\setlist[enumerate]{leftmargin=.5in}
\setlist[itemize]{leftmargin=.5in}
\newcommand{\R}{\mathbb{R}}
\newcommand{\disna}{\mathrm{dis}_{\infty}}
\newcommand{\spec}{\mathrm{spec}}
\newcommand{\dis}{\mathrm{dis}}
\newcommand{\disp}{\mathrm{dis}_{p}}
\newcommand{\ums}{\mathcal{U}}
\newcommand{\ms}{\mathcal{M}}
\newcommand{\ufin}{\mathcal{U}^\mathrm{fin}}
\newcommand{\p}{^{\prime}}
\newcommand{\dlp}{\dgh^{\scriptscriptstyle{(p)}}}
\newcommand{\dghp}[1]{\dgh^{\scriptscriptstyle{(#1)}}}
\newcommand{\dgh}{d_{\mathrm{GH}}}
\newcommand{\ugh}{u_{\mathrm{GH}}}
\newcommand{\eps}{\varepsilon}
\newcommand{\diam}{\mathrm{diam}}
\newcommand{\absult}{\mathrm{ult^{abs}}}
\newcommand{\lc}{\left(}
\newcommand{\rc}{\right)}
\newcommand{\lb}{\left\{}
\newcommand{\rb}{\right\}}
\newtheorem{theorem}{Theorem}
\newtheorem{corollary}[theorem]{Corollary}
\newtheorem{lemma}[theorem]{Lemma}
\newtheorem{definition}[theorem]{Definition}
\newtheorem{remark}[theorem]{Remark}
\newtheorem{proposition}[theorem]{Proposition}
\newtheorem{claim}[theorem]{Claim}
\newtheorem{example}[theorem]{Example}
\newcommand{\ct}[1]{_{\mathfrak{c}\left(#1\right)}}
\begin{document}
\title{The Gromov-Hausdorff distance between ultrametric spaces:\\
its structure and computation\thanks{This work was partially supported by the NSF through grants
CCF-1740761, and CCF-1526513, and DMS-1723003.}}

\author{Facundo M\'emoli\thanks{Department of Mathematics and Department of Computer Science and Engineering, The Ohio State University, Columbus, OH, USA 
  (\texttt{memoli@math.osu.edu}, \url{http://facundo-memoli.org/}).}
\and Zane Smith\thanks{Department of Computer Science and Engineering, University of Minnesota 
  (\texttt{smit9474@umn.edu}).}
\and Zhengchao Wan\thanks{Department of Mathematics, The Ohio State University, Columbus, OH, USA 
  (\texttt{wan.252@osu.edu}, \url{https://zhengchaow.github.io}).}}
  
\maketitle

\begin{abstract}
  The Gromov-Hausdorff distance ($d_\mathrm{GH}$) provides a natural way of quantifying the dissimilarity between two given metric spaces. It is known  that computing $d_\mathrm{GH}$ between two finite metric spaces is NP-hard, even in the case of finite ultrametric spaces which are highly structured metric spaces in the sense that they satisfy the so-called \emph{strong triangle inequality}. Ultrametric spaces naturally arise in many applications such as hierarchical clustering, phylogenetics, genomics, and even linguistics. By exploiting the special structures of ultrametric spaces, (1) we identify a one parameter family $\{\dghp p\}_{p\in[1,\infty]}$ of distances defined in a flavor similar to the Gromov-Hausdorff distance on the collection of finite ultrametric spaces, and in particular $\dghp 1 =\dgh$. The extreme case when $p=\infty$, which we also denote by $\ugh$, turns out to be an ultrametric on the collection of ultrametric spaces. Whereas for all $p\in[1,\infty)$, $\dghp p$ yields NP-hard problems, we prove that surprisingly $u_\mathrm{GH}$ can be computed in polynomial time. The proof is based on a structural theorem for $\ugh$ established in this paper; (2) inspired by the structural theorem for $u_\mathrm{GH}$, and by carefully leveraging  properties of ultrametric spaces, we also establish a structural theorem for $d_\mathrm{GH}$ when restricted to ultrametric spaces. This structural theorem allows us to identify special families of ultrametric spaces on which $\dgh$ is computationally tractable. These families are determined by properties related to the doubling constant of metric space. Based on these families, we devise a fixed-parameter tractable (FPT) algorithm for computing the exact value of $\dgh$ between ultrametric spaces. We believe ours is the first such algorithm to be identified.
\end{abstract}

\nomenclature{$B_{(\eps)}$}{The set of $\eps$-maximal unions of closed balls of the ball $B$; page \pageref{symbol:Beps}.}

\nomenclature{$\theta_X$}{The dendrogram corresponding to the ultrametric space $X$; page \pageref{def:dendrogram}.}

\nomenclature[12]{$\lc X_{\mathfrak{c}(t)},u_{X_{\mathfrak{c}(t)}}\rc$}{The $t$-closed quotient of the ultrametric space $X$; page \pageref{def:ultraquotient}.}

\nomenclature[13]{$ X_{\mathfrak{o}(t)}$}{The $t$-open partition of the ultrametric space $X$; page \pageref{def:ultraquotient-open}.}

\nomenclature{$V_X$}{The set of all closed balls in the ultrametric space $X$; page \pageref{sec:dp-main-text}.}

\nomenclature[05]{$\ums$}{The collection of all compact ultrametric spaces.}

\nomenclature[06]{$\ums^\mathrm{fin}$}{The collection of all finite ultrametric spaces.}

\nomenclature[07]{$(X,u_X)$}{An ultrametric space.}

\nomenclature[01]{$\dgh$}{The Gromov-Haussdorff distance; page \pageref{def:dgh}.}

\nomenclature[02]{$\ugh$}{The Gromov-Haussdorff ultrametric; page \pageref{eq:ugh}.}

\nomenclature{$\delta_\eps(X)$}{A convenient symbol representing $\delta_\eps(X)\coloneqq\diam(X)-\eps$; page \pageref{thm:ums-dgh}.}

\nomenclature{$\rho_\eps(X)$}{A convenient symbol representing $\rho_\eps(X)\coloneqq\max(\diam\lc X\rc -2\eps,0)$; page \pageref{rhos_eps}.}

\nomenclature[09]{$[x]_{\mathfrak{c}(t)}^X$}{The equivalence class of the closed equivalence relation; page \pageref{closed relation}.}

\nomenclature[10]{$[x]_{\mathfrak{o}(t)}^X$}{The equivalence class of the open equivalence relation; page \pageref{open relation}.}

\nomenclature{$\#X$}{The cardinality of set $X$.}

\nomenclature[03]{$\dghp{p}$}{$p$-Gromov-Hausdorff distance; page \pageref{eq:dgh-p-distortion}.}

\nomenclature[04]{$\dis_p$}{$p$-distortion; page \pageref{eq:dist}.}

\nomenclature[05]{$\Lambda_p$}{Absolute $p$-difference; page \pageref{sec:dghp}.}

\nomenclature[11]{$[\![x]\!]_{\mathfrak{c}(t)}^X$}{The equivalence class of the closed equivalence relation in the ultra dissimilarity space $X$; page \pageref{eq:equivalence class treegram}.}

\nomenclature{$\spec(X)$}{The spectrum of a metric space $X$: $\spec(X)\coloneqq\{d_X(x,x'):\,x,x'\in X\}$; page \pageref{thm:ugh-struct}.}

\section{Introduction and main results}
\label{sec:introduction}

Edwards \cite{edwards1975structure} and Gromov \cite{gromov1981groups} independently introduced a notion nowadays called the \emph{Gromov-Hausdorff distance} $\dgh$ for comparing metric spaces. This distance enjoys many pleasing mathematical properties: if we let $\ms$ denote the collection of all compact metric spaces, then modulo isometry,  $(\ms,\dgh)$ is a complete and separable metric space  {\cite[Proposition 43]{petersen2006riemannian}}, with rich pre-compact classes \cite{gromov2007metric}. It has also recently been proved that this space is geodesic \cite{ivanov2015gromov,chowdhury2018explicit}. This distance has been widely used in differential geometry \cite{petersen2006riemannian}, as a model for shape matching procedures \cite{memoli2007use, bronstein2010gromov}, and applied algebraic topology \cite{chazal2009gromov} for establishing stability properties of invariants.

Despite admitting many lower bounds which can be computed in polynomial time \cite{chazal2009gromov,memoli2012some}, computing $\dgh$ itself between arbitrary finite metric spaces leads to solving certain generalized quadratic assignment problems \cite{memoli2012some} which have been shown to be NP-hard  \cite{schmiedl2015shape,schmiedl2017computational,agarwal2018computing}. In fact, in \cite{schmiedl2017computational} Schmiedl proved the following stronger result (see however \cite{majhi2019approximating} for the case of point sets on the real line where the authors describe a poly time approximation algorithm):
\begin{theorem}[{\cite[Corollary 3.8]{schmiedl2017computational}}]\label{thm:NP-hard}
The Gromov-Hausdorff distance cannot be approximated within any factor less than 3
in polynomial time, unless $\mathcal{P}=\mathcal{NP}$.
\end{theorem}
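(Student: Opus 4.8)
Since the assertion is about \emph{in}approximability, the natural route is a gap-introducing polynomial-time reduction from a known NP-hard problem. Concretely, I would fix a classical NP-complete problem $\Pi$ — a 3-CNF satisfiability variant, or a graph partition/covering problem such as \textsc{Partition-into-Triangles} or \textsc{3-Dimensional Matching} — and, from an instance $I$ of $\Pi$, build in polynomial time two finite metric spaces $X_I$ and $Y_I$ (one of them playing the role of a rigid ``template'') together with a scale parameter $c>0$ depending only on the gadgets, so that $\dgh(X_I,Y_I)\le c$ when $I$ is a YES-instance and $\dgh(X_I,Y_I)\ge 3c$ when $I$ is a NO-instance. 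Given such a reduction, any polynomial-time algorithm approximating $\dgh$ within a factor $\alpha<3$ would, on input $(X_I,Y_I)$, return a value in $[c,\alpha c]\subseteq[c,3c)$ for YES-instances and a value $\ge\dgh(X_I,Y_I)\ge 3c$ for NO-instances; thresholding this output at $3c$ then decides $\Pi$, forcing $\mathcal P=\mathcal{NP}$. It is convenient to use integer-weighted gadgets and the standard correspondence reformulation $\dgh(X,Y)=\tfrac12\inf_R\dis(R)$, so that the two regimes become ``some correspondence has distortion $2c$'' versus ``every correspondence has distortion $\ge 6c$''; note this already forces the gadgets to contain points whose pairwise distances span a range of size about $6c$ (diameter $\gtrsim 6c$), since diameter-$2$ ``graph metrics'' can never exhibit distortion exceeding $2$.

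For the construction I would encode the combinatorial skeleton of $I$ — its variables and clauses, or its vertices and hyperedges — as disjoint clusters of points inside $X_I$, and encode the set of ``legal solutions'' as the internal block structure of $Y_I$ (for instance, $Y_I$ a disjoint union of uniform blocks, one per clause-satisfaction pattern or per triangle of a prospective partition), choosing the inter-cluster and inter-block distances large enough to keep everything a genuine metric yet small enough to control distortion. The completeness direction should then be direct: a satisfying assignment (resp.\ a valid partition) of $I$ supplies an explicit correspondence $R$ — essentially a bijection matching each cluster of $X_I$ to the block of $Y_I$ to which it is ``assigned'' — for which one checks, cluster pair by cluster pair, that $|d_{X_I}(x,x')-d_{Y_I}(y,y')|\le 2c$ for all $(x,y),(x',y')\in R$, hence $\dgh(X_I,Y_I)\le c$.

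The soundness direction carries the real weight: one must show that if $I$ admits no legal solution then \emph{every} correspondence $R\subseteq X_I\times Y_I$ contains a pair $(x,y),(x',y')\in R$ with $|d_{X_I}(x,x')-d_{Y_I}(y,y')|\ge 6c$. The mechanism I would target is a pigeonhole/triangle-inequality argument: any correspondence that keeps all the ``short'' intra-gadget distances roughly consistent is forced to induce a well-defined assignment of the clusters of $X_I$ to the blocks of $Y_I$; were this induced assignment to violate none of the encoded constraints it would be a legal solution of $I$, a contradiction; hence some encoded constraint is violated, and the gadgets must be built so that a violated constraint plants a pair of points compelled to sit at radically different distances in the two spaces, producing the distortion $6c$.

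The main obstacle is exactly this soundness analysis, intertwined with metric-validity. One needs the ``cheating loss'' to be the full factor $3$ rather than merely some constant, which forces a careful tuning of every gadget distance so that any correspondence incompatible with a legal solution is provably driven into a distortion-$6c$ configuration, while \emph{simultaneously} all distances actually in play — including the cross-space distances implicit in the correspondence analysis and the many inter-cluster distances — obey the triangle inequality. Reconciling ``large forced distortion in the NO-case'' with ``valid metric and small distortion in the YES-case'' is the delicate quantitative heart of the argument; the completeness check and the final reduction bookkeeping are, by comparison, routine once the gadgets are fixed.
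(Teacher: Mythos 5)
The paper does not prove this statement at all: it is imported verbatim as \cite[Corollary 3.8]{schmiedl2017computational}, so there is no internal argument to compare yours against. Judged on its own terms, your proposal correctly identifies the only viable strategy --- a gap-introducing polynomial-time reduction from an NP-complete problem, with completeness giving $\dgh \le c$ and soundness forcing $\dgh \ge 3c$, so that any $\alpha$-approximation with $\alpha < 3$ would separate the two cases --- and your bookkeeping around the correspondence reformulation (distortion $2c$ versus $6c$) is sound. This is indeed the shape of Schmiedl's actual argument, which reduces from a partition-type problem and builds spaces with a small number of carefully chosen distance values.

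However, what you have written is a plan, not a proof. The entire content of the theorem lives in the two places you explicitly defer: (i) the concrete gadget construction --- which NP-hard problem, which clusters and blocks, which numerical distances --- and (ii) the soundness analysis showing that \emph{every} correspondence arising from a NO-instance is driven to distortion $\ge 6c$, not merely $\ge 2c + \delta$ for some small $\delta$. You acknowledge that tuning the distances so that cheating costs the full factor $3$ (rather than some smaller constant) while all distances remain a valid metric is ``the delicate quantitative heart,'' and you do not carry it out. Without it the argument establishes nothing beyond plain NP-hardness of exact computation; the specific constant $3$ in the statement is precisely what the missing analysis must deliver. A further point relevant to this paper: the authors rely on the fact that Schmiedl's construction already uses (essentially) ultrametric gadgets, which is what lets them assert the hardness persists on $\ufin$ and derive Corollary~\ref{thm:approximate}; a generic reduction of the kind you sketch would not automatically have that property, so the choice of gadgets matters beyond just achieving the gap.
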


The proof of this result reveals that the claim still holds even in the case of \emph{ultrametric spaces}.
An ultrametric space $(X,d_X)$ is a metric space which satisfies the \emph{strong triangle inequality}: 
\[\forall x,x',x''\in X, \text{ one has } d_X(x,x')\leq\max\lc d_X(x,x''),d_X(x'',x')\rc.\]
In this paper, we will henceforth use $u_X$ instead of $d_X$ to represent an ultrametric. Ultrametric spaces appear in many applications: they arise in statistics as a geometric encoding of \emph{dendrograms} \cite{jardine-sibson,carlsson2010characterization},  in taxonomy and phylogenetics \cite{phylo-book} as representations of phylogenies, and in linguistics \cite{roberts}.
In theoretical computer science, ultrametric spaces arise as building blocks for the probabilistic approximation of finite metric spaces \cite{bartal}.

In many of the aforementioned applications (including  phylogenetics), in order to characterize the difference between relevant objects, it is important to compare ultrametric spaces via meaningful metrics. This is one of the main motivations behind our study of the Gromov-Hausdorff distance between ultrametric spaces. 

Being a well understood and highly structured type of metric spaces, we are particularly interested in exploiting possible advantages associated to either \emph{restricting} or \emph{adapting} $\dgh$ to the collection $\ufin$ of all finite ultrametric spaces. In this paper, we provide positive  answers to the following two questions naturally arising from trying to bypass/overcome the hardness result in Theorem \ref{thm:NP-hard}:

\begin{framed}

\begin{enumerate}

\item[(Q1)] Is there any suitable variant of the Gromov-Hausdorff distance on the collection of finite ultrametric spaces which can be approximated/computed in polynomial time? 

\item[]

\item[(Q2)] Is there any subcollection of ultrametric spaces on which the Gromov-Hausdorff distance can be approximated/computed in polynomial time? 

\end{enumerate}
\end{framed}

In this paper we provide positive answers to these two questions and in the course of answering these questions, we establish structural theorems for both $\dgh$ and a suitable \emph{ultrametric} variant $\ugh$ which in each case allow us to convert the problem of comparing two given spaces into instances of the problem on strictly smaller spaces.

\paragraph{Related work}
The Gromov-Hausdorff ultrametric, which we denote by $\ugh$, on the collection $\ums$ of compact ultrametric spaces was first introduced by Zarichnyi \cite{zarichnyi2005gromov} in 2005 as an ultrametric counterpart to $\dgh$. Moreover, the author proved that $(\ums,\ugh)$ is a complete but not separable (ultra) metric space, where $\ums$ denotes the collection of all compact ultrametric spaces. $\ugh$ was further studied by Qiu in \cite{qiu2009geometry} where the author established several characterizations of $\ugh$  similar to the classical ones for $\dgh$  (cf. \cite[Chapter 7]{burago}) such as those arising via the notions of $\eps$-isometry and $(\eps,\delta)$-approximation. Qiu has also found a suitable version of Gromov’s pre-compactness theorem for $(\ums,\ugh)$.

Phylogenetic tree shapes (unlabled rooted trees) are closely related to ultrametric spaces. In \cite{colijn2018metric}, Colijn and Plazzotta studied a metric between tree shapes to compare evolutionary trees of influenza. In \cite{liebscher2018new}, Liebscher studied a class of metrics analogous to $\dgh$ between unrooted phylogenetic trees. In \cite{lafond2019complexity}, Lafond et al. extended different types of metrics on phylogenetic trees to metrics between tree shapes via optimization over permutations of labels. They studied the computational aspect of these metric extensions. In particular, they proved that computing the extension of the path distance is NP-complete via a similar argument used for proving that approximating $\dgh$ between merge trees in NP-complete \cite{agarwal2018computing}. Moreover, they devised an FPT algorithm which computes the extension of the so-called Robinson-Foulds distances. Their FPT algorithm is a recursive algorithm comparing subtrees of nodes at each iteration, which is of similar flavor to our algorithms (Algorithms \ref{algo-dGH-rec} and \ref{algo-dGH-dyn}) for computing $\dgh$ between ultrametric spaces.

In \cite{touli2018fpt}, Touli and Wang devised FPT algorithms for the computation of the interleaving distance $d_\mathrm{I}$  between merge trees \cite{morozov2013interleaving}. Since any finite ultrametric space can be naturally represented by a {merge tree} (see for example \cite{gasparovic2019intrinsic}) it turns out that $d_\mathrm{I}$ between ultrametric spaces as merge trees is a 2-approximation of $\dgh$ between the ultrametric spaces (see \cite[Corollary 6.13]{memoli2019gromov}). Thus, one could potentially adapt the algorithm  from \cite{touli2018fpt} for computing a 2-approximation for $\dgh$ between ultrametric spaces, which is FPT. In this paper we obtain essentially the same time complexity for the \emph{exact} computation of $\dgh$ (see Remark \ref{rem:tw}) via algorithms specifically tailored for ultrametric spaces.

\subsection{Our results}
In this section, we summarize our main results obtained in the course of answering the two major questions mentioned above.

\subsubsection{Polynomial time computable variant of $\dgh$}

Let $(X,d_X)$ and $(Y,d_Y)$ be two metric spaces. A \emph{correspondence} $R$ between the underlying sets $X$ and $Y$ is any subset of $X\times Y$ such that the images of $R$ under the canonical projections $p_X:X\times Y\rightarrow X$ and $p_Y:X\times Y\rightarrow Y$ are full: $p_X(R)=X$ and $p_Y(R)=Y$. Then, the Gromov-Hausdorff distance $\dgh$ between $X$ and $Y$ is defined as follows {\cite{memoli2007use}}:
\begin{equation}\label{eq:dgh-distortion}
    \dgh(X,Y)\coloneqq\frac{1}{2}\inf_{R}\sup_{(x,y),(x',y')\in R}|d_X (x,x')- d_Y (y,y')|,
\end{equation}
where the infimum is taken over all correspondences $R$ between $X$ and $Y$. {The term appeared above $\sup_{(x,y),(x',y')\in R}|d_X (x,x')- d_Y (y,y')|$ is called the \emph{distortion} of $R$, denoted by $\dis(R)$.}

We modify Equation \eqref{eq:dgh-distortion} to obtain a one-parameter family of related quantities: given $p\in[1,\infty)$, define a quantity $\dghp{p}(X,Y)$ as follows:

\begin{equation}\label{eq:dghp-no-dist}
    \dghp{p}(X,Y)\coloneqq 2^{-\frac{1}{p}}\inf_{R}\sup_{(x,y),(x',y')\in R}\left|(d_X (x,x'))^p- (d_Y (y,y'))^p\right|^\frac{1}{p}.
\end{equation}

In this way, as $p$ increases, the discrepancy between large distance values is more heavily penalized. It turns out that for each $p\in[1,\infty)$, $\dghp{p}$ is a metric on the collection $\mathcal{U}$ of all compact ultrametric spaces. Moreover, we will later show as a consequence of Theorem \ref{thm:NP-hard} the following as one of our motivations of considering $\dghp{p}$:

\begin{restatable}{corollary}{thmapprox}\label{thm:approximate}
For each $p\in[1,\infty)$ and for any $X,Y\in\ufin$, $\dghp{p}(X,Y)$ cannot be approximated within any factor less than $3^\frac{1}{p}$
in polynomial time, unless $\mathcal{P}=\mathcal{NP}$.
\end{restatable}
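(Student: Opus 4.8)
The plan is to deduce the claim from the hardness of approximating $\dgh$ (Theorem~\ref{thm:NP-hard}) via a ``snowflaking'' identity relating $\dghp{p}$ to $\dgh$. The key preliminary observation is that for all finite ultrametric spaces $X,Y$ and all $p\in[1,\infty)$,
\begin{equation}\label{eq:snowflake-dghp}
\dghp{p}(X,Y)=\Bigl(\dgh\bigl((X,u_X^p),(Y,u_Y^p)\bigr)\Bigr)^{1/p},
\end{equation}
where $u_X^p$ is again an ultrametric on $X$, since $t\mapsto t^p$ is nondecreasing on $[0,\infty)$ and so preserves the strong triangle inequality. To verify \eqref{eq:snowflake-dghp}, fix a correspondence $R$: since $t\mapsto t^{1/p}$ is continuous and nondecreasing, $\sup_{(x,y),(x',y')\in R}\bigl|u_X(x,x')^p-u_Y(y,y')^p\bigr|^{1/p}=\bigl(\sup_{(x,y),(x',y')\in R}\bigl|u_X(x,x')^p-u_Y(y,y')^p\bigr|\bigr)^{1/p}$; taking the infimum over $R$, again pulling $t\mapsto t^{1/p}$ outside, converts the right-hand side of \eqref{eq:dghp-no-dist} into $2^{-1/p}\bigl(2\,\dgh\bigl((X,u_X^p),(Y,u_Y^p)\bigr)\bigr)^{1/p}$, which is the right-hand side of \eqref{eq:snowflake-dghp}. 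Note also that $u\mapsto u^{1/p}$ inverts $u\mapsto u^p$ and likewise carries ultrametrics to ultrametrics.

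I would then argue by contradiction, transporting Schmiedl's gap construction through \eqref{eq:snowflake-dghp}. The reduction behind Theorem~\ref{thm:NP-hard} produces, in polynomial time from an instance of some $\mathcal{NP}$-hard problem, a pair of finite ultrametric spaces and a threshold $s>0$ for which deciding whether their $\dgh$-distance is $\le s$ or $\ge 3s$ is $\mathcal{NP}$-hard. Suppose that this pair can moreover be taken of the form $(X,u_X^p),(Y,u_Y^p)$ with $X,Y$ having rational distances --- equivalently, with all its distances being perfect $p$-th powers of rationals --- while keeping the ratio-$3$ gap. Then from the $\mathcal{NP}$-hard instance one computes $(X,u_X),(Y,u_Y)$ in polynomial time, feeds this rational pair to a purported polynomial-time factor-$c$ approximation of $\dghp{p}$ with $c<3^{1/p}$, and obtains $v$ with, by \eqref{eq:snowflake-dghp},
\[
\dgh\bigl((X,u_X^p),(Y,u_Y^p)\bigr)^{1/p}=\dghp{p}(X,Y)\le v\le c\,\dghp{p}(X,Y)=c\,\dgh\bigl((X,u_X^p),(Y,u_Y^p)\bigr)^{1/p}.
\]
On the YES side this gives $v\le c\,s^{1/p}<3^{1/p}s^{1/p}$; on the NO side $v\ge(3s)^{1/p}=3^{1/p}s^{1/p}$. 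Hence the algorithm decides the $\mathcal{NP}$-hard problem, so no such algorithm exists unless $\mathcal P=\mathcal{NP}$, which is the corollary.

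The step I expect to be the main obstacle is the supposition made in the previous paragraph: that the hard instances of Theorem~\ref{thm:NP-hard} can be normalized so all their distances are perfect $p$-th powers of rationals while the gap ratio remains exactly $3$. This amounts to revisiting Schmiedl's construction and checking that its boundedly many distance ``levels'' --- which it may choose with some freedom subject to producing the ratio-$3$ gap --- can be taken with all pairwise ratios equal to $p$-th powers of rationals, after which a single rescaling makes each level itself such a power; since $p$ is a fixed parameter this should be achievable. (For non-integer $p$ the spaces $(X,u_X^p),(Y,u_Y^p)$ then carry irrational distances, which is harmless, since they enter only the analysis and never as algorithmic inputs: the inputs $(X,u_X),(Y,u_Y)$ always have rational distances.) Granting this, the corollary follows; and since $p=1$ recovers Theorem~\ref{thm:NP-hard} itself, the genuinely new ingredient is the snowflaking identity \eqref{eq:snowflake-dghp}.
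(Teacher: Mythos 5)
Your snowflaking identity is exactly the paper's Proposition \ref{prop:dgh-dlp-eq} (namely $\dghp{p}(X,Y)=\big(\dgh(S_p(X),S_p(Y))\big)^{1/p}$ with $S_p(X)=(X,(u_X)^p)$), and the paper proves the corollary by the same reduction to Theorem \ref{thm:NP-hard}, so in substance you have reproduced its argument; your verification of the identity (pulling $t\mapsto t^{1/p}$ through the sup and the inf by monotonicity and continuity) is correct. The one structural difference is the direction in which you run the reduction, and it is what creates the obstacle you flag: you require the hard $\dgh$-instances to be expressible as $S_p$ of rational ultrametric spaces, which forces you to reopen Schmiedl's construction and leaves your proof conditional on an unverified normalization. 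That detour is unnecessary. Given any instance $(A,B)$ on which approximating $\dgh$ within a factor $c<3$ is hard, apply $S_{\frac{1}{p}}$ (the inverse of $S_p$, which also carries ultrametrics to ultrametrics) to obtain $X=S_{\frac{1}{p}}(A)$ and $Y=S_{\frac{1}{p}}(B)$; if $v$ approximates $\dghp{p}(X,Y)$ within a factor $c^{1/p}<3^{1/p}$, then $v^p$ approximates $\dgh(A,B)=\big(\dghp{p}(X,Y)\big)^p$ within a factor $c<3$, contradicting Theorem \ref{thm:NP-hard}. This is how the paper argues, and it needs no hypothesis on the arithmetic of the distance levels in the hard instances. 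The only residual issue --- that $(u_A)^{1/p}$ may be irrational, so the purported approximator is fed non-rational inputs --- is a model-of-computation point that the paper's proof does not address either; if one insists on a bit model, some care is needed in both versions, and your observation that the gap instances have boundedly many distance levels is the natural starting point for handling it.
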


Note that the factor $3^\frac{1}{p}$ approaches $1$ as $p\rightarrow\infty$. This suggests us considering $\dghp{\infty}\coloneqq\lim_{p\rightarrow\infty}\dghp{p}$, which could potentially be a computationally tractable quantity. Before stating our computational result for $\dghp{\infty}$, it is worth mentioning that $\dghp{\infty}$ turns out to be an ultrametric on $\ums$. Moreover, it actually coincides with the Gromov-Hausdorff ultrametric $\ugh$ defined by Zarichnyi \cite{zarichnyi2005gromov}. In the sequel, we will hence use $\ugh$ to denote $\dghp{\infty}$.

One of our main contributions in the paper is the following structural characterization of $\ugh$. This structural result eventually leads to a polynomial time computable algorithm for computing $\ugh$ which we will state later.

\begin{restatable}[Structural theorem for $\ugh$]{theorem}{thmugh}\label{thm:ugh-struct}
For any $X,Y\in\mathcal{U}^\mathrm{fin}$ one has that 
\begin{equation*}
    \ugh(X,Y) = \min\left\{t\geq 0:\, \lc X_{\mathfrak{c}(t)},u_{X_{\mathfrak{c}(t)}}\rc \,\mbox{is isometric to}\, \lc Y_{\mathfrak{c}(t)},u_{Y_{\mathfrak{c}(t)}}\rc \right\}.
\end{equation*}
\end{restatable}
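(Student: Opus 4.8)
The plan is to prove the two inequalities $\ugh(X,Y)\le t^*$ and $\ugh(X,Y)\ge t^*$, where $t^*$ denotes the right-hand side, i.e. the smallest $t\ge 0$ for which the $t$-closed quotients $X_{\mathfrak{c}(t)}$ and $Y_{\mathfrak{c}(t)}$ are isometric. (I should first note that the set on the right is nonempty and the minimum is attained: for $t\ge\max(\diam X,\diam Y)$ both quotients collapse to a point, and since $X,Y$ are finite the quotient $X_{\mathfrak{c}(t)}$ only changes at finitely many threshold values of $t$, so the minimum is realized.)

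For the upper bound $\ugh(X,Y)\le t^*$, I would start from an isometry $\phi\colon X_{\mathfrak{c}(t^*)}\to Y_{\mathfrak{c}(t^*)}$ and lift it to a correspondence $R\subseteq X\times Y$ by declaring $(x,y)\in R$ iff $\phi([x]_{\mathfrak{c}(t^*)}^X)=[y]_{\mathfrak{c}(t^*)}^Y$; this is a valid correspondence because the quotient maps are surjective. The key computation is to bound $\sup_{(x,y),(x',y')\in R}|u_X(x,x')-u_Y(y,y')|$ — more precisely, to show it is $\le t^*$ in the appropriate $\ugh$ sense. The crucial ultrametric fact is that if $x,x'$ lie in distinct $t^*$-closed balls then $u_X(x,x')=u_{X_{\mathfrak{c}(t^*)}}([x],[x'])$, while if they lie in the same $t^*$-ball then $u_X(x,x')\le t^*$; combining this with the analogous statement on $Y$ and the fact that $\phi$ is an isometry of the quotients gives that $u_X(x,x')$ and $u_Y(y,y')$ either agree or are both $\le t^*$. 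Since $\ugh$ is computed as the infimum over correspondences $R$ of $\inf\{t: u_X(x,x')=u_Y(y,y') \text{ whenever one of them exceeds } t, \text{ for all pairs in } R\}$ (the $p=\infty$ reading of \eqref{eq:dghp-no-dist}, where the "absolute $p$-difference" $\Lambda_\infty$ of two unequal numbers is their maximum), this bounds $\ugh(X,Y)$ by $t^*$.

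For the lower bound $\ugh(X,Y)\ge t^*$, suppose $\ugh(X,Y)=s<t^*$ (or $\le s$ for any $s<t^*$); I want to produce an isometry between $X_{\mathfrak{c}(s)}$ and $Y_{\mathfrak{c}(s)}$, contradicting minimality of $t^*$ — or, working at the value $s=\ugh(X,Y)$ itself (attained since the spaces are finite), directly between $X_{\mathfrak{c}(s)}$ and $Y_{\mathfrak{c}(s)}$. Take a correspondence $R$ realizing $\ugh(X,Y)=s$, so that any two pairs $(x,y),(x',y')\in R$ satisfy: $u_X(x,x')\ne u_Y(y,y')$ forces both to be $\le s$. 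The heart of the argument is to show that $R$ descends to a well-defined bijection $\bar\phi\colon X_{\mathfrak{c}(s)}\to Y_{\mathfrak{c}(s)}$: if $(x,y),(x,y')\in R$ share an $X$-coordinate then $u_X(x,x)=0\ne u_Y(y,y')$ is impossible unless $u_Y(y,y')\le s$, i.e. $y,y'$ lie in the same $s$-closed ball; conversely two points of $X$ in the same $s$-ball are forced via $R$ into the same $s$-ball of $Y$. This shows the relation on quotients is single-valued both ways, hence a bijection, and the same dichotomy shows it preserves the quotient ultrametric exactly (distances strictly above $s$ are preserved on the nose, and distances at level $0$ in the quotient — i.e. within an $s$-ball — match up). Therefore $X_{\mathfrak{c}(s)}$ and $Y_{\mathfrak{c}(s)}$ are isometric, so $t^*\le s=\ugh(X,Y)$.

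The main obstacle is the bookkeeping around the threshold value itself: one must be careful about whether distances exactly equal to $t^*$ (or to $s$) are "preserved" or merely "both small," and about whether the closed quotient at the critical level behaves well — this is exactly where the \emph{closed} (as opposed to open) quotient matters, since $X_{\mathfrak{c}(t)}$ glues points at distance $\le t$ and a pair at distance exactly $t$ may be split or merged depending on side. I expect the clean way to handle this is to first establish the lemma that for finite $X$ the function $t\mapsto X_{\mathfrak{c}(t)}$ is "left-continuous" in the appropriate sense and that the dichotomy "$u_X(x,x')>t$ $\Rightarrow$ $x,x'$ in distinct $t$-balls and the quotient distance equals $u_X(x,x')$; $u_X(x,x')\le t$ $\Rightarrow$ same $t$-ball" holds with these precise inequalities, and then both directions of the theorem follow by the matching-of-dichotomies argument above applied at $t=t^*$ and $t=\ugh(X,Y)$ respectively.
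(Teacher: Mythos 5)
Your proposal is correct and follows essentially the same route as the paper's proof: the upper bound by lifting an isometry of the $t^*$-closed quotients to a correspondence whose $\Lambda_\infty$-distortion is at most $t^*$ (using the dichotomy that distances strictly above $t$ are preserved exactly in the quotient while distances at most $t$ collapse), the lower bound by descending a correspondence to a well-defined bijective isometry of the quotients at level $\mathrm{dis}_\infty(R)$, and finiteness to replace the infimum by a minimum. The only cosmetic difference is that you work with an optimal correspondence (whose existence also needs finiteness), whereas the paper proves the inequality for an arbitrary correspondence and then takes the infimum.
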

Here $\lc X_{\mathfrak{c}(t)},u_{X_{\mathfrak{c}(t)}}\rc $ is the $t$-\emph{closed quotient} of $X$ where $x$ and $x'$ are identified if $u_X(x,x')\leq t$ (cf. Definition \ref{def:ultraquotient}). See Figure \ref{fig:strutural theorem} for an illustration of Theorem \ref{thm:ugh-struct}.

\begin{figure}[ht]
    \centering
    \includegraphics[width=0.4\textwidth]{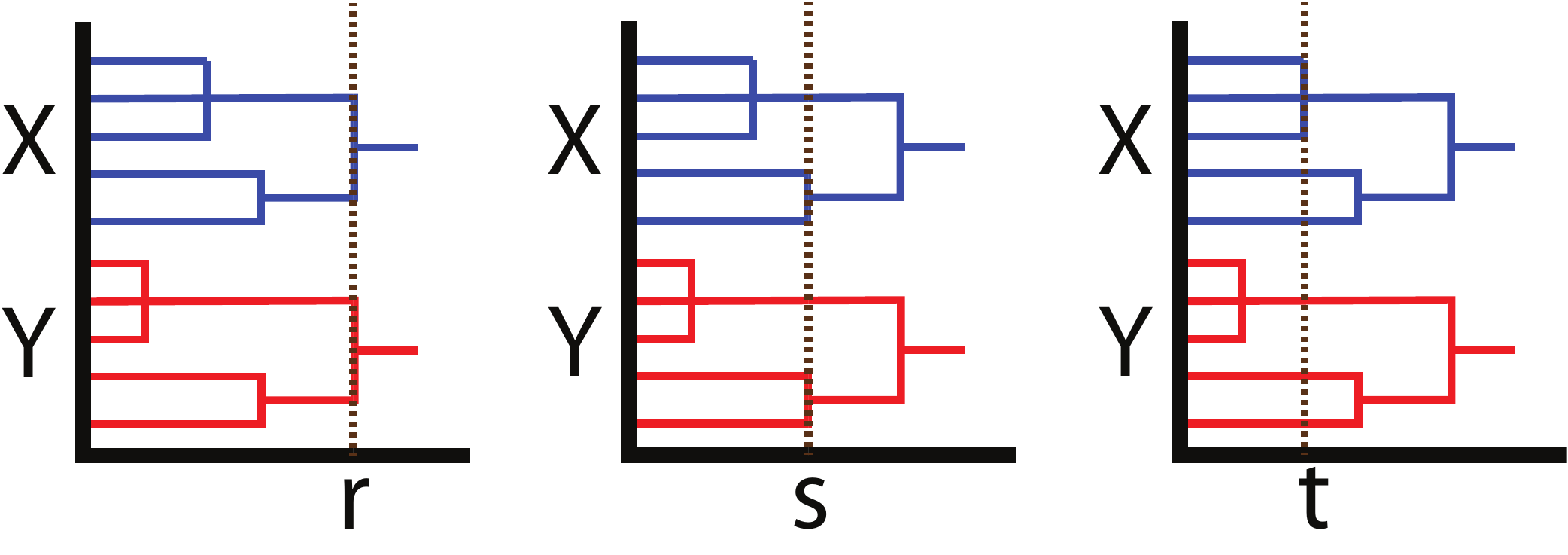}
    \caption{\textbf{Illustration of Theorem \ref{thm:ugh-struct}.} We represent two ultrametric spaces $X$ and $Y$ as dendrograms (See Theorem \ref{thm:dendroultra} for more details.). Imagine that we move a vertical dotted line from \emph{right to left} ($r>s>t$) to obtain successive quotient spaces according to the parameter indicated by the line, as described in Definition \ref{def:ultraquotient}. It is obvious from the figure that ($\cong$ denotes isometry) $X_{\mathfrak{c}(r)}\cong Y_{\mathfrak{c}(r)}$, $X_{\mathfrak{c}(s)}\cong Y_{\mathfrak{c}(s)}$, $X_{\mathfrak{c}(t)}\cong Y_{\mathfrak{c}(t)}$, and that $t$ is the minimum value such that the resulting quotient spaces are isometric. Thus, $\ugh(X,Y)=t.$ }
    \label{fig:strutural theorem}
\end{figure}

Based on Theorem \ref{thm:ugh-struct}, we devise an algorithm for computing $\ugh$ as follows. For a finite ultrametric space $X$,  the isometry type of $X_{\mathfrak{c}(t)}$ only changes finitely many times along $0\leq t<\infty$. In fact, the set of all $t$s when $X_{\mathfrak{c}(t)}$ changes its isometry type is exactly the \emph{spectrum} $\spec(X)\coloneqq\{u_X(x,x'):\,x,x'\in X\}$ of $X$. Then, in order to compute $\ugh(X,Y)$, we simply check whether $X_{\mathfrak{c}(t)}\cong Y_{\mathfrak{c}(t)}$, starting from the largest $t$ and progressively scanning all possible $t$s until reaching the smallest $t$ in $\mathrm{spec}(X)\bigcup\mathrm{spec}(Y)$; the smallest $t$ such that $X_{\mathfrak{c}(t)}\cong Y_{\mathfrak{c}(t)}$ will be $\ugh(X,Y)$. 
Since ultrametric spaces can be regarded as weighted trees (cf. Section \ref{sec:tree structure of dend}), determining whether two ultrametric spaces are isometric is equivalent to determining whether two weighted trees are isomorphic, which can be achieved in polynomial time on the number of vertices involved.

We prove that computing $\ugh$ can be done in time $O(n\log(n))$ where $n$ is the maximum of the cardinalities of $X$ and $Y$ (cf. Theorem \ref{thm:com-ugh-alg} and Remark \ref{rmk:ugh-log-alg}). See Section \ref{sec:ugh-struct} for the pseudocode (cf. Algorithm \ref{algo-uGH}) of the algorithm described above and a detailed complexity analysis, and also see Appendix \ref{sec:ext-treegram} for an extension of $\ugh$ to the case of ultra-dissimilarity spaces. We also remark that our computational results regarding the determination of $\ugh$ (between finite ultrametric spaces) can be interpreted as providing a novel computationally tractable instance of the well known quadratic assignment problem (cf. Remark \ref{rmk:QAP}).

In the end, we summarize our complexity results in Figure \ref{fig:np-hard}

\begin{figure}[ht]
    \centering
    \includegraphics[width=0.5\textwidth]{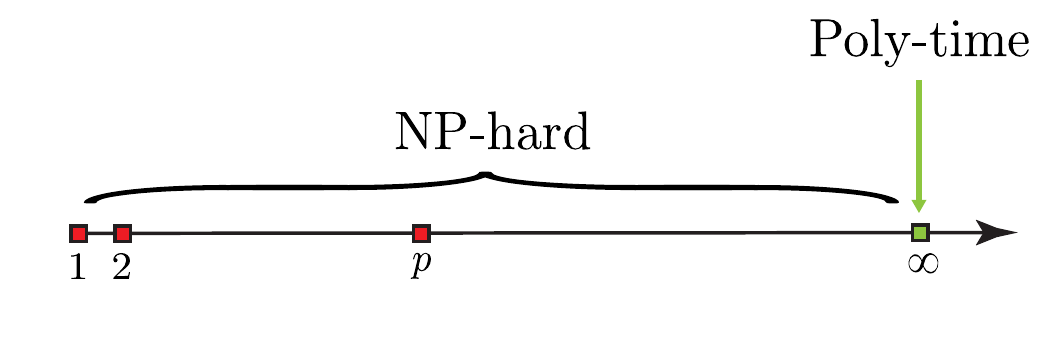}
    \caption{\textbf{Complexity of $\dghp{p}$.} Computing/approximating $\dghp{p}$ is NP-hard for each $p\in[1,\infty)$ whereas computing $\dghp{\infty}$ can be done in polynomial time.  }
    \label{fig:np-hard}
\end{figure}

\subsubsection{Polynomial time computable family with respect to $\dgh$}
Inspired by the usefulness of Theorem \ref{thm:ugh-struct} for $\ugh$, we exploit special properties of ultrametric spaces and establish a structural theorem for $\dgh$ between ultrametric spaces (Theorem \ref{thm:ums-dgh}).

\begin{figure}
  \begin{center}
    \includegraphics[width=0.35\textwidth]{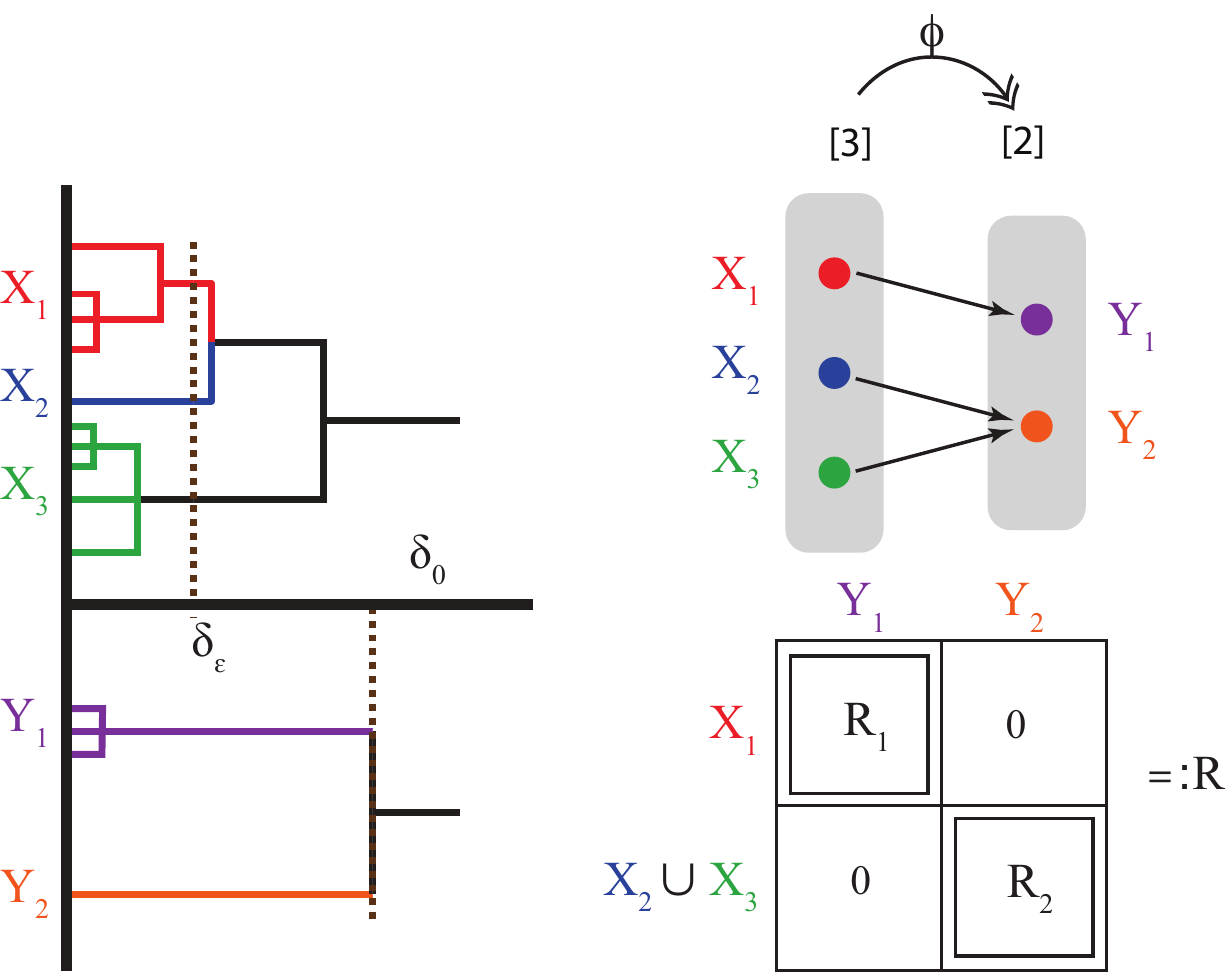}
  \end{center}
  \caption{\textbf{Illustration of the claim in Theorem \ref{thm:ums-dgh}.} }
\end{figure}

Below, $X_{\mathfrak{o}(t)}$ denotes the $t$-\emph{open partition} of $X$ where $x$ and $x'$ belong to the same block if $u_X(x,x')< t$ (cf. Definition \ref{def:ultraquotient-open}) and we call any correspondence $R$ between $X$ and $Y$ with distortion (cf. Section \ref{sec:dghp}) bounded above by $\eps\geq 0$ an \emph{$\eps$-correspondence.} Given a metric space $X$ and $\eps\geq 0$, we let $\delta_\eps(X)\coloneqq\diam(X)-\eps$. With this notation, $\delta_0(X)=\diam(X)$.

\begin{restatable}[Structural theorem for $\dgh$]{theorem}{thmdgh}\label{thm:ums-dgh}
Let $X,Y\in\mathcal{U}^\mathrm{fin}$ and $\eps\geq 0$ be such that 
\begin{equation}\label{eq:condition of thm}
    |\delta_0(X)-\delta_0(Y)|\leq\eps<\delta_0(Y).
\end{equation}
Consider the following open partitions  
\[X_{\mathfrak{o}\lc \delta_\eps(Y)\rc}:=\left\{X_i\right\}_{i=1}^{N_X} \,\,\,\mbox{and}\,\,\,\, Y_{\mathfrak{o}\lc \delta_0(Y)\rc}:=\left\{Y_j\right\}_{j=1}^{N_Y}.\] 

Then, there exists an $\eps$-correspondence between $X$ and $Y$ if and only if:

\begin{itemize}
    \item[(1)] there exists a surjection $\Psi:[N_X]\twoheadrightarrow [N_Y]$ {and, with this surjection,} 
    
    \item[(2)] for every $j\in [N_Y]$  there exists an $\eps$-correspondence between $\left(\!X_{\Psi^{-1}(j)},u_X|_{X_{\Psi^{-1}(j)}\times X_{\Psi^{-1}(j)}}\!\right)$ and $\left(Y_j,u_Y|_{Y_j\times Y_j}\right)$ where for each $j\in [N_Y]$,  $X_{\Psi^{-1}(j)}\coloneqq\bigcup_{i\in\Psi^{-1}(j)}X_i.$
\end{itemize} 
\end{restatable}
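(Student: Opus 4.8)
The plan is to prove the two implications separately, using only the following elementary feature of open partitions: in $X_{\mathfrak{o}(t)}$ two points in the same block are at distance strictly less than $t$, and two points in distinct blocks are at distance at least $t$; moreover in $Y_{\mathfrak{o}(\delta_0(Y))}$, taken at the diameter level $\delta_0(Y)=\diam(Y)$, two points in distinct blocks are at distance \emph{exactly} $\diam(Y)$ (distances never exceeding the diameter). Write $D:=\diam(Y)$, so $\delta_\eps(Y)=D-\eps>0$ by \eqref{eq:condition of thm}, which also yields $\diam(X)\le D+\eps$. Throughout recall the definition of $\dgh$ via distortion, so an $\eps$-correspondence is a correspondence $R$ with $\dis(R)\le\eps$, and that $\{X_i\}$ partitions $X$ while $\{Y_j\}$ partitions $Y$.

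For the ``if'' direction, given $\Psi$ and, for each $j\in[N_Y]$, an $\eps$-correspondence $R_j$ between $X_{\Psi^{-1}(j)}$ and $Y_j$, set $R:=\bigcup_{j=1}^{N_Y}R_j$. Since $\{\Psi^{-1}(j)\}_j$ partitions $[N_X]$ into nonempty pieces (nonemptiness being forced by the existence of an $\eps$-correspondence onto $Y_j\neq\emptyset$) and $\{Y_j\}_j$ partitions $Y$, the set $R$ is a correspondence between $X$ and $Y$. To estimate $\dis(R)$, take $(x,y),(x',y')\in R$: if both lie in one $R_j$ then $|u_X(x,x')-u_Y(y,y')|\le\dis(R_j)\le\eps$; otherwise $(x,y)\in R_j$ and $(x',y')\in R_{j'}$ with $j\neq j'$, so $x,x'$ lie in distinct blocks of $X_{\mathfrak{o}(\delta_\eps(Y))}$ (their index sets being disjoint), whence $D-\eps\le u_X(x,x')\le\diam(X)\le D+\eps$, while $y\in Y_j$, $y'\in Y_{j'}$ give $u_Y(y,y')=D$; hence $|u_X(x,x')-u_Y(y,y')|\le\eps$. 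Thus $\dis(R)\le\eps$.

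For the ``only if'' direction, let $R$ be an $\eps$-correspondence. For each block $X_i$ let $S_i:=\{\,j\in[N_Y] : (x,y)\in R\text{ for some }x\in X_i,\,y\in Y_j\,\}$. The key claim is that each $S_i$ is a singleton: if distinct $j,j'\in S_i$ were witnessed by $(x,y),(x',y')\in R$ with $x,x'\in X_i$, $y\in Y_j$, $y'\in Y_{j'}$, then $u_X(x,x')<D-\eps$ (same block) while $u_Y(y,y')=D$ (distinct $Y$-blocks), so $|u_X(x,x')-u_Y(y,y')|>\eps$, contradicting $\dis(R)\le\eps$. As $R$ is a correspondence each $S_i$ is nonempty, so $\Psi:[N_X]\to[N_Y]$, $i\mapsto$ the unique element of $S_i$, is well defined; it is surjective because any $y\in Y_j$ is $R$-related to some $x\in X_i$, forcing $\Psi(i)=j$. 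This is (1). For (2), fix $j$ and set $R_j:=R\cap(X_{\Psi^{-1}(j)}\times Y_j)$. By the definition of $\Psi$, every $x\in X_{\Psi^{-1}(j)}$ is $R$-related only to points of $Y_j$ and every $y\in Y_j$ is $R$-related to some point of $X_{\Psi^{-1}(j)}$; hence $R_j$ is a correspondence between the two subspaces, and $\dis(R_j)\le\dis(R)\le\eps$ because their metrics are restrictions of $u_X,u_Y$.

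The only load-bearing step is the singleton claim for $S_i$, and it is precisely there that the \emph{asymmetric} truncation levels --- $\delta_\eps(Y)$ on the $X$-side versus $\delta_0(Y)$ on the $Y$-side --- together with $\eps<\diam(Y)$ do the work: shrinking the $X$-side level by $\eps$ opens the gap ``$<D-\eps$ on the one side, $=D$ on the other'' that overshoots the distortion budget $\eps$, whereas taking the same level on both sides would not. The remaining half of \eqref{eq:condition of thm}, namely $|\diam(X)-\diam(Y)|\le\eps$ (which is in any case necessary for an $\eps$-correspondence to exist), is used only in the ``if'' direction, to supply the bound $\diam(X)\le D+\eps$. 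Everything else is routine bookkeeping with partitions, so I do not anticipate further obstacles.
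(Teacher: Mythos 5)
Your proof is correct and follows essentially the same route as the paper's: the forward direction builds $\Psi$ from the well-definedness of ``block of the image'' (you phrase it as the singleton claim for $S_i$ by contradiction, the paper shows directly that $u_Y(y,y')\le u_X(x,x')+\eps<\delta_0(Y)$ forces a common $Y_j$), and the converse direction takes $R=\bigcup_j R_j$ with the identical two-case distortion estimate using $u_Y(y,y')=\diam(Y)$ across distinct $Y$-blocks and $|\diam(X)-\diam(Y)|\le\eps$. No gaps.
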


\begin{remark}[Interpretation of Equation (\ref{eq:condition of thm})]
Note that for any correspondence $R$ between $X$ and $Y$, the relations $|\delta_0(X)-\delta_0(Y)|\leq \dis(R)\leq \max(\delta_0(X),\delta_0(Y))$ always hold (cf. Proposition \ref{prop:diam-dgh}). Therefore, (1) If $|\delta_0(X)-\delta_0(Y)|>\eps$, then there exists no $\eps$-correspondence between $X$ and $Y$; (2) If $\max(\delta_0(X),\delta_0(Y))\leq\eps$, then every correspondence $R$ between $X$ and $Y$ is an $\eps$-correspondence. In this way, in order to analyze existence of $\eps$-correspondence we only need to consider the case when
\begin{equation}\label{eq:interpretation}
    |\delta_0(X)-\delta_0(Y)|\leq\eps< \max(\delta_0(X),\delta_0(Y)).
\end{equation}
Therefore, equation (\ref{eq:condition of thm}) is  simply an asymmetric variant of Equation (\ref{eq:interpretation}).
\end{remark}

\begin{remark}\label{rmk:union-corr}
In the course of proving Theorem \ref{thm:ums-dgh} (cf. Section \ref{sec:proof-dgh-str}), we actually establish the following result: under the assumption that a surjection $\Psi:[N_X]\twoheadrightarrow [N_Y]$ and an $\eps$-correspondence $R_j$ between $X_{\Psi^{-1}(j)}$ and $Y_j$ for each ${j\in [N_Y]}$ as above all exist, the set 
\[R\coloneqq\bigcup_{j\in [N_Y]}R_j\]
is an explicit $\eps$-correspondence between $X$ and $Y$. This fact will be used in Algorithms \ref{algo-dGH-rec} and \ref{algo-dGH-dyn}.
\end{remark}

The structural theorem for $\dgh$ is `anatomically' similar to  the structural theorem for $\ugh$ in that, in some sense, it converts the problem related to comparing two spaces into smaller problems related to comparing subspaces. This naturally suggests considering a \emph{divide-and-conquer strategy} for devising a recursive algorithm (Algorithm \ref{algo-dGH-rec}) for (asserting the existence of and) finding an $\eps$-correspondence between two given ultrametric spaces. It turns out that the recursive algorithm performs many repetitive computations, so we further improve this strategy via a dynamic programming (DP) idea to obtain a more efficient algorithm (Algorithm \ref{algo-dGH-dyn}). See Section \ref{sec:computing-dgh} for a detailed description of both algorithms.

One key factor which will influence the complexity of either the recursive or the DP algorithm is the \emph{size} of the subproblems. By exploiting the inherent tree-like structure of ultrametric spaces, we identified in Definition \ref{def:growth} the first $(\eps,\gamma)$-growth condition (FGC) which suitably quantifies the structural complexity of ultrametric spaces and thus controls the size of the subproblems in our recursive algorithm. When two ultrametric spaces satisfy the FGC for some fixed parameters, the recursive algorithm  (Algorithm \ref{algo-dGH-rec}) is proved to run in polynomial time (Theorem \ref{thm:complexityrecdgh}). 

A similar but more general second $(\eps,\gamma)$-growth condition (SGC) is identified in Definition \ref{def:growth2} for the DP algorithm (Algorithm \ref{algo-dGH-dyn}). If we denote by $\mathcal{U}_2(\eps,\gamma)$ the collection of all finite ultrametric spaces satisfying the second $(\eps,\gamma)$-growth condition, then for any $X,Y\in \mathcal{U}_2(\eps,\gamma)$, we can determine whether $\dgh(X,Y)\leq \frac{\eps}{2}$ in time $O\lc n^2\log(n)2^{\gamma}\gamma^{\gamma+2}\rc$, where $n:=\max(\#X,\#Y)$ (cf. Theorem \ref{thm:com-dyn-alg}); and under the assumption $2\dgh(X,Y)\leq \eps$, we can compute the exact value of $\dgh(X,Y)$ in time $O\lc n^4\log(n)2^{\gamma}\gamma^{\gamma+2}\rc$ (cf. Theorem \ref{thm:dGH-complex}). In particular, this implies that our DP algorithm is \emph{fixed-parameter tractable} (FPT) with respect to the parameters given in the SGC.

Based on our algorithms for computing $\dgh$ between ultrametric spaces, we further establish an FPT algorithm to additively approximate $\dgh$ between arbitrary doubling (non necessary ultra)  metric spaces which are themselves quantitatively close to being ultrametric spaces (cf. Corollary \ref{coro:complexity approximation}). One of the key observations leading  to this approximation algorithm is the `transfer' of the doubling condition on a metric space into the satisfaction of the second growth condition by its corresponding single-linkage ultrametric space (cf. Lemma \ref{lm:ultrametricty sgc}).

\paragraph{Implementations} The GitHub repository \cite{github2019} provides implementations of some of our algorithms as well as an experimental demonstration.

\subsection{Organization of the paper}
In Section \ref{sec:pre} we review facts about ultrametric spaces and dendrograms, and introduce the quotient operations mentioned above. In Section \ref{sec:dghp} we discuss the $p$-Gromov-Hausdorff distance $\dghp p$ and connect $\dghp\infty$ with the Gromov-Hausdorff ultrametric $\ugh$. In Section \ref{sec:ugh-struct} we prove Theorem \ref{thm:ugh-struct} and provide details of an algorithm (Algorithm \ref{algo-uGH}) for computing $\ugh$. In Section \ref{sec:computing-dgh} we prove Theorem \ref{thm:ums-dgh} and discuss how to utilize Theorem \ref{thm:ums-dgh} for devising algorithms (Algorithms \ref{algo-dGH-rec} and \ref{algo-dGH-dyn}) computing $\dgh$. In Appendix \ref{sec:data-structure} we specify the data structure for ultrametric spaces used in algorithms throughout the paper. In Appendix \ref{sec:ext-treegram} we provide details for generalizing $\ugh$ to the so-called ultra-dissimilarity spaces. Some proofs are relegated to Appendix \ref{sec:proofs}.

\printnomenclature

\section{Ultrametric spaces}\label{sec:pre}

Ultrametric spaces, as defined in the introduction, are metric spaces which satisfy the strong triangle inequality. The following basic properties of ultrametric spaces are direct consequences of the strong triangle inequality.

\begin{proposition}[Basic properties of ultrametric spaces]\label{prop:basic property ultrametric}
Let $X$ be an ultrametric space. Then, $X$ satisfies the following basic properties:
\begin{enumerate}
    \item (\emph{Isosceles triangles}) Any three distinct points $x,x',x''\in X$ constitute an isosceles triangle, i.e., two of $u_X(x,x'),u_X(x,x'')$ and $u_X(x',x'')$ are the same and are greater than the rest.
    \item (\emph{Center of closed balls}) Let $B_t(x)\coloneqq\{x'\in X:\,u_X(x,x')\leq t\}$ denote the closed ball centered at $x\in X$ with radius $t\geq 0$. Then, for any $x'\in B_t(x)$ we have that $B_t(x')=B_t(x)$.
    \item (\emph{Relation between closed balls}) For any two closed balls $B$ and $B'$ in $X$, if $B\cap B'\neq\emptyset$, then either $B\subseteq B'$ or $B'\subseteq B$.
    \item (\emph{Cardinality of spectrum}) Suppose $X$ is a finite space. Then, $\#\spec(X)\leq \#X$.
\end{enumerate}
\end{proposition}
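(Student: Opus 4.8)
The plan is to prove the four properties in order, each as a short deduction from the strong triangle inequality, since this is exactly the kind of ``warm-up'' proposition that should require no machinery beyond the definition.

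\textbf{Property 1 (Isosceles triangles).} Given three distinct points $x,x',x''$, I would look at the three pairwise distances $a = u_X(x,x')$, $b = u_X(x,x'')$, $c = u_X(x',x'')$ and let, say, $c$ be the largest of the three (up to relabeling). The strong triangle inequality applied to the triple in the appropriate order gives $c \le \max(a,b)$, but since $c$ is the maximum we also have $c \ge \max(a,b)$, hence $c = \max(a,b)$, i.e. the two largest of the three values coincide. The only subtlety is to handle the case where two or three of them are equal gracefully, but the statement ``two are the same and are $\geq$ the rest'' already accommodates this. I expect this to be essentially immediate.

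\textbf{Property 2 (Center of closed balls).} To show $B_t(x') = B_t(x)$ for $x' \in B_t(x)$, I would argue $\subseteq$ and $\supseteq$ symmetrically. If $y \in B_t(x')$, then $u_X(x,y) \le \max(u_X(x,x'), u_X(x',y)) \le \max(t,t) = t$, so $y \in B_t(x)$; the reverse inclusion follows by the same computation with the roles of $x$ and $x'$ swapped, using that $u_X(x,x') = u_X(x',x) \le t$. So this is one application of the strong triangle inequality in each direction.

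\textbf{Property 3 (Relation between closed balls) and Property 4 (Cardinality of spectrum).} For Property 3, suppose $z \in B \cap B'$ where $B = B_s(x)$ and $B' = B_r(x')$ with, WLOG, $s \le r$. By Property 2, since $z \in B$ we have $B = B_s(z)$, and since $z \in B'$ we have $B' = B_r(z)$; then $B_s(z) \subseteq B_r(z)$ because $s \le r$, giving $B \subseteq B'$. This is the step I'd flag as the one needing the most care — it is clean only because Property 2 lets me re-center both balls at the common point $z$; without that observation one would have to fiddle with three-point inequalities directly. For Property 4, I would map $X \to 2^X$ (or into the set of nonempty closed balls) but more directly: define, for each value $t \in \spec(X)$ with $t > 0$, ... actually the cleanest route is to fix any $x_0 \in X$ and consider the map $\spec(X) \setminus \{0\} \to X$, $t \mapsto$ some point at distance exactly $t$ from $x_0$ — this doesn't obviously work since $x_0$ need not realize every distance. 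Instead I would argue by induction on $\#X$: pick a point $x_0$, note $\spec(X) = \{0\} \cup \{u_X(x_0,y) : y \ne x_0\} \cup \spec(X \setminus \{x_0\})$ is not quite tight either. The slick argument: for a finite ultrametric space, build the dendrogram / single-linkage hierarchy; the distinct finite values of $u_X$ are precisely the merge heights, and a binary-like merge tree on $n$ leaves has at most $n-1$ internal heights, so $\#\spec(X) \le (n-1) + 1 = n$ counting the value $0$. If I want to avoid invoking dendrogram structure not yet established, I would instead induct: removing one point decreases $\#X$ by one and decreases $\#\spec(X)$ by at most one, since the distances from the removed point to the rest all take values already among... no — so the honest inductive claim is that adding a point to an $(n{-}1)$-point space adds at most one new spectrum value, which follows because by Property 1 every new distance $u_X(x_0, y)$ equals $\max(u_X(x_0,y'), u_X(y',y))$ for a suitable already-present triangle, forcing the multiset of new values to be controlled — this is the genuinely fiddly case and the place I would spend the most effort nailing down rigorously. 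Overall the main obstacle is Property 4; Properties 1–3 are routine one-line applications of the strong triangle inequality.
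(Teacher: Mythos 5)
The paper itself gives no argument here: it declares items 1--3 ``well known'' and omits them, and for item 4 it simply cites \cite[Corollary 3]{gurvich2012characterizing}. So your proposal is doing strictly more work than the paper. Your proofs of Properties 1--3 are correct and complete: the max-comparison argument for the isosceles property, the two symmetric applications of the strong triangle inequality for re-centering balls, and the reduction of Property 3 to Property 2 by re-centering both balls at a common point are all standard and sound.

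The only place where something is genuinely unfinished is Property 4, and you say so yourself. The inductive idea you settle on does work, but the step you leave vague (``forcing the multiset of new values to be controlled'') can be nailed down as follows. Remove a point $x_0$ from $X$ and let $X'=X\setminus\{x_0\}$, so $\#\spec(X')\leq \#X-1$ by induction. Among the new distances $\{u_X(x_0,y):y\in X'\}$, let $y_0$ attain the minimum $m$. For any other $y\in X'$ with $u_X(x_0,y)>m$, the isosceles property applied to the triple $x_0,y_0,y$ forces the two largest of $u_X(x_0,y_0)=m$, $u_X(x_0,y)$, $u_X(y_0,y)$ to coincide; since $u_X(x_0,y)>m$, this gives $u_X(x_0,y)=u_X(y_0,y)\in\spec(X')$. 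Hence the only possibly new spectrum value is $m$ itself, and $\#\spec(X)\leq\#\spec(X')+1\leq\#X$. With that one sentence added, your Property 4 argument is complete and self-contained, which is arguably preferable to the paper's bare citation.
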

\begin{proof}
The first three items are well known (and easy to prove) and we omit their proof. As for the fourth item, see for example \cite[Corollary 3]{gurvich2012characterizing}.
\end{proof}

Next, we introduce two important notions for ultrametric spaces: quotient operations and dendrograms.
\subsection{Quotient operations}\label{sec:quotient operation}
There are two special equivalence relations on ultrametric spaces whose respectively induced quotient operations will be helpful in revealing the structure of both $\ugh$ and $\dgh$.

\paragraph{A `closed' equivalence relation }\label{closed relation}For any ultrametric space $(X,u_X)$, we introduce a relation $\sim_{\mathfrak{c}(t)}$ on $X$ such that $x\sim_{\mathfrak{c}(t)} x'$ iff $u_X(x,x')\leq t$. Due to the strong triangle inequality, $\sim_{\mathfrak{c}(t)}$ is an equivalence relation which we call the \emph{closed equivalence relation}. For each $x\in X$ and $t\geq 0$, denote by $[x]_{\mathfrak{c}(t)}^X$ the equivalence class of $x$ under $\sim_{\mathfrak{c}(t)}$. We abbreviate $[x]_{\mathfrak{c}(t)}^X$ to $[x]_{\mathfrak{c}(t)}$ whenever the underlying set is clear from the context. Consider the set $X_{\mathfrak{c}(t)}\coloneqq\{[x]_{\mathfrak{c}(t)}:\,x\in X\}$ of all $\sim_{\mathfrak{c}(t)}$ equivalence classes.

\begin{remark}[Relationship with closed balls]\label{rmk:relation with closed ball}
Note that for each $x\in X$, the equivalence class $[x]_{\mathfrak{c}(t)}^X$ satisfies $[x]_{\mathfrak{c}(t)}^X=\{x'\in X:\,u_X(x,x')\leq t\}$. This implies that $[x]_{\mathfrak{c}(t)}$ coincides with the closed ball $ {B}_t(x)\coloneqq\{x'\in X:\,u_X(x,x')\leq t\}$. We will henceforth use both notation $[x]_{\mathfrak{c}(t)}$ and $B_t(x)$ to represent closed balls interchangeably.
\end{remark}

Now, we introduce a function $u_{X_{\mathfrak{c}(t)}}:X_{\mathfrak{c}(t)}\times X_{\mathfrak{c}(t)}\rightarrow\mathbb R_{\geq 0}$ as follows: 
\begin{equation}\label{eq:closed quotient}
    u_{X_{\mathfrak{c}(t)}}\left([x]_{\mathfrak{c}(t)},[x']_{\mathfrak{c}(t)}\right) \coloneqq  \left\{
\begin{array}{cl}
u_X(x,x') & \mbox{if $[x]_{\mathfrak{c}(t)}\neq[x']_{\mathfrak{c}(t)}$}\\
0 & \mbox{if $[x]_{\mathfrak{c}(t)}=[x']_{\mathfrak{c}(t)}$.}
\end{array}
\right.
\end{equation}

It is clear that $u_{X_{\mathfrak{c}(t)}}$ is an ultrametric on $X_{\mathfrak{c}(t)}$. 
\begin{definition}[$t$-closed quotient]\label{def:ultraquotient}
For any ultrametric space $(X,u_X)$ and $t\geq 0$, 
we call $\lc X_{\mathfrak{c}(t)},u_{X_{\mathfrak{c}(t)}}\rc $ the $t$-\emph{closed quotient of $X$}.
\end{definition}

For each $t\geq 0$, the $t$-closed quotient gives rise to a map which we call the \emph{$t$-closed quotient operator} $Q_{\mathfrak{c}\lc t\rc}:\ufin\rightarrow\ufin$ sending $X\in \ufin$ to $X_{\mathfrak{c}(t)}\in\ufin$.

\paragraph{An `open' equivalence relation}\label{open relation} Given an ultrametric space $X$ and $t> 0$, let $\sim_{\mathfrak{o}(t)}$ be such that $x\sim_{\mathfrak{o}(t)}x'$ if $u_X(x,x')<t.$ Due to the strong triangle inequality again, $\sim_{\mathfrak{o}(t)}$ is an equivalence relation on $X$ which we call the \emph{open equivalence relation}. Its difference with the closed equivalence relation $\sim_{\mathfrak{c}(t)}$ is that we now require a \emph{strict} inequality for defining the equivalence relation. Denote by $[x]_{\mathfrak{o}(t)}^X$ the equivalence class of $x\in X$ under $\sim_{\mathfrak{o}(t)}$. We will use the simpler notation $[x]_{\mathfrak{o}(t)}$ instead of $[x]_{\mathfrak{o}(t)}^X $ when the underlying set is clear from the context. 

\begin{remark}[Relation with open and closed balls]
In the same way that $[x]_{\mathfrak{c}(t)}$ is the closed ball centered at $x$ with radius $t$, when $t>0$ $[x]_{\mathfrak{o}(t)}=\{x'\in X:\, u_X(x,x')<t\}$ is actually the open ball centered at $x$ with radius $t$. If $X$ is finite, then each open ball is actually a closed ball: for any open ball $[x]_{\mathfrak{o}(t)}$,  we have $[x]_{\mathfrak{o}(t)}=[x]_{\mathfrak{c}(t')}$, where $t'\coloneqq \diam([x]_{\mathfrak{o}(t)})$.
\end{remark}

Now in analogy with Equation (\ref{eq:closed quotient}), we introduce an ultrametric $u_{X_{\mathfrak{o}(t)}}$ on $X_{\mathfrak{o}(t)}$ as follows: 
\begin{equation}\label{eq:open quotient}
    u_{X_{\mathfrak{o}(t)}}\left([x]_{\mathfrak{o}(t)},[x']_{\mathfrak{o}(t)}\right) \coloneqq  \left\{
\begin{array}{cl}
u_X(x,x') & \mbox{if $[x]_{\mathfrak{o}(t)}\neq[x']_{\mathfrak{o}(t)}$}\\
0 & \mbox{if $[x]_{\mathfrak{o}(t)}=[x']_{\mathfrak{o}(t)}$.}
\end{array}
\right.
\end{equation}

\begin{definition}[$t$-open quotient]\label{def:ultraquotient-open}
For any ultrametric space $(X,u_X)$ and any $t> 0$, 
we call $(X_{\mathfrak{o}(t)},u_{X_{\mathfrak{o}(t)}})$ the \emph{$t$-open quotient of $X$}. When $t=0$, by definition we let $(X_{\mathfrak{o}(0)},u_{X_{\mathfrak{o}(0)}})\coloneqq(X,u_X)$ be the $0$-open quotient of $X$.
\end{definition}

Given a finite set $X$, a set $P=\{B_1,\ldots,B_n\}$ of non-empty subsets of $X$ is called a \emph{partition} of $X$ if $\bigcup_{i=1}^n B_i=X$ and $B_i\bigcap B_j=\emptyset$ if $i\neq j$. It is well known that any equivalence relation on a given set induces a partition of that set. For the open equivalence relation, instead of the metric $u_{X_{\mathfrak{o}(t)}}$ we will mainly focus on the partition induced by $\sim_{\mathfrak{o}(t)}$, i.e., the partition $\left\{[x]_{\mathfrak{o}(t)}:\, x\in X\right\}$. We call this partition the $t$-\emph{open partition} of $X$.

\begin{example}[$t$-closed and open quotients when $t=$ diameter]
Let $X$ be a finite ultrametric space with at least two points and let $\delta\coloneqq\diam(X)$. Then, $X_{\mathfrak{c}(\delta)}=\{X\}$ is the one point space whereas $\#X_{\mathfrak{o}\lc \delta\rc}>1$. Indeed, for any point $x\in X$, $X=[x]_{\mathfrak{c}(\delta)}$ and thus $X_{\mathfrak{c}(\delta)}=\{X\}$; since $X$ is finite and $\#X\geq 2$, there exist $x,x'\in X$ such that $u_X(x,x')=\diam(X)=\delta$, then $[x]_{\mathfrak{o}(\delta)}\neq[x']_{\mathfrak{o}(\delta)}$ and thus $\#X_{\mathfrak{o}\lc \delta\rc}\geq \#\{[x]_{\mathfrak{o}(\delta)},[x']_{\mathfrak{o}(\delta)}\}>1$.
\end{example}

\subsection{Dendrograms}\label{sec:ultra-dendrogram}

One essential mental picture to evoke when thinking about ultrametric spaces is that of a \emph{dendrogram} (see Figure \ref{fig:ultra-dendro}). To proceed with the definition of dendrograms, we first introduce some related terminology.

\paragraph{Partitions} Given any finite set $X$ and a partition $P=\{B_1,\ldots,B_n\}$ of $X$, we call each $B_i\in P$ a \emph{block} of $P$. We denote by $\mathrm{Part}(X)$ the collection of all partitions of $X$. Given two partitions $P_1,P_2\in\mathrm{Part}(X)$, we say that $P_1$ is a \emph{refinement} of $P_2$, or equivalently, that $P_2$ is \emph{coarser} than $P_1$, if every block in $P_1$ is contained in some block in $P_2$. 

\begin{definition}[Dendrograms, \cite{carlsson2010characterization}]\label{def:dendrogram}
A dendrogram $\theta_X$ over a finite set $X$ is any function $\theta_X:[0,\infty)\rightarrow \mathrm{Part}(X)$ satisfying the following conditions:
\begin{enumerate}
    \item[(1)] $\theta_X(0)=\{\{x_1\},\ldots,\{x_n\}\}.$
    \item[(2)] For any $s<t$, $\theta_X(s)$ is a refinement of $\theta_X(t)$.
    \item[(3)] There exists $t_X>0$ such that $\theta_X(t_X)=\{X\}.$
    \item[(4)] For any $r\geq0$, there exists $\eps>0$ such that $\theta_X(r)=\theta_X(t)$ for $t\in[r,r+\eps].$
\end{enumerate}
\end{definition}

There exists a close relationship between dendrograms and ultrametric spaces. Fix a finite set $X$, by $\mathcal{U}(X)$ denote the collection of all ultrametrics over $X$ and by $\mathcal{D}(X)$ denote the collection of all dendrograms over $X$. We define a map $\Delta_X:\mathcal{U}(X)\rightarrow\mathcal{D}(X)$ by sending $u_X$ to a dendrogram $\theta_X$ as follows via the closed quotient: 
given $t\geq 0$, we let $\theta_X(t)\coloneqq X_{\mathfrak{c}(t)}=\{[x]_{\mathfrak{c}(t)}:\,x\in X\}$. It turns out that the map $\Delta_X$ is bijective. In fact, the inverse $\Upsilon_X:\mathcal{D}(X)\rightarrow\mathcal{U}(X)$ of $\Delta_X$ is the following map: for any dendrogram $\theta_X$,\label{dendrogram block} $u_X\coloneqq\Upsilon(\theta_X)$ is defined by $    u_X(x,x')\coloneqq\inf\{t\geq 0:\, [x]_{t}^{\theta_X}=[x']_{t}^{\theta_X}\}$ for any $x,x'\in X,$
where $[x]_{t}^{\theta_X}\in\theta_X(t)$ denotes the block containing $x$. It turns out that $[x]_{t}^{\theta_X}$ coincides with the equivalence class $[x]_{\mathfrak{c}(t)}$ of the closed equivalence relation with respect to $u_X=\Upsilon_X(\theta_X)$. Hence, we also use either $[x]_{\mathfrak{c}(t)}^X$ or $[x]_{\mathfrak{c}(t)}$ to represent the block containing $x$ in a dendrogram $\theta_X$ at level $t$. We summarize our discussion above into the following theorem.

\nomenclature{$[x]_{t}^{\theta_X}$}{The block in $\theta_X(t)$ containing $x$; page \pageref{dendrogram block}.}

\begin{figure}[ht]
    \centering
    \includegraphics[width=0.6\textwidth]{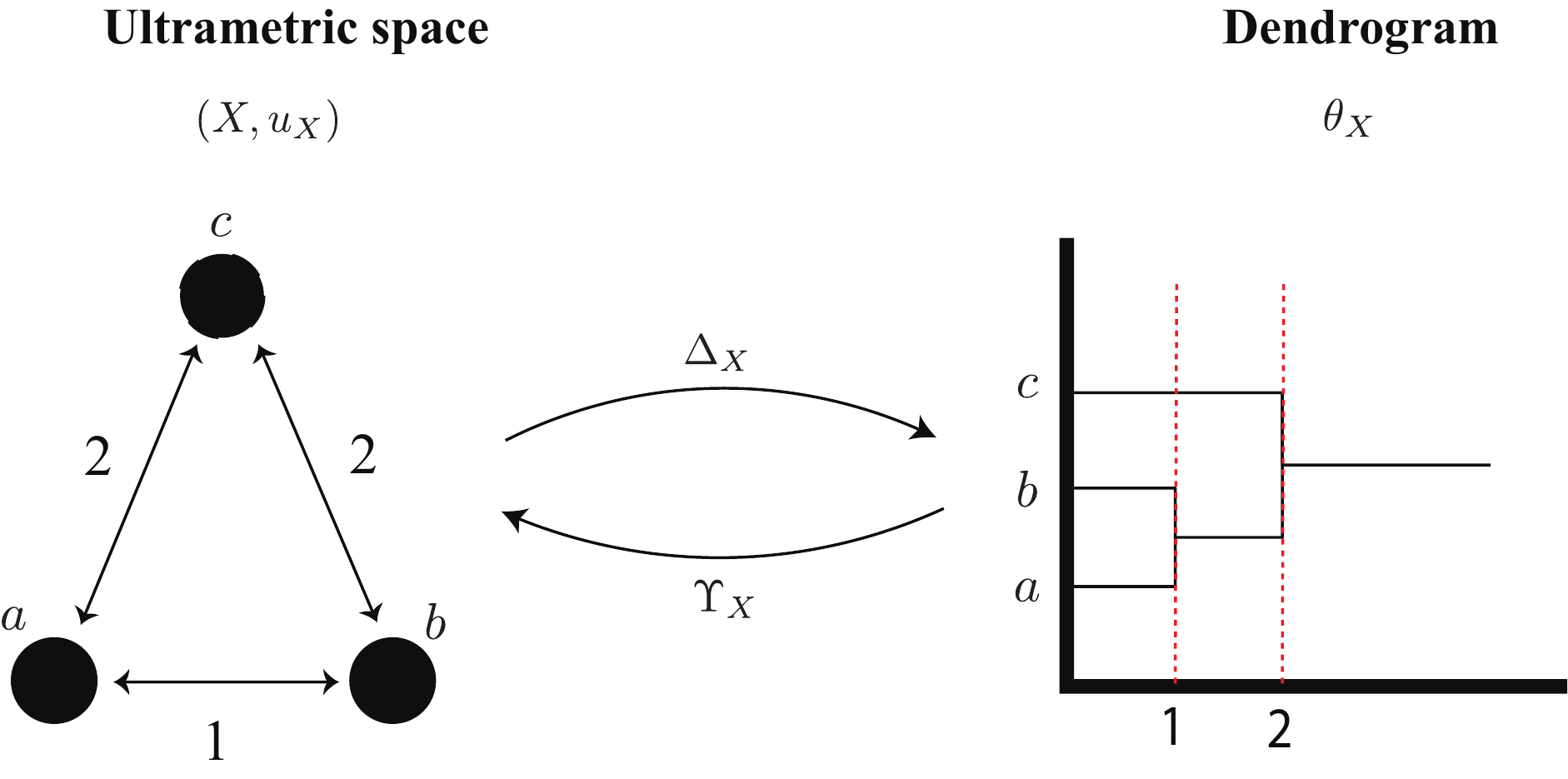}
    \caption{\textbf{Transforming ultrametric spaces into dendrograms.}}
    \label{fig:ultra-dendro}
\end{figure}

\begin{theorem}[Dendrograms as ultrametric spaces,  {\cite[Theorem 9]{carlsson2010characterization}}]\label{thm:dendroultra}
Given a finite set $X$, then $\Delta_X:\mathcal{U}(X)\rightarrow\mathcal{D}(X)$ is bijective with inverse $\Upsilon_X:\mathcal{D}(X)\rightarrow\mathcal{U}(X)$.
\end{theorem}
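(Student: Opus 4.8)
The plan is to verify directly that $\Delta_X$ and $\Upsilon_X$ are well-defined maps $\mathcal{U}(X)\to\mathcal{D}(X)$ and $\mathcal{D}(X)\to\mathcal{U}(X)$ and that they are mutually inverse. First I would check that $\Delta_X$ indeed lands in $\mathcal{D}(X)$: given an ultrametric $u_X$, set $\theta_X(t)\coloneqq X_{\mathfrak{c}(t)}=\{[x]_{\mathfrak{c}(t)}:x\in X\}$. Axiom (1) holds because $u_X(x,x')\le 0$ iff $x=x'$; axiom (2) because $u_X(x,x')\le s$ implies $u_X(x,x')\le t$ whenever $s<t$, so $[x]_{\mathfrak{c}(s)}\subseteq[x]_{\mathfrak{c}(t)}$ and $\theta_X(s)$ refines $\theta_X(t)$; axiom (3) by taking $t_X\coloneqq\diam(X)$ (or any positive number if $\#X=1$), since then all points of $X$ lie in one common closed ball; and axiom (4) using that $\spec(X)$ is finite (Proposition \ref{prop:basic property ultrametric}(4)): for any $r\ge 0$ pick $\eps>0$ smaller than the gap between $r$ and the least element of $\spec(X)$ exceeding $r$ (arbitrary $\eps>0$ if there is none), so that no new pair of points is identified on $[r,r+\eps]$ and $\theta_X$ is constant there.

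Next I would check that $\Upsilon_X$ lands in $\mathcal{U}(X)$. For a dendrogram $\theta_X$, set $u_X(x,x')\coloneqq\inf\{t\ge 0:[x]_{t}^{\theta_X}=[x']_{t}^{\theta_X}\}$. The crucial preliminary observation — the one that drives the whole argument — is that this infimum is attained: by axiom (2) the set $S_{x,x'}\coloneqq\{t\ge 0:[x]_{t}^{\theta_X}=[x']_{t}^{\theta_X}\}$ is an interval of the form $[a,\infty)$ or $(a,\infty)$ (nonempty and bounded below, with $a<\infty$ by axiom (3)), and the right-continuity axiom (4) applied at $r=a$ forces it to be the closed one, so $S_{x,x'}=[u_X(x,x'),\infty)$ and in particular $[x]_{u_X(x,x')}^{\theta_X}=[x']_{u_X(x,x')}^{\theta_X}$. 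Granting this, symmetry of $u_X$ and $u_X(x,x)=0$ are immediate; $u_X(x,x')=0$ forces $x=x'$, since by axioms (1) and (4) the partition $\theta_X(t)$ consists of singletons for small $t$ and the attained infimum puts $x,x'$ in a common block at $t=0$; and the strong triangle inequality follows by setting $m\coloneqq\max(u_X(x,x''),u_X(x'',x'))$ and noting that at level $m$ we have $[x]_{m}^{\theta_X}=[x'']_{m}^{\theta_X}=[x']_{m}^{\theta_X}$, hence $u_X(x,x')\le m$.

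Finally I would establish that both composites are the identity. For $\Upsilon_X\circ\Delta_X$: starting from an ultrametric $u_X$, the dendrogram it produces has blocks $[x]_{t}^{\theta_X}=[x]_{\mathfrak{c}(t)}=\{x':u_X(x,x')\le t\}$, so $\Upsilon_X(\Delta_X(u_X))(x,x')=\inf\{t\ge 0:u_X(x,x')\le t\}=u_X(x,x')$. For $\Delta_X\circ\Upsilon_X$: starting from a dendrogram $\theta_X$ with associated ultrametric $u_X=\Upsilon_X(\theta_X)$, one has $x'\in[x]_{\mathfrak{c}(t)}^{u_X}$ iff $u_X(x,x')\le t$, which by the attained-infimum observation is equivalent to $t\in S_{x,x'}$, i.e. to $[x]_{t}^{\theta_X}=[x']_{t}^{\theta_X}$; hence $X_{\mathfrak{c}(t)}=\theta_X(t)$ for every $t\ge 0$, so $\Delta_X(\Upsilon_X(\theta_X))=\theta_X$.

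I expect the only genuinely subtle point to be the attained-infimum observation and the correct use of the right-continuity axiom (4): without it one can neither guarantee that $u_X$ separates points nor that the round trip $\Delta_X\circ\Upsilon_X$ recovers $\theta_X$ exactly at every level. Everything else is a routine unwinding of the definitions and the strong triangle inequality, with finiteness of $X$ entering only through the finiteness of $\spec(X)$ in the verification of axioms (3) and (4) for $\Delta_X$.
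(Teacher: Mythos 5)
Your proof is correct and complete; the paper itself offers no proof of this statement, citing it as Theorem 9 of \cite{carlsson2010characterization}, and your direct verification is essentially the standard argument given there. You correctly isolate the one genuinely delicate point --- that axiom (4) forces the set $S_{x,x'}$ to be closed on the left, so the infimum defining $\Upsilon_X(\theta_X)$ is attained --- which is exactly what makes definiteness, the strong triangle inequality, and the round trip $\Delta_X\circ\Upsilon_X=\mathrm{id}$ go through.
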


Theorem \ref{thm:dendroultra} above establishes that dendrograms and ultrametric spaces are equivalent concepts -- a point of view which helps to formulate subsequent ideas in this paper.

\begin{example}[$t$-closed/open quotient in terms of dendrograms]
It is helpful to understand both the open and closed $t$-quotients by viewing ultrametric spaces as dendrograms: both $t$-quotients simply forget the details of a given dendrogram strictly below scale $t$. Whereas the $t$-open equivalence relation retains the partition information at scale $t$, the $t$-closed equivalence does not. See Figure \ref{fig:open-closed-relation} for an illustration.
\end{example}

\begin{figure}[ht]
    \centering
    \includegraphics[width=0.5\textwidth]{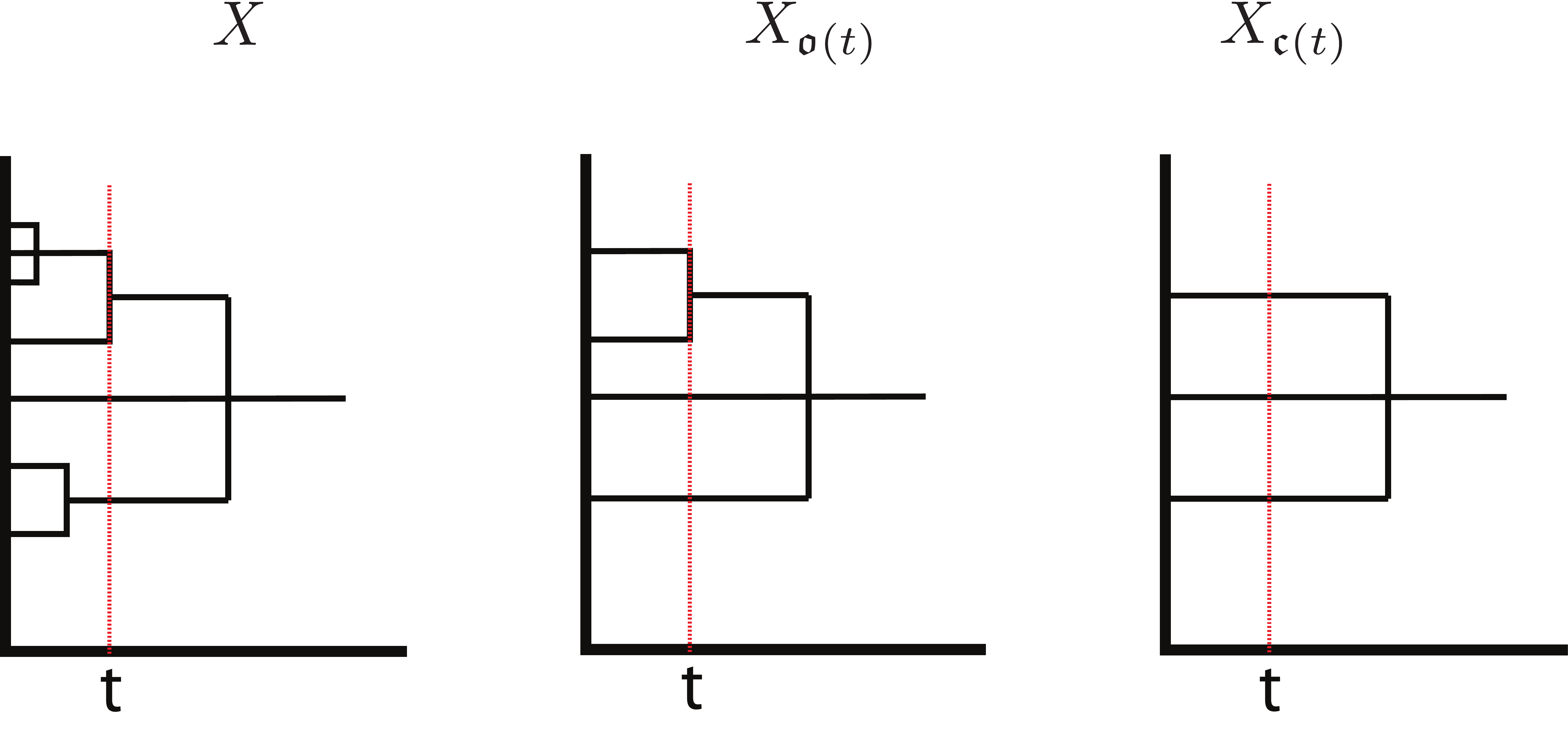}
    \caption{\textbf{Illustration of open and closed equivalence relations.} The leftmost figure is a dendrogram representing a 7-point ultrametric space $X$. The middle figure is the dendrogram corresponding to $X_{\mathfrak{o}(t)}$ whereas the rightmost figure is the dendrogram corresponding to $X_{\mathfrak{c}(t)}$. }
    \label{fig:open-closed-relation}
\end{figure}

\section{Gromov-Hausdorff distances between ultrametric spaces} \label{sec:dghp}

For convenience, we adopt the following notation to represent the \emph{absolute $p$-difference} (for $p\in[1,\infty]$) between two non-negative numbers $a,b\in\mathbb{R}_{\geq 0}$:
\begin{align*}
    \Lambda_p(a,b) &\coloneqq |a^p-b^p|^\frac{1}{p}, &\mbox{for }p\in[1,\infty); \\
    \Lambda_\infty(a,b) &\coloneqq\begin{cases}
\max(a,b),&a\neq b\\
0, & a= b
\end{cases}, &\mbox{for }p=\infty.
\end{align*}

Note that $\Lambda_1(a,b)=|a-b|$ is the usual Euclidean distance and that $\lim_{p\rightarrow\infty}\Lambda_p(a,b)=\Lambda_\infty(a,b)$. In particular, for any $a,b\geq 0$, we have the following obvious characterization of $\Lambda_\infty(a,b)$:
\begin{equation}\label{eq:lambda-infty-other-def}
    \Lambda_\infty(a,b)=\inf\{c\geq 0:\,a\leq \max(b,c)\text{ and }b\leq \max(a,c) \}.
\end{equation}
\begin{proof}
We have the following two cases.
\begin{enumerate}
    \item If $a=b$, then $\Lambda_\infty(a,b)=0\leq c.$
    \item If $a\neq b$, we assume without loss of generality that $a>b$. Then, $a\leq \max(b,c)$ implies that $\Lambda_\infty(a,b)=\max(a,b)=a\leq c$.
\end{enumerate}
\end{proof}

Now, given $p\in[1,\infty]$ and two ultrametric spaces $(X,u_X)$ and $(Y,u_Y)$, for any non-empty subset $S\subseteq X\times Y$, we define its \emph{$p$-distortion} with respect to $u_X$ and $u_Y$ as follows:
\begin{equation}\label{eq:dist}
    \dis_p\lc S,u_X,u_Y\rc\coloneqq\sup_{(x,y),(x',y')\in S}\Lambda_p(u_X(x,x'),u_Y(y,y')).
\end{equation}
We abbreviate $\dis_p\lc S,u_X,u_Y\rc$ to $\dis_p(S)$ whenever clear from the context. In particular, for a map $\varphi:X\rightarrow Y$, we define its $p$-distortion by 
\[\dis_p(\varphi)\coloneqq\dis_p(\mathrm{graph}(\varphi))=\sup_{x,x'\in X}\Lambda_p\big(u_X(x,x'),u_Y(\varphi(x),\varphi(x'))\big),\]
where $\mathrm{graph}(\varphi)\coloneqq\{(x,\varphi(x))\in X\times Y:\,x\in X\}$. Note that when $p=1$, we usually drop the subscript and simply write $\dis\coloneqq\dis_1$ and call the $1$-distortion simply the distortion.

Recall from the introduction that a {correspondence} $R$ between the underlying sets $X$ and $Y$ is any subset of $X\times Y$ such that the images of $R$ under the canonical projections $p_X:X\times Y\rightarrow X$ and $p_Y:X\times Y\rightarrow Y$ are full: $p_X(R)=X$ and $p_Y(R)=Y$. 

\begin{example}
Let $X=\{x_1,x_2\}$ and $Y=\{y_1,y_2\}$ be a pair of two-point spaces. Assume two ultrametrics $u_X$ and $u_Y$ on $X$ and $Y$, respectively, such that $u_X(x_1,x_2)=1$ and $u_Y(y_1,y_2)=2$. Let $R\coloneqq\{(x_1,y_1),(x_2,y_2)\}$. Then, $R$ is a correspondence between $X$ and $Y$. For any $p\in[1,\infty]$, it is clear that 
\[\dis_p(R,u_X,u_Y)=\Lambda_p\big( u_X (x_1,x_2), u_Y (y_1,y_2)\big)=\begin{cases}|1^p-2^p|^\frac{1}{p} & p\in[1,\infty)\\
2&p=\infty\end{cases}. \]
\end{example}

Now, for any $p\in[1,\infty]$, we define $\dghp{p}$ as follows, which is an extension of Equation \eqref{eq:dghp-no-dist} defined only for $p\in[1,\infty)$:
\begin{equation}\label{eq:dgh-p-distortion}
    \dghp p(X,Y)\coloneqq 2^{-\frac{1}{p}}\inf_{R}\dis_p(R),
\end{equation}
where the infimum is taken over all correspondences between $X$ and $Y$. Here we adopt the convention that $\frac{1}{\infty}=0$. Note that $\dghp 1=\dgh$, and as a consequence of our convention 
\[\dghp\infty(X,Y)=\inf_R\disna(R).\]

As already mentioned in the introduction, when only considering \emph{finite} ultrametric spaces, we easily have the following property of the family $\{\dghp p\}_{p\in[1,\infty]}$:

\begin{proposition}\label{prop:continuity-dghp}
Given any $X,Y\in\mathcal{U}^\mathrm{fin}$, the function $p\mapsto \dghp p(X,Y)$ is continuous and increasing with respect to $p\in[1,\infty]$. In particular, $\lim_{p\rightarrow\infty}\dghp p(X,Y)=\dghp\infty(X,Y)$.
\end{proposition}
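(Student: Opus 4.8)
The plan is to fix $X,Y\in\ufin$ and analyze the function $f(p)\coloneqq\dghp p(X,Y)$ on $[1,\infty]$ by reducing everything to a single, well-chosen correspondence. First I would observe that since $X$ and $Y$ are finite, there are only finitely many correspondences $R$ between $X$ and $Y$, hence the infimum in Equation \eqref{eq:dgh-p-distortion} is attained, and for each such $R$ the distortion $\dis_p(R)$ is a \emph{maximum} over the finitely many pairs $(x,x'),(y,y')$ with $(x,y),(x',y')\in R$. Thus $\dghp p(X,Y)=2^{-1/p}\min_R\max_{(x,y),(x',y')\in R}\Lambda_p(u_X(x,x'),u_Y(y,y'))$, a finite min-max of the continuous functions $p\mapsto 2^{-1/p}\Lambda_p(a,b)$ for the finitely many values $a\in\spec(X)$, $b\in\spec(Y)$.

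Next I would establish the two claimed properties at the level of a single pair $(a,b)$ of non-negative reals, namely that $g_{a,b}(p)\coloneqq 2^{-1/p}\Lambda_p(a,b)=2^{-1/p}|a^p-b^p|^{1/p}$ is continuous and non-decreasing in $p\in[1,\infty)$ with limit $\Lambda_\infty(a,b)$ as $p\to\infty$. Continuity is clear (it is a composition of elementary functions, and when $a=b$ it is identically $0$); the limit is the standard fact, already noted after the definition of $\Lambda_p$, that $\lim_{p\to\infty}\Lambda_p(a,b)=\Lambda_\infty(a,b)$ combined with $2^{-1/p}\to 1$. Monotonicity is the only genuine computation: assuming WLOG $a>b\ge 0$, write $g_{a,b}(p)=2^{-1/p}a\,(1-(b/a)^p)^{1/p}$ and check $\frac{d}{dp}\log g_{a,b}(p)\ge 0$; this reduces to showing $\frac{1}{p^2}\big(\log 2 - \log(1-r^p)\big) - \frac{r^p\log r}{p(1-r^p)}\ge 0$ for $r=b/a\in[0,1)$, and since $\log r\le 0$ both terms are non-negative once $1-r^p\le 1\le 2$, so in fact $\log 2-\log(1-r^p)\ge 0$ trivially. (When $b=0$, $g_{a,0}(p)=2^{-1/p}a$ is plainly increasing to $a=\Lambda_\infty(a,0)$.) I expect this elementary monotonicity estimate to be the main, though minor, obstacle — it is the one place a real inequality must be verified rather than quoted.

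Then I would lift these properties from individual pairs to $\dghp p$. Continuity of $f$ is immediate: $f$ is a finite min-max (min over correspondences $R$, max over pairs within $R$) of the continuous functions $g_{a,b}$, and min and max of finitely many continuous functions are continuous. For monotonicity, fix $1\le p\le q<\infty$; for each correspondence $R$ we have $2^{-1/p}\dis_p(R)=\max g_{a,b}(p)\le \max g_{a,b}(q)=2^{-1/q}\dis_q(R)$ pairwise, where the max is over the pairs realized by $R$, hence taking the minimum over $R$ preserves the inequality and $f(p)\le f(q)$. Finally, for the limit statement, pick $R^\ast$ attaining $f(\infty)=\inf_R\disna(R)$; since $R^\ast$ ranges over a finite set and $g_{a,b}(p)\to\Lambda_\infty(a,b)$ for each of the finitely many pairs, we get $f(p)\le 2^{-1/p}\dis_p(R^\ast)\to \disna(R^\ast)=f(\infty)$. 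Combined with $f(p)$ non-decreasing and bounded above by $f(\infty)$ (each $g_{a,b}(p)\le\Lambda_\infty(a,b)$ by the established monotonicity and limit), this forces $\lim_{p\to\infty}f(p)=f(\infty)=\dghp\infty(X,Y)$, which also shows $f$ is continuous at $p=\infty$, completing the proof.
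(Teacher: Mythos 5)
The paper itself offers no proof of this proposition (it is asserted with ``we easily have''), so there is nothing to compare against; your strategy --- reduce to a finite min-max over correspondences of the elementary functions $g_{a,b}(p)=2^{-1/p}\Lambda_p(a,b)$, prove continuity and monotonicity of each $g_{a,b}$ by a logarithmic-derivative computation, and lift both properties through $\min$ and $\max$ over finite index sets --- is the natural one, and almost all of it checks out. The derivative computation is correct: for $r=b/a\in(0,1)$ one gets $\frac{d}{dp}\log g_{a,b}(p)=\frac{1}{p^2}\bigl(\log 2-\log(1-r^p)\bigr)+\frac{-r^p\log r}{p\,(1-r^p)}$, and both summands are nonnegative since $1-r^p\in(0,1]$ and $\log r<0$; the degenerate cases $a=b$ and $b=0$ are handled. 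The lifting of continuity and monotonicity through a finite $\min$ of finite $\max$'s is also fine.

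There is, however, one genuine (if small) logical gap in the final limit argument. Writing $f(p)=\min_R h_R(p)$ with $h_R(p)=2^{-1/p}\dis_p(R)$, you establish (i) $f(p)\leq h_{R^\ast}(p)\to\disna(R^\ast)=f(\infty)$ and (ii) $f$ is non-decreasing and bounded above by $f(\infty)$. Both facts only bound $\lim_{p\to\infty}f(p)$ \emph{from above} by $f(\infty)$; a non-decreasing function bounded above by $M$ with $\limsup\leq M$ need not converge to $M$ (a constant $M/2$ satisfies everything you cite). What is missing is the reverse inequality $\lim_{p\to\infty}f(p)\geq f(\infty)$, and it does not follow from the combination you invoke. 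The repair is immediate from ingredients you already have: since there are only finitely many correspondences, $\lim$ and $\min$ commute, so $\lim_{p\to\infty}\min_R h_R(p)=\min_R\lim_{p\to\infty}h_R(p)=\min_R\disna(R)=f(\infty)$; equivalently, choose $p_n\to\infty$ along which a single $R_0$ realizes the minimum (pigeonhole on the finite set of correspondences), whence $f(p_n)=h_{R_0}(p_n)\to\disna(R_0)\geq f(\infty)$. With that one line inserted, the proof is complete.
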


\begin{remark}
Note that $\dghp p(X,Y)$ in Equation (\ref{eq:dgh-p-distortion}) is actually well-defined for any two metric spaces $X$ and $Y$, i.e., $X$ and $Y$ are not restricted to be ultrametric spaces. See Section \ref{sec:alternative formulate} for alternative definitions of $\dghp p(X,Y)$ on classes of metric spaces larger than $\mathcal{U}$. 
\end{remark}

\begin{example}[Distance to the one point space]
Since there exists a unique correspondence $R_\ast \coloneqq X\times \ast$ between a given finite set $X$ and the one point space $\ast$, we have for each $p\in[1,\infty]$ that 
\[\dlp(X,\ast)=2^{-\frac{1}{p}}\disp(R_\ast) = 2^{-\frac{1}{p}}\diam(X).\]
\end{example}

\subsection{Computing $\dghp p$ is NP-hard when $p\in[1,\infty)$}
Given an ultrametric space $(X,u_X)$ and any positive real number $\alpha$, the function $(x,x')\mapsto \lc u_X(x,x')\rc^\alpha$ is still an ultrametric on $X$ so that the space $(X,(u_X)^\alpha)$ is still an ultrametric space. We let $S_\alpha(X)$ denote the ultrametric space $(X,(u_X)^\alpha)$. Then, we have the following transformation between $\dghp p$ and $\dgh$ for all $p\in[1,\infty)$. The proof of the following result is relegated to Appendix \ref{sec:proofs}. 

\begin{proposition}\label{prop:dgh-dlp-eq}
Given $1\leq p<\infty$ and any two ultrametric spaces $X$ and $Y$, one has 
\[\dlp(X,Y) = \big(\dgh(S_p(X),S_p(Y))\big)^\frac{1}{p}.\]
Conversely, 
\[\dgh(X,Y)=\lc\dlp\lc S_\frac{1}{p}(X),S_\frac{1}{p}(Y)\rc\rc^p. \]
\end{proposition}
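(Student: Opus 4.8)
The plan is to establish the first identity directly from the definition of $\dlp$ via correspondences, and then derive the converse as a formal consequence by substituting $S_{1/p}(X)$ and $S_{1/p}(Y)$ for $X$ and $Y$. For the first identity, fix $1\le p<\infty$ and two ultrametric spaces $X,Y$. The key observation is that correspondences between $X$ and $Y$ are exactly the same subsets of $X\times Y$ as correspondences between $S_p(X)$ and $S_p(Y)$ (the underlying sets are unchanged; only the metrics are raised to the $p$-th power). So it suffices to compare distortions term by term. For a fixed correspondence $R$ and fixed pairs $(x,y),(x',y')\in R$, writing $a=u_X(x,x')$ and $b=u_Y(y,y')$, one has
\[
\Lambda_p\big(u_X(x,x'),u_Y(y,y')\big)=\big|a^p-b^p\big|^{1/p}=\Big(\big|a^p-b^p\big|\Big)^{1/p}=\Big(\Lambda_1\big((u_X)^p(x,x'),(u_Y)^p(y,y')\big)\Big)^{1/p},
\]
since $(u_X)^p(x,x')=a^p$ is precisely the metric of $S_p(X)$ evaluated at $(x,x')$, and likewise for $Y$. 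Taking the supremum over all pairs in $R$ and using that $t\mapsto t^{1/p}$ is continuous and increasing on $[0,\infty)$ (so it commutes with $\sup$), we get $\dis_p(R,u_X,u_Y)=\big(\dis_1(R,(u_X)^p,(u_Y)^p)\big)^{1/p}=\big(\dis(R)\text{ in }S_p(X),S_p(Y)\big)^{1/p}$.

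Next I would pass to the infimum over correspondences. Again because $t\mapsto t^{1/p}$ is continuous and increasing, $\inf_R \big(\dis_1(R,(u_X)^p,(u_Y)^p)\big)^{1/p}=\big(\inf_R \dis_1(R,(u_X)^p,(u_Y)^p)\big)^{1/p}$. Combining with the definition $\dlp(X,Y)=2^{-1/p}\inf_R\dis_p(R)$ and $\dgh(S_p(X),S_p(Y))=\tfrac12\inf_R\dis(R)$, we obtain
\[
\dlp(X,Y)=2^{-1/p}\Big(\inf_R\dis_1(R,(u_X)^p,(u_Y)^p)\Big)^{1/p}=2^{-1/p}\big(2\,\dgh(S_p(X),S_p(Y))\big)^{1/p}=\big(\dgh(S_p(X),S_p(Y))\big)^{1/p},
\]
where the last step uses $2^{-1/p}\cdot 2^{1/p}=1$. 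This is the first claimed identity.

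For the converse, I would simply apply the first identity with $X$ replaced by $S_{1/p}(X)$ and $Y$ replaced by $S_{1/p}(Y)$ (both are still ultrametric spaces, since raising an ultrametric to a positive power yields an ultrametric, as noted in the excerpt). Since $S_p(S_{1/p}(X))$ has metric $\big((u_X)^{1/p}\big)^p=u_X$, i.e.\ $S_p(S_{1/p}(X))=X$ and likewise for $Y$, the first identity reads $\dlp(S_{1/p}(X),S_{1/p}(Y))=\big(\dgh(X,Y)\big)^{1/p}$; raising both sides to the $p$-th power gives $\dgh(X,Y)=\big(\dlp(S_{1/p}(X),S_{1/p}(Y))\big)^p$, as desired.

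I do not expect a serious obstacle here; the only points requiring minor care are (i) confirming that the set of correspondences is genuinely insensitive to rescaling the metrics — which is immediate since a correspondence is defined purely in terms of the underlying sets and projections — and (ii) justifying the interchange of the increasing continuous map $t\mapsto t^{1/p}$ with both $\sup$ and $\inf$, which is a standard fact (and is particularly clean here since for finite spaces, or more generally compact spaces, these are attained). If one wishes to avoid even that, one can argue with $\le$ in both directions by bounding distortions of individual correspondences, but the monotone-map argument is cleaner.
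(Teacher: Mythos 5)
Your proposal is correct and follows essentially the same route as the paper: both arguments identify correspondences across the rescaled spaces, establish the pointwise identity $\Lambda_p(a,b)=\big(\Lambda_1(a^p,b^p)\big)^{1/p}$ to relate the distortions of a fixed $R$, and then pass to the infimum, with the normalization $2^{-1/p}\cdot 2^{1/p}=1$ handled the same way. The only cosmetic difference is that you obtain the converse by substituting $S_{1/p}(X),S_{1/p}(Y)$ into the first identity, whereas the paper runs the symmetric distortion computation directly; these are interchangeable.
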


Therefore, solving an instance of $\dghp p$ is equivalent to solving an instance of $\dgh$. Since it is NP-hard to compute $\dgh$ between finite ultrametric spaces \cite{schmiedl2017computational}, it is also NP-hard to compute $\dghp p$ between finite ultrametric spaces for every $p\in[1,\infty)$.

Moreover, combining Proposition \ref{prop:dgh-dlp-eq} with Theorem \ref{thm:NP-hard}, we have the following more precise statement:
\thmapprox*
\begin{proof}
Suppose otherwise that there exist ultrametric spaces $X$ and $Y$ such that $\dghp p(X,Y)$ can be approximated within a factor $c^\frac{1}{p}<3^\frac{1}{p}$ in polynomial time. Then, Proposition \ref{prop:dgh-dlp-eq} implies that one can approximate $\big(\dgh(S_p(X),S_p(Y))\big)^\frac{1}{p}$ within a factor $c^\frac{1}{p}<3^\frac{1}{p}$ in polynomial time. This implies that one can approximate $\dgh(S_p(X),S_p(Y))$ within a factor $c<3$ in polynomial time which contradicts with Theorem \ref{thm:NP-hard}.
\end{proof}

As mentioned in the introduction, the factor $3^\frac{1}{p}$ in Corollary \ref{thm:approximate} approaches $1$ as $p\rightarrow\infty$. This suggests that $\dghp{\infty}$ could be a computationally tractable quantity; later in Section \ref{sec:comp-comp-ugh} we will illustrate this point.

\subsection{An estimate of $\dghp p$ via diameters of input spaces}
The following result shows how the Gromov-Hausdorff distance interacts with the diameters of the input spaces.
\begin{proposition}[{\cite[Theorems 3.3 and 3.4]{memoli2012some}}]\label{prop:diam-dgh}
For any finite metric spaces $X$ and $Y$, let $R$ be a correspondence between them. Then, we have
\[|\diam(X)-\diam(Y)|\leq \dis(R)\leq \max(\diam(X),\diam(Y)). \]
In particular, 
\[\frac{1}{2}|\diam(X)-\diam(Y)|\leq\dgh(X,Y)\leq \frac{1}{2}\max(\diam(X),\diam(Y)). \]
\end{proposition}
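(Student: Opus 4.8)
The statement to prove is Proposition~\ref{prop:diam-dgh}, asserting that for finite metric spaces $X,Y$ and any correspondence $R$ between them, $|\diam(X)-\diam(Y)|\leq \dis(R)\leq \max(\diam(X),\diam(Y))$, with the stated corollary for $\dgh$ following by taking infima over $R$ and dividing by $2$.

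\medskip

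The plan is to prove the two inequalities separately by direct manipulation of the definition $\dis(R)=\sup_{(x,y),(x',y')\in R}|d_X(x,x')-d_Y(y,y')|$. For the \textbf{upper bound}, fix any $(x,y),(x',y')\in R$. Since distances are nonnegative, $|d_X(x,x')-d_Y(y,y')|\leq \max(d_X(x,x'),d_Y(y,y'))\leq \max(\diam(X),\diam(Y))$, where the first step uses that for $a,b\geq 0$ one has $|a-b|\leq\max(a,b)$, and the second uses the definition of diameter. Taking the supremum over all pairs in $R$ gives $\dis(R)\leq\max(\diam(X),\diam(Y))$.

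\medskip

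For the \textbf{lower bound}, I would exploit surjectivity of the projections. Pick $x_0,x_1\in X$ realizing $\diam(X)=d_X(x_0,x_1)$ (possible since $X$ is finite). Because $R$ is a correspondence, there exist $y_0,y_1\in Y$ with $(x_0,y_0),(x_1,y_1)\in R$. Then, by definition of $\dis(R)$,
\[
\dis(R)\geq |d_X(x_0,x_1)-d_Y(y_0,y_1)| = |\diam(X)-d_Y(y_0,y_1)| \geq \diam(X)-d_Y(y_0,y_1)\geq \diam(X)-\diam(Y).
\]
By the symmetric argument, starting from a diameter-realizing pair in $Y$ and pulling back through $R$, one gets $\dis(R)\geq \diam(Y)-\diam(X)$. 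Combining the two yields $\dis(R)\geq |\diam(X)-\diam(Y)|$.

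\medskip

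Finally, to obtain the corollary, take the infimum over all correspondences $R$ in both inequalities. The upper bound $\max(\diam(X),\diam(Y))$ does not depend on $R$, so it survives the infimum; the lower bound $|\diam(X)-\diam(Y)|$ likewise does not depend on $R$, so it bounds the infimum from below. Then divide by $2$ and invoke $\dgh(X,Y)=\tfrac12\inf_R\dis(R)$. There is no real obstacle here: the only point requiring the finiteness hypothesis is that the diameter is attained by an actual pair of points (so that it can be transported through $R$), and even this can be circumvented by an $\eps$-argument if one prefers; the elementary inequality $|a-b|\leq\max(a,b)$ for $a,b\geq0$ is the sole ingredient for the upper bound.
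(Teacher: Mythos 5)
Your proof is correct; the paper itself gives no argument for this proposition and simply cites \cite[Theorems 3.3 and 3.4]{memoli2012some}, and your two-sided estimate (the elementary inequality $|a-b|\leq\max(a,b)$ for $a,b\geq 0$ for the upper bound, and transporting a diameter-realizing pair through $R$ for the lower bound) is exactly the standard argument from that reference. The passage to $\dgh$ by taking the infimum over correspondences and halving is also handled correctly.
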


Invoking Proposition \ref{prop:dgh-dlp-eq}, we immediately have the following analogue to the second part of Proposition \ref{prop:diam-dgh} for $\dghp p$: 

\begin{proposition}\label{prop:diam-dghp}
For any $p\in[1,\infty)$ and finite ultrametric spaces $X$ and $Y$, we have that
\[2^{-\frac{1}{p}}\Lambda_p(\diam(X),\diam(Y))\leq\dghp p(X,Y)\leq 2^{-\frac{1}{p}}\max(\diam(X),\diam(Y)). \]
\end{proposition}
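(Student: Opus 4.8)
The plan is to derive Proposition~\ref{prop:diam-dghp} directly from Proposition~\ref{prop:diam-dgh} by transporting both inequalities through the identity of Proposition~\ref{prop:dgh-dlp-eq}. Recall that $\dlp(X,Y) = \big(\dgh(S_p(X),S_p(Y))\big)^{1/p}$, so the whole argument reduces to understanding what happens to diameters under the $p$-th power transformation $S_p$.

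First I would observe that $\diam(S_p(X)) = (\diam(X))^p$: since raising to the positive power $p$ is monotone increasing on $\mathbb{R}_{\geq 0}$, the pair $(x,x')$ realizing $\diam(X)$ is exactly the pair realizing $\max_{x,x'} (u_X(x,x'))^p$, hence $\diam(S_p(X)) = \max_{x,x'} (u_X(x,x'))^p = (\diam(X))^p$, and likewise $\diam(S_p(Y)) = (\diam(Y))^p$. Next I would apply Proposition~\ref{prop:diam-dgh} to the pair of (ultra)metric spaces $S_p(X)$ and $S_p(Y)$:
\[
\tfrac{1}{2}\big|(\diam(X))^p - (\diam(Y))^p\big| \;\leq\; \dgh(S_p(X),S_p(Y)) \;\leq\; \tfrac{1}{2}\max\big((\diam(X))^p,(\diam(Y))^p\big).
\]
Then I would raise all three terms to the power $1/p$ (again a monotone operation on $\mathbb{R}_{\geq 0}$), using Proposition~\ref{prop:dgh-dlp-eq} on the middle term, the definition $\Lambda_p(a,b) = |a^p - b^p|^{1/p}$ on the left term, and the fact that $\big(\max(a^p,b^p)\big)^{1/p} = \max(a,b)$ together with $2^{-1/p} = (1/2)^{1/p}$ on the right term. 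This yields exactly
\[
2^{-\frac{1}{p}}\Lambda_p(\diam(X),\diam(Y)) \;\leq\; \dghp p(X,Y) \;\leq\; 2^{-\frac{1}{p}}\max(\diam(X),\diam(Y)),
\]
as claimed.

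There is essentially no main obstacle here: the proposition is a routine corollary, and the only points requiring a word of care are (i) noting that $S_p(X)$ is genuinely an ultrametric space (already recorded in the text preceding Proposition~\ref{prop:dgh-dlp-eq}), so Proposition~\ref{prop:diam-dgh} applies to it, and (ii) checking that the map $t \mapsto t^{1/p}$ commutes with $\max$ and with the constant factor $1/2$ in the way used above. Both are immediate from monotonicity of $t\mapsto t^{1/p}$ on the nonnegative reals. One could alternatively prove the statement from scratch by mimicking the proof of Proposition~\ref{prop:diam-dgh} with $\Lambda_p$ in place of $|\cdot - \cdot|$, but routing through the established transformation identity is cleaner and avoids redoing the correspondence-level estimates.
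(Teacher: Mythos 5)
Your proposal is correct and matches the paper's argument: the paper likewise obtains this proposition by applying Proposition \ref{prop:diam-dgh} to $S_p(X)$ and $S_p(Y)$ and transporting the result through the identity of Proposition \ref{prop:dgh-dlp-eq}. Your write-up simply makes explicit the routine checks (that $\diam(S_p(X))=(\diam(X))^p$ and that $t\mapsto t^{1/p}$ commutes with $\max$) that the paper leaves implicit in its one-line derivation.
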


Note that $\lim_{p\rightarrow\infty}2^{-\frac{1}{p}}\Lambda_p(\diam(X),\diam(Y))=\Lambda_\infty(\diam(X),\diam(Y))$. When $\diam(X)\neq\diam(Y)$, $\lim_{p\rightarrow\infty}2^{-\frac{1}{p}}\Lambda_p(\diam(X),\diam(Y))=\max(\diam(X),\diam(Y))$.
Then, by continuity of $\dghp p$ with respect to $p\in[1,\infty]$ (cf. Proposition \ref{prop:continuity-dghp}), we have the following property for $\dghp\infty$:

\begin{proposition}\label{prop:diam-dgh-infty}
Let $X$ and $Y$ be any two finite ultrametric spaces with different diameters, then
\[ \dghp\infty(X,Y)=\max(\diam(X),\diam(Y)).\]
\end{proposition}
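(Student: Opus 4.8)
The plan is to sandwich $\dghp\infty(X,Y)$ between $\max(\diam(X),\diam(Y))$ from below and from above, using Proposition \ref{prop:diam-dghp} together with a limiting argument via Proposition \ref{prop:continuity-dghp}.

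First I would recall from Proposition \ref{prop:diam-dghp} that for every $p\in[1,\infty)$ one has
\[
2^{-\frac1p}\Lambda_p(\diam(X),\diam(Y))\ \leq\ \dghp p(X,Y)\ \leq\ 2^{-\frac1p}\max(\diam(X),\diam(Y)).
\]
Since $\diam(X)\neq\diam(Y)$, assume without loss of generality $\diam(X)>\diam(Y)\geq 0$; then $\Lambda_p(\diam(X),\diam(Y)) = \big((\diam X)^p-(\diam Y)^p\big)^{1/p}$, and factoring out $\diam(X)$ gives
\[
2^{-\frac1p}\Lambda_p(\diam(X),\diam(Y)) = 2^{-\frac1p}\,\diam(X)\,\Big(1-\big(\tfrac{\diam Y}{\diam X}\big)^p\Big)^{1/p}\ \xrightarrow[p\to\infty]{}\ \diam(X),
\]
because $2^{-1/p}\to 1$, $(\diam Y/\diam X)^p\to 0$, and the $p$-th root of a quantity tending to $1$ tends to $1$. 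Similarly $2^{-\frac1p}\max(\diam(X),\diam(Y)) = 2^{-\frac1p}\diam(X)\to\diam(X)$ as $p\to\infty$.

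Next I would invoke Proposition \ref{prop:continuity-dghp}, which gives $\lim_{p\to\infty}\dghp p(X,Y) = \dghp\infty(X,Y) = \ugh(X,Y)$. Taking the limit as $p\to\infty$ in the displayed two-sided inequality, and using the two limit computations above, the squeeze forces $\dghp\infty(X,Y)=\diam(X)=\max(\diam(X),\diam(Y))$, which is the claim.

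There is essentially no serious obstacle here; the only point requiring a little care is the elementary limit $\big(1-r^p\big)^{1/p}\to 1$ for $0\le r<1$ (equivalently, that the lower bound does not collapse to something strictly smaller than $\diam(X)$), and the harmless degenerate subcase $\diam(Y)=0$, where $\Lambda_p(\diam X,0)=\diam X$ outright and the argument is even simpler. One could alternatively bypass the limit entirely by noting directly from the definition of $\Lambda_\infty$ that $\Lambda_\infty(\diam X,\diam Y)=\max(\diam X,\diam Y)$ when $\diam X\neq\diam Y$, and applying the $p=\infty$ version of Proposition \ref{prop:diam-dghp} (i.e.\ $\dghp\infty(X,Y)\le\max(\diam X,\diam Y)$ together with the lower bound $\disna(R)\ge\Lambda_\infty(\diam X,\diam Y)$ from Proposition \ref{prop:diam-dgh} transported through $\Lambda_\infty$), but routing through the continuity statement keeps the argument shortest.
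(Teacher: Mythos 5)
Your proposal is correct and follows essentially the same route as the paper: the paper also passes to the limit $p\to\infty$ in the two-sided bound of Proposition \ref{prop:diam-dghp}, using Proposition \ref{prop:continuity-dghp} and the fact that $\lim_{p\to\infty}2^{-1/p}\Lambda_p(\diam(X),\diam(Y))=\Lambda_\infty(\diam(X),\diam(Y))=\max(\diam(X),\diam(Y))$ when the diameters differ. Your explicit verification of the elementary limit and the $\diam(Y)=0$ subcase only fills in details the paper leaves implicit.
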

This proposition indicates that, when $X$ and $Y$ have different diameters,  $\dghp\infty(X,Y)$ is determined by the diameter values of the input spaces, which suggests that $\dghp\infty$ is more rigid than other $\dghp p$ when $p<\infty$ and as a consequence, $\dghp\infty$ exhibits a distinct computational behavior in comparison to $\dghp p$ when $p<\infty$.

\begin{example}[Distance between homothetic spaces]
Let $X$ be a finite ultrametric space and let $\lambda>0$. Then, for any $p\in[1,\infty]$ we have
\[\dghp p((X,u_X),(X,\lambda\cdot u_X))=2^{-\frac{1}{p}}\Lambda_p(\lambda,1)\cdot\diam(X). \]
Indeed, since $\diam(X,\lambda\cdot u_X)=\lambda\cdot \diam(X)$, by Proposition \ref{prop:diam-dghp} and Proposition \ref{prop:diam-dgh-infty} we have that 
\[\dghp p((X,u_X),(X,\lambda\cdot u_X))\geq 2^{-\frac{1}{p}}\Lambda_p(\lambda\cdot\diam(X),\diam(X))=2^{-\frac{1}{p}}\Lambda_p(\lambda,1)\cdot\diam(X). \]
For the converse, consider the identity correspondence $R_\mathrm{id}\coloneqq\{(x,x)\in X\times X:x\in X\}$. Then,

\[\dghp p((X,u_X),(X,\lambda\cdot u_X))\leq 2^{-\frac{1}{p}}\dis_p\big(R_\mathrm{id},u_X,\lambda\cdot u_X\big)= 2^{-\frac{1}{p}}\Lambda_p(\lambda,1)\cdot\diam(X).\]
\end{example}

\subsection{$\dghp\infty$ exactly coincides with the Gromov-Hausdorff ultrametric $\ugh$}\label{sec:alternative formulate}
Recall that on a metric space $Z$, the \emph{Hausdorff distance} $d_\mathrm{H}^Z$ between two subsets $A,B\subseteq Z$ is defined as 
\[   d_\mathrm{H}^Z(A,B):=\max\left(\sup_{a\in A}\inf_{b\in B} d_Z(a,b),\sup_{b\in B}\inf_{a\in A} d_Z(a,b)\right).\]
Given two metric spaces $X$ and $Y$, we say a map $\varphi:X\rightarrow Y$ is an \emph{isometric embedding}  from $X$ to $Y$ if for every $x,x'\in X$, $d_X(x,x')=d_Y(\varphi(x),\varphi(x'))$. We usually use the symbol $\hookrightarrow$ (instead of $\rightarrow$) to represent isometric embeddings. Then, the Gromov-Hausdorff distance can be characterized as follows:

\begin{theorem}[{Duality formula for $\dgh$, \cite[Theorem 7.3.25]{burago}}]\label{def:dgh}
The Gromov-Hausdorff distance $\dgh$ between two compact metric spaces $X$ and $Y$ satisfies the following:
\begin{equation}\label{eq:dgh-hausdorff}
    \dgh(X,Y)=\inf d^Z_\mathrm{H}(\varphi_X(X),\varphi_Y(Y)),
\end{equation}
where the infimum is taken over all metric spaces $Z$ and isometric embeddings $\varphi_X:X\hookrightarrow Z$ and $\varphi_Y:Y\hookrightarrow Z$. 
\end{theorem}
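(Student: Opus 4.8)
The plan is to establish the two inequalities between $\dgh(X,Y)$ as defined via correspondences in Equation \eqref{eq:dgh-distortion} and the quantity $\inf_Z d^Z_\mathrm{H}(\varphi_X(X),\varphi_Y(Y))$ appearing in \eqref{eq:dgh-hausdorff}.

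\textbf{The easy inequality $\dgh(X,Y)\leq\inf_Z d^Z_\mathrm{H}(\varphi_X(X),\varphi_Y(Y))$.} First I would fix any metric space $Z$ with isometric embeddings $\varphi_X:X\hookrightarrow Z$ and $\varphi_Y:Y\hookrightarrow Z$, and any $r>d^Z_\mathrm{H}(\varphi_X(X),\varphi_Y(Y))$. Set $R\coloneqq\{(x,y)\in X\times Y:\, d_Z(\varphi_X(x),\varphi_Y(y))\leq r\}$. The Hausdorff bound guarantees that $R$ projects onto both $X$ and $Y$, so $R$ is a correspondence. For $(x,y),(x',y')\in R$, the triangle inequality in $Z$ together with the fact that $\varphi_X,\varphi_Y$ are isometric embeddings gives $|d_X(x,x')-d_Y(y,y')|=|d_Z(\varphi_X(x),\varphi_X(x'))-d_Z(\varphi_Y(y),\varphi_Y(y'))|\leq d_Z(\varphi_X(x),\varphi_Y(y))+d_Z(\varphi_X(x'),\varphi_Y(y'))\leq 2r$, hence $\dis(R)\leq 2r$ and $\dgh(X,Y)\leq r$. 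Letting $r\downarrow d^Z_\mathrm{H}(\varphi_X(X),\varphi_Y(Y))$ and then infimizing over all admissible $(Z,\varphi_X,\varphi_Y)$ yields the claim.

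\textbf{The harder inequality $\inf_Z d^Z_\mathrm{H}(\varphi_X(X),\varphi_Y(Y))\leq\dgh(X,Y)$.} Here I would, for a given correspondence $R$ and a real number $r>\tfrac12\dis(R)$ (strict, to keep all glued distances positive), glue $X$ and $Y$ along $R$: on the disjoint union $Z\coloneqq X\sqcup Y$ define $d_Z$ to agree with $d_X$ on $X\times X$, with $d_Y$ on $Y\times Y$, and for $x\in X$, $y\in Y$ set $d_Z(x,y)=d_Z(y,x)\coloneqq\inf_{(x',y')\in R}\big(d_X(x,x')+r+d_Y(y',y)\big)$. I would then verify: (i) the restriction of $d_Z$ to $X$ (resp.\ $Y$) is still $d_X$ (resp.\ $d_Y$), which uses $r>\tfrac12\dis(R)$ together with $|d_X(x',x'')-d_Y(y',y'')|\leq\dis(R)<2r$ for $(x',y'),(x'',y'')\in R$, so that the glued ``detour'' distances never undercut the intrinsic ones; (ii) $d_Z$ satisfies the triangle inequality, the only nontrivial cases being those mixing points of $X$ and $Y$, which reduce to the defining infimum plus the triangle inequalities already present in $X$ and $Y$; (iii) $d_Z$ is positive off the diagonal, hence an honest metric. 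Granting this, the canonical inclusions $\varphi_X:X\hookrightarrow Z$, $\varphi_Y:Y\hookrightarrow Z$ are isometric embeddings, and by fullness of $R$ every $x\in X$ has some $y$ with $(x,y)\in R$, so $d_Z(x,y)\leq r$; symmetrically every $y\in Y$ lies within $r$ of $X$. Thus $d^Z_\mathrm{H}(\varphi_X(X),\varphi_Y(Y))\leq r$. Letting $r\downarrow\tfrac12\dis(R)$ and then infimizing over $R$ gives the inequality.

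\textbf{Main obstacle.} The crux is parts (i)--(ii) of the gluing construction: checking that the formula for $d_Z(x,y)$ really defines a metric whose restrictions to $X$ and $Y$ are unchanged. The strict inequality $r>\tfrac12\dis(R)$ is precisely what makes this go through, and it is also why the infimum in \eqref{eq:dgh-hausdorff} need not be attained; the degenerate case $\dis(R)=0$ can be dispatched separately (it forces $X$ and $Y$ to be isometric, so one embeds both into $X$) or simply absorbed into the same construction with an arbitrary $r>0$. Everything else is a routine unwinding of the definitions of $\dis(R)$, $\dgh$, and $d^Z_\mathrm{H}$.
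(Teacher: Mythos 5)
Your argument is correct, and it is essentially the standard proof of this equivalence; note, however, that the paper does not prove this statement at all---it is quoted from \cite[Theorem 7.3.25]{burago}, so there is no in-paper proof to compare against. The closest internal analogue is the proof of Theorem \ref{thm:ugh-dual} in Appendix \ref{sec:proofs}, and your construction is its exact metric counterpart: there the ``easy'' direction also extracts a correspondence $R=\{(x,y):u_Z(x,y)\leq\eta\}$ from an embedding, and the ``hard'' direction glues $X\sqcup Y$ via $u(x,y)=\inf_{(x',y')\in R}\max\lc u_X(x,x'),u_Y(y',y),\eta\rc$, which is the $\max$-analogue of your $\inf_{(x',y')\in R}\lc d_X(x,x')+r+d_Y(y',y)\rc$; the verification that the glued function restricts correctly and satisfies the (strong) triangle inequality is carried out there in the same two mixed cases you identify. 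Your bookkeeping with $r>\tfrac12\dis(R)$ is sound---the triangle-inequality check only needs $2r\geq\dis(R)$, and the strict inequality handles positivity off the diagonal---and the degenerate case $\dis(R)=0$ is correctly dispatched, so there is no gap.
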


In fact, Equation (\ref{eq:dgh-hausdorff}) is the original definition of the Gromov-Hausdorff distance given by Gromov in \cite{gromov1981groups}. In the spirit of Equation (\ref{eq:dgh-hausdorff}), Zarichnyi defines in \cite{zarichnyi2005gromov} the \emph{Gromov-Hausdorff ultrametric}, {which we denote by $\ugh$}, between \emph{compact} ultrametric spaces $X$ and $Y$ as follows:
\begin{equation*}\label{eq:ugh}
    \ugh(X,Y):=\inf d^Z_\mathrm{H}(\varphi_X(X),\varphi_Y(Y)),
\end{equation*}
where the infimum is taken over all \emph{ultrametric spaces} $Z$ and isometric embeddings $\varphi_X:X\hookrightarrow Z$ and $\varphi_Y:Y\hookrightarrow Z$. It turns out that $\ugh$ agrees with $\dghp\infty$ as defined by Equation (\ref{eq:dgh-p-distortion}).

\begin{theorem}[{Duality formula for $\dghp\infty$}]\label{thm:ugh-dual}
Given two \emph{compact} ultrametric spaces $X$ and $Y$, we have that
\begin{equation}\label{eq:distortion-ugh}
    \dghp\infty(X,Y)=\ugh(X,Y).
\end{equation}
\end{theorem}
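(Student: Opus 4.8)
The plan is to prove the two inequalities $\dghp\infty(X,Y)\le\ugh(X,Y)$ and $\ugh(X,Y)\le\dghp\infty(X,Y)$ separately, exploiting the characterization of $\Lambda_\infty$ in Equation \eqref{eq:lambda-infty-other-def} to translate between the distortion-based and the Hausdorff-based formulations. The crucial structural fact that makes everything work is that the distortion functional $\disna$ with respect to ultrametrics interacts cleanly with the strong triangle inequality: if $R$ is a correspondence with $\disna(R)\le t$, then one can glue $X$ and $Y$ along $R$ into a single \emph{ultrametric} space, and conversely an isometric embedding of both spaces into a common ultrametric space yields a correspondence of controlled $\infty$-distortion.

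First I would prove $\ugh(X,Y)\le\dghp\infty(X,Y)$. Fix a correspondence $R$ with $\disna(R)=:t$; I want an ultrametric space $Z$ containing isometric copies of $X$ and $Y$ with $d_\mathrm{H}^Z(X,Y)\le t$. Take $Z=X\sqcup Y$ as a set, keep $u_X$ and $u_Y$ on the two pieces, and for $x\in X,y\in Y$ define $u_Z(x,y):=\inf_{(x',y')\in R}\max\big(u_X(x,x'),t,u_Y(y',y)\big)$ — this is the standard "metric gluing" but with max in place of sum. The verification that $u_Z$ is a genuine ultrametric (in particular that the strong triangle inequality holds across the three cases of where the intermediate point lies, and that $u_Z(x,y)=0\iff x=y$ fails only in the degenerate sense handled by passing to the quotient, or one simply notes $u_Z(x,y)\ge$ something positive unless $(x,y)\in R$ with $x,y$ "the same point") uses precisely that $\disna(R)\le t$ means $u_X(x,x')\le\max(u_Y(y,y'),t)$ and $u_Y(y,y')\le\max(u_X(x,x'),t)$ for all $(x,y),(x',y')\in R$, via Equation \eqref{eq:lambda-infty-other-def}. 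Then each point of $X$ is within $u_Z$-distance $t$ of its $R$-partner in $Y$ and vice versa, so $d_\mathrm{H}^Z(X,Y)\le t$; taking the infimum over $R$ gives $\ugh(X,Y)\le\dghp\infty(X,Y)$.

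Next I would prove the reverse inequality $\dghp\infty(X,Y)\le\ugh(X,Y)$. Given an ultrametric space $Z$ with isometric embeddings $\varphi_X,\varphi_Y$ and $d_\mathrm{H}^Z(\varphi_X(X),\varphi_Y(Y))=:t$, define $R:=\{(x,y):d_Z(\varphi_X(x),\varphi_Y(y))\le t\}$. The Hausdorff bound guarantees $R$ is a correspondence. For $(x,y),(x',y')\in R$, the strong triangle inequality in $Z$ gives $d_Z(\varphi_X(x),\varphi_X(x'))\le\max\big(d_Z(\varphi_X(x),\varphi_Y(y)),d_Z(\varphi_Y(y),\varphi_Y(y')),d_Z(\varphi_Y(y'),\varphi_X(x'))\big)\le\max(u_Y(y,y'),t)$, and symmetrically $u_Y(y,y')\le\max(u_X(x,x'),t)$; by Equation \eqref{eq:lambda-infty-other-def} this says exactly $\Lambda_\infty(u_X(x,x'),u_Y(y,y'))\le t$, so $\disna(R)\le t$. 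Taking infima yields $\dghp\infty(X,Y)\le\ugh(X,Y)$, completing the proof.

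\textbf{Main obstacle.} The delicate point is the first direction: checking that the glued function $u_Z$ on $X\sqcup Y$ is actually an ultrametric (not merely a pseudo-ultrametric), especially handling whether the infimum defining $u_Z(x,y)$ is attained (it is, since $X,Y$ are finite — for the compact case one argues with a minimizing sequence and compactness) and whether $u_Z$ can vanish off the diagonal. In the compact rather than finite setting one should be slightly careful and perhaps quotient by the zero-distance relation, but since the statement is about compact ultrametric spaces and isometry classes this is harmless. All the triangle-inequality casework reduces, via \eqref{eq:lambda-infty-other-def}, to the single algebraic inequality $\max(\max(a,t),\max(t,b))=\max(a,b,t)$, so once the setup is right the verification is routine.
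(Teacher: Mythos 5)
Your proposal is correct and follows essentially the same route as the paper: the same gluing formula $u(x,y)=\inf_{(x',y')\in R}\max\lc u_X(x,x'),\eta,u_Y(y',y)\rc$ on $X\sqcup Y$ in one direction, and the same correspondence $R=\{(x,y):u_Z(x,y)\leq\eta\}$ extracted from an embedding in the other, with Equation \eqref{eq:lambda-infty-other-def} mediating both verifications. The minor points you flag (attainment of the infimum, possible degeneracy when $\eta=0$) are harmless for the reasons you give, and the paper likewise passes over them.
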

See Appendix \ref{sec:proofs} for the proof.

Zarichnyi proved in \cite{zarichnyi2005gromov} that $\ugh$ (and thus $\dghp\infty$) is an ultrametric on the collection of all compact ultrametric spaces. Similar metric properties also hold for $\dghp p$, for $p\in[1,\infty)$; see the following remark.

\begin{remark}[Duality formula for $\dghp p$]
A similar alternative formulation via the Hausdorff distance exists for $\dghp p$ for each $p\in[1,\infty]$. For $p\in[1,\infty)$, we call a metric space $X$ a \emph{$p$-metric space} if it satisfies the $p$-\emph{triangle inequality}: 
\[\forall x,x',x''\in X,\,(d_X(x,x'))^p\leq (d_X(x,x''))^p+(d_X(x'',x'))^p.\]
In that case, we refer to $d_X$ as a $p$-metric. Then, for any two compact $p$-metric spaces, we have that $\dghp p(X,Y)=\inf d^Z_\mathrm{H}(\varphi(X),\varphi_Y(Y)),$ where the infimum is taken over all \emph{$p$-metric spaces} $Z$ and isometric embeddings $\varphi_X:X\hookrightarrow Z$ and $\varphi_Y:Y\hookrightarrow Z$. Moreover, $\dghp p$ is actually a $p$-metric on the collection of isometry classes of compact $p$-metric spaces. We do not provide details here;  see our technical report \cite{memoli2019gromov} for  these.
\end{remark}

\section{Structural results for $\ugh$ and computational implications} \label{sec:ugh-struct}

In this section, we first prove our central observation regarding $\ugh$, the structural theorem for $\ugh$ (Theorem \ref{thm:ugh-struct}). Then, we utilize this theorem to devise a poly-time algorithm for computing $\ugh$. We remark that the distance $\ugh$ as well as Theorem \ref{thm:ugh-struct} and Algorithm \ref{algo-uGH} can be extended to the so-called ultra-dissimilarity spaces, which can be regarded as generalization of ultrametric spaces. See Appendix \ref{sec:ext-treegram} for details.

\subsection{Proof of Theorem \ref{thm:ugh-struct}}

Recall the statement of the structural theorem for $\ugh$:
\thmugh*

\begin{proof}
We first prove the following weaker version of Theorem \ref{thm:ugh-struct} (with $\inf$ instead of $\min$):
\begin{equation}\label{eq:ughinf}
    \ugh(X,Y) = \inf\left\{t\geq 0:\, \lc X_{\mathfrak{c}(t)},u_{X_{\mathfrak{c}(t)}}\rc \cong \lc Y_{\mathfrak{c}(t)},u_{Y_{\mathfrak{c}(t)}}\rc \right\}.
\end{equation}

Suppose first that $X_{\mathfrak{c}(t)}\cong Y_{\mathfrak{c}(t)}$ for some $t\geq 0$, i.e. there exists an isometry  $f_t:X_{\mathfrak{c}(t)}\rightarrow Y_{\mathfrak{c}(t)}$. Define 
\[R_t\coloneqq \left\{(x,y)\in X\times Y:\,[y]_{\mathfrak{c}(t)}^Y= f_t\lc [x]^X_{\mathfrak{c}(t)}\rc\right\}.\]
That $R_t$ is a correspondence between $X$ and $Y$ is clear since: $f_t$ is bijective, every $x\in X$ belongs to exactly one block in $X_{\mathfrak{c}(t)}$ and every $y\in Y$ belongs to exactly one block in $Y_{\mathfrak{c}(t)}$. For any $(x,y),(x\p,y\p)\in R_t$, if $u_X(x,x')\leq t$, then we already have $u_X(x,x')\leq \max(t,u_Y(y,y'))$. Otherwise, if $u_X(x,x\p)>t$, then we have $[x]^X_{\mathfrak{c}(t)}\neq[x']^X_{\mathfrak{c}(t)}$. Since $f_t$ is bijective, we have that $[y]^Y_{\mathfrak{c}(t)}=f_t([x]^X_{\mathfrak{c}(t)})\neq f_t([x']^X_{\mathfrak{c}(t)})=[y']^Y_{\mathfrak{c}(t)}.$ Then,
\[u_Y(y,y\p)=u_{Y_{\mathfrak{c}(t)}}\lc [y]^Y_{\mathfrak{c}(t)},[y']^Y_{\mathfrak{c}(t)}\rc=u_{X_{\mathfrak{c}(t)}}\lc [x]^X_{\mathfrak{c}(t)},[x']^X_{\mathfrak{c}(t)}\rc=u_X(x,x\p).\]
Therefore, $u_X(x,x\p)\leq\max(t,u_Y(y,y\p))$. Similarly, $u_Y(y,y\p)\leq\max(t,u_X(x,x\p))$. Then, by Equation (\ref{eq:lambda-infty-other-def}), $\Lambda_\infty(u_X(x,x'),u_Y(y,y'))\leq t$ and thus $\disna(R_t)\leq t$. This implies that 
\[\ugh(X,Y) \leq \inf\left\{t\geq 0:\, X_{\mathfrak{c}(t)}\cong Y_{\mathfrak{c}(t)}\right\}.\]

Conversely, let $R$ be any correspondence between $X$ an $Y$ and let $t\coloneqq\disna(R)$.
Consider any $(x,y),(x\p,y\p)\in R$ such that $[x']^X_{\mathfrak{c}(t)}=[x]^X_{\mathfrak{c}(t)}$ (i.e. $u_X(x\p,x)\leq t$). We define a map $f_t:X_{\mathfrak{c}(t)}\rightarrow Y_{\mathfrak{c}(t)}$ as follows: for each $[x]_{\mathfrak{c}(t)}^X\in X_{\mathfrak{c}(t)}$, suppose $y\in Y$ is such that $(x,y)\in R$, then we let $f_t\lc[x]_{\mathfrak{c}(t)}^X\rc\coloneqq[y]_{\mathfrak{c}(t)}^Y$.  

$f_t$ is well-defined. Indeed, if $[x]_{\mathfrak{c}(t)}^X=[x']_{\mathfrak{c}(t)}^X$ and $y,y'\in Y$ are such that $(x,y),(x',y')\in R$, then 
\[\Lambda_\infty(u_X(x,x'),u_Y(y,y'))\leq\disna(R)=t.\]
This implies that $u_Y(y\p,y)\leq t$ which is equivalent to $[y]^Y_{\mathfrak{c}(t)}=[y']^Y_{\mathfrak{c}(t)}$. Similarly, there is a well-defined map $g_t:Y_{\mathfrak{c}(t)}\rightarrow X_{\mathfrak{c}(t)}$ sending $[y]_{\mathfrak{c}(t)}^Y\in Y_{\mathfrak{c}(t)}$ to $[x]_{\mathfrak{c}(t)}^X$ whenever $(x,y)\in R$. It is clear that $g_t$ is the inverse of $f_t$ and thus $f_t$ is bijective.
Now suppose that $u_{X_{\mathfrak{c}(t)}}\lc[x]^X_{\mathfrak{c}(t)},[x']^X_{\mathfrak{c}(t)}\rc =s>t$, which implies that $u_X(x,x\p)=s$. Let $y,y'\in Y$ be such that $(x,y),(x',y')\in R$. Then, since $\Lambda_\infty(u_X(x,x'),u_Y(y,y'))\leq\disna(R)=t<s$, $u_Y(y,y\p)$ is forced to be equal to $u_X(x,x')=s$. Therefore,
\[u_{Y_{\mathfrak{c}(t)}}\lc f_t\lc[x]^X_{\mathfrak{c}(t)}\rc,f_t\lc[x']^X_{\mathfrak{c}(t)}\rc\rc=u_{Y_{\mathfrak{c}(t)}}\lc [y]^Y_{\mathfrak{c}(t)},[y']^Y_{\mathfrak{c}(t)}\rc=s=u_{X_{\mathfrak{c}(t)}}\lc[x]^X_{\mathfrak{c}(t)},[x']^X_{\mathfrak{c}(t)}\rc.\] 
This proves that $f_t$ is an isometry and thus \[\ugh(X,Y) \geq \inf\left\{t\geq 0:\, X_{\mathfrak{c}(t)}\cong Y_{\mathfrak{c}(t)}\right\}.\]

Since $X$ and $Y$ are finite, for each $t\geq 0$, there exists $\eps>0$ such that for all $s\in[t,t+\eps]$ $X_{\mathfrak{c}(t)}\cong X_{\mathfrak{c}(s)}$ and $Y_{\mathfrak{c}(t)}\cong Y_{\mathfrak{c}(s)}$. This implies that the infimum in Equation (\ref{eq:ughinf}) is attained and thus we obtain the claim.
\end{proof}

\begin{remark}
Theorem \ref{thm:ugh-struct} actually holds for compact ultrametric spaces; see our technical report \cite{memoli2019gromov} for  details.
\end{remark}

\subsection{A poly-time algorithm for computing $\ugh$} \label{sec:comp-comp-ugh}
In Algorithm \ref{algo-uGH} below we provide pseudocode for computing $\ugh$ and in Theorem \ref{thm:com-ugh-alg} we prove that Algorithm \ref{algo-uGH} runs in time $O(n^2)$; see also Remark \ref{rmk:ugh-log-alg} for details about improving  this time complexity  to  $O(n\log(n))$.

Recall that the spectrum $\mathrm{spec}(X)$ of the metric space $X$ is the set of values defined by $\mathrm{spec}(X)\coloneqq \{u_X(x,x'):\,\forall x,x'\in X\}$. The pseudocode for the function $\mathbf{ClosedQuotient}$ implementing the closed quotient operation is given in Algorithm \ref{algo-c-quotient} in Appendix \ref{sec:data-structure}. The function $\mathbf{is\_iso}$ determines whether two ultrametric spaces are isometric, for which we adapt the algorithm in \cite[Example 3.2]{aho1974design}.

 \begin{algorithm}[htb]
\caption{$\mathbf{uGH}(X,Y)$}\label{algo-uGH}
\begin{algorithmic}[1]
  \STATE $\spec \gets$ \textbf{sort}($\spec(X)\cup \spec(Y)$, `descend')
  \FOR{$i=1: \mathrm{length(spec)}$}
     \STATE{$t = \spec(i)$}
     \IF{$\thicksim \mathbf{is\_iso}(\mathbf{ClosedQuotient}(X,t),\mathbf{ClosedQuotient}(Y,t))$}
     \RETURN $\spec(i-1)$
     \ENDIF
     \ENDFOR
     \RETURN $0$
\end{algorithmic}
\end{algorithm}

\paragraph{Complexity analysis of Algorithm \ref{algo-uGH}} 
Let $n\coloneqq\max(\#X,\#Y)$. By Proposition \ref{prop:basic property ultrametric}, $\#\spec(X)\leq\#X$. Then,
\[\#\spec=\#\left(\mathrm{spec}(X)\bigcup\mathrm{spec}(Y)\right)=O(n).\]
Thus, it takes time $O(n\log(n))$ to construct and to sort the sequence $\spec\coloneqq\mathrm{spec}(X)\bigcup\mathrm{spec}(Y)$ (cf. Lemma \ref{lm:spectrum computation}). Now, for each $t\in\mathrm{spec}$, we need time $O(n)$ for running Algorithm $\mathbf{ClosedQuotient}$ (Algorithm \ref{algo-c-quotient}) with input $(X,t)$ and $(Y,t)$.

Following Appendix \ref{sec:data-structure} and Lemma \ref{lm:isometry linear time}, since $\max(\#X_{\mathfrak{c}(t)},\#Y_{\mathfrak{c}(t)})\leq \max(\#X,\#Y)=O(n)$, the function $\mathbf{is\_iso}$ with input $(X_{\mathfrak{c}(t)},Y_{\mathfrak{c}(t)})$ runs in time $O(n)$ as well (cf. Lemma \ref{lm:isometry linear time}).

Thus, the time complexity associated to computing $\ugh(X,Y)$ via Algorithm \ref{algo-uGH} is
\[O(n\log(n))+\mathrm{length(spec)}\cdot O(n)=O(n\log(n))+O(n)\cdot O(n)=O(n^2).\]

In this way we have proved the following theorem.

\begin{theorem}[Time complexity of Algorithm $\mathbf{uGH}$ (Algorithm \ref{algo-uGH})]\label{thm:com-ugh-alg}
Let $X$ and $Y$ be finite ultrametric spaces. Then, algorithm $\mathbf{uGH}(X,Y)$ (Algorithm \ref{algo-uGH}) runs in time $O\lc n^2\rc$, where $n\coloneqq\max(\#X,\#Y)$.
\end{theorem}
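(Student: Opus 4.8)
The plan is to bound the running time of Algorithm \ref{algo-uGH} by accounting for each of its phases separately, using the structural theorem (Theorem \ref{thm:ugh-struct}) to guarantee correctness of the procedure and Proposition \ref{prop:basic property ultrametric}(4) to control the number of iterations. First I would recall that by Theorem \ref{thm:ugh-struct}, $\ugh(X,Y)$ equals the smallest $t\geq 0$ with $X_{\mathfrak{c}(t)}\cong Y_{\mathfrak{c}(t)}$, and observe that the isometry type of $X_{\mathfrak{c}(t)}$ (and likewise $Y_{\mathfrak{c}(t)}$) is constant on each half-open interval between consecutive values of $\spec(X)\cup\spec(Y)$; hence scanning these finitely many values in descending order and reporting the last one before the first failure of $\mathbf{is\_iso}$ indeed returns $\ugh(X,Y)$. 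This reduces the complexity question to counting the cost of (i) building and sorting $\spec$, (ii) the per-iteration closed-quotient computations, and (iii) the per-iteration isometry tests, times the number of iterations.

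Next I would quantify each phase with $n\coloneqq\max(\#X,\#Y)$. By Proposition \ref{prop:basic property ultrametric}(4), $\#\spec(X)\leq\#X$ and $\#\spec(Y)\leq\#Y$, so $\#\spec=\#(\spec(X)\cup\spec(Y))=O(n)$; computing $\spec(X)$ and $\spec(Y)$ and merging/sorting them takes $O(n\log n)$ (invoking Lemma \ref{lm:spectrum computation}). This also bounds the loop length by $O(n)$. For a single value $t$, I would invoke the stated cost of $\mathbf{ClosedQuotient}$ (Algorithm \ref{algo-c-quotient}), namely $O(n)$, applied to both $X$ and $Y$; and since $\max(\#X_{\mathfrak{c}(t)},\#Y_{\mathfrak{c}(t)})\leq n$, the call to $\mathbf{is\_iso}$ on $(X_{\mathfrak{c}(t)},Y_{\mathfrak{c}(t)})$ runs in $O(n)$ by Lemma \ref{lm:isometry linear time} (adapting the tree-isomorphism algorithm of \cite{aho1974design}). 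Multiplying, the loop contributes $O(n)\cdot O(n)=O(n^2)$, and adding the $O(n\log n)$ preprocessing gives the claimed $O(n^2)$ total.

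The main obstacle — or rather the main thing that needs care — is not a single hard step but making sure the supporting lemmas about data structures actually deliver the linear-time primitives: that $\mathbf{ClosedQuotient}$ runs in $O(n)$ on the chosen representation of ultrametric spaces (tree/dendrogram encoding, Appendix \ref{sec:data-structure}), and that weighted-tree isomorphism testing for finite ultrametric spaces of size $O(n)$ can be done in $O(n)$ rather than, say, $O(n\log n)$. Both are quoted as Lemma \ref{lm:spectrum computation} and Lemma \ref{lm:isometry linear time}, so here I would simply cite them; the proof of Theorem \ref{thm:com-ugh-alg} itself is then the straightforward additive/multiplicative bookkeeping above. (I note in passing that Remark \ref{rmk:ugh-log-alg} promises an improvement to $O(n\log n)$, presumably by avoiding recomputing the quotient and isometry certificate from scratch at each $t$ and instead updating them incrementally as $t$ decreases across $\spec$; but for the stated $O(n^2)$ bound the naive per-iteration accounting suffices.)
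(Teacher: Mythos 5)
Your proposal is correct and follows essentially the same route as the paper's own complexity analysis: bound $\#\spec$ by $O(n)$ via Proposition \ref{prop:basic property ultrametric}, sort in $O(n\log n)$ using Lemma \ref{lm:spectrum computation}, and charge $O(n)$ per iteration for $\mathbf{ClosedQuotient}$ and $\mathbf{is\_iso}$ via Lemma \ref{lm:isometry linear time}, giving $O(n\log n)+O(n)\cdot O(n)=O(n^2)$. The additional correctness discussion you include is fine but not part of the paper's proof of this particular theorem.
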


\begin{remark}[Acceleration via binary search]\label{rmk:ugh-log-alg}
By replacing the for-loop over $i=1: \mathrm{length(spec)}$ in $\mathbf{uGH}$ (Algorithm \ref{algo-uGH}) with binary search, the total complexity will drop to $ O(n\log(n))+O(\log(n))\cdot O(n) =O(n\log(n)).$
\end{remark}

\begin{remark}[A novel poly time solvable instance of the quadratic assignment problem]\label{rmk:QAP}
Given $p\in[1,\infty]$ and two finite ultrametric spaces $X=\{x_1,\ldots,x_{n_X}\}$ and $Y=\{y_1,\ldots,y_{n_Y}\}$, we formulate the computation of $\dghp{p}(X,Y)$ as the following \emph{generalized}\footnote{Here, `generalized' refers to the fact that we are allowing matchings more general than permutations.} bottleneck quadratic assignment problem ($\mathrm{GQBAP}_p$) as in \cite[Remark 3.4]{memoli2012some} (cf. Equation (\ref{eq:dgh-p-distortion})):
\[\mathrm{GQBAP}_p(\mathbf{a},\mathbf{b})\coloneqq 2^{\frac{1}{p}}\,\dghp{p}(X,Y)=\min_R\max_{i,j,k,l}\Lambda_p(a_{ik},b_{jl})\,R_{ij}\,R_{kl}, \]
where $\mathbf{a}=(a_{ik})$ is an $n_X\times n_X$ matrix such that $a_{ik}\coloneqq u_X(x_i,x_k)$,  $\mathbf{b}=(b_{jl})$ is an $n_Y\times n_Y$ matrix such that $b_{jl}\coloneqq u_Y(y_j,y_l)$, and $R$ is a correspondence, which is regarded as $(R_{ij})$, an $n_X\times n_Y$ matrix such that  $R_{ij}\in\{0,1\}$  and
\begin{enumerate}
    \item $\sum_{i=1}^{n_X}R_{ij}\geq 1$ for all $j$;
    \item $\sum_{j=1}^{n_Y}R_{ij}\geq 1$ for all $i$.
\end{enumerate}

Then, by Corollary \ref{thm:approximate} and Theorem \ref{thm:com-ugh-alg}, whereas solving $\mathrm{GQBAP}_p$ is NP-hard for each $p\in[1,\infty)$, the problem $\mathrm{GQBAP}_\infty$ can be solved in time $O(n\log(n))$ where $n\coloneqq\max(n_X,n_Y)$.

In general, quadratic assignment problems are NP-hard  \cite{pardalos1994quadratic}. This includes instances such as $\mathrm{GQBAP}_p$ in the case when $p<\infty$. However, by the above `cost' matrices of the form of $\left(\Lambda_\infty(a_{ik},b_{jl})\right)$ yield computationally tractable instances. 
\end{remark}

\section{Structural results  for $\dgh$ and computational implications}\label{sec:computing-dgh}

In this section, we first prove the structural theorem for $\dgh$ (Theorem \ref{thm:ums-dgh}), and then develop efficient algorithms for computing $\dgh$ based on Theorem \ref{thm:ums-dgh}.

\subsection{Proof of Theorem \ref{thm:ums-dgh}}\label{sec:proof-dgh-str}
Recall the structural theorem for the Gromov-Hausdorff distance:

\thmdgh*

\begin{proof}
First suppose that there exists an $\eps$-correspondence $R$ between $X$ and $Y$. Then, we define a map $\Psi:[N_X]\rightarrow [N_Y]$ as follows: for any $i\in[N_X]$, pick an arbitrary $x\in X_i$ and assume that $(x,y)\in R$ for some $y\in Y$; further assume that $y\in Y_j$ for some $j\in[N_Y]$, then we let $\Psi(i)\coloneqq j$. Now, we verify that this map $\Psi$ is well-defined, i.e., $\Psi$ is independent of choice of $x\in X_i$ and choice of $(x,y)\in R$. For any $i\in [N_X]$ and $x,x\p\in X_i$, we have by assumption that $u_X(x,x\p)<\delta_\eps(Y)$. Suppose $y,y\p\in Y$ are such that $(x,y),(x\p,y\p)\in R$. Then, 
\[u_Y(y,y\p)\leq u_X(x,x')+\dis(R)\leq u_X(x,x\p)+\eps<\delta_\eps(Y)+\eps=\delta_0(Y).\]
Therefore, there exists a common $j\in [N_Y]$ such that both $y\in Y_j$ and $y\p\in Y_j$. This implies that $\Psi$ is well-defined. Since $R$ is a correspondence, $\Psi$ must be surjective. Then, for each $j\in[N_Y]$, we define the set
\[R_j\coloneqq R\bigcap (X_{\Psi^{-1}(j)}\times Y_j)\]
It is obvious that $R_j$ is a correspondence between $X_{\Psi^{-1}(j)}$ and $ Y_j$.  Moreover, $\dis(R_j)\leq \dis(R)\leq \eps$ for each $j\in[N_Y]$. Therefore, for each $j\in[N_Y]$, $R_j$ is an $\eps$-correspondence between $X_{\Psi^{-1}(j)}$ and $ Y_j$.

Conversely, suppose that there exist a surjection $\Psi:[N_X]\twoheadrightarrow [N_Y]$ and for each $j\in [N_Y]$ an $\eps$-correspondence $R_j$ between $X_{\Psi^{-1}(j)}$ and $Y_j$. Then, define $R\coloneqq\bigcup_{j\in [N_Y]}R_j.$ It is clear that $R$ is a correspondence between $X$ and $Y$ because 
\[p_X\lc\bigcup_{j\in [N_Y]}R_j\rc=\bigcup_{j\in [N_Y]}p_X(R_j)=\bigcup_{j\in [N_Y]}X_{\Psi^{-1}(j)}=X\]
and
\[p_Y\lc\bigcup_{j\in [N_Y]}R_j\rc=\bigcup_{j\in [N_Y]}p_Y(R_j)=\bigcup_{j\in [N_Y]}Y_j=Y,\]
where $p_X:X\times Y\rightarrow X$ and $p_Y:X\times Y\rightarrow Y$ are the canonical coordinate projections. Given any $(x,y),(x\p,y\p)\in R$, suppose $(x,y)\in R_j$ and  $(x\p,y\p)\in R_{j'}$ for some  $j,j'\in [N_Y]$. Then, we verify that $|u_X(x,x\p)-u_Y(y,y\p)|\leq\eps$ in the following two cases:
\begin{enumerate}
\item if $j=j'$, then $|u_X(x,x\p)-u_Y(y,y\p)|\leq\mathrm{dis}(R_j)\leq\eps;$
\item if $j\neq j'$, then $x$ and $x'$ belong to different blocks of $X_{\mathfrak{o}\lc \delta_\eps(Y)\rc}$, and $y$ and $y'$ belong to different blocks of $Y_{\mathfrak{o}\lc \delta_0(Y)\rc}$. Then,
$u_X(x,x\p)\geq \delta_\eps(Y)=\diam(Y)-\eps$ and $u_Y(y,y\p)=\delta_0(Y)=\diam(Y)$. So, $u_X(x,x')\geq u_Y(y,y')-\eps$. By the assumption that $|\diam(X)-\diam(Y)|\leq \eps$, we have that $u_X(x,x')\leq\diam(X)\leq \diam(Y)+\eps=u_Y(y,y')+\eps$. Therefore, $|u_X(x,x\p)-u_Y(y,y\p)|\leq\eps.$
\end{enumerate}
Therefore, $\mathrm{dis}(R)\leq\eps$ and thus $R$ is an $\eps$-correspondence between $X$ and $Y$. This also proves Remark \ref{rmk:union-corr}.
\end{proof}

\subsection{Algorithms for computing $\dgh$ based on Theorem \ref{thm:ums-dgh}}
The main goal of this section is to develop an efficient algorithm for computing the exact value of $\dgh$ between ultrametric spaces. To achieve the goal, we first consider the following decision problem:

\begin{framed}
\noindent \textbf{Decision Problem \textbf{\textsf{GHDU-dec}} ($\dgh$ distance computation between finite ultrametric spaces)}

\noindent \textbf{Inputs:} Finite ultrametric spaces $X$ and $Y$, as well as $\eps\geq 0$.

\noindent \textbf{Question:} {Is there an $\eps$-correspondence between $X$ and $Y$}?
\end{framed}

\subsubsection{Strategy for solving \textbf{\textsf{GHDU-dec}}}\label{sec:strategy solving GHDU-dec}

\paragraph{Base cases for Problem \textbf{\textsf{GHDU-dec}}} Proposition \ref{prop:diam-dgh} shows how the Gromov-Hausdorff distance $\dgh$ interacts with the diameters of the input spaces. This theorem then implies that \textbf{\textsf{GHDU-dec}} is solved immediately in the following two base cases:
\begin{description}
    \item[Base Case 1:] If $|\diam(X)-\diam(Y)|>\eps$, then there exists no $\eps$-correspondence between $X$ and $Y$.
    \item[Base Case 2:] If $\max(\diam(X),\diam(Y))\leq\eps$, then every correspondence $R$ between $X$ and $Y$  is an $\eps$-correspondence.
\end{description}
Base Case 1 justifies our assumption that $|\diam(X)-\diam(Y)|\leq \eps$ in Theorem \ref{thm:ums-dgh} since otherwise we would be in one of the two base cases. Note that the situation when one of the two spaces is the one point space will automatically fall in either of the above two base cases.

\paragraph{Application of Theorem \ref{thm:ums-dgh}} Suppose that we are given two ultrametric spaces $X$ and $Y$ and $\eps\geq 0$ not falling in either of the two base cases mentioned above. This implies that one of $\diam(X)$ or $\diam(Y)$ must be strictly larger than $\eps$. 

Suppose $\diam(Y)>\eps$ (otherwise we swap the roles of $X$ and $Y$) and  apply the open partition operation to $X$ and $Y$ to obtain $X_{\mathfrak{o}\lc \delta_\eps(Y)\rc}:=\{X_i\}_{i\in [N_X]}$ and $Y_{\mathfrak{o}\lc \delta_0(Y)\rc}:=\{Y_j\}_{j\in [N_Y]}$. Here we use the same notation as in Theorem \ref{thm:ums-dgh} that for each $i\in [N_X]$, $X_i$ denotes an open equivalence class $[x_i]_{\mathfrak{o}(\delta_\eps(Y))}$ for some $x_i\in X$ and similarly for notation $Y_j$.

If there is no surjection from $[N_X]$ to $[N_Y]$, i.e., $N_X<N_Y$, then we conclude from Theorem \ref{thm:ums-dgh} that there is no $\eps$-correspondence between $X$ and $Y$. Otherwise, for each surjection $\Psi:[N_X]\twoheadrightarrow [N_Y]$ {and for each $j\in [N_Y]$}, we solve one instance of the decision problem \textbf{\textsf{GHDU-dec}} with input $(X_{\Psi^{-1}(j)},Y_j,\eps)$. If for some surjection $\Psi$, there exist $\eps$-correspondences $R_j$ between $X_{\Psi^{-1}(j)}$ and $Y_j$ for all $j\in [N_Y]$, then the union of all $R_j$s is an $\eps$-correspondence between $X$ and $Y$ ({cf. Remark \ref{rmk:union-corr}}). Otherwise, by Theorem \ref{thm:ums-dgh} again, there exists no $\eps$-correspondence between $X$ and $Y$. 

For each pair $(X_{\Psi^{-1}(j)},Y_j)$ as described above, it is easy to see that $\#X_{\Psi^{-1}(j)}<\#X$ and $\#Y_j<\#Y$. So, if we repeatedly apply the open partition operation as in Theorem \ref{thm:ums-dgh}, we will eventually reduce the problem to one of the two base cases.

\subsubsection{A recursive algorithm}

From the analysis above we identify a recursive algorithm $\mathbf{FindCorrRec}$ (Algorithm \ref{algo-dGH-rec}) which takes as input two ultrametric spaces $X$ and $Y$ and a parameter $\eps\geq 0$. If there exists an $\eps$-correspondence between $X$ and $Y$, then $\mathbf{FindCorrRec}(X,Y,\eps)$ returns such an $\eps$-correspondence. If there exists no $\eps$-correspondence, $\mathbf{FindCorrRec}(X,Y,\eps)$ returns 0.


\begin{algorithm}[htb]
\caption{$\mathbf{FindCorrRec} (X,Y,\eps)$}\label{algo-dGH-rec}
\begin{algorithmic}[1]
\STATE{BoolSwap $\gets$ FALSE}
 \IF {$\diam(X)>\diam(Y)$}
 \STATE{Swap $X$ and $Y$; BoolSwap $\gets$ TRUE}
 \ENDIF
\IF{$\max\left(\diam(X),\diam(Y)\right)\leq\eps$}
\STATE $R\gets\mathrm{ones}\lc{\#X , \#Y}\rc$
\ENDIF
    \IF {\emph{BoolSwap}}
    \STATE{Transpose $R$}
\RETURN $R$
    \ENDIF

 \IF{ $|\diam(X) - \diam(Y)| > \eps$}
 \RETURN 0
 \ENDIF
\STATE{$\{X_i\}_{i\in [N_X]}=\mathbf{OpenPartition}(X,\delta_\eps(Y))$}
\STATE{$\{Y_j\}_{j\in [N_Y]}=\mathbf{OpenPartition}(Y,\delta_0(Y))$}
 \FOR {\emph{Each surjection} $\Psi:[N_X]\twoheadrightarrow [N_Y]$}
 \FOR{$j\in [N_Y]$}
 \STATE{$R_j\gets \mathbf{FindCorrRec}\lc X_{\Psi^{-1}(j)},Y_j,\eps\rc$}
 \IF{$\left(\left( R_j \, != 0 \right) \forall j\right)$}
 \STATE $R\gets \bigcup_{j=1}^{N_Y} R_j$
     \IF {\emph{BoolSwap}} 
     \STATE{Transpose $R$}
     \ENDIF
     \RETURN $R$
     \ENDIF
     \ENDFOR
\ENDFOR
\RETURN 0
\end{algorithmic}
\end{algorithm}

\paragraph{Complexity analysis} To analyze the complexity of this recursive algorithm, we need to control the size of subproblems, i.e., the sizes of the blocks of the partitions produced by the open equivalence relations. The following structural condition on ultrametric spaces serves this purpose.

\begin{definition}[First $(\eps,\gamma)$-growth condition]\label{def:growth}
For $\eps\geq 0$, and $\gamma>1 $, we say that an ultrametric space $(X,u_X)$ satisfies the \emph{first $(\eps,\gamma)$-growth condition} (FGC) if for all $x\in X$, and $t\geq \eps$,
\[{\#[x]_{\mathfrak{c}\lc t\rc}}\leq \gamma\cdot{\#[x]_{\mathfrak{o}\lc t-\eps\rc}}.\]
Note that on the left-hand side of the inequality above we consider a `closed' equivalence class whereas on the right-hand side we consider an `open' equivalence class. We denote by $\mathcal{U}_1(\eps,\gamma)$ the collection of all finite ultrametric spaces satisfying the first $(\eps,\gamma)$-growth condition. See Figure \ref{fig:def9} for an illustration and Remark \ref{rmk:numberofbranch} for an interpretation.
\end{definition}

\nomenclature{$\mathcal{U}_1(\eps,\gamma)$}{The collection of all finite ultrametric spaces satisfying the first $(\eps,\gamma)$-growth condition; page \pageref{def:growth}.}

\begin{figure}[ht]
    \centering
    \includegraphics[width=0.5\textwidth]{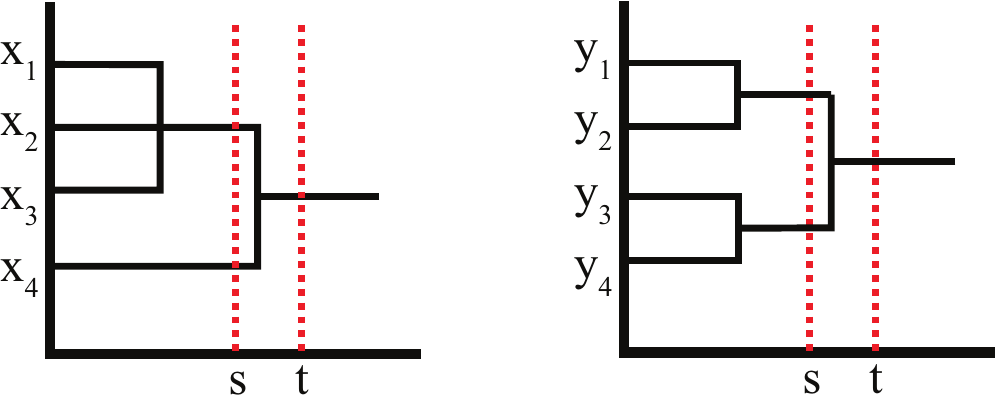}
    \caption{\textbf{Illustration of Definition \ref{def:growth}.} $X$ and $Y$ are two 4-point ultrametric spaces. Suppose $s=t-\eps$ for some $\eps>0$. It is easy to see that $Y\in\mathcal{U}(\eps,2)$. Since $2\,\#[x_4]_{\mathfrak{o}\lc s\rc}=2<4=\#[x_4]_{\mathfrak{o}\lc t\rc}$, it is easy to see that, in contrast, $X\notin\mathcal{U}(\eps,2)$. This example illustrates that the FGC prevents a given equivalence class in $X_{\mathfrak{o}(t)}$ from containing most of the points of $X$ and thus its dendrogram will tend to split `evenly'. }
    \label{fig:def9}
\end{figure}

\begin{remark}[Interpretation of the FGC]\label{rmk:numberofbranch}
The main idea behind the first $(\eps,\gamma)$-growth condition is that for each $t>0$ we want to have some degree of control over both the cardinalities of and the number of descendants of each $[x]_{\mathfrak{c}(t)}$ in $X_{\mathfrak{c}(t)}$, where we say that $[x']_{\mathfrak{o}\lc s\rc}$ is an (open) \emph{descendant} of $[x]_{\mathfrak{c}(t)}$, or conversely that $[x]_{\mathfrak{c}(t)}$ is a (closed) \emph{ancestor} of $[x']_{\mathfrak{o}\lc s\rc}$, if $[x']_{\mathfrak{o}\lc s\rc}\subseteq [x]_{\mathfrak{c}(t)}$.

More precisely, we write explicitly the $(t-\eps)$-open partition of $[x]_{\mathfrak{c}(t)}$ by $[x]_{\mathfrak{c}\lc t\rc}=\sqcup_{i=1}^N[x_i]_{\mathfrak{o}\lc t-\eps\rc}$ for some $x_i\in [x]_{\mathfrak{c}(t)}$, $i=1,\ldots,N$. 

First, we note that $[x]_{\mathfrak{c}\lc t\rc}=[x_i]_{\mathfrak{c}\lc t\rc}$ for each $i=1,\ldots,N$ and thus the FGC implies that 
\[{\#[x_i]_{\mathfrak{o}\lc t-\eps\rc}}\geq\frac{\#[x_i]_{\mathfrak{c}\lc t\rc}}{\gamma}= \frac{\#[x]_{\mathfrak{c}\lc t\rc}}{\gamma}.\]
This means that each descendant at scale $t-\eps$ of a given block $[x]_{\mathfrak{c}\lc t\rc}$ contains at least a fixed proportion $\frac{1}{\gamma}$ of the number of points in its ancestor $[x]_{\mathfrak{c}\lc t\rc}$. 

Moreover, we have
\[\#[x]_{\mathfrak{c}\lc t\rc}=\sum_{i=1}^N\#[x_i]_{\mathfrak{o}\lc t-\eps\rc}\geq\frac{N}{\gamma}\#[x]_{\mathfrak{c}\lc t\rc}. \]
Therefore $N\leq\gamma$, which implies that each $[x]_{\mathfrak{c}\lc t\rc}$ has at most $\gamma$ many descendants at scale $t-\eps$.
\end{remark}

By invoking the master theorem \cite{cormen2009introduction} we now prove the following theorem which provides an upper bound on the complexity of Algorithm \ref{algo-dGH-rec}. See Section \ref{sec:proof-rec} for its proof.

\begin{theorem}[Time complexity of Algorithm $\mathbf{FindCorrRec}$ (Algorithm \ref{algo-dGH-rec})]\label{thm:complexityrecdgh}
Fix some $\eps\geq 0$ and $\gamma\geq 2$. Then, for any $X,Y\in\mathcal{U}_1(\eps,\gamma)$, $\mathbf{FindCorrRec}(X,Y,\eps)$ (Algorithm \ref{algo-dGH-rec}) runs in time $O\left(n^{(\gamma+1)\log_{\mathrm{b}(\gamma)}\gamma}\right)$, where $n:=\max(\#X,\#Y)$ and $\mathrm{b}(\gamma):=\frac{\gamma^2}{\gamma^2-1}$.

\end{theorem}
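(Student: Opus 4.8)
The plan is to set up a recurrence for the running time $T(n)$ of $\mathbf{FindCorrRec}$ on inputs drawn from $\mathcal{U}_1(\eps,\gamma)$, and then solve it via the master theorem. First I would handle the base cases: when $\max(\diam(X),\diam(Y))\leq\eps$ the algorithm builds an all-ones correspondence matrix in time $O(\#X\cdot\#Y)=O(n^2)$, and when $|\diam(X)-\diam(Y)|>\eps$ it returns $0$ in constant time; computing diameters and performing the swap are $O(n^2)$. So the non-recursive overhead at the top of each call is polynomial, say $O(n^c)$ for a fixed small constant $c$.

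\textbf{Controlling the subproblems.} The key step is to use the FGC to bound both the number and the sizes of the subproblems created by one recursive call. After $\mathbf{OpenPartition}$, we have blocks $\{X_i\}_{i\in[N_X]}$ of $X_{\mathfrak{o}(\delta_\eps(Y))}$ and $\{Y_j\}_{j\in[N_Y]}$ of $Y_{\mathfrak{o}(\delta_0(Y))}$; write $n_j:=\#X_{\Psi^{-1}(j)}$ for the size of the $j$-th $X$-side subproblem under a surjection $\Psi$. By Remark \ref{rmk:numberofbranch} applied with $t=\diam(Y)$ (note $X$ is a single closed ball at scale $\diam(Y)$ since $\diam(X)\leq\diam(Y)$, using the hypothesis $|\diam(X)-\diam(Y)|\leq\eps$), the $(\diam(Y)-\eps)$-open partition of $X$ has $N_X\leq\gamma$ blocks, each of size at least $\#X/\gamma$; consequently $N_X\leq\gamma$ and $N_Y\leq\gamma$. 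Hence the number of surjections $\Psi:[N_X]\twoheadrightarrow[N_Y]$ is at most $\gamma^\gamma$ (a function of $\gamma$ only), and for each fixed $\Psi$ we recurse on at most $\gamma$ pairs $(X_{\Psi^{-1}(j)},Y_j)$. Moreover each block $X_i$ has size at most $\#X-(N_X-1)\cdot(\#X/\gamma)$; combining the lower bound $\#X_i\geq \#X/\gamma$ over the other blocks, each subproblem has size at most $\#X\cdot(1-(\gamma-1)/\gamma^2)=\#X\cdot\frac{\gamma^2-\gamma+1}{\gamma^2}$. A slightly cleaner and sufficient bound: each $n_j\leq \#X - (\#X/\gamma) = \#X(1-1/\gamma)$ when $N_Y\geq 2$, and when $N_Y=1$ the single subproblem has the same size as $X$ but strictly smaller spectrum, so — being careful about this degenerate branch — the effective shrinkage factor per level can be taken to be at most $1/\mathrm{b}(\gamma)=\frac{\gamma^2-1}{\gamma^2}$ (this is where the stated $\mathrm{b}(\gamma)=\gamma^2/(\gamma^2-1)$ enters: it reconciles the worst split, one block of size $\#X/\gamma$ forcing the complementary block down by that amount). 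This gives a recurrence of the shape
\[
T(n)\leq \gamma^\gamma\cdot\gamma\cdot T\!\left(n/\mathrm{b}(\gamma)\right)+O(n^c).
\]

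\textbf{Solving the recurrence.} Writing $a=\gamma^{\gamma+1}$ and $b=\mathrm{b}(\gamma)$, the master theorem \cite{cormen2009introduction} gives $T(n)=O\!\left(n^{\log_b a}\right)$ once we check that the combine cost $O(n^c)$ is dominated by the recursive term, i.e. $c<\log_b a$ (this holds for $\gamma\geq 2$ since $\log_{\mathrm{b}(\gamma)}\gamma\to\infty$ as $\gamma\to 2^+$ — indeed $\mathrm{b}(\gamma)\to 4/3$, $\mathrm{b}(\gamma)\to 1$ is false, wait: at $\gamma=2$, $\mathrm{b}=4/3$ and $\log_{4/3}2\approx 2.41$, so $\log_b a=(\gamma+1)\log_b\gamma$ is already large; for larger $\gamma$ one verifies the inequality directly). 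Then $\log_b a=(\gamma+1)\log_{\mathrm{b}(\gamma)}\gamma$, yielding $T(n)=O\!\left(n^{(\gamma+1)\log_{\mathrm{b}(\gamma)}\gamma}\right)$ as claimed.

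\textbf{Main obstacle.} The delicate point is the $N_Y=1$ (equivalently $N_X=1$, no nontrivial split) branch, where the open partition does not literally shrink the cardinality; one must argue that the diameter/spectrum strictly decreases so that only finitely many such "stalling" levels can occur before a genuine split or a base case, and fold that into the recurrence so the master theorem still applies. A secondary bookkeeping issue is confirming that the per-call non-recursive work — diameters, open partitions, assembling $\bigcup_j R_j$, and enumerating the $\leq\gamma^\gamma$ surjections — is genuinely $O(n^c)$ with $c$ small enough to be absorbed; this is routine given the data structures of Appendix \ref{sec:data-structure} but needs to be stated.
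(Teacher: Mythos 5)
Your overall route is the paper's: bound the number of subproblems per call by $\gamma^{\gamma+1}$ (at most $\gamma^\gamma$ surjections, each spawning $N_Y\leq\gamma$ recursive calls), bound the shrinkage of each subproblem by the factor $1/\mathrm{b}(\gamma)$, absorb the $O(n^2)$ per-call work, and invoke the master theorem. Two points need attention, one of which is a genuine gap.

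First, the ``main obstacle'' you single out — the stalling branch $N_Y=1$ — does not exist, so your proposed detour through a spectrum-decrease argument is unnecessary (and would complicate the master-theorem step). The $Y$-side partition is the \emph{open} partition at level $\delta_0(Y)=\diam(Y)$, and since $\diam(Y)>\eps\geq 0$ forces $\#Y\geq 2$, there are points $y,y'$ with $u_Y(y,y')=\diam(Y)$ lying in distinct open blocks; hence $N_Y\geq 2$ always. Likewise $\diam(X)\geq\diam(Y)-\eps=\delta_\eps(Y)$ (from $|\delta_0(X)-\delta_0(Y)|\leq\eps$) gives $N_X\geq 2$. The paper relies on exactly this fact (``since $\Psi$ is a surjection and $M\geq 2$\dots'') to guarantee that some block $X_i$ is excluded from $X_{\Psi^{-1}(j)}$, which is what produces the multiplicative shrinkage. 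Your single-application-of-FGC bound $\#X_i\geq\#X/\gamma$ is fine after the swap ensures $\diam(X)\leq\diam(Y)$, and is in fact slightly tighter than the paper's two-step bound $\#X_i\geq\#X/\gamma^2$; relaxing to $1-1/\gamma^2$ to match $\mathrm{b}(\gamma)$ is legitimate.

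The genuine gap is that your recurrence $T(n)\leq\gamma^{\gamma+1}\,T(n/\mathrm{b}(\gamma))+O(n^c)$ is only justified at the top level of the recursion. To iterate it you must know that every subproblem pair $\bigl(X_{\Psi^{-1}(j)},Y_j\bigr)$ again lies in $\mathcal{U}_1(\eps,\gamma)$; otherwise the bounds $N\leq\gamma$ and the $1/\gamma$ (or $1/\gamma^2$) lower bound on block sizes are unavailable at deeper levels and the master theorem does not apply. This is not automatic — $X_{\Psi^{-1}(j)}$ is a union of open blocks, not the original space — and the paper devotes a separate lemma (``Inheritance of the FGC'') to verifying it, by checking the growth inequality separately for $t\leq\diam(X)$ (where open classes in the subspace agree with those in $X$) and for $t>\diam(X)$. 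You need to state and prove this closure property, or the recurrence collapses after one level.
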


{Under the FGC, our recursive algorithm $\mathbf{FindCorrRec}$ (Algorithm \ref{algo-dGH-rec})  exhibits time complexity $O\left(n^{(\gamma+1)\log_{\mathrm{b}(\gamma)}\gamma}\right)$. Since the exponent of $n$ depends on $\gamma$, this is only partially satisfactory. In other words, Algorithm \ref{algo-dGH-rec} is not yet fixed-parameter tractable, a notion which requires the exponent to be independent of the parameters involved. This motivates us to further examine and improve Algorithm \ref{algo-dGH-rec} in order to develop an FPT algorithm.}
Note that in the for-loop over surjections in Algorithm \ref{algo-dGH-rec}, for different surjections $\Psi_1,\Psi_2:[N_X]\twoheadrightarrow [N_Y]$, there could be some $j_0\in [N_Y]$ such that $\Psi^{-1}_1(j_0)=\Psi^{-1}_2(j_0)$. This {would result} in repetitive computations of $\mathbf{FindCorrRec}\lc X_{\Psi^{-1}_1(j_0)},Y_{j_0},\eps\rc$. With the goal of eliminating such repetitions, in the next section we devise a \emph{dynamic programming algorithm} {which eventually turns out to be FPT.}


\subsubsection{A dynamic programming algorithm}\label{sec:dp-main-text}
In this section, we introduce a dynamic programming algorithm $\mathbf{FindCorrDP}$  for solving the decision problem \textbf{\textsf{GHDU-dec}} for which we  provide pseudocode in Algorithm \ref{algo-dGH-dyn}. To proceed with the description of Algorithm \ref{algo-dGH-dyn}, we first introduce some notation. 

We let $V_X$ denote the set of all closed balls in $X$. For each closed ball $B\in  V_X $, let $\rho_\eps\lc B\rc \coloneqq\max(\diam\lc B\rc -2\eps,0)$ and write the $\rho_\eps\lc B\rc$-open partition of $B$ as: 
\[B_{\mathfrak{o}\lc \rho_\eps\lc B\rc\rc}:=\left\{[x_i]_{\mathfrak{o}\lc \rho_\eps(B)\rc}^{B}\right\}_{i=1}^{N_{B}}\]\label{rhos_eps}
where $x_i\in B$ for $i=1,\ldots,N_{B_X}$. For notational simplicity, we let $B_i\coloneqq[x_i]_{\mathfrak{o}\lc \rho_\eps(B)\rc}^{B}$ for each $i=1,\ldots,N_{B}$. It is obvious that each $[x]_{\mathfrak{o}\lc \rho_\eps(B)\rc}^{B}=[x]_{\mathfrak{o}\lc \rho_\eps(B)\rc}^{X}$ is actually a closed ball in $X$. Then, $B_i\in  V_X $ for all $i=1,\ldots,N_{B}$. Note that for any $I\subseteq\{1,\ldots,N_{B}\}$, $\diam(\bigcup_{i\in I}B_i)\leq \diam\lc B\rc .$ If the equality is achieved, we call $\bigcup_{i\in I}B_i$ an $\eps$-\emph{maximal union of closed balls} of $B$. Denote by $B_{(\eps)}$ the set of all $\eps$-{maximal unions} of closed balls \emph{in $B$}. Then, define a new set $ V_X^{(\eps)}\coloneqq \bigcup_{B\in  V_X }B_{(\eps)}$ by replacing each $B\in V_X $ with the set $B_{(\eps)}$. We use the notation $U^X$ to represent a generic element in $ V_X^{(\eps)}$. See Figure \ref{fig:LX} for an illustration of $ V_X^{(\eps)}$.\label{symbol:Beps}

\begin{figure}[ht]
    \centering
    \includegraphics[width=0.2\textwidth]{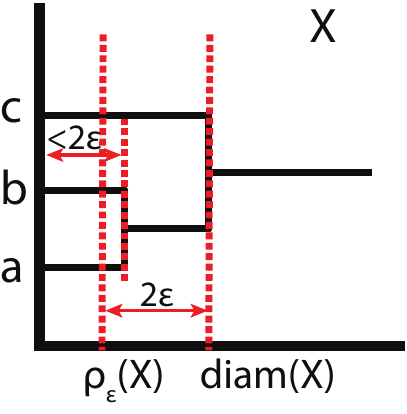}
    \caption{\textbf{Illustration of $ V_X^{(\eps)}$ from Section \ref{sec:dp-main-text}.} Note that $ V_X =\{ \{a\},\{b\},\{c\},\{a,b\},\{a,b,c\} \}$. For the ball $B\coloneqq\{a,b,c\}$, we have  $B_{(\eps)}=\{\{a,c\},\{b,c\},\{a,b,c\}\}$. For other balls $B'$ in $V_X$, we have $B'_{(\eps)}=\{B'\}$. For example, $\{a,b\}_{\mathfrak{o}\lc{\rho_\eps(\{a,b\})}\rc}=\{\{a\},\{b\}\}$, so $\{a,b\}_{(\eps)}=\{\{a,b\}\}$. Therefore, $ V_X^{(\eps)}=\{ \{a\},\{b\},\{c\},\{a,b\},\{a,c\},\{b,c\},\{a,b,c\}\}$.}
    \label{fig:LX}
\end{figure}

\begin{remark}\label{rmk:rho-eps-delta-eps}
The value $\rho_\eps(X)$ originates from Theorem \ref{thm:ums-dgh} as a lower bound for $\delta_\eps(Y)$: when $|\diam(X)-\diam(Y)|\leq\eps$ and $\diam(Y)>\eps$, we have that $\delta_\eps(Y)=\diam(Y)-\eps\geq\max(\diam(X)-2\eps,0)=\rho_\eps(X)$. This inequality results in the following favorable property of $ V_X^{(\eps)}$: for any $U^X\in  V_X^{(\eps)}$ and $B^Y\in V_Y $, if $|\diam\lc U^X\rc -\diam\lc B^Y\rc |\leq \eps$ and $\diam\lc B^Y\rc >\eps$, then each block of the open partition $U^X_{\mathfrak{o}\lc \delta_\eps\lc B^Y\rc\rc}$ belongs to $ V_X^{(\eps)}$. See Appendix \ref{sec:proof of remark rho} for a proof of this property. 
\end{remark}

Fix an input triple $(X,Y,\eps)$. It is clear that the pair $(X,Y)$ belongs to $ V_X^{(\eps)}\times  V_Y $. We sort $V_X^{(\eps)}$ and $V_Y$ according to ascending diameter values and denote by $\mathrm{LX}^{(\eps)}$ and $\mathrm{LY}$ the respective sorted arrays (with details provided in Appendix \ref{sec:implementation detail}). In particular, we require that $X$ and $Y$ are at the end of lists $\mathrm{LX}^{(\eps)}$ and $\mathrm{LY}$, respectively. We devise our DP algorithm $\mathbf{FindCorrDP}$ (Algorithm \ref{algo-dGH-dyn}) so that it maintains a binary variable $\mathrm{DYN}\lc U^X,B^Y\rc $ for each pair $\lc U^X,B^Y\rc \in\mathrm{ LX_{(\eps)}\times LY}$, such that $\mathrm{DYN}\lc U^X,B^Y\rc =1$ if there exists an $\eps$-correspondence between $U^X$ and $B^Y$, and $\mathrm{DYN}\lc U^X,B^Y\rc =0$, otherwise. Now, we elaborate the main idea behind Algorithm $\mathbf{FindCorrDP}$ (Algorithm \ref{algo-dGH-dyn}).

Algorithm \ref{algo-dGH-dyn} starts by looping over all $B^Y\mathrm{\in LY}$. Inside the loop, it computes $\mathrm{DYN}\lc U^X,B^Y\rc $ by looping over all $U^X\mathrm{\in LX_{(\eps)}}$. Most pairs ${\lc U^X,B^Y\rc }$ fall in the base cases and $\mathrm{DYN}\lc U^X,B^Y\rc $ is determined by comparing diameters. For non base cases, we have the following two situations:
\begin{enumerate}\label{two cases}
    \item If $\diam\lc B^Y\rc >\eps$, we determine $\mathrm{DYN}\lc U^X,B^Y\rc $ by (1) computing the open partition of $U^X$ and $B^Y$, respectively, to obtain $U^X_{\mathfrak{o}\lc \delta_\eps\lc B^Y\rc\rc}=\lb U^X_i\rb_{i\in [N_{U_X}]}$ and $B^Y_{\mathfrak{o}\lc \delta_0\lc B^Y\rc\rc}=\lb B^Y_j\rb_{j\in [N_{B_Y}]}$ and (2) by exploiting the precomputed values  
    \[\lb\mathrm{DYN}\lc U^X_{\Psi^{-1}(j)},B^Y_j\rc \rb_{j\in [N_{B_Y}],\text{surjection }\Psi:[N_{U_X}]\rightarrow[N_{B_Y}] }\footnote{Here $U^X_{\Psi^{-1}(j)}\coloneqq\cup_{i\in\Psi^{-1}(j)}U^X_i$}\]
    via the strategy discussed in Section \ref{sec:strategy solving GHDU-dec}. That $\lc U^X_{\Psi^{-1}(j)},B^Y_j\rc \in  V_X^{(\eps)}\times  V_Y $ follows from Remark \ref{rmk:rho-eps-delta-eps} and that the values $\mathrm{DYN}\lc U^X_{\Psi^{-1}(j)},B^Y_j\rc$ for all $j\in[N_{B_Y}]$ are pre-computed follows from the fact that $\diam(B^Y_j)<\diam(B^Y)$ and $\mathrm{LY}$ is ordered according to increasing diameter values.
    
    \item If $\diam\lc B^Y\rc \leq\eps$, {we determine $\mathrm{DYN}\lc U^X,B^Y\rc $ directly by applying Algorithm \ref{algo-dGH-small} (which arises from Proposition \ref{prop:smallcorr} below).}
\end{enumerate}

\begin{remark}[Interpretation of situation 2]
In order to reduce redundant computations, Algorithm $\mathbf{FindCorrDP}$ (Algorithm \ref{algo-dGH-dyn}) only inspects pairs in $ V_X^{(\eps)}\times  V_Y $ instead of the much larger symmetric set  $ V_X^{(\eps)}\times  V_Y^{(\eps)}$.
Due to the asymmetry of $V_X^{(\eps)}\times V_Y$, the exceptional case in item 2 above may arise. {This case is dealt with in the recursive algorithm $\mathbf{FindCorrRec}$ (Algorithm \ref{algo-dGH-rec}) by swapping the roles of $U^X$ and $B^Y$.} However, this swapping technique is not feasible for $\mathbf{FindCorrDP}$. We now further elaborate on this point. Suppose we replace line 10 in Algorithm \ref{algo-dGH-dyn} with a swapping between $U^X$ and $B^Y$. Subsequently, we obtain open partitions of $B^Y$ and $U^X$ as follows:
\[B^Y_{\mathfrak{o}\lc \delta_\eps\lc U^X\rc\rc}=\lb B^Y_i\rb_{i\in [N_{B_Y}]}\text{ and }U^X_{\mathfrak{o}\lc \delta_0\lc U^X\rc\rc}=\lb U^X_j\rb_{j\in [N_{U_X}]}.\]
Then, for each surjection $\Psi:[N_{B_Y}]\twoheadrightarrow [N_{U_X}]$, we need to inspect values of $\mathrm{DYN}\left(U^X_j,B^Y_{\Psi^{-1}(j)}\right)$. Being a union of closed balls in $B^Y$, $B^Y_{\Psi^{-1}(j)}$ does not necessarily belong to $ V_Y $, the set of closed balls in $Y$. This implies that the value $\mathrm{DYN}\left(U^X_j,B^Y_{\Psi^{-1}(j)}\right)$ does not necessarily exist for which Algorithm \ref{algo-dGH-dyn} may fail to continue.
\end{remark}

\begin{proposition}\label{prop:smallcorr}
Let finite ultrametric spaces $X$ and $Y$ and  $\eps\geq 0$ be such that the following two conditions hold:
\begin{enumerate}
    \item $\diam(X)>\eps$, and
    \item $\diam(Y)\leq \eps$
\end{enumerate} Then, there exists an $\eps$-correspondence between $X$ and $Y$ if and only if there exists an \emph{injective} map $\varphi:X\ct{\eps}\rightarrow Y$ such that $\dis(\varphi)\leq \eps$.
\end{proposition}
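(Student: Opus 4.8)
The plan is to prove both directions of the equivalence, using the $t$-closed quotient $X_{\mathfrak{c}(\eps)}$ to reduce an $\eps$-correspondence with a \emph{small} target space $Y$ to an injective, low-distortion map. The key observation is that when $\diam(Y)\le\eps$, any $\eps$-correspondence $R$ cannot tell apart two points of $X$ that are within distance $\eps$ of each other in a way that matters: if $u_X(x,x')\le\eps$ then for any $(x,y),(x',y')\in R$ we have $u_Y(y,y')\le u_X(x,x')+\dis(R)\le 2\eps$, which is automatically fine, but more importantly the constraint only bites \emph{between} distinct $\eps$-closed balls of $X$. This suggests that the relevant data of $R$ is a map defined on $X_{\mathfrak{c}(\eps)}$.

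First I would prove the ``only if'' direction. Suppose $R$ is an $\eps$-correspondence. I would define $\varphi:X_{\mathfrak{c}(\eps)}\to Y$ by picking, for each $\eps$-closed ball $[x]_{\mathfrak{c}(\eps)}$, some $y\in Y$ with $(x,y)\in R$, and setting $\varphi([x]_{\mathfrak{c}(\eps)}):=y$. The first task is well-definedness: if $[x]_{\mathfrak{c}(\eps)}=[x']_{\mathfrak{c}(\eps)}$, i.e. $u_X(x,x')\le\eps$, and $(x,y),(x',y')\in R$, then $u_Y(y,y')\le u_X(x,x')+\eps\le 2\eps$ — but this does not force $y=y'$. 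So the naive definition does not land a \emph{single} well-defined point; instead I should be more careful. The right move: the chosen representatives $y$ still may differ, so I would instead argue that $\varphi$ can be chosen injective. For injectivity and distortion control: take distinct closed balls $[x]_{\mathfrak{c}(\eps)}\ne[x']_{\mathfrak{c}(\eps)}$, so $u_X(x,x')>\eps$; then $\Lambda_\infty$-type reasoning (here with the ordinary metric, via $\dis(R)\le\eps$) gives $|u_X(x,x')-u_Y(\varphi([x]),\varphi([x']))|\le\eps$, hence $u_Y(\varphi([x]),\varphi([x']))\ge u_X(x,x')-\eps>0$, so $\varphi([x])\ne\varphi([x'])$ — giving injectivity — and simultaneously $\dis(\varphi)\le\eps$ on pairs coming from distinct balls; on pairs from the same ball the distortion is vacuous since the quotient distance is $0$ and $u_Y\le\diam(Y)\le\eps$. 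The subtlety of choosing consistent representatives $y$ across a ball is handled by just making \emph{one} choice per ball; the argument above shows any such choice is injective and low-distortion.

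For the ``if'' direction, suppose $\varphi:X_{\mathfrak{c}(\eps)}\hookrightarrow Y$ is injective with $\dis(\varphi)\le\eps$. I would build $R\subseteq X\times Y$ as follows: for each $\eps$-closed ball $B=[x]_{\mathfrak{c}(\eps)}$ of $X$, put all of $B\times\{\varphi(B)\}$ into $R$, and additionally — to ensure surjectivity onto $Y$ — for every $y\in Y$ not in the image of $\varphi$, pair $y$ with \emph{one} point of $X$, chosen in the closed ball of $X$ whose $\varphi$-image is ``closest'' to $y$, or more simply with an arbitrary fixed point. Then I must check $\dis(R)\le\eps$. For two pairs $(x,y),(x',y')$ with $x\in B$, $x'\in B'$: if $B=B'$, then $u_X(x,x')\le\eps$ and $u_Y(y,y')\le\diam(Y)\le\eps$, so the difference is $\le\eps$; if $B\ne B'$ and both $y,y'$ are genuine $\varphi$-images, then $|u_X(x,x')-u_Y(y,y')|$ — we have $u_X(x,x')=u_{X_{\mathfrak{c}(\eps)}}([x],[x'])$ when $u_X(x,x')>\eps$ (and $\le\eps$ otherwise), and $u_Y(\varphi(B),\varphi(B'))$ is within $\eps$ of that by $\dis(\varphi)\le\eps$; one also needs $u_Y(y,y')=u_Y(\varphi(B),\varphi(B'))$, which requires knowing the metric $Y$ is itself ultrametric with small diameter so these are the same — actually since $y=\varphi(B)$, $y'=\varphi(B')$ exactly, this is immediate. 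The pairs involving leftover $y\notin\mathrm{im}(\varphi)$ are handled by $u_Y\le\eps$ and a case check on whether $u_X\le\eps$ or $>\eps$; here I expect to need the condition $\diam(X)>\eps$ only to be in the genuinely non-trivial regime, and the bookkeeping with leftover points is the main place to be careful.

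\textbf{Main obstacle.} The main obstacle is the well-definedness/consistency issue in the forward direction: a single $\eps$-correspondence may relate one $\eps$-closed ball of $X$ to several points of $Y$, so one must extract a \emph{function} out of a relation and then prove the extracted function is injective with controlled distortion regardless of the choices made — and separately, in the backward direction, the careful verification that the leftover points of $Y$ (those outside $\mathrm{im}\,\varphi$) can always be absorbed into $R$ without inflating the distortion beyond $\eps$, which is exactly where the hypotheses $\diam(X)>\eps$ and $\diam(Y)\le\eps$ together with the ultrametric structure of $X$ get used.
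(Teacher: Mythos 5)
Your ``only if'' direction is correct and matches the paper's in substance: injectivity of the chosen-representative map follows because $[x]\ct{\eps}\neq[x']\ct{\eps}$ forces $u_X(x,x')>\eps$, hence $u_Y(y,y')\geq u_X(x,x')-\eps>0$ for any $(x,y),(x',y')\in R$, and the distortion bound splits into the same two cases you list. (The paper factors this through Lemmas~\ref{lm:injlm1} and~\ref{lm:injlm2}, obtaining first a correspondence between $X\ct{\eps}$ and $Y$ and then a surjection $Y\twoheadrightarrow X\ct{\eps}$, but the mechanism is identical.)

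The ``if'' direction has a genuine gap, and it sits exactly where the real content of the proposition is. Your fallback of pairing a leftover $y\notin\mathrm{im}(\varphi)$ ``with an arbitrary fixed point'' is false: take $\eps>0$, $X=\{x_1,x_2\}$ with $u_X(x_1,x_2)=2\eps$, and $Y=\{y_1,y_2,y_3\}$ with $u_Y(y_1,y_2)=u_Y(y_1,y_3)=\eps$ and $u_Y(y_2,y_3)=\eps/2$; the map $\varphi([x_1]\ct{\eps})=y_1$, $\varphi([x_2]\ct{\eps})=y_2$ is injective with $\dis(\varphi)=\eps$, but adding $(x_1,y_3)$ to $R$ produces the pair $(x_1,y_3),(x_2,y_2)$ with $|2\eps-\eps/2|=3\eps/2>\eps$. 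So the leftover points \emph{must} be routed to the ball whose $\varphi$-image is nearest to them (here $y_3$ must go to $x_2$), which is your first option and is what the paper does in Lemma~\ref{lm:injlm3}. But then the distortion bound is not just ``a case check on whether $u_X\leq\eps$ or $>\eps$'': for points $y,y'$ of $Y$ assigned to distinct balls with images $y_i=\varphi([x_i]\ct{\eps})$ and $y_j=\varphi([x_j]\ct{\eps})$, you need $u_X(x_i,x_j)-u_Y(y,y')\leq\eps$, and since all you control is $u_X(x_i,x_j)-u_Y(y_i,y_j)\leq\dis(\varphi)\leq\eps$, you must prove that $u_Y(y,y')\geq u_Y(y_i,y_j)$ whenever $y$ and $y'$ have distinct nearest image points. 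This is exactly the paper's Claim~\ref{clm:injob}; it is a genuinely ultrametric statement (proved via the isosceles property together with a tie-breaking convention for the argmin, and false in general metric spaces), and your proposal neither states nor proves it. Without it the backward direction does not close.
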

See Appendix \ref{sec: small-algo-prop} for a proof.
\begin{algorithm}[htb]
\caption{$\mathbf{FindCorrSmall}(X,Y,\eps)$}\label{algo-dGH-small}
\begin{algorithmic}[1]
 \STATE{\textbf{Assert} $\diam(X) > \eps$ \textbf{and} $\diam(Y) \leq \eps$}
\IF{ $|\diam(X) - \diam(Y)| > \eps$}
\RETURN 0
\ENDIF
\STATE{$X\ct{\eps}=\mathbf{ClosedQuotient}(X,\eps)$}
\FOR {\emph{Each injective map} $\Phi:X\ct{\eps}\rightarrow Y $}
\STATE Compute $\dis(\Phi)$
\IF {$\dis(\Phi)\leq\eps$}
\RETURN $1$
\ENDIF
\ENDFOR
\RETURN 0
\end{algorithmic}
\end{algorithm}

\begin{algorithm}[htb]
\caption{$\mathbf{FindCorrDP}(X,Y,\eps)$}\label{algo-dGH-dyn}
\begin{algorithmic}[1]
 \STATE{Build $\mathrm{LX}^{(\eps)}$ and $\mathrm{LY}$}
 \STATE $\mathrm{DYN = zeros(\# \mathrm{LX}^{(\eps)},\# \mathrm{LY})}$
\FOR{$B^Y \in  \mathrm{LY} $}{
    \FOR{$k=1$ to $\#\mathrm{LX}^{(\eps)}$ }{
            \STATE $U^X=\mathrm{LX}^{(\eps)}(k)$
            \IF{$\left|\diam\lc U^X\rc-\diam\lc B^Y\rc\right|>\eps$}
            \STATE{$\mathrm{DYN}\lc U^X,B^Y\rc =0$}
        \ELSIF{$\max\lc\diam\lc U^X\rc ,\diam\lc B^Y\rc \rc\leq\eps$}
        \STATE{$\mathrm{DYN}\lc U^X,B^Y\rc =1$}
         \ELSIF{$\diam\lc U^X\rc >\eps$ \textbf{and} $\diam\lc B^Y\rc \leq\eps$}
         \STATE{$\mathrm{DYN}\lc U^X,B^Y\rc =\mathbf{FindCorrSmall}(U^X,B^Y,\eps)$}
            \ELSE
            \STATE{$\lb U^X_i\rb_{i\in [N_{U_X}]}=\mathbf{OpenPartition}\lc U^X,\delta_\eps(B^Y)\rc$}
            \STATE{$\lb B^Y_j\rb_{j\in [N_{B_Y}]}=\mathbf{OpenPartition}\lc B^Y,\delta_0(B^Y)\rc$}
            
            \FOR{\emph{Each surjection} $\Psi : [N_{U_X}]\twoheadrightarrow  [N_{B_Y}]$}
             {
             \IF{$\mathrm{DYN}\lc U^X_{\Psi^{-1}(j)},B^Y\rc=1,\, \forall j=1,\ldots,M$}
             \STATE $\mathrm{DYN}\lc U^X,B^Y\rc  = 1$
               \STATE     Continue in line 4
               \ENDIF
            }
            \ENDFOR
            
            \ENDIF
    }
    \ENDFOR
    }
    \ENDFOR
\RETURN $\mathrm{DYN(END,END)}$
\end{algorithmic}
\end{algorithm}

Eventually, Algorithm $\mathbf{FindCorrDP}$ (Algorithm \ref{algo-dGH-dyn}) will compute $\mathrm{DYN}(X,Y)$ through a bottom-up approach and thus solve the decision problem \textbf{\textsf{GHDU-dec}} with the given input triple $(X,Y,\eps)$. The correctness of Algorithm \ref{algo-dGH-dyn} is stated in the following theorem; see Appendix \ref{sec:corr-dp} for its proof. Note that, the given pseudocode of Algorithm \ref{algo-dGH-dyn} only determines the existence of $\eps$-correspondence without actually constructing a correspondence. However, it is clear that one can inspect the $\mathrm{DYN}$ matrix to produce an $\eps$-correspondence whenever it exists.

\begin{theorem}[Correctness of Algorithm $\mathbf{FindCorrDP}$ (Algorithm \ref{algo-dGH-dyn})]\label{thm:corr-dp-algo}
There exists an $\eps$-correspondence between $X$ and $Y$ if and only if $\mathbf{FindCorrDP}(X,Y,\eps)=1$.
\end{theorem}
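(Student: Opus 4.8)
The plan is to prove, by induction, a statement slightly stronger than Theorem~\ref{thm:corr-dp-algo}: \emph{for every pair $(U^X,B^Y)\in\mathrm{LX}^{(\eps)}\times\mathrm{LY}$, upon termination $\mathbf{FindCorrDP}$ has set $\mathrm{DYN}(U^X,B^Y)=1$ if and only if there is an $\eps$-correspondence between the ultrametric spaces $U^X$ and $B^Y$}. The theorem then follows at once: $X$ and $Y$ are the last entries of $\mathrm{LX}^{(\eps)}$ and $\mathrm{LY}$ respectively, so the returned value $\mathrm{DYN}(\mathrm{END},\mathrm{END})$ is $\mathrm{DYN}(X,Y)$, and $(X,Y)\in\mathrm{LX}^{(\eps)}\times\mathrm{LY}$ since $X\in V_X\subseteq V_X^{(\eps)}$ and $Y\in V_Y$. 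I would run the induction on the position of $B^Y$ in the list $\mathrm{LY}$, which is sorted by nondecreasing diameter. The point that makes this legitimate is that the only previously-computed table entries read while filling in $\mathrm{DYN}(U^X,B^Y)$ are the values $\mathrm{DYN}\bigl(U^X_{\Psi^{-1}(j)},B^Y_j\bigr)$ in the main branch, whose second coordinates $B^Y_j$ are proper blocks of an open partition of $B^Y$ and hence have \emph{strictly smaller} diameter; thus they were computed during an earlier iteration of the outer loop, and for a fixed $B^Y$ the inner loop over $U^X$ introduces no new dependence on the same $B^Y$. So the inductive hypothesis supplies exactly what is needed.

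For the inductive step, fix $B^Y$ and analyze the branches of Algorithm~\ref{algo-dGH-dyn} on a pair $(U^X,B^Y)$. The two base-case branches are correct by Proposition~\ref{prop:diam-dgh}: if $|\diam(U^X)-\diam(B^Y)|>\eps$ no correspondence has distortion $\le\eps$, and if $\max(\diam(U^X),\diam(B^Y))\le\eps$ then the full correspondence $U^X\times B^Y$ has distortion $\le\max(\diam(U^X),\diam(B^Y))\le\eps$. The ``small'' branch, reached precisely when $\diam(U^X)>\eps$ and $\diam(B^Y)\le\eps$, sets $\mathrm{DYN}(U^X,B^Y)=\mathbf{FindCorrSmall}(U^X,B^Y,\eps)$; its correctness is Proposition~\ref{prop:smallcorr}, which in exactly this diameter regime reduces existence of an $\eps$-correspondence to existence of an injective $\varphi:U^X\ct{\eps}\to B^Y$ with $\dis(\varphi)\le\eps$, together with the observation that $\mathbf{FindCorrSmall}$ (Algorithm~\ref{algo-dGH-small}) forms $U^X\ct{\eps}$ via $\mathbf{ClosedQuotient}$ and exhaustively tests all such injections. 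Neither branch reads $\mathrm{DYN}$, so no induction hypothesis is invoked here.

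The crux is the remaining (main) branch, which by the preceding case split is entered exactly when $\diam(B^Y)>\eps$ and $|\diam(U^X)-\diam(B^Y)|\le\eps$ — precisely the hypotheses of Theorem~\ref{thm:ums-dgh} with $U^X$ in the role of $X$ and $B^Y$ in the role of $Y$, whence the algorithm's partitions of $U^X$ at scale $\delta_\eps(B^Y)$ and of $B^Y$ at scale $\delta_0(B^Y)$ match the theorem. By Theorem~\ref{thm:ums-dgh}, an $\eps$-correspondence between $U^X$ and $B^Y$ exists iff there is a surjection $\Psi:[N_{U_X}]\twoheadrightarrow[N_{B_Y}]$ for which every pair $\bigl(U^X_{\Psi^{-1}(j)},B^Y_j\bigr)$ admits an $\eps$-correspondence. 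Here I would check: each $B^Y_j$, being an open ball inside the closed ball $B^Y$, lies in $V_Y$ and has $\diam(B^Y_j)<\diam(B^Y)$; and each $U^X_{\Psi^{-1}(j)}$, a union of blocks of $U^X_{\mathfrak{o}(\delta_\eps(B^Y))}$, again lies in $V_X^{(\eps)}$ — this extends Remark~\ref{rmk:rho-eps-delta-eps} and follows by the same reasoning (the smallest closed ball $B$ containing such a union satisfies $\diam(B)=\diam\bigl(U^X_{\Psi^{-1}(j)}\bigr)$ and $\rho_\eps(B)\le\delta_\eps(B^Y)$, so the union is an $\eps$-maximal union of balls of $B$). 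Consequently all queried entries $\mathrm{DYN}\bigl(U^X_{\Psi^{-1}(j)},B^Y_j\bigr)$ are bona fide table cells filled at an earlier level, and by the induction hypothesis each equals $1$ iff the corresponding $\eps$-correspondence exists. Since the algorithm sets $\mathrm{DYN}(U^X,B^Y)=1$ exactly when some surjection $\Psi$ makes all those cells $1$, and otherwise leaves the initialized value $0$ (in particular when $N_{U_X}<N_{B_Y}$ and no surjection exists, consistent with Theorem~\ref{thm:ums-dgh}), the branch returns the correct value, completing the induction. Finally one invokes Remark~\ref{rmk:union-corr} if one also wants an explicit correspondence, by reading off the $\mathrm{DYN}$ matrix top-down.

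The main obstacle I expect is not any single deep estimate but the bookkeeping of the main branch: one must verify carefully that every pair $\bigl(U^X_{\Psi^{-1}(j)},B^Y_j\bigr)$ the algorithm dereferences genuinely indexes the $\mathrm{DYN}$ table (lies in $V_X^{(\eps)}\times V_Y$) \emph{and} was already computed. Both rest on Remark~\ref{rmk:rho-eps-delta-eps} — closure of $V_X^{(\eps)}$ under the relevant open-partition decompositions, itself a consequence of $\rho_\eps(B^Y)\le\delta_\eps(B^Y)$ when $|\diam(U^X)-\diam(B^Y)|\le\eps$ and $\diam(B^Y)>\eps$ — and on the diameter-monotone ordering of $\mathrm{LX}^{(\eps)}$ and $\mathrm{LY}$. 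A secondary subtlety, already accounted for by the separate ``small'' branch (and discussed in the text's Remark on situation~2), is the asymmetry of tabulating over $V_X^{(\eps)}\times V_Y$ rather than a symmetric product: the swap trick available to $\mathbf{FindCorrRec}$ cannot be used here because a union of closed balls of $B^Y$ need not be a closed ball of $Y$, so the case $\diam(B^Y)\le\eps<\diam(U^X)$ really does demand its own treatment via Proposition~\ref{prop:smallcorr}.
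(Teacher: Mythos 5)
Your proposal is correct and follows essentially the same route as the paper: the same strengthened inductive claim over all pairs in $V_X^{(\eps)}\times V_Y$, induction driven by the diameter of $B^Y$ (the paper indexes it by the spectrum of $Y$, you by position in the sorted list $\mathrm{LY}$, which is equivalent), with the three branches discharged by Proposition \ref{prop:diam-dgh}, Proposition \ref{prop:smallcorr}, and Theorem \ref{thm:ums-dgh} together with Remark \ref{rmk:rho-eps-delta-eps} exactly as in the paper. Your observation that membership of the unions $U^X_{\Psi^{-1}(j)}$ in $V_X^{(\eps)}$ needs the union form of Remark \ref{rmk:rho-eps-delta-eps} is well taken; the paper's proof of that remark in fact already establishes it for arbitrary unions of blocks, so no extension is needed.
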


\paragraph{Complexity analysis}

To analyze the complexity of Algorithm $\mathbf{FindCorrDP}$ (Algorithm \ref{algo-dGH-dyn}), we consider the following growth condition in a similar spirit to the FGC:

\begin{definition}[Second $(\eps,\gamma)$-growth condition]\label{def:growth2}
For $\eps\geq 0$, and $\gamma\in\mathbb{N}$, we say that an ultrametric space $(X,u_X)$ satisfies the \emph{second $(\eps,\gamma)$-growth condition} (SGC) if for all $x\in X$, and $t\geq 2\eps$,
\begin{equation*}
    \#\left\{{[x']_{\mathfrak{o}\lc t-2\eps\rc}}:\,x'\in [x]_{\mathfrak{c}\lc t\rc}\right\}\leq \gamma.
\end{equation*}
\end{definition}

We denote by $\mathcal{U}_2(\eps,\gamma)$ the collection of all finite ultrametric spaces satisfying the second $(\eps,\gamma)$-growth condition. Note that for any $0\leq \eps'<\eps$, $\mathcal{U}_2(\eps,\gamma)\subseteq \mathcal{U}_2(\eps',\gamma)$.

\nomenclature{$\mathcal{U}_2(\eps,\gamma)$}{The collection of all finite ultrametric spaces satisfying the second $(\eps,\gamma)$-growth condition; page \pageref{def:growth2}.}

\begin{remark}[Relation with the notion of degree bound from \cite{touli2018fpt}]\label{rmk:degree bound in main text}
If we let
\[\gamma_\eps(X)\coloneqq\sup_{x\in X,t\geq 0}\#\left\{{[x']_{\mathfrak{o}\lc t-2\eps\rc}}:\,x'\in [x]_{\mathfrak{c}\lc t\rc}\right\},\]
then for any $\gamma\geq\gamma_\eps(X)$, $X\in\mathcal{U}_2(\eps,\gamma)$. The information captured by $\gamma_\eps$ is in a similar spirit to the concept called degree bound of merge trees as considered in \cite{touli2018fpt}: the $\eps$-degree bound $\tau_\eps(M_X)$ of a merge tree $M_X$ is the largest sum of degrees of all tree vertices inside any closed $\eps$ balls in $M_X$\footnote{In \cite{touli2018fpt}, the degree bound is actually defined for two merge trees: for two merge trees $M_X$ and $M_Y$, the number $\tau_\eps(M_X,M_Y)\coloneqq\max(\tau_\eps(M_X),\tau_\eps(M_Y))$is called the $\eps$-degree bound of $(M_X,M_Y)$.}. See Appendix \ref{sec:tree structure of dend} for a detailed comparison between $\gamma_\eps(X)$ and $\tau_\eps$.
\end{remark}

\begin{remark}[Interpretation of the SGC and its relation with the FGC]\label{rmk:2nd-growth-int}
The second $(\eps,\gamma)$-growth condition is equivalent to saying for any $x\in X$ and $t>2\eps$, the number of descendants of $[x]_{\mathfrak{c}(t)}$ at level $t-2\eps$ is bounded above by $\gamma$. Note that if $X\in\mathcal{U}_1(\eps,\gamma)$, then for any $t>\eps$, the number of descendants of any class $[x]_{\mathfrak{c}(t)}$ at $t-\eps$ is bounded above by $\gamma$ (cf. Remark \ref{rmk:numberofbranch}). This implies that $X\in\mathcal{U}_2\lc\frac{\eps}{2},\gamma\rc$.  In other words, $\mathcal{U}_1(\eps,\gamma)\subseteq \mathcal{U}_2\lc\frac{\eps}{2},\gamma\rc$, which indicates that the second growth condition is \textbf{less} rigid than the first growth condition.
\end{remark}

\begin{remark}[Relation between the SGC and the doubling constant]\label{rmk:sgc-doubling}
Recall that given $K>0$, a metric space $(X,d_X)$ is said to be $K$-doubling if for each $r>0$, a closed ball with radius $r$ can be covered by at most $K$ closed balls with radius $\frac{r}{2}$. The SGC is related to the doubling constant as follows: (1) if a finite ultrametric space $X\in\mathcal{U}_2(\eps,\gamma)$ for some $\eps>0$ and $\gamma\geq 1$, then $X$ is $K_{\gamma,\eps}$-doubling for $K_{\gamma,\eps}\coloneqq \gamma^{\lfloor\frac{\diam(X)}{4\eps}\rfloor+1}$; (2) conversely, a $K$-doubling ultrametric space satisfies the second $(0,K)$-growth condition. See Appendix \ref{sec:rmk-sgc-proof} for the proof of the fact. See Lemma \ref{lm:ultrametricty sgc} for a generalization of the latter fact in the case of finite metric spaces.
\end{remark}

{Under the SGC, Algorithm $\mathbf{FindCorrDP}$ (Algorithm \ref{algo-dGH-dyn}) runs in polynomial time and moreover, Algorithm $\mathbf{FindCorrDP}$ is FPT with respect to parameters in the SGC.}
\begin{theorem}[Time complexity of Algorithm $\mathbf{FindCorrDP}$ (Algorithm \ref{algo-dGH-dyn})]\label{thm:com-dyn-alg}
Fix some $\eps\geq 0$ and $\gamma\geq 1$. Then, for any $X,Y\in\mathcal{U}_2(\eps,\gamma)$, Algorithm $\mathbf{FindCorrDP}(X,Y,\eps)$ (Algorithm \ref{algo-dGH-dyn}) runs in time $O\lc n^2\log(n)2^{\gamma}\gamma^{\gamma+2}\rc$, where $n:=\max(\#X,\#Y)$.
\end{theorem}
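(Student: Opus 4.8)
The plan is to bound the running time of Algorithm \ref{algo-dGH-dyn} as (number of cells of the table $\mathrm{DYN}$)$\times$(worst-case cost of filling one cell), plus a cheap preprocessing term for building and sorting $\mathrm{LX}^{(\eps)}$ and $\mathrm{LY}$; all three estimates will be driven by the SGC (Definition \ref{def:growth2}). First I would bound the list sizes. A finite ultrametric space has at most $2n-1$ distinct closed balls (one per node of its dendrogram), so $\#V_Y=O(n)$. For $V_X^{(\eps)}=\bigcup_{B\in V_X}B_{(\eps)}$ I would use that, when $\diam(B)>2\eps$, one has $B=[x]^X_{\mathfrak c(\diam(B))}$, so applying the SGC with $t=\diam(B)$ shows the $\rho_\eps(B)$-open partition of $B$ has $N_B\le\gamma$ blocks; hence $\#B_{(\eps)}\le 2^{N_B}\le 2^{\gamma}$ (and $\#B_{(\eps)}=1$ when $\diam(B)\le 2\eps$, since then $\rho_\eps(B)=0$). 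Thus $\#\mathrm{LX}^{(\eps)}=O(n\,2^{\gamma})$, and constructing the lists — for each of the $O(n)$ balls $B$, enumerating its $\le\gamma$ children, the $\le 2^{\gamma}$ candidate unions, and testing $\eps$-maximality by comparing child merge-levels — together with sorting them by diameter costs $O\!\left(n\,2^{\gamma}(\gamma^2+\log n)\right)$, which is dominated by the claimed bound. Consequently $\mathrm{DYN}$ has $\#\mathrm{LX}^{(\eps)}\cdot\#\mathrm{LY}=O(n^2 2^{\gamma})$ cells.

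Next I would bound the cost of one cell $(U^X,B^Y)$. The two diameter-comparison base cases cost $O(1)$ once all diameters are precomputed, so assume $|\diam(U^X)-\diam(B^Y)|\le\eps$ (otherwise line~6 of Algorithm \ref{algo-dGH-dyn} returns); combined with Remark \ref{rmk:rho-eps-delta-eps} (which gives $\delta_\eps(B^Y)\ge\rho_\eps(B)$ for the ball $B\in V_X$ from which $U^X$ descends) this is exactly what makes the SGC applicable to the partitions that arise. (i) In the \emph{small} branch ($\diam(U^X)>\eps\ge\diam(B^Y)$) the diameter test forces $\diam(U^X)\le 2\eps$, and a ball of diameter $\le 2\eps$ has at most $\gamma$ points — apply the SGC with $t$ slightly above $2\eps$, so that the level-$(t-2\eps)$ open classes inside it are singletons. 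Hence $\#(U^X)_{\mathfrak c(\eps)}\le\gamma$ and $\#B^Y\le\gamma$, so Algorithm \ref{algo-dGH-small} inspects at most $\gamma^{\gamma}$ injections, each at cost $O(\gamma^2)$, i.e.\ $O(\gamma^{\gamma+2})$. (ii) In the \emph{recursive} branch ($\diam(B^Y)>\eps$), applying the SGC with $t=\diam(B^Y)+2\eps$ to $B^Y$ gives $N_{B_Y}\le\gamma$, and applying it to the ambient ball $B\supseteq U^X$ at scale $\ell=\delta_\eps(B^Y)\le\diam(B)$ gives $\#(B)_{\mathfrak o(\ell)}\le\gamma$; since every block of $(U^X)_{\mathfrak o(\ell)}$ is the trace on $U^X$ of a block of $(B)_{\mathfrak o(\ell)}$, also $N_{U_X}\le\gamma$. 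Computing both open partitions costs $O(\gamma\log n)$ with the dendrogram data structure of Appendix \ref{sec:data-structure}; there are at most $N_{B_Y}^{N_{U_X}}\le\gamma^{\gamma}$ surjections $\Psi$, and for each the $\le N_{B_Y}$ lookups $\mathrm{DYN}(U^X_{\Psi^{-1}(j)},B^Y_j)$ are legitimate — the arguments lie in $\mathrm{LX}^{(\eps)}\times\mathrm{LY}$ by Remark \ref{rmk:rho-eps-delta-eps}, and were already filled because their diameters are strictly smaller and the lists are sorted by increasing diameter — each costing $O(\log n)$ via the data structure, for a total of $O(\gamma^{\gamma+1}\log n)$. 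Hence each cell costs $O(\gamma^{\gamma+2}\log n)$.

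Multiplying the $O(n^2 2^{\gamma})$ cells by the $O(\gamma^{\gamma+2}\log n)$ per-cell cost and adding the preprocessing term yields $O\!\left(n^2\log n\,2^{\gamma}\gamma^{\gamma+2}\right)$, which is the asserted complexity. The main obstacle I expect is step~(ii)/(i) above, namely extracting the right constant-$\gamma$ size bounds from the SGC: each of $N_B$, $N_{U_X}$, $N_{B_Y}$, $\#(U^X)_{\mathfrak c(\eps)}$ and $\#B^Y$ must be bounded by $\gamma$, and in every case this requires choosing the correct scale $t\ge 2\eps$ at which to invoke Definition \ref{def:growth2} and checking, via $|\diam(U^X)-\diam(B^Y)|\le\eps$ and Remark \ref{rmk:rho-eps-delta-eps}, both that the relevant ball is contained in $[x]^X_{\mathfrak c(t)}$ and that the relevant open scale is exactly $t-2\eps$. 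This is made trickier by the fact that the $U^X$'s are \emph{unions} of balls rather than balls, forcing the ``trace of the ambient ball's partition'' argument. A secondary, purely bookkeeping obstacle is verifying that all sub-instances queried in the recursive branch indeed sit in $\mathrm{LX}^{(\eps)}\times\mathrm{LY}$ and were processed earlier, which is precisely where Remark \ref{rmk:rho-eps-delta-eps} and the ascending-diameter ordering of the lists are indispensable.
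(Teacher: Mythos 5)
Your proposal follows essentially the same route as the paper's proof in Appendix \ref{sec:com-dp-proof}: bound $\#\mathrm{LY}=O(n)$ and $\#\mathrm{LX}^{(\eps)}=O(n2^\gamma)$ via the SGC, show each table cell costs $O(\gamma^{\gamma+2}\cdot\text{polylog})$ by bounding the number of partition blocks and surjections by $\gamma$ and $\gamma^\gamma$, and multiply (the paper charges the $\log n$ factor to preprocessing and keeps the main loop at $O(\gamma^{\gamma+2}\log\gamma)$ per cell, whereas you charge it to the lookups, but both land on the stated bound). One small slip: when $\diam(B)\leq 2\eps$ the set $B_{(\eps)}$ need not be a singleton (e.g.\ three points at mutual distance $2\eps$ give four $\eps$-maximal unions), though the bound $\#B_{(\eps)}\leq 2^{\gamma}$ still holds because the SGC at $t=2\eps$ forces $\#B\leq\gamma$, so your count of $\#\mathrm{LX}^{(\eps)}$ is unaffected.
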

See Appendix \ref{sec:com-dp-proof} for its proof.
\subsubsection{Computing the exact value of $\dgh$}
Given two finite ultrametric spaces $X$ and $Y$, we compute the exact value of $\dgh(X,Y)$ in the following way. Define 
\[ \mathcal{E}(X,Y)\coloneqq\{|u_X(x,x')-u_Y(y,y')|:\,\forall x,x'\in X\text{ and }\forall y,y'\in Y\}.\]
Then, for any correspondence $R$ between $X$ and $Y$, we have $\dis(R)\in\mathcal{E}(X,Y)$ by finiteness of $X$ and $Y$ and by Equation (\ref{eq:dist}). Therefore, in order to compute $\dgh(X,Y)$, we first sort the elements in $\mathcal{E}(X,Y)$ in ascending order as $\eps_0<\eps_1<\cdots<\eps_M$. If $i$ is the smallest integer such that $\mathbf{FindCorrDP}(X,Y,{\eps_i})=1 $, then $\dgh(X,Y)=\frac{\eps_i}{2}$. We summarize the process in Algorithm \ref{algo-dGH-opt} and analyze its complexity  in Theorem \ref{thm:dGH-complex}.

\begin{theorem}[Time complexity of Algorithm $\mathbf{dGH}$ (Algorithm \ref{algo-dGH-opt})] \label{thm:dGH-complex}
Fix some $\eps\geq 0$ and $\gamma\geq 1$. Let $X,Y\in\mathcal{U}_2(\eps,\gamma)$ and assume that $\eps\geq 2\,\dgh(X,Y)$. Then, the algorithm $\mathbf{dGH}$ (Algorithm \ref{algo-dGH-opt}) with input $(X,Y)$ runs in time $O\lc n^4\log(n)2^\gamma\gamma^{\gamma+2}\rc$, where $n=\max(\#X,\#Y)$.
\end{theorem}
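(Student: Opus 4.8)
The plan is to bound separately the cost of the preprocessing step (assembling and sorting $\mathcal{E}(X,Y)$) and the cost of the calls to $\mathbf{FindCorrDP}$ issued by Algorithm~\ref{algo-dGH-opt}, while checking that every such call meets the hypotheses of Theorem~\ref{thm:com-dyn-alg}. For the preprocessing, the key point is that $\mathcal{E}(X,Y)$ is small: since $u_X(x,x')\in\spec(X)$ and $u_Y(y,y')\in\spec(Y)$, one has $\#\mathcal{E}(X,Y)\leq\#\spec(X)\cdot\#\spec(Y)\leq\#X\cdot\#Y\leq n^2$, where the middle inequality is item~4 of Proposition~\ref{prop:basic property ultrametric}. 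Computing $\spec(X)$ and $\spec(Y)$ takes $O(n\log n)$ (Lemma~\ref{lm:spectrum computation}), forming the $O(n^2)$ pairwise differences $|u_X(x,x')-u_Y(y,y')|$ takes $O(n^2)$, and sorting and deduplicating the result into $\eps_0<\eps_1<\cdots<\eps_M$ with $M=O(n^2)$ takes $O(n^2\log n)$. So the preprocessing is $O(n^2\log n)$.

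For the main loop, Algorithm~\ref{algo-dGH-opt} calls $\mathbf{FindCorrDP}(X,Y,\eps_i)$ for $i=0,1,2,\dots$ and halts at the first index $i^\ast$ with output $1$. By Theorem~\ref{thm:corr-dp-algo} and the fact that $\dis(R)\in\mathcal{E}(X,Y)$ for every correspondence $R$, one has $\dgh(X,Y)=\tfrac{1}{2}\eps_{i^\ast}$, hence $\eps_{i^\ast}=2\,\dgh(X,Y)$. Now every call made by the algorithm uses a parameter $\eps_i$ with $i\leq i^\ast$, so $\eps_i\leq\eps_{i^\ast}=2\,\dgh(X,Y)\leq\eps$ by the standing hypothesis. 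Since $\mathcal{U}_2(\eps,\gamma)\subseteq\mathcal{U}_2(\eps_i,\gamma)$ whenever $\eps_i\leq\eps$ (the nesting recorded right after Definition~\ref{def:growth2}), we get $X,Y\in\mathcal{U}_2(\eps_i,\gamma)$; thus Theorem~\ref{thm:com-dyn-alg} applies to each call and bounds it by $O(n^2\log(n)\,2^\gamma\gamma^{\gamma+2})$. There are at most $i^\ast+1\leq M+1=O(n^2)$ such calls, so the loop costs $O(n^2)\cdot O(n^2\log(n)\,2^\gamma\gamma^{\gamma+2})=O(n^4\log(n)\,2^\gamma\gamma^{\gamma+2})$, which dominates the preprocessing and gives the claimed bound.

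The step demanding the most care is the verification that every invocation of $\mathbf{FindCorrDP}$ lies within the scope of Theorem~\ref{thm:com-dyn-alg}: a priori $\mathbf{FindCorrDP}(X,Y,\eps_i)$ could be invoked with $\eps_i$ so large that $X,Y$ no longer satisfy the second $(\eps_i,\gamma)$-growth condition, in which case no polynomial per-call bound is available. This is exactly where the hypothesis $\eps\geq 2\,\dgh(X,Y)$ is used — it ensures the loop terminates before any such $\eps_i$ is reached — in combination with the monotonicity $\mathcal{U}_2(\eps,\gamma)\subseteq\mathcal{U}_2(\eps',\gamma)$ for $\eps'<\eps$. (If one instead searched $\mathcal{E}(X,Y)$ by binary search, the same consideration would force restricting the search range to $\{\eps_i:\eps_i\leq\eps\}$ first, and the resulting running time would only improve.) The remaining ingredients are routine bookkeeping on the sizes of the sorted lists.
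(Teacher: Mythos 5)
Your proposal is correct and follows essentially the same route as the paper's proof: bound $\#\mathcal{E}(X,Y)=O(n^2)$ via Proposition \ref{prop:basic property ultrametric}, observe that the loop only ever invokes $\mathbf{FindCorrDP}$ with parameters $\eps_i\leq 2\,\dgh(X,Y)\leq\eps$ so that the monotonicity $\mathcal{U}_2(\eps,\gamma)\subseteq\mathcal{U}_2(\eps_i,\gamma)$ lets Theorem \ref{thm:com-dyn-alg} bound each call, and multiply. Your write-up is merely more explicit than the paper's about why every invocation stays within the scope of Theorem \ref{thm:com-dyn-alg}, which the paper asserts in one line.
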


\begin{remark} 
Though the complexity in Theorem \ref{thm:dGH-complex} depends on inherent structures of input spaces, it never means that we to figure out parameters $\eps$ and $\gamma$ beforehand in order to apply our algorithm.
\end{remark}

\begin{proof}[Proof of Theorem \ref{thm:dGH-complex}]
By Proposition \ref{prop:basic property ultrametric}, we have that $\#\mathcal{E}(X,Y)=O(n^2)$. Then, sorting $\mathcal{E}(X,Y)$ takes time $O(n^2\log(n^2))=O(n^2\log(n))$ in average. For each $\eps_i\in\mathcal{E}(X,Y)$ such that $\eps_i\leq\eps$, we need to invoke once Algorithm $\mathbf{FindCorrDP}$ (Algorithm \ref{algo-dGH-dyn}) with inputs $(X,Y,\eps_i)$. For all such $\eps_i$s, $X,Y\in\mathcal{U}_2(\eps_i,\gamma)$ and thus Algorithm $\mathbf{FindCorrDP}$ with inputs $(X,Y,\eps_i)$ runs in time $O\lc n^2\log(n)2^{\gamma}\gamma^{\gamma+2}\rc$ (cf. Theorem \ref{thm:com-dyn-alg}). Therefore, the total time complexity of Algorithm $\mathbf{dGH}$ is bounded by 
\[O(n^2)\times O\lc n^2\log(n)2^{\gamma}\gamma^{\gamma+2}\rc= O\lc n^4\log(n)2^\gamma\gamma^{\gamma+2}\rc.\]
\end{proof}

\begin{algorithm}[htb]
\caption{$\mathbf{dGH}(X,Y)$}\label{algo-dGH-opt}
\begin{algorithmic}[1]
 \STATE{$\mathcal{E}$ $\gets$ \textbf{sort}($\mathcal{E}(X,Y)$, `ascend')}
\FOR{i=1 to $\#\mathcal{E}$}
\IF{$\mathbf{FindCorrDP}(X,Y,\mathcal{E}(\mathrm{i}))$}
\RETURN $\frac{\mathcal{E}(\mathrm{i})}{2}$
\ENDIF
\ENDFOR
\end{algorithmic}
\end{algorithm}

\begin{remark}[Comparison to \cite{touli2018fpt}] \label{rem:tw}
Whereas methods from \cite{touli2018fpt} can be adapted to obtain a 2-approximation of $\dgh$ between two finite ultrametric spaces, our algorithm $\mathbf{dGH}$ (Algorithm \ref{algo-dGH-opt}) can obtain the exact value \emph{in the same time complexity}. We now elaborate upon this statement.

As illustrated in Remark \ref{rmk:merge tree}, each finite ultrametric space naturally maps into a merge tree. In this way, we define the $\eps$-degree bound of an ultrametric space as the $\eps$-degree bound of its corresponding merge tree. By Remark \ref{rmk:detail comparison with degree bound}, if any ultrametric space $X$ has $\eps$-degree bound $\tau_\eps$, it automatically satisfies the second $(\eps,\tau_\eps)$-growth condition. 

Now consider the case where two merge trees $M_X$ and $M_Y$ arise from finite ultrametric spaces $X$ and $Y$ such that $\dgh(X,Y) = \frac{\eps}{2}$. In this case, if $d_\mathrm{I}$ denotes the interleaving distance between merge trees of \cite{morozov2013interleaving},  by \cite[Remark 6.3 and Corollary 6.13]{memoli2019gromov}, then \begin{equation}\label{eq:comp}
    \frac{1}{2}d_\mathrm{I}(M_X,M_Y)\leq\dgh(X,Y)\leq {d_\mathrm{I}(M_X,M_Y)}.
    \end{equation}
    Let $\tau\coloneqq\tau_{\eps}(M_X,M_Y)$ denote the $\eps$-degree bound of $(M_X,M_Y)$, then by above arguments we have that $X,Y\in\mathcal{U}_2(\eps,\tau)$. Let $\delta\coloneqq d_\mathrm{I}(M_X,M_Y)$. Since $\delta=d_\mathrm{I}(M_X,M_Y)\leq\eps$, by monotonicity of the degree bound, the $\delta$-degree bound $\tau_\delta\coloneqq\tau_\delta(M_X,M_Y)$ of $(M_X,M_Y)$ satisfies that $\tau_\delta\leq\tau$. Then, it is shown in \cite{touli2018fpt} that one can compute $\delta=d_\mathrm{I}(M_X,M_Y)$ in time
    \[O\lc n^4\log(n)2^{\tau_\delta}\tau_\delta^{\tau_\delta+2}\rc=O\lc n^4\log(n)2^\tau\tau^{\tau+2}\rc,\]
    which by Equation (\ref{eq:comp}) is a 2-approximation of $\dgh(X,Y)$. Note that, in contrast, by Theorem \ref{thm:dGH-complex}, with the same time complexity, our algorithm can compute the \textbf{exact} value of $\dgh(X,Y)$. 
\end{remark}

\begin{remark}[Improved time complexity for computing $\dgh$]\label{rmk:improved complexity}
Following  essentially the same strategy used for proving \cite[Theorem 5]{touli2018fpt}, we can improve the time complexity for computing $\dgh$ to $O\lc n^2\log^3(n)2^{2\gamma}(2\gamma)^{2\gamma+2}\rc$ under the same assumptions in Theorem \ref{thm:dGH-complex}. We provide details in Appendix \ref{sec:proof of rmk improved complexity}.
\end{remark}

\subsubsection{Additive approximation of $\dgh$ between arbitrary finite metric spaces}\label{sec:additive approx}
For any finite metric space $(X,d_X)$, we introduce the following notion of \emph{absolute ultrametricity} which quantifies how far $X$ is being an ultrametric space:
\begin{definition}[Absolute ultrametricity]
For a finite metric space $(X,d_X)$, we define the {absolute ultrametricity} of $X$ by
\[\absult(X)=\inf_{u}\|d_X-u\|_\infty\]
where the infimum is over all possible ultrametrics on $X$.
\end{definition}

Note that $X\in\ufin$ iff $\absult(X)=0$.

The notion of absolute ultrametricity is related to a more involved notion simply called \emph{ultrametricity}; see \cite{chowdhury2016improved} for a detailed study.

One natural ultrametric on $X$ which can be used to approximate $d_X$ is the so-called \emph{single-linkage ultrametric} (or maximal subdominant) $u_X^*$ \cite{carlsson2010characterization}. The ultrametric $u_X^*$ is defined as follows:
\[u_X^*(x,x')\coloneqq\inf_{x=x_1,x_2,\ldots,x_n=x'}\max_{i=1,\ldots,n-1}d_X(x_i,x_{i+1}),\]
where the infimum is taken over all finite chains $x_1,x_2,\ldots,x_n\in X$ such that $x_1=x$ and $x_n=x'$. It turns out that the single-linkage ultrametric is a fairly good ultrametric approximation of $d_X$:

\begin{proposition}[{\cite[Theorem 3.3]{kvrivanek1988complexity}}]\label{prop:ultrametricity double}
For any finite metric space $(X,d_X)$, 
\[\|d_X-u_X^*\|_\infty=2\,\absult(X).\]
\end{proposition}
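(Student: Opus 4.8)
### Proof proposal

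The plan is to prove the two inequalities $\|d_X - u_X^*\|_\infty \le 2\,\absult(X)$ and $\|d_X - u_X^*\|_\infty \ge 2\,\absult(X)$ separately, with the first being essentially a maximality property of $u_X^*$ among \emph{dominated} ultrametrics, and the second being the genuinely substantive direction.

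\medskip\noindent\textbf{The easy direction ($\le$).} First I would record the standard fact that $u_X^*$ is the \emph{maximal subdominant ultrametric}: $u_X^* \le d_X$ pointwise, and for \emph{any} ultrametric $u$ on $X$ with $u \le d_X$ one has $u \le u_X^*$. (Both are immediate from the chain definition of $u_X^*$ together with the strong triangle inequality for $u$.) Now let $u$ be an arbitrary ultrametric on $X$ and set $\eta \coloneqq \|d_X - u\|_\infty$; it suffices to show $\|d_X - u_X^*\|_\infty \le 2\eta$. The ultrametric $u' \coloneqq \max(u - \eta, 0)$ (one checks this truncation is still an ultrametric, since subtracting a constant from off-diagonal values and clamping at $0$ preserves the strong triangle inequality) satisfies $u' \le u \le d_X + \eta$, hence $u' - \eta \le d_X$; more carefully, $u'(x,x') \le u(x,x') \le d_X(x,x') + \eta$ gives $u(x,x') - \eta \le d_X(x,x')$, and since $u' \le u$ we get an ultrametric $u'' \coloneqq \max(u-\eta,0)$ with $u'' \le d_X$. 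By subdominance $u'' \le u_X^*\le d_X$, so $0 \le d_X - u_X^* \le d_X - u''$. Finally $d_X - u'' \le d_X - (u - \eta) = (d_X - u) + \eta \le 2\eta$ wherever $u \ge \eta$, and where $u < \eta$ we have $u'' = 0$ so $d_X - u'' = d_X \le u + \eta < 2\eta$. Taking the infimum over $u$ yields $\|d_X - u_X^*\|_\infty \le 2\,\absult(X)$.

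\medskip\noindent\textbf{The hard direction ($\ge$).} Here I would again use the truncation trick, but now anchored at $u_X^*$ itself. Let $\eta^* \coloneqq \tfrac12\|d_X - u_X^*\|_\infty$, and consider the candidate ultrametric $v \coloneqq u_X^* + \eta^*$ on off-diagonal pairs (equivalently, raise every positive value of $u_X^*$ by $\eta^*$); this remains an ultrametric because adding a positive constant to all off-diagonal values preserves the strong triangle inequality and non-negativity. I claim $\|d_X - v\|_\infty \le \eta^*$, which by definition of $\absult$ forces $\absult(X) \le \eta^*$, i.e. $2\,\absult(X) \le \|d_X - u_X^*\|_\infty$. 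For the upper bound $v - d_X \le \eta^*$: since $u_X^* \le d_X$ we get $v = u_X^* + \eta^* \le d_X + \eta^*$. For the lower bound $d_X - v \le \eta^*$, i.e. $d_X - u_X^* \le 2\eta^*$: this is exactly the definition of $\eta^*$. The \textbf{main obstacle} is precisely checking that this $v$ is still a legitimate ultrametric on all of $X$ — adding a constant to off-diagonal entries is fine for the strong triangle inequality among three \emph{distinct} points, but one must confirm it does not create issues with the diagonal (it doesn't, since we leave $v(x,x)=0$) and, more importantly, one must double-check there is no subtlety when $u_X^*(x,x') = 0$ for $x \ne x'$ (impossible here since $d_X$ is a genuine metric, so $u_X^*(x,x') \ge$ the single edge $d_X(x,x') > 0$, hence all off-diagonal values of $u_X^*$ are strictly positive and bumping them up by $\eta^* \ge 0$ is harmless).

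\medskip\noindent\textbf{Assembly.} Combining the two displayed inequalities gives $\|d_X - u_X^*\|_\infty = 2\,\absult(X)$. I expect the write-up to be short: the only lemma that needs explicit verification is the elementary "shift/truncate an ultrametric by a constant and it stays an ultrametric" claim, together with the subdominance characterization of $u_X^*$, both of which are standard and can be cited to \cite{carlsson2010characterization} if a cleaner reference is preferred over an inline argument.
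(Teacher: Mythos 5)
Your proof is correct. Note that the paper does not actually prove this proposition: it is imported verbatim as \cite[Theorem 3.3]{kvrivanek1988complexity}, so there is no in-paper argument to compare against; you have supplied a self-contained proof where the authors rely on a citation. Both halves of your argument check out: the upper bound via truncation $u''\coloneqq\max(u-\eta,0)$ followed by the maximality (subdominance) of $u_X^*$, and the lower bound via the uniform shift $v\coloneqq u_X^*+\eta^*$ off the diagonal, which is a legitimate ultrametric witnessing $\absult(X)\le\eta^*$. One small point you should make explicit in a final write-up: the truncation $u''$ may fail definiteness (one can have $u''(x,x')=0$ for distinct $x\neq x'$ when $u(x,x')\le\eta$), so it is only a pseudo-ultrametric; this is harmless because the subdominance property you invoke --- any symmetric function satisfying the strong triangle inequality and dominated by $d_X$ is dominated by $u_X^*$ --- follows from the chain characterization of $u_X^*$ without using definiteness, but as written your parenthetical claim that the truncation ``is still an ultrametric'' is slightly too strong. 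The duplication of $u'$ and $u''$ in the easy direction should also be cleaned up, but the logic is sound.
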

\begin{remark}\label{rmk:property of single linkage ultrametric}
\begin{enumerate}
    \item Proposition \ref{prop:ultrametricity double} implies that if $(X,u_X)$ is already an ultrametric space, then $u_X^*=u_X$.
    \item The time complexity of computing $u_X^*$ from $d_X$ is bounded above by $O(n^2)$ where $n\coloneqq\#X$, cf. \cite{mullner2011modern}.
\end{enumerate}
\end{remark}

For a metric space $(X,d_X)$, its \emph{separation} is defined as $\mathrm{sep}(X):=\inf\{d_X(x,x')|\,x\neq x'\}.$ The following result illustrates how to transfer a doubling condition on a given metric space $(X,d_X)$ to a SGC on its corresponding single-linkage ultrametric space $(X,u_X^*)$. \label{separation}

\nomenclature{$\mathrm{sep}(X)$}{The separation of $X$; page \pageref{separation}.}

\begin{lemma}[Transfer from the doubling property on $d_X$ to the SGC on $u_X^\ast$]\label{lm:ultrametricty sgc}
Let $(X,d_X)$ be a finite metric space. Let $K$ be a doubling constant for $X$ and let $\delta\coloneqq2\,\absult(X)$. Let $s:=\mathrm{sep}(X)$ denote the separation of $X$. Then, for any $\eps\geq 0$, we have that $(X,u_X^*)$ satisfies the second $\lc\eps,\max\lc K,K^{\log_2\lc\frac{2\delta+4\eps}{s}\rc+1}\rc\rc$-growth condition.
\end{lemma}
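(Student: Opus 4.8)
The plan is to fix an arbitrary point $x\in X$ and a scale $t\geq 2\eps$ and bound the number of $(t-2\eps)$-open equivalence classes of $u_X^*$ contained in the closed ball $[x]_{\mathfrak{c}(t)}^{(X,u_X^*)}$. Write $B:=[x]_{\mathfrak{c}(t)}^{(X,u_X^*)}$, so $B=\{x'\in X:\,u_X^*(x,x')\leq t\}$, and let $B_1,\dots,B_N$ be the distinct blocks of the $(t-2\eps)$-open partition of $B$ with respect to $u_X^*$; the goal is to show $N\leq \max\!\lc K,K^{\log_2\lc\frac{2\delta+4\eps}{s}\rc+1}\rc$. The key observation is that $\|d_X-u_X^*\|_\infty=\delta$ by Proposition \ref{prop:ultrametricity double}, so $u_X^*$ and $d_X$ differ pointwise by at most $\delta$; hence, picking representatives $z_i\in B_i$, the $z_i$ are pairwise separated by at least $t-2\eps$ in $u_X^*$ and therefore by at least $(t-2\eps)-\delta$ in $d_X$ (provided this is positive), while they all lie within $d_X$-distance $t+\delta$ of $x$. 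Thus $\{z_1,\dots,z_N\}$ is a $\lc(t-2\eps)-\delta\rc$-separated subset of the $d_X$-ball of radius $t+\delta$ around $x$, and a standard doubling packing argument bounds $N$.

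First I would recall the packing bound that follows from the $K$-doubling property: a closed $d_X$-ball of radius $R$ contains at most $K^{\lceil\log_2(R/r)\rceil}$ points that are pairwise more than $r$ apart (iterate the covering-by-half-radius-balls $\lceil\log_2(R/r)\rceil$ times until each covering ball has radius $<r/2$, forcing it to contain at most one of the separated points). Applying this with $R=t+\delta$ and $r=(t-2\eps)-\delta$ gives
\[
N\leq K^{\lceil\log_2\!\lc\frac{t+\delta}{\,t-2\eps-\delta\,}\rc\rceil}
\]
whenever $t-2\eps-\delta>0$. The function $t\mapsto \frac{t+\delta}{t-2\eps-\delta}$ is decreasing in $t$ on its domain of positivity, so the worst case is the smallest admissible $t$. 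Here is where the separation $s=\mathrm{sep}(X)$ enters: if $t-2\eps-\delta>0$ but is small, the blocks $B_i$ are forced to be singletons (if $t-2\eps\leq s$ then $[z_i]_{\mathfrak{o}(t-2\eps)}=\{z_i\}$), in which case $N\leq \#[x]_{\mathfrak{c}(t)}^{(X,u_X^*)}$ and a separate, cruder bound applies; more usefully, since any two distinct points of $X$ are at $d_X$-distance $\geq s$, and $u_X^*\leq d_X$ pointwise with $u_X^*$ also realized by chains, one shows $t-2\eps$ can effectively be taken $\geq s$ — equivalently, the only scales $t$ producing a nontrivial partition satisfy $t\geq s+2\eps$ or at least allow us to replace $t$ by $\max(t,s+2\eps)$ in the estimate. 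Plugging $t= s+2\eps$ (so $R=s+2\eps+\delta\leq 2(\delta+2\eps)$ roughly, using $s\leq\text{diam}$ crudely bounded, and $r=s-\delta$... ) — I will instead argue directly: when $t-2\eps\leq \delta+s$ hmm. The cleanest route: split into the case $t-2\eps\le \delta$ (then $u_X^*$-distance $>t-2\eps$ between representatives need not give $d_X$-separation, but then... ) — on reflection, the robust argument is to note a $(t-2\eps)$-open $u_X^*$-class is nontrivial only if $t-2\eps>s$, whence $t>s+2\eps$; and then if additionally $t-2\eps-\delta\le 0$ the class is still a single $u_X^*$-ball whose cardinality we bound, while if $t-2\eps-\delta>0$ the packing bound gives $N\le K^{\lceil\log_2\frac{t+\delta}{t-2\eps-\delta}\rceil}\le K^{\log_2\!\lc\frac{2\delta+4\eps}{s}\rc+1}$ after substituting the extreme value $t=s+2\eps$ and simplifying $\frac{(s+2\eps)+\delta}{(s+2\eps)-2\eps-\delta}=\frac{s+2\eps+\delta}{s-\delta}$, then bounding this ratio; a short monotonicity computation yields the claimed exponent, and the trivial case is absorbed by the outer $\max(K,\cdot)$.

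The main obstacle I anticipate is handling the small-scale regime cleanly: precisely, controlling what happens when $t-2\eps$ is comparable to or smaller than $\delta$, where $u_X^*$-separation of the block representatives no longer transfers to $d_X$-separation and the naive packing argument breaks down. The resolution will lean on the ultrametric structure of $u_X^*$ (so that $(t-2\eps)$-open classes are genuine $u_X^*$-balls, Proposition \ref{prop:basic property ultrametric}) together with the fact that such a ball, if it properly contains more than one point, must have $u_X^*$-diameter $\geq s$ — this is the role of $\mathrm{sep}(X)$ — which forces the relevant scales to satisfy $t-2\eps\ge s$ and hence lets us substitute $t=s+2\eps$ as the extremal (worst) case in the doubling packing estimate. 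Once that reduction is made, the remaining inequality $K^{\lceil\log_2\frac{s+2\eps+\delta}{s-\delta}\rceil}\le K^{\log_2\!\lc\frac{2\delta+4\eps}{s}\rc+1}$ is a routine estimate (using $\lceil a\rceil\le a+1$ and bounding the logarithm's argument), and the degenerate case where $s-\delta\le 0$ or the ball is a singleton is covered by the term $K$ in $\max(K,\cdot)$. I would present the doubling packing lemma as a self-contained sublemma, then the case analysis, then the arithmetic simplification.
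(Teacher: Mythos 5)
Your overall frame is the same as the paper's: use $\|d_X-u_X^*\|_\infty=\delta$ (Proposition \ref{prop:ultrametricity double}) to sandwich $u_X^*$-balls between $d_X$-balls and then invoke the doubling property of $d_X$. But the execution via a packing bound on separated representatives has a genuine quantitative gap: it is lossier than the covering argument by a factor of $2$ in radius, which costs powers of $K$ that the stated bound does not allow. Concretely, in the large-$t$ regime the correct argument covers $B^{d_X}_{t+\delta}(x)$ by $K$ balls of radius $\frac{t+\delta}{2}<t-2\eps$ and observes that each such $d_X$-ball is contained in a \emph{single} $(t-2\eps)$-open $u_X^*$-class (because $u_X^*\leq d_X$ pointwise), giving exactly $K$ classes. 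A packing bound for points pairwise at distance $\geq t-2\eps$ requires covering balls of \emph{diameter} less than $t-2\eps$, i.e.\ radius below $\frac{t-2\eps}{2}$, which forces at least two doubling steps and yields $K^2$. Since $K^2>\max\lc K,K^{\log_2\lc\frac{2\delta+4\eps}{s}\rc+1}\rc$ whenever $s>\delta+2\eps$ (e.g.\ $\delta=\eps=0$, where the lemma must reduce to ``$K$-doubling implies the second $(0,K)$-growth condition''), your route cannot reach the stated constant no matter how the arithmetic is arranged.

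There are two further problems. First, the regime where $t-2\eps<s$, in which every open class is a singleton and the count equals $\#[x]_{\mathfrak{c}(t)}$, is precisely where the exponent $\log_2\lc\frac{2\delta+4\eps}{s}\rc+1$ comes from: one iterates doubling on $B^{d_X}_{t+\delta}(x)$ until the covering balls have radius $<s$ and are therefore singletons, and then uses $t+\delta\leq 2\delta+4\eps$ (valid because larger $t$ falls into the first case). You defer this to an unspecified ``cruder bound,'' but it is the heart of the estimate, and your extremal substitution $t=s+2\eps$ with denominator $s-\delta$ leads to the inequality $\frac{s+2\eps+\delta}{s-\delta}\leq\frac{2\delta+4\eps}{s}$, which is false in general (take $\delta=0$ and $s$ much larger than $\eps$; the fact that such $t$ then lies in the large-$t$ regime is exactly the case split you would need to make explicit, and it is the one based on $t\lessgtr\delta+4\eps$, not on $t\lessgtr s+2\eps$). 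Second, you needlessly weaken the separation of the representatives to $(t-2\eps)-\delta$: since $u_X^*\leq d_X$ pointwise (take the trivial chain), $u_X^*$-separation transfers to $d_X$-separation with no loss, and the degenerate case $s\leq\delta$ that worries you never arises for that reason. Fixing all of this essentially forces you back onto the paper's two-case covering argument.
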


The proof is postponed to Appendix \ref{sec:proof of approximation}. By Remark \ref{rmk:property of single linkage ultrametric}, when $(X,u_X)$ is itself an ultrametric space, $u_X^*=u_X$ and thus $X\in\mathcal{U}_2\lc\eps,\max\lc K,K^{\log_2\lc\frac{4\eps}{s}\rc+1}\rc\rc$. In particular, if $\eps=0$, we have that $X\in\mathcal{U}_2\lc0,K\rc$, which coincides with the second claim of Remark \ref{rmk:sgc-doubling}.

Now, let $\ms^\mathrm{fin}$ denote the collection of all finite metric spaces. We denote by $\mathfrak{H}:\ms^\mathrm{fin}\rightarrow\mathcal{U}^\mathrm{fin}$ the \emph{single-linkage map} sending $(X,d_X)\in \ms^\mathrm{fin}$ to $\mathfrak{H}(X)\coloneqq (X,u_X^*)\in \ums^\mathrm{fin}$. 

\begin{proposition}\label{prop:2delta approximation}
Let $X,Y\in \ms^\mathrm{fin}$ and let $\delta\coloneqq\max(\absult(X),\absult(Y))$. Then,
\[\dgh(\mathfrak{H}(X),\mathfrak{H}(Y))\leq\dgh(X,Y)\leq \dgh(\mathfrak{H}(X),\mathfrak{H}(Y))+2\delta.\]
\end{proposition}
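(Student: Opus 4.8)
The plan is to prove both inequalities in Proposition~\ref{prop:2delta approximation} by transferring correspondences between the original metric spaces and their single-linkage ultrametric counterparts, using the uniform bound $\|d_X-u_X^*\|_\infty=2\,\absult(X)$ from Proposition~\ref{prop:ultrametricity double} to control how the distortion changes. The key observation is that a correspondence $R\subseteq X\times Y$ is simultaneously a correspondence for $(X,d_X),(Y,d_Y)$ and for $(\mathfrak H(X),u_X^*),(\mathfrak H(Y),u_Y^*)$, since the notion of correspondence depends only on the underlying sets; so the entire argument reduces to comparing $\dis(R)$ computed with the two pairs of metrics.

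First I would prove the lower bound $\dgh(\mathfrak H(X),\mathfrak H(Y))\le\dgh(X,Y)$. Let $R$ be any correspondence and let $(x,y),(x',y')\in R$. The quantity $|u_X^*(x,x')-u_Y^*(y,y')|$ should be bounded by $\dis(R,d_X,d_Y)$ plus an error coming from replacing $d$ by $u^*$. This is \emph{not} immediate from the triangle inequality alone with a bound of $4\delta$; instead I would exploit the variational description $u_X^*(x,x')=\inf\max_i d_X(x_i,x_{i+1})$ directly. The cleanest route: I claim that for any correspondence $R$ and any pair in $R$, $|u_X^*(x,x')-u_Y^*(y,y')|\le \dis(R,d_X,d_Y)$ exactly — because single-linkage is a ``$1$-Lipschitz functor'' in the sense that if $\dis(R,d_X,d_Y)\le\eta$ then a chain realizing $u_X^*(x,x')$ in $X$ transports through $R$ to a chain in $Y$ whose consecutive $d_Y$-values exceed the $d_X$-values by at most $\eta$, giving $u_Y^*(y,y')\le u_X^*(x,x')+\eta$, and symmetrically. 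Hence $\dis(R,u_X^*,u_Y^*)\le\dis(R,d_X,d_Y)$ for every $R$, and taking the infimum over $R$ and dividing by $2$ yields $\dgh(\mathfrak H(X),\mathfrak H(Y))\le\dgh(X,Y)$.

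For the upper bound, let $R$ be an arbitrary correspondence. For any $(x,y),(x',y')\in R$, the triangle inequality gives
\[
|d_X(x,x')-d_Y(y,y')| \le |u_X^*(x,x')-u_Y^*(y,y')| + \|d_X-u_X^*\|_\infty + \|d_Y-u_Y^*\|_\infty,
\]
and by Proposition~\ref{prop:ultrametricity double} the last two terms are $2\,\absult(X)+2\,\absult(Y)\le 4\delta$. Taking suprema over pairs in $R$ gives $\dis(R,d_X,d_Y)\le\dis(R,u_X^*,u_Y^*)+4\delta$; taking the infimum over correspondences and dividing by $2$ gives $\dgh(X,Y)\le\dgh(\mathfrak H(X),\mathfrak H(Y))+2\delta$. (Here I would double-check the constant: $\frac12\cdot 4\delta = 2\delta$ matches the claimed bound, so no slack is lost.)

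The main obstacle is the sharp ``$1$-Lipschitz'' claim for single-linkage used in the lower bound: one must verify that the chain-transport argument works in both directions and that the error in each chain step is genuinely bounded by $\dis(R,d_X,d_Y)$ and does not accumulate along the chain (it does not, because we take a \emph{max} over chain edges, not a sum). If one is not comfortable with the exact Lipschitz statement, a weaker bound $\dis(R,u_X^*,u_Y^*)\le\dis(R,d_X,d_Y)+4\delta$ — obtained by the same triangle-inequality estimate as in the upper bound — still yields $\dgh(\mathfrak H(X),\mathfrak H(Y))\le\dgh(X,Y)+2\delta$, which is weaker than stated; so getting the clean inequality $\dgh(\mathfrak H(X),\mathfrak H(Y))\le\dgh(X,Y)$ really does require the functoriality/Lipschitz property of $u^*$. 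I would therefore isolate that property as a short lemma (possibly citing \cite{carlsson2010characterization} or \cite{memoli2012some} for the fact that the single-linkage map is $1$-Lipschitz for $\dgh$) and invoke it.
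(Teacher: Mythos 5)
Your proof is correct, and both halves land on the intended constants. The upper bound is essentially the paper's argument unpacked: the paper strings together the triangle inequality for $\dgh$ along the chain $(X,d_X)\to(X,u_X^*)\to(Y,u_Y^*)\to(Y,d_Y)$ together with the observation that two metrics on the same set are at $\dgh$-distance at most half their sup-norm difference, whereas you run the same triangle inequality pointwise on the distortion of a single correspondence and only then pass to the infimum; the bookkeeping $\tfrac12\cdot 4\delta=2\delta$ is identical. The genuine divergence is in the lower bound: the paper simply cites the stability of the single-linkage map \cite[Proposition 2]{carlsson2010characterization}, while you re-derive it via the chain-transport argument, showing $\dis(R,u_X^*,u_Y^*)\leq\dis(R,d_X,d_Y)$ for every correspondence $R$ by pushing a minimizing chain through $R$ and noting that the per-edge error does not accumulate because $u^*$ takes a maximum, not a sum, over chain edges. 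That argument is sound (finiteness guarantees the optimal chain is attained, and intermediate chain points can always be lifted through $R$ since $p_X(R)=X$), and you are right that the weaker triangle-inequality estimate would lose the clean inequality $\dgh(\mathfrak{H}(X),\mathfrak{H}(Y))\leq\dgh(X,Y)$; isolating the $1$-Lipschitz property of $\mathfrak{H}$ as a lemma, or citing it as the paper does, are both acceptable, with your route having the advantage of being self-contained.
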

\begin{proof}
The leftmost inequality follows directly from the stability result of the single-linkage map, cf. \cite[Proposition 2]{carlsson2010characterization}. For the rightmost inequality, we first have the following obvious observation:
\begin{claim}
Given a finite set $X$ and two metrics $d_1,d_2$ on the set $X$, we have that
\[\dgh((X,d_1),(X,d_2))\leq \|d_1-d_2\|_\infty.\]
\end{claim}
Then, we have that 
\begin{align*}
    \dgh((X,d_X),(Y,d_Y))&\leq \dgh((X,d_X),(X,u_X^*))+\dgh((X,u_X^*),(Y,u_Y^*))+\dgh((Y,d_Y),(Y,u_Y^*))\\
    &\leq \delta+\dgh((X,u_X^*),(Y,u_Y^*))+\delta\leq \dgh((X,u_X^*),(Y,u_Y^*))+2\delta.
\end{align*}
This implies that $ \dgh(X,Y)\leq \dgh(\mathfrak{H}(X),\mathfrak{H}(Y))+2\delta.$
\end{proof}

This proposition indicates that $\dgh(\mathfrak{H}(X),\mathfrak{H}(Y))$ is a $2\delta$-additive approximation to $\dgh(X,Y)$. Applying Theorem \ref{thm:dGH-complex} to computing $\dgh(\mathfrak{H}(X),\mathfrak{H}(Y))$, this immediately gives rise to the following time complexity result for computing an additive approximation of $\dgh(X,Y)$.

\begin{corollary}[Computing an additive approximation to $\dgh(X,Y)$]\label{coro:complexity approximation}
Let $X$ and $Y$ be two $K$-doubling finite metric spaces for some $K>0$. Let $\delta\coloneqq2\max(\absult(X),\absult(Y))$. Let $s\coloneqq\min(\mathrm{sep}(X),\mathrm{sep}(Y))$. Then, the $2\delta$-additive approximation $\dgh(\mathfrak{H}(X),\mathfrak{H}(Y))$ of $\dgh(X,Y)=:\frac{\eps}{2}$ can be computed in time $O\lc n^4\log(n)2^\gamma\gamma^{\gamma+2}\rc$, where $n=\max(\#X,\#Y)$ and $\gamma\coloneqq\max\lc K,K^{\log_2\lc\frac{2\delta+4\eps}{s}\rc+1}\rc$.
\end{corollary}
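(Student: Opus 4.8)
The plan is to assemble this corollary directly from the pieces already in place, treating it as an essentially bookkeeping argument. First I would invoke Proposition \ref{prop:2delta approximation}, which tells us that $\dgh(\mathfrak{H}(X),\mathfrak{H}(Y))$ differs from $\dgh(X,Y)$ by at most $2\delta$ where $\delta = \max(\absult(X),\absult(Y))$; note the corollary's $\delta$ is defined as twice this, so a small care with the factor of $2$ in the statement is needed, but the approximation quality claim is immediate. The real content is the \emph{running time}: we must show that computing $\dgh(\mathfrak{H}(X),\mathfrak{H}(Y))$ fits into the time bound $O(n^4\log(n)2^\gamma\gamma^{\gamma+2})$.

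For the running time, the strategy is to reduce to Theorem \ref{thm:dGH-complex}, whose hypotheses require that the two input ultrametric spaces lie in $\mathcal{U}_2(\eps,\gamma)$ for a parameter $\eps$ with $\eps \geq 2\,\dgh(\mathfrak{H}(X),\mathfrak{H}(Y))$. So I would proceed in the following steps. (1) First form the single-linkage ultrametric spaces $\mathfrak{H}(X) = (X,u_X^*)$ and $\mathfrak{H}(Y) = (Y,u_Y^*)$; by Remark \ref{rmk:property of single linkage ultrametric}(2) this costs $O(n^2)$, which is dominated. (2) Set $\eps$ to be $2\,\dgh(X,Y)$ (or, to be safe, any value known a priori to be at least $2\,\dgh(\mathfrak{H}(X),\mathfrak{H}(Y))$; since $\dgh(\mathfrak{H}(X),\mathfrak{H}(Y)) \leq \dgh(X,Y) = \eps/2$ by Proposition \ref{prop:2delta approximation}, the choice $\eps$ works). (3) Apply Lemma \ref{lm:ultrametricty sgc} to each of $X$ and $Y$: since both are $K$-doubling with $2\,\absult(\cdot)\leq\delta$ and separation at least $s$, we get $\mathfrak{H}(X),\mathfrak{H}(Y) \in \mathcal{U}_2\!\left(\eps, \max\!\left(K,K^{\log_2\left(\frac{2\delta+4\eps}{s}\right)+1}\right)\right) = \mathcal{U}_2(\eps,\gamma)$. (4) Now the hypotheses of Theorem \ref{thm:dGH-complex} are met for the pair $(\mathfrak{H}(X),\mathfrak{H}(Y))$ with these values of $\eps$ and $\gamma$, so Algorithm \ref{algo-dGH-opt} computes $\dgh(\mathfrak{H}(X),\mathfrak{H}(Y))$ in time $O(n^4\log(n)2^\gamma\gamma^{\gamma+2})$. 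Adding the $O(n^2)$ preprocessing cost leaves the bound unchanged.

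The one subtlety I would be careful about — and which I expect is the only genuine obstacle — is the circularity in the definition of $\gamma$: the parameter $\gamma$ in the statement depends on $\eps = 2\,\dgh(X,Y)$, which is precisely the quantity being (approximately) computed. I would address this by emphasizing, as in the remark following Theorem \ref{thm:dGH-complex}, that the algorithm itself does not need to know $\eps$ or $\gamma$ in advance; Algorithm \ref{algo-dGH-opt} simply iterates over the sorted candidate values in $\mathcal{E}(\mathfrak{H}(X),\mathfrak{H}(Y))$ and halts at the first feasible one, and the complexity analysis applies a posteriori once we observe that all the values $\eps_i$ actually tested satisfy $\eps_i \leq \eps$ (because the correct distortion is $2\,\dgh(\mathfrak{H}(X),\mathfrak{H}(Y)) \leq \eps$), hence $\mathfrak{H}(X),\mathfrak{H}(Y)\in\mathcal{U}_2(\eps_i,\gamma)$ by the nesting $\mathcal{U}_2(\eps,\gamma)\subseteq\mathcal{U}_2(\eps',\gamma)$ for $\eps'\leq\eps$ noted after Definition \ref{def:growth2}. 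With that observation in hand, everything is a direct concatenation of Proposition \ref{prop:2delta approximation}, Lemma \ref{lm:ultrametricty sgc}, and Theorem \ref{thm:dGH-complex}, and no new estimates are required.
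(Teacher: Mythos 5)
Your proposal is correct and follows essentially the same route as the paper's proof: form the single-linkage ultrametrics in $O(n^2)$, apply Lemma \ref{lm:ultrametricty sgc} to place $\mathfrak{H}(X),\mathfrak{H}(Y)$ in $\mathcal{U}_2(\eps,\gamma)$, invoke Proposition \ref{prop:2delta approximation} for the approximation quality and for $2\,\dgh(\mathfrak{H}(X),\mathfrak{H}(Y))\leq\eps$, and then apply Theorem \ref{thm:dGH-complex}. Your explicit handling of the nesting $\mathcal{U}_2(\eps,\gamma)\subseteq\mathcal{U}_2(\eps',\gamma)$ for $\eps'\leq\eps$ matches the paper's step of passing from $\eps$ to $\eps_u\coloneqq 2\,\dgh(\mathfrak{H}(X),\mathfrak{H}(Y))$.
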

\begin{proof}
By Remark \ref{rmk:property of single linkage ultrametric}, the time complexity of computing $u_X^*$ from $d_X$ and computing $u_Y^*$ from $d_Y$ is bounded above by $O(n^2)$ where $n\coloneqq\max(\#X,\#Y)$.

Note that $\eps=2\dgh(X,Y)$. Then, by Lemma \ref{lm:ultrametricty sgc}, $\mathfrak{H}(X)$ and $\mathfrak{H}(Y)$ both satisfy the second $\lc\eps,\gamma\rc$-growth condition, where $\gamma\coloneqq\max\lc K,K^{\log_2\lc\frac{2\delta+4\eps}{s}\rc+1}\rc$. 
Let $\eps_u\coloneqq 2\,\dgh(\mathfrak{H}(X),\mathfrak{H}(Y))$. Note that $\eps_u\leq \eps$ by Proposition \ref{prop:2delta approximation}. Then, $\mathfrak{H}(X)$ and $\mathfrak{H}(Y)$ both satisfy the second $\lc\eps_u,\gamma\rc$-growth condition.
Therefore, by Theorem \ref{thm:dGH-complex}, $\frac{\eps_u}{2}=\,\dgh(\mathfrak{H}(X),\mathfrak{H}(Y))$, which is a $2\delta$ additive approximation of $\frac{\eps}{2}=\dgh(X,Y)$, can be computed in time $O\lc n^4\log(n)2^{\gamma}\gamma^{\gamma+2}\rc$, where $\gamma\coloneqq\max\lc K,K^{\log_2\lc\frac{2\delta+4\eps}{s}\rc+1}\rc$.

Therefore, the total time complexity is bounded by 
$$O(n^2)+O\lc n^4\log(n)2^{\gamma}\gamma^{\gamma+2}\rc=O\lc n^4\log(n)2^{\gamma}\gamma^{\gamma+2}\rc.$$
\end{proof}

\section{Discussion}

It is well known that computing $\dgh$ between finite metric spaces leads to NP-hard problems. This hardness result holds even in the context of ultrametric spaces, which are highly structured metric spaces appearing in many practical applications.

In contrast to the hardness results for $\dgh$, by exploiting the ultrametric structure of the input spaces we first devised a polynomial time algorithm for computing $\ugh$, an ultrametric variant of $\dgh$, on the collection of all finite ultrametric spaces. Indeed, as a consequence of being more rigid than $\dgh$, we proved that $\ugh$ can be computed in  $O(n\,\log(n))$ time via Algorithm \ref{algo-uGH}, which we also extended to the case of ultra-dissimilarity spaces.

From a different angle, but also with the goal of taming the NP-hardness associated to computing  $\dgh$ on the collection of all finite ultrametric spaces, as a second contribution, we first devised a recursive algorithm (Algorithm \ref{algo-dGH-rec}) and then based on this, a dynamic programming  FPT-algorithm (Algorithm \ref{algo-dGH-dyn}) for computing $\dgh$. 

We  provide implementations of Algorithm \ref{algo-uGH} for $\ugh$ and Algorithm \ref{algo-dGH-rec} for $\dgh$ in our github repository \cite{github2019}. 

We leave for future work finding extensions of Algorithms \ref{algo-dGH-rec} and \ref{algo-dGH-dyn} to the case of ultra-dissimilarity spaces and eventually general tree metric spaces. 

\bibliographystyle{plain}
\bibliography{ugh-bib}


\newpage
\appendix
\section{Data structure for ultrametric spaces and implementation details}\label{sec:data-structure} 
Whereas dendrograms are helpful for our theoretical development, we found a certain rooted tree structure associated to ultrametric spaces to be extremely helpful for designing our algorithms. In this section, we provide a detailed description of such rooted tree structure.
\subsection{Tree structure for ultrametric spaces}\label{sec:tree structure of dend} 
Tree structures for ultrametric spaces are thoroughly studied in the literature \cite{petrov2014gomory,kloeckner2015geometric,dovgoshey2018isomorphic}. Following the \emph{labeled rooted tree} language used in \cite{dovgoshey2018isomorphic}, we provide a description of a \emph{weighted rooted tree} representation of any finite ultrametric space.

A node weighted rooted tree is a tuple $T=(V,E,w,r)$ where $(V,E)$ denotes an undirected tree with $V$ being the vertex set and $E$ being the edge set, $w:V\rightarrow\mathbb R_{\geq0}$ denotes a node weight function and $r\in V$ is a specified vertex called the root of $T$. 
Two weighted rooted trees $T_1=(V_1,E_1,w_1,r_1)$ and $T_2=(V_2,E_2,w_2,r_2)$ are said to be \emph{isomorphic}, if there exists a bijection $f:V_1\rightarrow V_2$ such that 
\begin{enumerate}
    \item for every $x,y\in V_1$, $\{x,y\}\in E_1$ iff $\{f(x),f(y)\}\in E_2$;
    \item for every $x\in V_1$, $w_1(x)=w_2(f(x))$;
    \item $f(r_1)=r_2$.
\end{enumerate}

\begin{remark}[Standard terminology for rooted trees]
Given any weighted rooted tree $T=(V,E,w,r)$, we call a collection of distinct vertices $x_0,x_1,\ldots,x_k\in V$ a \emph{path} if for each $i=0,\ldots,k-1$ we have $\{x_i,x_{i+1}\}\in E$. If for any given distinct $x,y\in V$ there exists a path $x_0=r,x_1,\ldots,x_k=y$ such that $x=x_i$ for some $i=0,\ldots,k-1$, then we say that $x$ is an \emph{ancestor} of $y$ and $y$ is a \emph{descendant} of $x$. If furthermore $\{x,y\}\in E$, then we say that $x$ is the \emph{parent} of $y$ and also that $y$ is a \emph{child} of $x$.
\end{remark}

The following useful fact will be utilized multiple times in the sequel.

\begin{lemma}\label{lm:sum of children}
Given a weighted rooted tree $T=(V,E,w,r)$, we denote by $k_x$ the number of children of any given $x\in V$. Then,
\[\sum_{x\in V}k_x=O(\#V).\]
\end{lemma}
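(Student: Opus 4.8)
The plan is to prove $\sum_{x\in V}k_x=O(\#V)$ by a standard counting argument: count the total number of children across all vertices by counting each child exactly once. First I would observe that every vertex $y\in V$ other than the root $r$ has exactly one parent; this is immediate from the definition of a rooted tree (there is a unique path from $r$ to $y$, and the second-to-last vertex on that path is the unique parent of $y$). So if we form the set $S\coloneqq\{(x,y)\in V\times V:\,x\text{ is the parent of }y\}$, then projecting onto the second coordinate is a bijection from $S$ onto $V\setminus\{r\}$, whence $\#S=\#V-1$.

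Next I would note that $\sum_{x\in V}k_x$ is precisely $\#S$, since for each fixed $x\in V$ the number of pairs $(x,y)\in S$ is exactly the number of children of $x$, namely $k_x$. Combining the two observations gives
\[\sum_{x\in V}k_x=\#S=\#V-1\leq \#V,\]
which is $O(\#V)$ as claimed. (One could equivalently phrase this via the handshake lemma: in a tree $\#E=\#V-1$, and $\sum_{x\in V}k_x$ counts each edge exactly once because each edge joins a parent to a child.)

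There is really no obstacle here; the only thing to be careful about is the trivial edge case $\#V=1$ (the tree consisting of the root alone), where the sum is $0$ and the bound still holds, and to make sure one does not double-count: each edge contributes to $k_x$ for exactly one endpoint $x$ (the parent), so the sum telescopes cleanly to $\#V-1$. I would present this as a two-line proof invoking the bijection between $V\setminus\{r\}$ and the set of (vertex, child) incidences.
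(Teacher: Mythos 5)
Your proof is correct and follows essentially the same counting argument as the paper: the paper derives the bound from $\sum_{x\in V}\mathrm{degree}(x)=2\,\#E$ together with $\mathrm{degree}(x)=k_x+1-\delta_{rx}$ and $\#E=\#V-1$, which is exactly the handshake-lemma phrasing you mention at the end. Your direct bijection between parent--child incidences and $V\setminus\{r\}$ is a marginally cleaner route to the same fact, and it yields the exact identity $\sum_{x\in V}k_x=\#V-1$ rather than only the $O(\#V)$ bound, but there is no substantive difference in the underlying idea.
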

\begin{proof}
Note that $\sum_{x\in V}\mathrm{degree}(x)=2\cdot\#E.$ Since $T$ is a tree, we have that $\#E=\#V-1$. Moreover, $\mathrm{degree}(x)=k_x+1-\delta_{rx}$. Therefore,
\[\sum_{x\in V}k_x=O\!\lc \sum_{x\in V}\mathrm{degree}(x)\rc=O(\#E)=O(\#V).\]
\end{proof}

A dendrogram automatically induces a weighted rooted tree as one can deduce from its graphic representation (see Figure \ref{fig:dendro-tree}). We describe this relationship between dendrograms and weighted rooted trees as follows. Let $(X,u_X)$ be a finite ultrametric space and let $\theta_X$ be its corresponding dendrogram. Following the notation from Section \ref{sec:dp-main-text} we let $V_X$ denote the collection of all closed balls in $X$. It is then clear that $V_X$ is a finite set. Furthermore, $V_X$ contains $X$ and all singletons $\{x\}$ for $x\in X$. Define a collection $E_X$ of two-element \emph{subsets} of $V_X$ as follows: for any two different $B,B'\in V_X$, $\{B,B'\}\in E_X$ iff $B'$ (resp. $B$) is the smallest (under inclusion) ball different from but containing $B$ (resp. $B'$). Then, it is easy to see from the dendrogram that $(V_X,E_X)$ is a combinatorial tree, i.e., a connected graph without cycles (see Figure \ref{fig:dendro-tree} for an illustration), a fact which we record for later use:

\begin{lemma}
$(V_X,E_X)$ is an undirected tree with vertex set $V_X$ and edge set $E_X$.
\end{lemma}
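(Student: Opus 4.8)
The plan is to exhibit $(V_X,E_X)$ as the graph of a \emph{parent function} on $V_X$ and then invoke the standard characterization of trees. First I would show that for every $B\in V_X$ with $B\neq X$ there is a well-defined smallest closed ball strictly containing $B$, which I will denote $p(B)$. Indeed $X\in V_X$ (take $B_{\diam(X)}(x)$ for any $x\in X$), and $X$ strictly contains $B$, so the set $\mathcal{S}_B\coloneqq\{B'\in V_X:\,B\subsetneq B'\}$ is nonempty; it is finite since $V_X$ is finite; and it is totally ordered by inclusion, because any two members of $\mathcal{S}_B$ both contain the nonempty set $B$, hence intersect, hence are nested by the third item of Proposition \ref{prop:basic property ultrametric}. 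A nonempty finite chain has a minimum, so $p(B)\coloneqq\min_{\subseteq}\mathcal{S}_B$ is well-defined, and by construction $\{B,p(B)\}\in E_X$.

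Next I would argue that $E_X$ is exactly the set $\{\{B,p(B)\}:\,B\in V_X,\ B\neq X\}$ and that the assignment $B\mapsto\{B,p(B)\}$ is a bijection onto $E_X$. Given $\{B,B'\}\in E_X$ with $B\neq B'$, the two balls are comparable (if they were disjoint, neither could contain the other, contradicting the defining condition of $E_X$), say $B\subsetneq B'$; then the clause ``$B$ is the smallest ball strictly containing $B'$'' is impossible, so the clause ``$B'$ is the smallest ball strictly containing $B$'' holds, i.e.\ $B'=p(B)$. Conversely every $\{B,p(B)\}$ lies in $E_X$. Injectivity of $B\mapsto\{B,p(B)\}$ is clear, since $B$ is the unique member of the pair $\{B,p(B)\}$ that is properly contained in the other. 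Consequently $\#E_X=\#V_X-1$.

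Then I would establish connectedness. Fix $B\in V_X$ and iterate $p$: the sequence $B,\ p(B),\ p(p(B)),\dots$ is strictly increasing under inclusion, so by finiteness of $V_X$ it terminates, and it can only terminate at an element having no proper superball in $V_X$; the only such element is $X$, since any $B\neq X$ satisfies $B\subsetneq X$. Thus there is a path in $(V_X,E_X)$ from $B$ to $X$, and since $B$ was arbitrary, the graph $(V_X,E_X)$ is connected.

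Finally, a connected graph on $\#V_X$ vertices with $\#V_X-1$ edges is a tree, which yields the claim. I do not anticipate a serious obstacle here; the one point requiring care is the reading of the two-clause definition of $E_X$ in the step identifying $E_X$ with the graph of $p$, together with the appeal to Proposition \ref{prop:basic property ultrametric} guaranteeing that distinct closed balls are either nested or disjoint.
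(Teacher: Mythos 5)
Your proof is correct. The paper itself records this lemma without a formal argument, simply asserting it is evident from the dendrogram picture (the tree structure of $T_X$ being inherited from that of $\theta_X$), so your write-up is a genuine formalization rather than a retracing of the paper's steps. The route you take is the natural one: the nestedness of closed balls (item 3 of Proposition \ref{prop:basic property ultrametric}) makes $\mathcal{S}_B$ a finite nonempty chain, so the parent $p(B)$ is well-defined for every $B\neq X$; the identification of $E_X$ with $\left\{\{B,p(B)\}:B\in V_X,\ B\neq X\right\}$ together with injectivity of $B\mapsto\{B,p(B)\}$ gives $\#E_X=\#V_X-1$; iterating $p$ reaches $X$ and yields connectedness; and the standard count characterization of trees finishes the argument. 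All steps check out, including the careful reading of the two-clause (symmetric) definition of $E_X$ and the degenerate case $\#X=1$, which your argument handles vacuously. A minor alternative worth noting: instead of invoking the edge-count characterization, one can conclude acyclicity directly, since a cycle in $(V_X,E_X)$ would force some ball to have two distinct minimal proper superballs among its neighbors on the cycle, contradicting the uniqueness of $p(B)$; this buys a marginally more self-contained ending, but your version is equally rigorous and matches the tree facts (e.g. $\#E_X=\#V_X-1$) that the paper later uses in Lemma \ref{lm:sum of children} and Remark \ref{rmk:tree number}.
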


\begin{lemma}\label{lm:spectrum via VX}
Let $X$ be a finite ultrametric space. Then, 
\[\spec(X)=\{\diam(B):\, B\in V_X\}. \]
\end{lemma}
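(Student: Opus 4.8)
The plan is to show the two inclusions between $\spec(X) = \{u_X(x,x') : x,x' \in X\}$ and $\{\diam(B) : B \in V_X\}$. For the inclusion $\spec(X) \subseteq \{\diam(B): B\in V_X\}$, I would start with any pair $x,x' \in X$ and let $t \coloneqq u_X(x,x')$. Consider the closed ball $B \coloneqq B_t(x) = [x]_{\mathfrak{c}(t)} \in V_X$. Since $u_X(x,x') = t$, we have $x' \in B$, so $\diam(B) \geq t$. On the other hand, every $z \in B$ satisfies $u_X(x,z) \leq t$, so for any $z,z' \in B$ the strong triangle inequality gives $u_X(z,z') \leq \max(u_X(z,x), u_X(x,z')) \leq t$; hence $\diam(B) \leq t$. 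Therefore $\diam(B) = t = u_X(x,x')$, which shows $t \in \{\diam(B) : B \in V_X\}$. (If $x = x'$, then $t = 0 = \diam(\{x\})$ and $\{x\} \in V_X$.)

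For the reverse inclusion $\{\diam(B) : B \in V_X\} \subseteq \spec(X)$, I would take any closed ball $B \in V_X$. Since $X$ is finite, $B$ is finite, so the supremum defining $\diam(B) = \sup_{z,z'\in B} u_X(z,z')$ is attained: there exist $z,z' \in B$ with $u_X(z,z') = \diam(B)$. But $u_X(z,z') \in \spec(X)$ by definition of the spectrum. Hence $\diam(B) \in \spec(X)$.

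Combining the two inclusions yields the claimed equality. I do not anticipate any genuine obstacle here: the only subtlety is the standard ultrametric fact that a closed ball $B_t(x)$ has diameter at most $t$ (so that its diameter is \emph{exactly} $t$ when the defining distance $t$ is realized), which follows immediately from the strong triangle inequality and is already implicit in Proposition \ref{prop:basic property ultrametric} and Remark \ref{rmk:relation with closed ball}. Finiteness of $X$ is used only to guarantee that diameters are attained, which is needed for the reverse inclusion.
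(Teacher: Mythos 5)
Your proof is correct and complete: the paper states this lemma without proof, and your two-inclusion argument (using the strong triangle inequality to show $\diam(B_t(x))=t$ whenever $t=u_X(x,x')$ is realized, and finiteness to show every ball's diameter is attained and hence lies in $\spec(X)$) is exactly the standard reasoning the paper implicitly relies on. No gaps; the edge cases ($x=x'$ giving $0=\diam(\{x\})$ with $\{x\}\in V_X$, and singleton balls in the reverse direction) are handled.
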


Now, we define a \emph{weighted} rooted tree $T_X$ associated to the ultrametric space $X$.

\begin{definition}[Weighted rooted tree associated to $X$]\label{def:weighted rooted tree}
Define $w_X:V_X\rightarrow\mathbb R_{\geq 0}$ by $w_X(B)\coloneqq\diam(B)$ for each $B\in V_X$. Let $r_X\coloneqq X\in V_X$. Then, we call the tuple $T_X\coloneqq(V_X,E_X,w_X,r_X)$ the weighted rooted tree associated to $X$.
\end{definition}

\begin{figure}[ht]
    \centering
    \includegraphics[width=\textwidth]{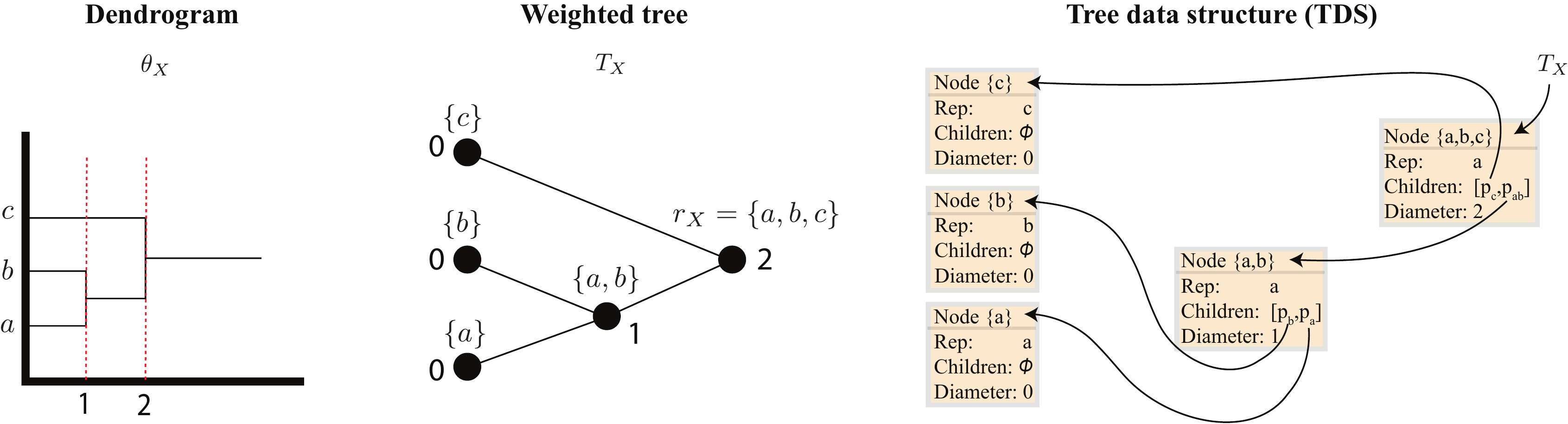}
    \caption{\textbf{Tree structure of dendrograms.} The leftmost figure represents the dendrogram $\theta_X$ of a three-point ultrametric space $X$. The middle figure represents the weighted rooted tree $T_X$ associated to $X$. The numbers beside the nodes represent the weight values given by $w_X$. Note that the tree structure of $T_X$ is inherited directly from the tree structure of $\theta_X$. The rightmost figure shows the TDS associated to $T_X$. }
    \label{fig:dendro-tree}
\end{figure}

\begin{remark}\label{rmk:tree number}
It is obvious that the set of singletons $\{\{x\}\in V_X:\,x\in X\}$ coincides with the set of leaves of $T_X$. Since for every rooted tree $\#\mathrm{vertices}=O(\#\mathrm{leaves})$ (indeed, $\#\mathrm{leaves}\leq \#\mathrm{vertices}\leq 2\#\mathrm{leaves}$), we have that $\#V_X=O(n)$ where $n\coloneqq\#X$. Moreover, since $\#E_X=\#V_X-1=O(V_X)$, we have that $\#E_X=O(n)$ as well.
\end{remark}

\begin{proposition}[{\cite[Theorem 1.10]{dovgoshey2018isomorphic}}]\label{prop:isomorphism isometry}
For any two finite ultrametric spaces $X$ and $Y$, let $T_X$ and $T_Y$ denote their corresponding weighted rooted trees. Then, $X$ is isometric to $Y$ iff $T_X$ is isomorphic to $T_Y$.
\end{proposition}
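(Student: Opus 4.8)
The plan is to reduce both implications to a single \emph{dictionary lemma} that translates the metric structure of a finite ultrametric space $X$ into the combinatorics of its associated weighted rooted tree $T_X$. The lemma I would establish first is: for any $x,x'\in X$, the family of closed balls of $X$ containing both $x$ and $x'$ is totally ordered by inclusion (item (3) of Proposition \ref{prop:basic property ultrametric}), its minimum is $B_{u_X(x,x')}(x)=B_{u_X(x,x')}(x')$, and $\diam\!\lc B_{u_X(x,x')}(x)\rc=u_X(x,x')$; here the bound ``$\le$'' follows because any two points of a closed ball of radius $t$ are within $t$ of one another by the strong triangle inequality, and ``$\ge$'' is witnessed by $x,x'$ themselves. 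Since in $T_X$ each step from a ball to its parent passes to the next larger ball under inclusion, the ancestors of the leaf $\{x\}$ are exactly the closed balls of $X$ containing $x$; hence the least common ancestor $\mathrm{lca}_{T_X}(\{x\},\{x'\})$ is precisely the smallest closed ball containing $x$ and $x'$, and therefore $u_X(x,x')=w_X\!\lc\mathrm{lca}_{T_X}(\{x\},\{x'\})\rc$ (using $w_X(B)=\diam(B)$).

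With this lemma in hand the two directions become bookkeeping. For ``$\Rightarrow$'', given an isometry $\phi:X\to Y$, I would check that $\phi$ carries $B_t(x)$ bijectively onto $B_t(\phi(x))$ for all $x$ and $t$, so $B\mapsto\phi(B)$ is a well-defined bijection $f:V_X\to V_Y$; it preserves inclusion, hence the parent relation and thus the edge sets $E_X\leftrightarrow E_Y$; it preserves diameters, so $w_Y\circ f=w_X$; and it sends $r_X=X$ to $r_Y=Y$. Thus $f$ is an isomorphism of weighted rooted trees. For ``$\Leftarrow$'', an isomorphism $f:T_X\to T_Y$ restricts to a bijection of leaf sets (Remark \ref{rmk:tree number}), inducing $\phi:X\to Y$ via $f(\{x\})=\{\phi(x)\}$; since a graph isomorphism fixing the root preserves the ancestor relation, it preserves least common ancestors, i.e. $f\!\lc\mathrm{lca}_{T_X}(\{x\},\{x'\})\rc=\mathrm{lca}_{T_Y}(\{\phi(x)\},\{\phi(x')\})$. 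Applying the dictionary lemma in both $X$ and $Y$ and using $w_Y\circ f=w_X$ then gives $u_X(x,x')=u_Y(\phi(x),\phi(x'))$, so $\phi$ is an isometry.

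The only step that is not pure bookkeeping is the dictionary lemma, specifically the identification of the combinatorial notion ``least common ancestor in $T_X$'' with the metric notion ``smallest closed ball containing $x$ and $x'$'' together with the diameter identity $\diam(B_{u_X(x,x')}(x))=u_X(x,x')$; once this is in place, both implications follow immediately. I note that the statement also appears as \cite[Theorem 1.10]{dovgoshey2018isomorphic}, so the proof can alternatively be quoted from there.
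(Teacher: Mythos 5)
Your argument is correct, but it cannot be compared line-by-line with an in-paper proof because the paper does not prove this statement at all: it simply quotes it as \cite[Theorem 1.10]{dovgoshey2018isomorphic}. What you supply is a self-contained proof, and it is the standard one. Your dictionary lemma is sound: by the nesting of closed balls (item (3) of Proposition \ref{prop:basic property ultrametric}) and the center property (item (2)), the balls containing a fixed $x$ form a chain, the ancestors of the leaf $\{x\}$ in $T_X$ are exactly these balls, the least common ancestor of $\{x\}$ and $\{x'\}$ is the smallest ball containing both, namely $B_{u_X(x,x')}(x)=B_{u_X(x,x')}(x')$, and the strong triangle inequality gives $\diam\lc B_{u_X(x,x')}(x)\rc=u_X(x,x')$, so $u_X(x,x')=w_X\lc\mathrm{lca}_{T_X}(\{x\},\{x'\})\rc$. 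The forward direction is indeed bookkeeping ($\phi(B_t(x))=B_t(\phi(x))$ makes $B\mapsto\phi(B)$ an inclusion-preserving, weight-preserving, root-preserving bijection $V_X\to V_Y$, hence a tree isomorphism), and the backward direction works because a root-preserving graph isomorphism of trees carries leaves to leaves and preserves the ancestor relation, hence least common ancestors, so the dictionary lemma plus $w_Y\circ f=w_X$ yields that the induced leaf bijection is an isometry (the degenerate one-point case, where the root is itself a leaf, is immediate). What the paper's citation buys is brevity and an attribution to the literature; what your proof buys is a self-contained verification that also makes explicit the lca/diameter correspondence that the paper uses only implicitly when it builds $T_X$ from the dendrogram. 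So your proposal is a correct, more detailed alternative to the paper's citation-only treatment.
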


\begin{remark}[Relation to merge trees]\label{rmk:merge tree} For the precise definition of merge trees, see for example \cite{morozov2013interleaving}. Let $X$ be a finite ultrametric space and let $T_X$ be its associated weighted rooted tree.
We first transform $T_X$ into a topological/metric tree by replacing each edge $\{B,B'\}$ with an interval with length $|\diam(B)-\diam(B')|$. We then attach to the root $r_X$ the half line $[0,\infty)$ to obtain the topological space $M_X$. We then define the height function $h_X:M_X\rightarrow\mathbb{R}$ as follows:
\begin{enumerate}
    \item $h_X(B)=w_X(B)=\diam(B)$ for each $B\in V_X$;
    \item for each edge, inside its corresponding interval, $h_X$ is defined as the linear interpolation between the function values at the end points;
    \item for $t$ on the half line $[0,\infty)$, $h_X(t)\coloneqq h_X(r_X)+t$.
\end{enumerate}
In this way, each finite ultrametric space $X$ canonically induces the merge tree $\lc M_X,h_X\rc$.
\end{remark}

\begin{remark}[Detailed comparison between $\gamma_\eps(X)$ and $\tau_\eps(M_X)$]\label{rmk:detail comparison with degree bound}
Recall that for $\eps\geq 0$, the $\eps$-degree bound $\tau_\eps(M_X)$ of a merge tree $M_X$ is the maximum over all closed $\eps$ balls the sum of degrees of all vertices inside the ball (cf. Remark \ref{rmk:degree bound in main text}). By Remark \ref{rmk:merge tree}, each finite ultrametric space $X$ canonically induces the merge tree $(M_X,h_X)$. Then, we call $\tau_\eps(X)\coloneqq\tau_\eps(M_X)$ the $\eps$-degree bound of the ultrametric space $X$. It is easy to see that for any ultrametric space $X$, we have that 
\[\gamma_\eps(X)\leq \tau_\eps(X)\leq 2\gamma_\eps(X).\] 
See Figure \ref{fig:degreebound} for an illustration and a sketch of the proof of this fact. In particular, this implies that $X\in\mathcal{U}_2(\frac{\eps}{2},\gamma)$ for all $\gamma\geq \tau_\eps(X)$. 
\end{remark}
\begin{figure}[ht]
    \centering
    \includegraphics[width=\textwidth]{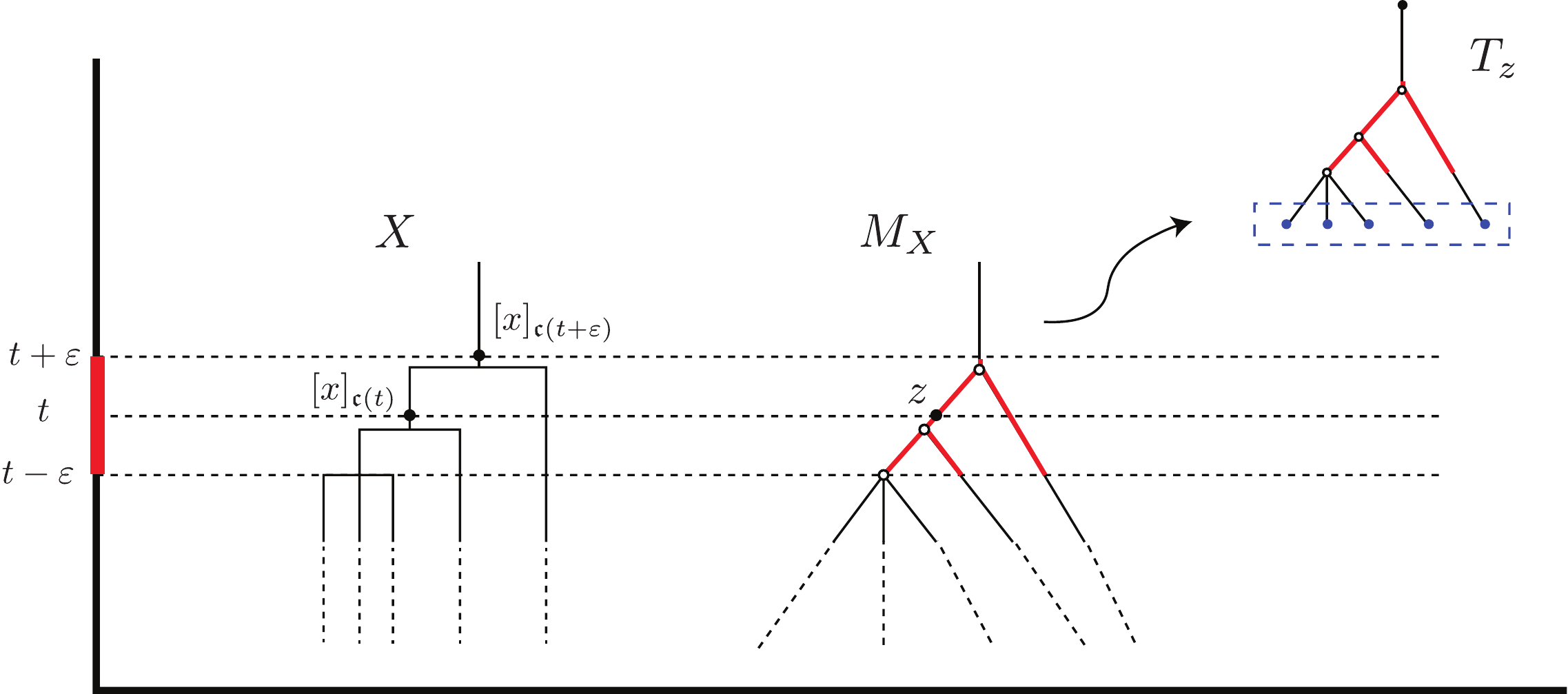}
    \caption{\textbf{Illustration of Remark \ref{rmk:detail comparison with degree bound}.} In this figure, we draw part of the dendrogram of an ultrametric space $X$ and its corresponding merge tree $M_X$. Pick any $z\in M_X$ with height $t$. Then, $z$ corresponds to $[x]_{\mathfrak{c}(t)}$ for some point $x\in X$ (cf. Remark \ref{rmk:merge tree}). Consider the closed ball $B_\eps^{M_X}(z)\subseteq M_X$ highlighted in red in the figure. Considering a small closed neighborhood of the highlighted part in $M_X$, we obtain the rooted tree $T_z$ in the top right corner of the figure. It is obvious that the number of leaves of $T_z$ inside the dotted box equals $\gamma_\eps(x,t+\eps)\coloneqq\#\left\{{[x']_{\mathfrak{o}\lc t-\eps\rc}}:\,x'\in [x]_{\mathfrak{c}\lc t+\eps\rc}\right\}$. On the other hand, the sum of degrees of vertices of the tree $M_X$ contained in the ball $B_\eps^{M_X}(z)$, denoted by $\tau_\eps(z)$, equals the number of edges in $T_z$. Therefore, by the standard relationship between the number of leaves and the number of edges in a rooted tree, we have that $\gamma_\eps(x,t+\eps)\leq\tau_\eps(z)\leq 2\,\gamma_\eps(x,t+\eps)$. From this observation, we conclude that $\gamma_\eps(X)\leq \tau_\eps(X)\leq 2\,\gamma_\eps(X).$ }
    \label{fig:degreebound}
\end{figure}

\subsection{Data structure for ultrametric spaces and algorithms for fundamental operations}\label{sec: data structure and algorithm}
Given a finite ultrametric space $(X,u_X)$, let $T_X$ be its corresponding weighted rooted tree as described in Definition \ref{def:weighted rooted tree}. We utilize a special {self-referential} tree structure to represent $T_X$; for a description of self-referential {tree} data structures, we refer readers to \cite[Chapter 6.5]{kernighan2006c}. In order to represent vertices of $T_X$, we design a special class \texttt{Node} with three fields: \texttt{Representative}, \texttt{Diameter} and \texttt{Children}. Recall that each vertex of $T_X$ represents a closed ball of $X$. Then, for each vertex/ball $B\in V_X$, its corresponding \texttt{Node} object,  \textbf{which will still be denoted by $B$,} contains the following fields:
\begin{enumerate}
    \item \texttt{Representative}: an element $x\in B$;\footnote{Any choice of $x\in B$ will do.}
    \item \texttt{Diameter}: the diameter of $B$;
    \item \texttt{Children}: a list of pointers to all \texttt{Node} objects representing the children of $B$ in $T_X$.
\end{enumerate}
In what follows, we will sometimes call a \texttt{Node} object simply a \emph{node} and we will for instance write $B.\texttt{Diameter}$ to extract the diameter value of a node $B$.

Now, for a given ultrametric space $X$, the \emph{tree data structure} (TDS) which we will use to represent $T_X$ consists of a collection of nodes stored in memory, one for each vertex $B\in V_X$. This TDS is held by a \texttt{Node} pointer referencing the node representing the root $r_X=X$. We call this pointer (to a \texttt{Node} object)  the \textit{root pointer} of the TDS.
The role of the root pointer of a TDS is analogous to the role of the head pointer of a linked list.
See Figure \ref{fig:dendro-tree} for an illustration. 

\begin{framed}\label{notation frame}
\noindent\textbf{Note.} In the sequel, we will overload the symbol $T_X$ and will use it to denote both the weighted rooted tree corresponding to $X$ and to also represent its associated TDS (both in the sense of its root pointer and in the sense of the collection of its nodes). The symbol $B$ for representing any ball $B\in V_X$ is also overloaded to represent its corresponding \texttt{Node} object in $T_X$. In all our algorithms, every ultrametric space is understood as a \texttt{Node} object. 
\end{framed}
\nomenclature{$T_X$}{The weighted rooted tree corresponding to $X$ and also its associated TDS; page \pageref{notation frame}.}

\begin{remark}[Relationship with the distance matrix data structure]\label{rmk:distance mtx}
Starting from the root node $r_X$ of $T_X$, one can progressively trace all nodes in $T_X$ to completely reconstruct the distance matrix of the ultrametric space $X$. It is clear that the reconstruction process takes time at most $O(n^2)$ where $n\coloneqq\#X$. Conversely, given the distance matrix of $X$, one can construct the TDS $T_X$ in the same time complexity $O(n^2)$ (one can use for example the single-linkage algorithm \cite{mullner2011modern} to first obtain the dendrogram induced by the distance matrix (see Figure \ref{fig:dendro-tree})). If the ultrametric spaces are given in terms of distance matrices, we first need to convert them into TDSs before applying the algorithms described in this paper. The time complexity $O(n^2)$ due to this preprocessing is not counted when analyzing our algorithms.
\end{remark}

\begin{remark}[Subtree rooted at a ball]\label{rmk:subtree tb}
Given the TDS $T_X$ associated to the ultrametric space $X$, it is easy to induce a TDS $T_B$ for each closed ball $B\in V_X$ (which is itself an ultrametric space). Such $T_B$ consists of all descendant nodes of $B$ in $T_X$ and is held by a pointer to $B$, i.e., $*(T_B)=B$. Here $*p$ denotes the datum referenced by a pointer $p$.
\end{remark}

\begin{remark}[Finding parent nodes]\label{rmk:parent node}
Given any non-root node $B$ in $T_X$, we let $\texttt{Parent}(B,T_X)$ denote its parent node in $T_X$. To actually find the node $\texttt{Parent}(B,T_X)$ given the node $B$, one can start from the root node $r_X$ and recursively search $T_X$ for the parent of $B$. Such a search process takes at most time $O(\#T_X)=O(n)$ where $n\coloneqq \#X$.
\end{remark}

One major advantage of adopting the above-mentioned TDS is that it allows us to efficiently implement certain fundamental operations on ultrametric spaces. For example, it allows to efficiently obtain the spectrum of an ultrametric space:
\begin{lemma}\label{lm:spectrum computation}
Given an ultrametric space $X$, determining and sorting $\spec(X)$ can be done in time $O(n\log(n))$ where $n\coloneqq\#X$.
\end{lemma}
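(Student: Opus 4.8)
The plan is to leverage the tree data structure (TDS) $T_X$ described in this section, together with Lemma~\ref{lm:spectrum via VX} which identifies $\spec(X)$ with the set of diameter values $\{\diam(B):\,B\in V_X\}$. Since each node $B$ of $T_X$ carries its diameter in the \texttt{Diameter} field, the spectrum can be read off directly by a single traversal of the tree.

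First I would argue that, given the TDS $T_X$, one can collect all diameter values $\{B.\texttt{Diameter}:\,B\in V_X\}$ by a depth-first (or breadth-first) traversal starting from the root pointer $r_X$. Each node is visited exactly once, and at each node we perform $O(1)$ work (reading the \texttt{Diameter} field and pushing it onto an output list) plus time proportional to the number of its children in order to recurse. By Lemma~\ref{lm:sum of children}, the total work spent iterating over children lists across all nodes is $O(\#V_X)$, and by Remark~\ref{rmk:tree number} we have $\#V_X = O(n)$ where $n\coloneqq\#X$. Hence the traversal produces a list of $O(n)$ numbers in time $O(n)$. This list contains exactly the elements of $\spec(X)$ by Lemma~\ref{lm:spectrum via VX}, though possibly with repetitions.

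Next I would remove duplicates and sort. Sorting an array of $O(n)$ real numbers takes time $O(n\log n)$ by any comparison-based sorting algorithm (e.g.\ merge sort), and duplicates can be eliminated in a single $O(n)$ linear scan of the sorted array. Therefore the whole procedure — traverse $T_X$ to gather diameters, then sort and deduplicate — runs in time $O(n) + O(n\log n) = O(n\log n)$, which is the claimed bound. (One could also note that $\#\spec(X)\le \#X = n$ by Proposition~\ref{prop:basic property ultrametric}(4), but this is not strictly needed since the raw list already has length $O(n)$ by the tree-size bound.)

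I do not anticipate a genuine obstacle here; the only point requiring a little care is making explicit that the cost of the traversal is linear rather than, say, quadratic — this is exactly what Lemma~\ref{lm:sum of children} and Remark~\ref{rmk:tree number} guarantee, so the argument is essentially a bookkeeping exercise once those facts are in hand. The dominant term is the sorting step, which is where the $\log n$ factor enters.
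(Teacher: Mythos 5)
Your proposal is correct and follows essentially the same route as the paper: both use Lemma \ref{lm:spectrum via VX} to reduce the problem to reading off the \texttt{Diameter} fields of the $O(n)$ nodes of $T_X$ in a single traversal, and then attribute the $O(n\log(n))$ cost to sorting the resulting $O(n)$-element list. Your additional remarks about deduplication and the linearity of the traversal via Lemma \ref{lm:sum of children} are just a more explicit bookkeeping of what the paper leaves implicit.
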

\begin{proof}
Note that, by Lemma \ref{lm:spectrum via VX}, we only need to inspect each node in $T_X$ which takes time $O(n)$. Since $\#\spec(X)=O(n)$ (cf. Proposition \ref{prop:basic property ultrametric}), the sorting process takes time $O(n\log(n))$.
\end{proof}

Next, we introduce algorithms for other three fundamental operations on ultrametric spaces.

\paragraph{Closed quotient} In Algorithm \ref{algo-c-quotient} we introduce a recursive algorithm for the $t$-closed quotient operation.
In the algorithm, the notation $\gets$ represents variable assignment and the notation $\&Z$ denotes the memory address of the variable $Z$. In line 6 of the algorithm, the \emph{one point tree} data structure consists of a single node such that $\texttt{Diameter}=0$ and $\texttt{Children}$ is an empty list. Note that in the worst-case scenario, Algorithm \ref{algo-c-quotient} inspects all nodes of $T_X$ during the recursion process. At the recursion call with input $(B,t)$ where $B$ is a node of $T_X$, the main computational cost lies in line 1 for copying the \texttt{Node} object $B$ (in the place of $X$). If we let $k_B\coloneqq\mathrm{length}(B.\texttt{Children})$, it then costs time $O(k_B)$ to copy $B$ and it takes at most time $O(k_B)$ to update $B.\texttt{Children}$ according to the for-loop in line 2. Therefore, the total time complexity of Algorithm \ref{algo-c-quotient} is bounded above by $\sum_{B\in V_X}O(k_B)$. By Lemma \ref{lm:sum of children} and Remark \ref{rmk:tree number}, we have that  
\[\sum_{B\in V_X}O(k_B)=O(V_X)=O(n),\]
where $n\coloneqq \#X$. Therefore, the time complexity of Algorithm \ref{algo-c-quotient} is bounded above by $O(n)$.

\begin{algorithm}[htb]
\caption{$\mathbf{ClosedQuotient}(X,t)$}\label{algo-c-quotient}
\begin{algorithmic}[1]
  \STATE{$Y\gets X$}\;
  \FOR{Each $p\in Y.\texttt{Children}$}
     \IF{$*p.\texttt{Diameter}> t$}
     \STATE{$p\gets\&\mathbf{ClosedQuotient}(*p,t)$}
     \ELSE
     \STATE{$Z\gets$ the one point tree with the same representative in $*p$}
     \STATE{$p\gets\&Z$ }
     \ENDIF
\ENDFOR
     \RETURN $Y$
\end{algorithmic}
\end{algorithm}

\paragraph{Open partition} In Algorithm \ref{algo-o-part} we give pseudocode for constructing the $t$-open partition of an ultrametric space. In order to implement the `append' operation (appearing in line 3 and line 6) in constant time, we use a doubly linked list $P$ to represent the open partition. Algorithm \ref{algo-o-part} will recursively inspect all nodes corresponding to balls in $X_{\mathfrak{o}(t)}$ as well as all of their ancestor nodes. For each inspected node $B$ in $T_X$,  if we let $k_B\coloneqq \mathrm{length}(B.\texttt{Children})$, it then takes time $O(k_B)$ to update the list $P$. Then, following a similar argument as in the complexity analysis of Algorithm \ref{algo-c-quotient}, if we let $k$ be the cardinality of $P$ (i.e., $k=\#X_{\mathfrak{o}(t)}$), then Algorithm \ref{algo-o-part} will inspect at most $O(k)$ nodes in $T_X$ and thus it generates $P$ in at most $O(k)$ steps.

\begin{algorithm}[htb]
\caption{$\mathbf{OpenPartition}(X,t)$}\label{algo-o-part}
\begin{algorithmic}[1]
 \STATE $P = [\,]$ 
  \IF{$X.\texttt{Diameter}< t$}
    \STATE{$P.\mathrm{append}(\&X)$}
  \ELSE 
  {\FOR{Each $p_i \in X.\texttt{Children}$}
    \STATE $P.\mathrm{append}(\mathbf{OpenPartition}(*p_i,t))$
  \ENDFOR}
  \ENDIF
  \RETURN $P$
\end{algorithmic}
\end{algorithm}

\paragraph{Isometry between ultrametric spaces} By Proposition \ref{prop:isomorphism isometry}, two ultrametric spaces $X$ and $Y$ are isometric iff their corresponding weighted rooted trees $T_X$ and $T_Y$ are isomorphic. By adapting the algorithm in \cite[Example 3.2]{aho1974design}, determining isomorphism between rooted trees can be done in time $O(\#\mathrm{vertices})$; see also \cite[Theorem 3.3]{aho1974design} (and its corollary). Then, by Remark \ref{rmk:tree number}, we have the following result:
\begin{lemma}\label{lm:isometry linear time}
We can determine whether $X$ and $Y$ are isometric in $O(n)$ time.
\end{lemma}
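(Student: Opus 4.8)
The plan is to reduce the question of isometry between the finite ultrametric spaces $X$ and $Y$ to the question of isomorphism between their associated weighted rooted trees $T_X$ and $T_Y$, and then to cite the known linear-time algorithm for rooted tree isomorphism. First I would invoke Proposition \ref{prop:isomorphism isometry}, which tells us precisely that $X$ is isometric to $Y$ if and only if $T_X \cong T_Y$ as weighted rooted trees. So it suffices to decide the latter in $O(n)$ time, where $n \coloneqq \max(\#X,\#Y)$.

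Next I would recall the bookkeeping from Remark \ref{rmk:tree number}: since the leaves of $T_X$ are exactly the singletons $\{\{x\}: x\in X\}$, we have $\#\mathrm{leaves}(T_X) = \#X = n$, and since $\#\mathrm{leaves}\le\#\mathrm{vertices}\le 2\,\#\mathrm{leaves}$ for any rooted tree, we get $\#V_X = O(n)$ and likewise $\#V_Y = O(n)$; the edge counts are $\#E_X = \#V_X-1 = O(n)$ and similarly for $Y$. Thus both trees have size linear in $n$.

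The core step is then to run the classical rooted tree isomorphism algorithm of Aho, Hopcroft and Ullman (\cite[Example 3.2 and Theorem 3.3]{aho1974design}), which decides isomorphism of two rooted trees in time linear in the number of vertices by a level-by-level canonical labeling (bucket-sorting the sorted tuples of children's labels at each level). The only modification needed for our setting is that the trees are node-weighted: in the canonical labeling procedure one augments the label assigned to each vertex $B$ with its weight value $w_X(B) = \diam(B)$, so that two vertices receive the same canonical label only if they have equal weight \emph{and} isomorphic weighted subtrees. This augmentation does not change the asymptotic running time because the weight comparisons and the radix/bucket sorts at each level still cost $O(\#\text{vertices at that level})$ in total, and the sum over all levels is $O(\#V_X + \#V_Y) = O(n)$. (One may assume, as the paper does elsewhere, that arithmetic comparisons of the finitely many distinct weight values cost $O(1)$; alternatively one first sorts the combined spectrum $\spec(X)\cup\spec(Y)$ once in $O(n\log n)$ and replaces weights by their ranks — but since the weight set already has size $O(n)$ and is produced together with $T_X$, a single bucket sort over all vertices suffices to keep the bound at $O(n)$.) Combining this with the reduction above yields that whether $X$ and $Y$ are isometric can be determined in $O(n)$ time, which is the assertion of Lemma \ref{lm:isometry linear time}.

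I do not anticipate a genuine mathematical obstacle here: the substance is entirely in the cited Proposition \ref{prop:isomorphism isometry} and the cited AHU algorithm. The only point requiring a little care — and the one I would be most careful to state cleanly — is the claim that incorporating the node weights into the canonical labeling preserves linear time; this is where one must be explicit that the weight values (equivalently, their ranks in $\spec(X)\cup\spec(Y)$) can be compared and bucketed in amortized constant time per vertex, so that the per-level cost of the AHU sweep is unchanged.
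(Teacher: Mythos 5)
Your proposal is correct and follows essentially the same route as the paper: reduce isometry to isomorphism of the weighted rooted trees via Proposition \ref{prop:isomorphism isometry}, bound the tree sizes by $O(n)$ via Remark \ref{rmk:tree number}, and adapt the Aho--Hopcroft--Ullman rooted-tree isomorphism algorithm to account for node weights. Your additional discussion of how the weight-augmented canonical labeling preserves linear time is a more explicit treatment of what the paper summarizes as ``adapting the algorithm.''
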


\subsection{Subspace tree data structure and union of non-intersecting subspaces}\label{sec:union of susbets} In this section, we explain how to utilize the TDS described in the previous section to efficiently perform the union operation (under the conditions specified by Equation \eqref{eq:union condition} below).

First of all, we introduce a TDS for representing subspaces of a given ultrametric space $X$.  We assume that the distance matrix $u_X$ is available and assume that a TDS $T_X$ representing $X$ has already been computed.
\begin{definition}[Subspace tree data structure]
For any non-empty subspace $U\subseteq X$, we say that a tree data structure $T_U$ representing the ultrametric space $(U,u_X|_{U\times U})$ is a \emph{subspace tree data structure subordinate to} $T_X$, if each vertex (i.e., each ball) in $V_{U}\cap V_X$ is represented by a \texttt{Node} object belonging to the tree data structure $T_X$.
\end{definition}

\begin{remark}[Construction of subspace TDSs]
For any node $B\in T_X$ (which  represents a ball in $X$), the TDS $T_B$ described in Remark \ref{rmk:subtree tb} is obviously a subspace TDS subordinate to $T_X$ representing the subspace $B$. However, for an arbitrary subset $U$ the situation is different from the case of a ball. First, it is easy to verify that any such $U$ can be written as a union of non-intersecting balls $\{B_i\}_{i=1}^k$ satisfying the condition in Equation \eqref{eq:union condition} (see also  Lemma \ref{lm:ball decomposition} below). Then, a subspace TDS representing $U$ subordinate to $X$  can be constructed by applying the union process which we describe below to the set of balls $\{B_i\}_{i=1}^k$.
\end{remark}

Consider a set of non-empty and non-intersecting subspaces $\{U_1,\ldots,U_k\}$ of a given ultrametric space $X$ such that for any distinct $i,j=1,\ldots,k$, we have
\begin{equation}\label{eq:union condition}
    \min_{x_i\in U_i,x_j\in U_j}u_X(x_i,x_j)>\max(\diam(U_i),\diam(U_j)).
\end{equation}
This condition is compatible with the sets obtained by taking a slice of the dendrogram $\theta_X$, which is in turn equivalent to considering open/closed equivalence classes of ultrametric spaces (see the discussion below Definition \ref{def:dendrogram}). We assume that each $U_i$ is represented by a subspace TDS $T_{U_i}$ subordinate to $T_X$.

Now given the above data, we describe how to construct a subspace TDS $T_U$ subordinate to $T_X$ representing the union $U\coloneqq \cup_{i=1}^kU_i$. The whole process is organized through the following three steps.

\makeatletter
\newcommand*{\rom}[1]{\expandafter\@slowromancap\romannumeral #1@}
\makeatother

\paragraph{(\rom{1}) Constructing a TDS induced by representatives}
For each $i=1,\ldots,k$ let $x_i$ be the representative of $U_i$ as given in the TDS $T_{U_i}$ and let $X_k\coloneqq\{x_1,\ldots,x_k\}$. We first consider the ultrametric $u_{X_k}\coloneqq u_X|_{X_k\times X_k}$ on $X_k$ induced by the restriction of $u_X$ to $X_k\times X_k$. Then, we construct a new TDS $T_{X_k}$ to represent $(X_k,u_{X_k})$ (cf. Remark \ref{rmk:distance mtx}). This construction is possible due to the fact that $(X_k,u_{X_k})$ is itself an ultrametric space.

It takes time at most $O(k^2)$ to both construct the metric $u_{X_k}$ and create the new TDS $T_{X_k}$ (cf. Remark \ref{rmk:distance mtx}).

\paragraph{(\rom{2}) Constructing the preliminary union of $U_i$s} Recall that up to this point, we have at our disposal the following TDSs: $T_{U_1},\ldots,T_{U_k}$, and $T_{X_k}$. Based on these data, we progressively \textit{modify} leaf nodes of $T_{X_k}$ and utilize $T_{U_1},\ldots,T_{U_k}$ to find a TDS representation for the union $U$. For pedagogical reasons, we refer to the outcome TDS as $\Tilde{T}_U$. $\Tilde{T}_U$ may not be subordinate to $T_X$, and we thus name it the \emph{preliminary union} of $U_i$s. The modification process can be summarized as simply replacing each leaf node in $T_{X_k}$ with the root node of certain $T_{U_i}$ as shown in Figure \ref{fig:preliminary union}. More precisely, we traverse all nodes $B$ in $T_{X_k}$ and, if for such a node $B$ there exists an index $i_0$ such that $*(B.\texttt{Children}(i_0)).\texttt{Diameter}=0$ (which means $B$ is the parent of a leaf node), then we let $j_0$ be the index such that $*(B.\texttt{Children}(i_0)).\texttt{Representative}=x_{j_0}\in X_k$, and modify $B$ by assigning $B.\texttt{Children}(i_0)=T_{U_{j_0}}$ (and of course we free the memory used for storing the original node $*(B.\texttt{Children}(i_0))$). 

\begin{figure}[ht]
    \centering
    \includegraphics[width=\textwidth]{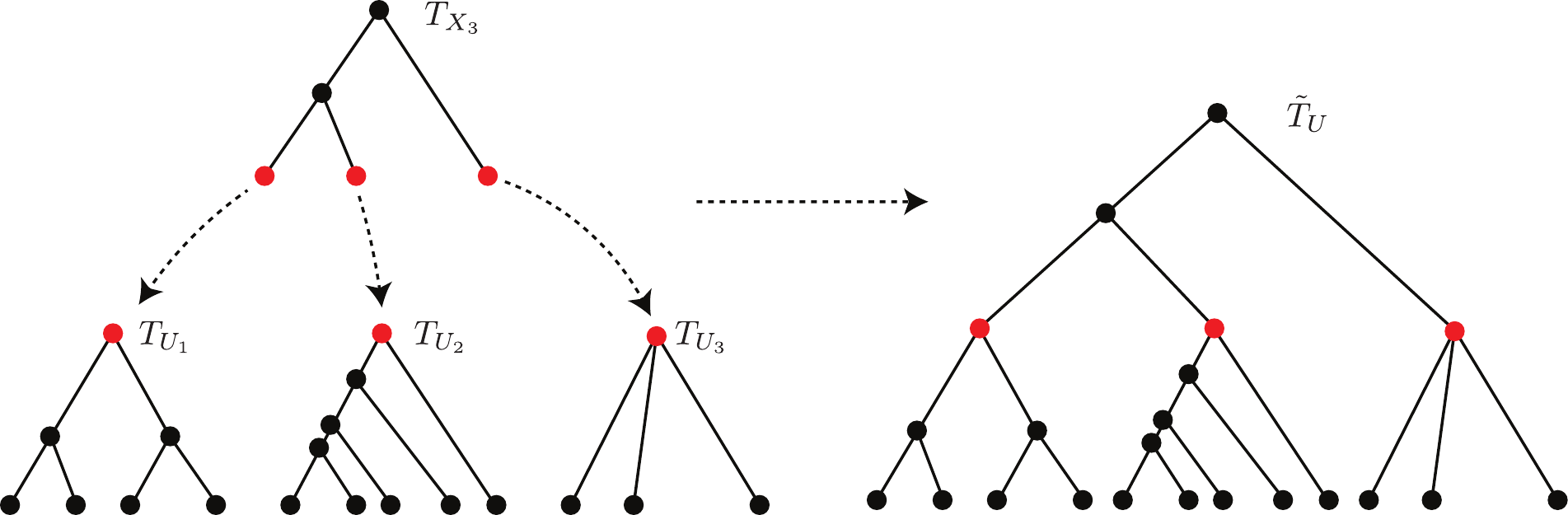}
    \caption{\textbf{Illustration of the process for producing the preliminary union $\Tilde{T}_U$.} }
    \label{fig:preliminary union}
\end{figure}

For the modification process described above, we need to modify at most $O(k)$ pointers, and for each such pointer it takes time $O(k)$ to search for $U_{j_0}$ with the desired representative as described above. So, the time needed for constructing the preliminary union $\Tilde{T}_U$ is at most $O(k^2)$.

\paragraph{(\rom{3}) Taming process for $\tilde{T}_{U}$} The TDS $\tilde{T}_{U}$ may not be a subspace TDS subordinate to $T_X$, i.e., $\tilde{T}_{U}$ may contain \texttt{Node} objects representing balls in $V_X$ which do not belong to the TDS $T_X$. We will thus further \emph{tame} $\Tilde{T}_{U}$ so that the outcome TDS is subordinate to $T_X$. 
For pedagogical reasons, we use the symbol $T_U$  (which is our final TDS representation for $U$) to refer to the tamed $\Tilde{T}_U$.

To accomplish this, we first create an array $\mathrm{L}$ consisting of pointers to all nodes in $\Tilde{T}_U\backslash \cup_{i=1}^k T_{U_i}$, i.e., all modified nodes from $T_{X_k}$. We sort $\mathrm{L}$ according to increasing \texttt{Diameter} values of the nodes referenced by its pointers. There are $O(k)$ such nodes and thus building and sorting $\mathrm{L}$ takes time $O(k\log(k))$. Let $\ell\coloneqq\mathrm{length}(\mathrm{L})=O(k)$. For each $i=1,\ldots,\ell$, we compare $*(\mathrm{L}(i))$ with each node in $T_X$ as described next. If for some $B\in T_X$, the sets $B.\texttt{Children}$ and $*(\mathrm{L}(i)).\texttt{Children}$ agree, we then replace the node $*(\mathrm{L}(i))$ in $\tilde{T}_{U}$ with $B$. There are two cases which can arise during this replacement:
\begin{enumerate}
    \item if $*(\mathrm{L}(i))$ is not the root node of $*(\Tilde{T}_U)$, we find the parent node of $*(\mathrm{L}(i))$ (cf. Remark \ref{rmk:parent node}) in $\Tilde{T}_U$ and let $i_0$ be the index such that $\texttt{Parent}\lc*(\mathrm{L}(i)),\Tilde{T}_U\rc.\texttt{Children}(i_0)=\mathrm{L}(i)$. Then, we replace $\texttt{Parent}\lc*(\mathrm{L}(i)),\Tilde{T}_U\rc.\texttt{Children}(i_0)$ with a pointer to $B\in T_X$;
    \item otherwise if $*(\mathrm{L}(i))$ is the root node $*(\Tilde{T}_U)$, we free the memory used for storing $*(\Tilde{T}_U)$, and then assign $\Tilde{T}_U=\&B$.
\end{enumerate}

For each $i=1,\ldots,\ell$ let $k_i\coloneqq \mathrm{length}(*(\mathrm{L}(i)).\texttt{Children})$. Determining whether $B.\texttt{Children}=*(\mathrm{L}(i)).\texttt{Children}$ takes time $O(k_i)$. Therefore, the time complexity of the replacement process mentioned above can be bounded as follows:
\[\sum_{i=1}^\ell O(\#T_X)\times k_i=\sum_{i=1}^\ell O(nk_i)= O\lc n \sum_{i=1}^\ell k_i\rc=O(nk),\]
where $n\coloneqq\#X$ (we used Lemma \ref{lm:sum of children} in the rightmost equality). The time incurred when finding and accessing the parent node of a given node in $\mathrm{L}$ is at most $O(k)$ (cf. Remark \ref{rmk:parent node}). So the total time required for taming $\Tilde{T}_U$ is at most $O(nk)$.

Therefore by following steps (\rom{1}), (\rom{2}) and (\rom{3}), the total time complexity of the union operation can be bounded by $O(k^2)+O(nk)=O(nk)$.

\subsection{Implementation details for Algorithm \ref{algo-dGH-dyn}}\label{sec:implementation detail}

In this section, we provide details for one possible implementation of Algorithm \ref{algo-dGH-dyn}.


\subsubsection{Preprocessing}\label{sec:preprocessing}
To achieve an actual implementation of Algorithm \ref{algo-dGH-dyn}, some preprocessing is needed in order to construct the {arrays} $\mathrm{LX}^{(\eps)}$ and $\mathrm{LY}$ defined in Section \ref{sec:dp-main-text}. Here, for completeness we describe one possible implementation of these preprocessing steps. We first construct subspace TDSs for subspaces in $\mathrm{LX}^{(\eps)}$ and $\mathrm{LY}$. We then construct arrays $\mathrm{pLX}^{(\eps)}$ and $\mathrm{pLY}$ of \texttt{Node} pointers referencing subspaces in $\mathrm{LX}^{(\eps)}$ and $\mathrm{LY}$, respectively. For this purpose, we augment the class \texttt{Node} by incorporating an integer field called \texttt{Order} and a Boolean field called \texttt{IsBall}. For each node $B$ in $T_X$, \texttt{Order} is initially set to $-1$ and $\texttt{IsBall}$ is set to \texttt{True}. We initialize nodes in $T_Y$ in the same way.

\paragraph{Construction of $\mathrm{pLY}$} Given the TDS $T_Y$, we first create an array $\mathrm{pLY}$ containing pointers to all nodes in $T_Y$. Then, we sort $\mathrm{pLY}$ according to increasing \texttt{Diameter} values of the \texttt{Node} objects referenced by its pointers. This finalizes constructing the array $\mathrm{pLY}$. For each $i=1,\ldots,\#\mathrm{pLY}$, we set $*(\mathrm{pLY}(i)).\texttt{Order}=i$. In this way, each element in $T_Y$ is such that its field \texttt{Order} is different from $-1$. 

By Lemma \ref{lm:counting-lxly}, $\# T_Y=\#V_Y =O(n)$. So building and sorting the list $\mathrm{pLY}$ can be done in time $O(n\log(n))$. The above process for setting $B^Y.\texttt{Order}$ for all $B^Y\in\mathrm{pLY}$ can be done in time $O(n)$.

\paragraph{Construction of $\mathrm{pLX}^{(\eps)}$} Unlike the case of $\mathrm{LY}$, the array $\mathrm{LX}^{(\eps)}$ can contain subspaces not belonging to $V_X$. In order to construct $\mathrm{pLX}^{(\eps)}$, we follow the three steps that we describe next:
\begin{enumerate}
    \item Create subspace TDSs subordinate to $T_X$ for representing each of the subspaces in $\mathrm{LX}^{(\eps)}\backslash \mathrm{LX}$;
    \item Create a \emph{list} $\mathrm{pLX}^{(\eps)}$ of pointers referencing the root nodes representing each of the subspaces in $\mathrm{LX}^{(\eps)}$.
    \item Transform the \emph{list} $\mathrm{pLX}^{(\eps)}$ into an \emph{array} (still denoted by $\mathrm{pLX}^{(\eps)}$). In this way, the random access time to elements in $\mathrm{pLX}^{(\eps)}$ is $O(1)$.
\end{enumerate}

Whereas the third step is clear, we will provide a detailed description for the first and second steps. In fact, these two steps are accomplished at the same time: We first construct $\mathrm{pLX}$ via a process analogous to the one used above for constructing $\mathrm{pLY}$. This step can be done in time $O(n\log(n))$. Then, we follow two substeps: (a) for each $B^X\in \mathrm{LX}$, we will first construct subspace TDSs subordinate to $T_X$ for all subspaces in $B^X_{(\eps)}\backslash\{ B_X\}$; we then construct the list $\mathrm{p}B^X_{(\eps)}$ of pointers referencing all subspaces in $B^X_{(\eps)}$; (b) we will use these constructions from (a) for all $B^X$ to complete step 1 and step 2. Below, we describe substep (a) and substep (b) in detail.

\subparagraph{(a) Constructions regarding a single $B^X$} We set $\mathrm{p}B^X_{(\eps)}$ to be an empty list. 
Applying $\mathbf{OpenPartition}$ (Algorithm \ref{algo-o-part}) to $B^X$ we obtain the open partition $B^X_{\mathfrak{o}\lc \rho_\eps\lc B^X\rc\rc}=\left\{B_i\right\}_{i=1}^{N_{B^X}}$. By the SGC, $N_{B^X}\leq \gamma$ so that the open partition process takes time at most $O(\gamma)$. For each non-empty index set $I\subseteq[N_{B^X}]$ (there are $O(2^\gamma)$ such $I$s), we apply the union operation (described in Appendix \ref{sec:union of susbets}) to obtain the TDS $T_{U_I}$ for the union $U_I\coloneqq\cup_{i\in I}B_i$ which takes time at most $O(n\times\#I)=O(n\gamma)$. For the new nodes thus created, i.e., for nodes $U$ in $T_{U_I}\backslash T_X$, we set $U.\texttt{IsBall}=\texttt{False}$ and set $U.\texttt{Order}=-1$. If $U_I.\texttt{Diameter}= B^X.\texttt{Diameter}$, we first set $U_I.\texttt{Order}=B^X.\texttt{Order}$ and then update $\mathrm{p}B^X_{(\eps)}$ by appending $T_{U_I}$ (which is a pointer to the node $U_I$) to it. Otherwise, we delete all nodes in $T_{U_I}\backslash T_X$.

In summary, the time complexity for both constructing subspace TDSs subordinate to $T_X$ for all elements in $B^X_{(\eps)}\backslash\{ B_X\}$ and constructing the list $\mathrm{p}B^X_{(\eps)}$ is bounded by $O(n2^\gamma\gamma^2)$.

\subparagraph{(b) Completing step 1 and step 2} Finally, we apply the constructions described above to all $B^X\in\mathrm{LX}$ and assemble the corresponding outputs to complete step 1 and step 2 concurrently. More specifically, we first sort $\mathrm{LX}$ according to increasing values of \texttt{Order}. Then, we apply the above constructions over all $B^X\in \mathrm{LX}$ with respect to this order. After this, subspace TDSs for all elements in $\mathrm{LX}^{(\eps)}\backslash \mathrm{LX}$ have been constructed and stored in memory. Finally, we merge all the resulting lists $\mathrm{p}B^X_{(\eps)}$s together to obtain the list $\mathrm{pLX}^{(\eps)}$. Since $\#\mathrm{LX}=O(n)$ and since for each $B^X$, $\mathrm{length}(\mathrm{p}B^X_{(\eps)})=O(2^\gamma)$, the time needed for constructing subspace TDSs for elements in $\mathrm{LX}^{(\eps)}\backslash \mathrm{LX}$ is bounded by $O(n^22^\gamma\gamma^2)$, and the merging process takes time at most $O(n2^\gamma)$ (the complexity can be reduced to $O(n)$ if each $\mathrm{p}B^X_{(\eps)}$ is represented by a doubly linked list).

Therefore, step 1 and step 2 together can be accomplished in time
\[O\big(n\log(n)+n^22^\gamma\gamma^2+n2^\gamma\big)=O\big(n^2\log(n)2^\gamma\gamma^2\big).\]
Since $\#\mathrm{pLX}^{(\eps)}=O(n2^\gamma)$, the time for transforming $\mathrm{pLX}^{(\eps)}$ into an array is bounded by $O(n2^\gamma)$. So, the total time complexity for building the array $\mathrm{pLX}^{(\eps)}$ is at most $O\big(n^2\log(n)2^\gamma\gamma^2\big)$.

\paragraph{Data structure for storing and accessing indices of elements in $\mathrm{LX}^{(\eps)}$} Given any $U^X\in\mathrm{LX}^{(\eps)}$, there exists a unique integer $\mathrm{ind}$ such that $U^X=*(\mathrm{pLX}^{(\eps)}(\mathrm{ind}))$ and we refer to $\mathrm{ind}$ as the \emph{index} of $U^X$ in $\mathrm{LX}^{(\eps)}$. Now, given any $U^X$ in the form of a subspace TDS subordinate to $T_X$, in order to access the index of $U^X$ in $\mathrm{LX}^{(\eps)}$ efficiently, we construct a $\underbrace{(1+\#V_X)\times \cdots\times (1+\#V_X)}_{\gamma\text{ terms}}$ dimensional multi-array $\mathrm{INDX}$ for storing all such indices. Each dimension of $\mathrm{INDX}$ is indexed by integers in the range $\{-1\}\cup \{1,\ldots,\#V_X\}$.
The following lemma gives rise to our strategy for indexing this multi-array.

\begin{lemma}\label{lm:ball decomposition}
For any subset $U^X\subseteq X$, there is a \emph{unique} maximal set of non-intersecting closed balls $\{B_1,\ldots,B_k\}\subseteq V_X$ such that $U^X=\cup_{i=1}^kB_i$. Here `maximal' means that if there exists another set of non-intersecting closed balls $\{B_1',\ldots,B_l'\}\subseteq V_X$ such that $U^X=\cup_{i=1}^lB_i'$, then for each $i=1,\ldots,l$, there exists some  $j=1,\ldots,k$ such that $B_i'\subseteq B_j$.
We call the unique maximal set $\{B_1,\ldots,B_k\}\subseteq V_X$ the \emph{ball decomposition of $U^X$}.
\end{lemma}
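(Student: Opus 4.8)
The plan is to prove existence, uniqueness, and maximality in one shot by \emph{constructing} the decomposition canonically and then verifying it has the required property. First I would define, for each $x\in U^X$, the ball $B(x)\coloneqq[x]_{\mathfrak{c}(t_x)}^X$ where $t_x\coloneqq\sup\{t\ge 0:\,[x]_{\mathfrak{c}(t)}^X\subseteq U^X\}$; since $X$ is finite this supremum is attained, so $B(x)$ is the largest closed ball of $X$ that contains $x$ and is still contained in $U^X$. Let $\{B_1,\dots,B_k\}\coloneqq\{B(x):\,x\in U^X\}$ (a finite set). Clearly $\bigcup_i B_i\subseteq U^X$ because each $B(x)\subseteq U^X$, and conversely every $x\in U^X$ lies in $B(x)$, so $\bigcup_i B_i=U^X$.

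Next I would check the $B_i$ are pairwise non-intersecting. This is where the ultrametric structure does the work: by item (3) of Proposition~\ref{prop:basic property ultrametric}, if $B(x)\cap B(x')\ne\emptyset$ then one contains the other, say $B(x)\subseteq B(x')$; but $B(x')$ is a closed ball contained in $U^X$ containing $x$, and $B(x)$ is the \emph{largest} such, forcing $B(x')\subseteq B(x)$, hence $B(x)=B(x')$. So distinct $B_i$'s are disjoint. For maximality, suppose $\{B_1',\dots,B_l'\}\subseteq V_X$ are non-intersecting closed balls with $\bigcup_i B_i'=U^X$. Fix $i$ and pick any $x\in B_i'$; then $B_i'$ is a closed ball of $X$ contained in $U^X$ and containing $x$, so by the definition of $t_x$ (maximality of $B(x)$) we get $B_i'\subseteq B(x)=B_j$ for the index $j$ with $x\in B_j$. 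This is exactly the maximality assertion. Uniqueness then follows: any maximal non-intersecting ball cover $\{B_1',\dots,B_l'\}$ refines $\{B_1,\dots,B_k\}$; but $\{B_1,\dots,B_k\}$ also refines itself and, being itself such a cover, must in turn refine $\{B_1',\dots,B_l'\}$ by applying the maximality property of the primed family — two families each refining the other (as partitions of $U^X$ into blocks of $V_X$) must coincide.

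I do not expect a serious obstacle here; the only thing to be careful about is the logical structure of ``maximal $\Rightarrow$ unique.'' The cleanest route is to observe that a maximal non-intersecting ball cover is necessarily \emph{the} coarsest one, i.e.\ it consists precisely of the $B(x)$'s: if some block $B_i'$ were strictly contained in $B(x)$ for all its points $x$ and $B(x)\subseteq U^X$, then $B(x)$ (which equals a disjoint union of some of the $B_j'$ by the relation-between-balls property) would be a strictly coarser block, contradicting maximality unless $B_i'=B(x)$. Thus maximality pins down the family uniquely, and the canonical construction via $\{B(x):x\in U^X\}$ realizes it. A one-line remark that finiteness of $X$ guarantees the defining supremum $t_x$ is attained (so $B(x)$ is well-defined) completes the argument.
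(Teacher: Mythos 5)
Your overall strategy --- build the decomposition canonically from the largest ball through each point, get disjointness from the nesting property of ultrametric balls, and get uniqueness from mutual refinement of partitions --- is sound; in fact the paper states this lemma without proof, so there is no official argument to compare against, and your disjointness, maximality, and uniqueness steps are all correct as written.

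There is, however, one genuine (though easily repaired) error in the construction itself: the supremum $t_x=\sup\{t\ge 0:\,[x]_{\mathfrak{c}(t)}^X\subseteq U^X\}$ is in general \emph{not} attained, finiteness of $X$ notwithstanding. The map $t\mapsto[x]_{\mathfrak{c}(t)}^X$ is piecewise constant but jumps \emph{at} each value $d\in\{u_X(x,x'):x'\in X\}$ (the closed ball of radius $d$ already contains the points at distance exactly $d$), so whenever $U^X\neq X$ the admissible set of $t$ is a half-open interval $[0,d)$, where $d$ is the smallest distance value with $[x]_{\mathfrak{c}(d)}^X\not\subseteq U^X$. At $t=t_x=d$ the ball $[x]_{\mathfrak{c}(t_x)}^X$ is \emph{not} contained in $U^X$, so your $B(x)$ as literally defined overshoots and $\bigcup_x B(x)=U^X$ fails. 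The fix is one line: the closed balls of $X$ that contain $x$ and are contained in $U^X$ form a non-empty finite family (it contains $\{x\}=[x]_{\mathfrak{c}(0)}^X$) that is totally ordered by inclusion by Proposition \ref{prop:basic property ultrametric}(3), hence has a unique maximal element; take that element to be $B(x)$ (equivalently, let $t_x$ be the \emph{maximum} over $t\in\spec(X)$ with $[x]_{\mathfrak{c}(t)}^X\subseteq U^X$). With this corrected definition every subsequent step of your argument goes through verbatim.
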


Now, given $U^X\in\mathrm{LX}^{(\eps)}$, let $\mathrm{ind}$ denote its index in $\mathrm{LX}^{(\eps)}$. Let $\{B_1,\ldots,B_k\}$ be the ball decomposition of $U^X$ whose elements are labeled such that
\[B_1.\texttt{Order}<\cdots<B_k.\texttt{Order}. \]
We then store $\mathrm{ind}$, the index of $U^X$, in $\mathrm{INDX}$ as follows
\[\mathrm{INDX}(B_1.\texttt{Order},B_2.\texttt{Order},\ldots,B_k.\texttt{Order},\underbrace{-1,\ldots,-1)}_{\gamma-k\text{ terms}}=\mathrm{ind}. \]

\paragraph{Computation of the ball decomposition} Given $U^X\in\mathrm{LX}^{(\eps)}$, we proceed to compute its ball decomposition as follows. If $U^X.\texttt{Order}\neq -1$, then $U^X\in \mathrm{LX}$, i.e., $U^X$ already represents a ball in $X$. In this case, $\{U^X\}$ is the ball decomposition of $U^X$. Otherwise, we traverse {all} nodes of $T_{U^X}$ to create the list $\mathrm{p}U^X$ consisting of pointers to all nodes $\{B_1,\ldots,B_k\}\subseteq T_{U^X}$ such that: for each $i=1,\ldots,k$, $B_i.\texttt{IsBall}=\texttt{True}$ but the parent node of $B_i$ satisfies $\texttt{Parent}\lc B_i,T_{U^X}\rc.\texttt{IsBall}=\texttt{False}$. Then, $\{B_1,\ldots,B_k\}$ is the desired ball decomposition of $U^X$.

The computation of the ball decomposition of $U^X\in\mathrm{LX}^{(\eps)}$ described above can be done in time $O(\gamma\log(\gamma))$ (including the sorting time). Therefore, the total time complexity for storing the indices of all $U^X\in\mathrm{LX}^{(\eps)}$ into $\mathrm{INDX}$ is bounded by $O(n2^\gamma\gamma\log(\gamma))$ and the time needed for finding the index of $U^X$ into $\mathrm{LX}^{(\eps)}$ is bounded by $O(\gamma\log(\gamma))$. We remark that the space complexity of the multi-array $\mathrm{INDX}$ is $O(n^\gamma)$. The actual size of $\mathrm{LX}^{(\eps)}$ is however $O(n2^\gamma)$. To reduce the space complexity, one could consider a sparse multi-array data structure or binary search trees.

\subsubsection{A refined union operation via an improved taming process}\label{sec:refined union} 
In line 16 of Algorithm \ref{algo-dGH-dyn} we need to construct the union space $U_{\Psi^{-1}(j)}^X\coloneqq \cup_{i\in \Psi^{-1}(j)}U_i^X$ where all $U_i^X$s are subspaces of $U^X$. By the SGC, $\#\Psi^{-1}(j)=O(\gamma)$. Therefore, by results in Section \ref{sec:union of susbets}, it takes time $O(n\gamma)$ to construct a subspace TDS (subordinate to $T_X$) for representing $U_{\Psi^{-1}(j)}^X$ given the subspace TDSs (subordinate to $T_X$) for $U_i^X$s.

Recall from Appendix \ref{sec:union of susbets} that the union operation consists of three steps where the final step, i.e., the taming process, has the leading time complexity.
In this section, we provide a \emph{refined union operation} via an \emph{improved taming process} for constructing $U_{\Psi^{-1}(j)}^X$ such that the time complexity of this refined union operation is thus reduced to $O(\gamma^2)$. In the sequel, we use the shorthand $U\coloneqq U_{\Psi^{-1}(j)}^X$ and also let $k\coloneqq \#\Psi^{-1}(j)=O(\gamma)$.

Note that in Appendix \ref{sec:union of susbets}, the taming process for the preliminary union $\Tilde{T}_U$ requires pairwise comparisons between all nodes in $\Tilde{T}_U\backslash \cup_{i=1}^k T_{U_i}$ and all nodes in $T_X$. The fact that $\#T_X=O(n)$ explains the $n$ factor in the time complexity bound $O(nk)$ for the taming process. However, to tame $\Tilde{T}_U$, we only need to compare every node in $\Tilde{T}_U\backslash \cup_{i=1}^k T_{U_i}$ with a certain \emph{subset} of nodes in $T_X$. We obtain the improved taming process for $\Tilde{T}_U$ by restricting the pairwise comparisons in this way and by keeping the rest of the taming process unchanged.

Now, we explain how to restrict the pairwise comparison. Let $i_0\coloneqq U^X.\texttt{Order}$ and let $B^X\coloneqq\mathrm{LX}(i_0)$. Since $U^X\in\mathrm{LX}^{(\eps)}$, we have that $U^X\in B^X_{(\eps)}$, i.e., the set of $\eps$-maximal unions of closed balls in $B^X$ (cf. Section \ref{sec:dp-main-text}). Starting from the node $B^X$, we traverse all of its descendants $B$ in order to identify all those for which $B.\texttt{Diameter}\geq B^X.\texttt{Diameter}-2\eps$. We let $\mathrm{L}B^X$ denote the set of all such descendants. Then, in the improved taming process of $\Tilde{T}_U$, we only compare all nodes in $\Tilde{T}_U\backslash \cup_{i=1}^k T_{U_i}$ with all nodes in $\mathrm{L}B^X$. 

By the SGC, $\#\mathrm{L}B^X=O(\gamma)$ and thus the time complexity of the improved taming process described above is at most $O(\gamma k)=O(\gamma^2)$. Therefore, constructing $T_U$ via this taming process has cost at most $O(\gamma^2)$.

\section{Extension of $\ugh$ to ultra-dissimilarity spaces}\label{sec:ext-treegram}
In this section, we will consider the collection $\ums^\mathrm{dis}$ of finite ultra-dissimilarity spaces, which are generalizations of ultrametric spaces (see also \cite{smith2016hierarchical} for a more general notion called ultra-network).
\begin{definition}[Ultra-dissimilarity space]\label{def:ultra-dissimilarity}
An ultra-dissimilarity space is any pair $(X,u_X)$ where $X$ is a \emph{finite} set and $u_X:X\times X\rightarrow \mathbb{R}_{\geq 0}$ satisfies, for all $x,x',x''\in X$:
\begin{enumerate}
    \item[(1)] \textbf{Symmetry:} $u_X(x,x') = u_X(x',x)$,
    \item[(2)] \textbf{Strong triangle inequality:} $u_X(x,x'')\leq \max\left(u_X(x,x'),u_X(x',x'')\right),$
    \item[(3)] \textbf{Definiteness:} $\max\left(u_X(x,x),u_X(x',x')\right)\leq u_X(x,x')$, and the equality takes place if and only if $x=x'$.
\end{enumerate}
We refer to $u_X$ as the \emph{ultra-dissimilarity} on $X$. It is obvious that any finite ultrametric space is an ultra-dissimilarity space. Then, $\ums^\mathrm{fin}\subseteq\ums^\mathrm{dis}$, where $\ums^\mathrm{fin}$ denotes the collection of finite ultrametric spaces.

We say two ultra-dissimilarity spaces $(X,u_X)$ and $(Y,u_Y)$ are \emph{isometric} if there exists a bijective function $f:X\rightarrow Y$ such that for any $x,x'\in X$
\[u_Y(f(x),f(x'))=u_X(x,x'). \]
Such an $f$ is called an \emph{isometry}.
\end{definition}

\begin{remark}[Informal interpretation]
For each $x\in X$, the value $u_X(x,x)$ is regarded as the `birth time' of the point $x$; when $u_X$ is an actual ultrametric on $X$, all points are born at time $0$. The value $u_X(x,x')$ for different points $x$ and $x'$ encodes the time when the two points `merge'.
Note that then condition (3) above can be informally interpreted as encoding the property that two points cannot merge before their respective birth times, and that if they merge at the same time they are born, then they are actually the \emph{same} point.
\end{remark}

Given two ultra-dissimilarity spaces $X$ and $Y$ and any correspondence $R$ between them, without any obstacle, we define $\disna(R)$ in exactly the same way by Equation (\ref{eq:dist}), i.e.,
\[\disna(R)\coloneqq\sup_{(x,y),(x',y')\in R}\Lambda_\infty(u_X(x,x'),u_Y(y,y')).\]

\begin{definition}[$\ugh$ between ultra-dissimilarity spaces]
For two ultra-dissimilarity spaces $X$ and $Y$, we define $\ugh(X,Y)$ by 
\begin{equation}\ugh(X,Y) := \inf_{R}\disna(R).
\end{equation}
\end{definition}

Given an ultra-dissimilarity space $X$, as we did in Section \ref{sec:quotient operation}, we consider a notion of closed equivalence classes $[x]_{\mathfrak{c}(t)}^X\coloneqq\{x'\in X:\,u_X(x,x')\leq t\}$ for any $x\in X$ and $t\geq 0$. Furthermore, let 
\begin{equation}\label{eq:equivalence class treegram}
    [\![x]\!]_{\mathfrak{c}(t)}^X \coloneqq  \left\{
\begin{array}{cl}
[x]_{\mathfrak{c}(t)}^X & \mbox{if $u_X(x,x)\leq t$}\\
\{x\} & \mbox{if $u_X(x,x)>t$}
\end{array}
\right.
\end{equation}
In words, if the `birth time' of $x$ is no larger than $t$, i.e., $u_X(x,x)\leq t$, then $[\![x]\!]_{\mathfrak{c}(t)}^X$ is the same as $[x]_{\mathfrak{c}(t)}^X$. Otherwise, $[\![x]\!]_{\mathfrak{c}(t)}^X$ denotes the singleton $\{x\}$.
We let $X_{\mathfrak{c}(t)}\coloneqq\{[\![x]\!]_{\mathfrak{c}(t)}^X:\,\forall x\in X\}$ and define by $u_{X_{\mathfrak{c}(t)}}$ an ultra-dissimilarity on $X_{\mathfrak{c}(t)}$ given by:

\begin{equation}
    u_{X_{\mathfrak{c}(t)}}\left([\![x]\!]_{\mathfrak{c}(t)}^X ,[\![x']\!]_{\mathfrak{c}(t)}^X \right) \coloneqq  \left\{
\begin{array}{cl}
u_X(x,x') & \mbox{if $[\![x]\!]_{\mathfrak{c}(t)}^X \neq[\![x']\!]_{\mathfrak{c}(t)}^X $, or $x=x'$ and $u_X(x,x)>t$}\\
0 & \mbox{otherwise.}
\end{array}
\right.
\end{equation}

\begin{definition}[Closed quotient on ultra-dissimilarity spaces]\label{def:ultrarightquotient}
Given an ultra-dissimilarity space $(X,u_X)$ and $t\geq 0$, Then, we call $\lc X_{\mathfrak{c}(t)},u_{X_{\mathfrak{c}(t)}}\rc $ the \emph{closed quotient} of $X$ at level $t$.
\end{definition}

We are still using the notation $X_{\mathfrak{c}(t)}$ to denote the resulting quotient space as we did in the case of ultrametric spaces (Definition \ref{def:ultraquotient}) because if $(X,u_X)$ is actually an ultrametric space, then the new definition agrees with the one given previously.

It is obvious that for any ultra-dissimilarity space $X$ and any $t\geq 0$, $X_{\mathfrak{c}(t)}$ is still an ultra-dissimilarity space. Then, the closed quotient gives rise to a map which we call the \emph{$t$-closed quotient operator} $Q_{\mathfrak{c}\lc t\rc}:\mathcal{U}^\mathrm{dis}\rightarrow\mathcal{U}^\mathrm{dis}$ sending $X\in \mathcal{U}^\mathrm{dis}$ to $X_{\mathfrak{c}(t)}\in\mathcal{U}^\mathrm{dis}$.

\begin{theorem}[Structural theorem for $\ugh$ on ultra-dissimilarity spaces]\label{thm:ugh-structn}
For any two finite  ultra-dissimilarity spaces $X$ and $Y$ one has that
\begin{equation*}
    \ugh(X,Y) = \min\left\{t\geq 0:\, \lc X_{\mathfrak{c}(t)},u_{X_{\mathfrak{c}(t)}}\rc \cong \lc Y_{\mathfrak{c}(t)},u_{Y_{\mathfrak{c}(t)}}\rc \right\}.
\end{equation*}
\end{theorem}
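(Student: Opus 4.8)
The plan is to adapt, at the level of its overall architecture, the proof of Theorem~\ref{thm:ugh-struct}, while carefully bookkeeping the one new feature of ultra-dissimilarity spaces: points carry birth times $u_X(x,x)$ which need not vanish. As there, I would first establish the weaker identity with $\inf$ in place of $\min$,
\[
\ugh(X,Y) = \inf\left\{t\ge 0:\ \lc X_{\mathfrak{c}(t)},u_{X_{\mathfrak{c}(t)}}\rc \cong \lc Y_{\mathfrak{c}(t)},u_{Y_{\mathfrak{c}(t)}}\rc\right\},
\]
and then upgrade $\inf$ to $\min$ using finiteness. Throughout I would use two elementary facts: by the strong triangle inequality and definiteness, $u_X(x,x')\le t$ forces $u_X(x,x),u_X(x',x')\le t$; consequently each class $[\![x]\!]_{\mathfrak{c}(t)}^X$ is either a \emph{genuine} class $[x]_{\mathfrak{c}(t)}^X$ (all of whose points are born by time $t$, and which carries diagonal value $0$ in $u_{X_{\mathfrak{c}(t)}}$) or a \emph{forced} singleton $\{x\}$ with $u_X(x,x)>t$ carrying diagonal value $u_X(x,x)$; and the classes $\{[\![x]\!]_{\mathfrak{c}(t)}^X\}$ partition $X$. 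In particular, having diagonal value $0$ in $u_{X_{\mathfrak{c}(t)}}$ characterizes genuine classes, a property preserved by any isometry.

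For the inequality $\le$, given $t$ and an isometry $f_t\colon X_{\mathfrak{c}(t)}\to Y_{\mathfrak{c}(t)}$, I would set
\[
R_t\coloneqq\big\{(x,y)\in X\times Y:\ [\![y]\!]_{\mathfrak{c}(t)}^Y = f_t\big([\![x]\!]_{\mathfrak{c}(t)}^X\big)\big\},
\]
which is a correspondence exactly as in the ultrametric case, and verify $\disna(R_t)\le t$ via \eqref{eq:lambda-infty-other-def}. The case analysis splits according to whether $u_X(x,x')\le t$ — in which case $[\![x]\!]_{\mathfrak{c}(t)}^X=[\![x']\!]_{\mathfrak{c}(t)}^X$ is genuine, its $f_t$-image has diagonal value $0$, hence is genuine, forcing $u_Y(y,y')\le t$ — or $u_X(x,x')>t$ — in which case either $[\![x]\!]_{\mathfrak{c}(t)}^X\ne[\![x']\!]_{\mathfrak{c}(t)}^X$ and the isometry yields $u_Y(y,y')=u_X(x,x')$, or $x=x'$ with $u_X(x,x)>t$ and transferring the diagonal value through $f_t$ yields $u_Y(y,y')=u_X(x,x)$. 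In every case $\Lambda_\infty(u_X(x,x'),u_Y(y,y'))\le t$, so $\ugh(X,Y)\le \inf\{t:X_{\mathfrak{c}(t)}\cong Y_{\mathfrak{c}(t)}\}$. For the reverse inequality, given any correspondence $R$ and $t\coloneqq\disna(R)$, I would define $f_t\big([\![x]\!]_{\mathfrak{c}(t)}^X\big)\coloneqq[\![y]\!]_{\mathfrak{c}(t)}^Y$ for any $y$ with $(x,y)\in R$, build a symmetric $g_t$ from $R$, and show $f_t$ is a well-defined isometry with inverse $g_t$. The only genuinely new point — which I expect to be the main obstacle — is well-definedness (and matching of diagonal values) when $[\![x]\!]_{\mathfrak{c}(t)}^X$ is a forced singleton $\{x\}$ with $u_X(x,x)>t$: here $\Lambda_\infty(u_X(x,x),u_Y(y,y'))\le t<u_X(x,x)$ forces $u_Y(y,y')=u_X(x,x)$ for all $y,y'$ paired with $x$, and then the \emph{definiteness} axiom ($\max(u_Y(y,y),u_Y(y',y'))\le u_Y(y,y')$, with equality iff $y=y'$) forces $y=y'$ — so $x$ has a unique partner, itself a forced singleton with the same birth time. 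Once this is settled, the remaining verifications that $f_t$ is a bijective isometry proceed by the same two-case analysis ($[\![x]\!]_{\mathfrak{c}(t)}^X$ equal to or distinct from $[\![x']\!]_{\mathfrak{c}(t)}^X$) as in the ultrametric case, now additionally tracking whether classes are genuine or forced; this gives $X_{\mathfrak{c}(t)}\cong Y_{\mathfrak{c}(t)}$, hence $\ugh(X,Y)\ge\inf\{t:X_{\mathfrak{c}(t)}\cong Y_{\mathfrak{c}(t)}\}$.

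Finally, to replace $\inf$ by $\min$: since $X$ and $Y$ are finite, off-diagonal values of $u_{X_{\mathfrak{c}(t)}}$ are $u_X$-values and diagonal values are either $0$ or fixed birth times, so the isometry type of $X_{\mathfrak{c}(t)}$ is constant on each interval between consecutive elements of the finite set $\{u_X(x,x'):x,x'\in X\}$, hence locally constant from the right in $t$; likewise for $Y_{\mathfrak{c}(t)}$. Therefore $\{t:X_{\mathfrak{c}(t)}\cong Y_{\mathfrak{c}(t)}\}$ is a finite union of left-closed intervals; it is non-empty because for $t$ exceeding all values of $u_X$ and $u_Y$ both quotients collapse to the one-point space with diagonal value $0$; so its infimum is attained. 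This completes the argument.
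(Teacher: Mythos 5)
Your proposal is correct and follows essentially the same route as the paper's proof: the same correspondence $R_t$ built from an isometry of the $t$-closed quotients, the same induced quotient map $f_t$ in the reverse direction with the definiteness axiom forcing uniqueness of partners of forced singletons, and the same finiteness argument to upgrade $\inf$ to $\min$. Your observation that genuine classes are exactly those with diagonal value $0$ in $u_{X_{\mathfrak{c}(t)}}$, a property preserved by isometries, is a slightly cleaner way to organize the case analysis than the paper's direct computation, but it is the same argument.
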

\begin{proof}
We first prove a weaker version (with $\inf$ instead of $\min$):
\begin{equation}\label{eq:ughnwinf}
    \ugh(X,Y) = \inf\left\{t\geq 0:\, (X_{\mathfrak{c}(t)} ,u_{X_{\mathfrak{c}(t)} })\cong (Y_{\mathfrak{c}(t)} ,u_{Y_{\mathfrak{c}(t)} })\right\}.
\end{equation}
Suppose first that $X_{\mathfrak{c}(t)}\cong Y_{\mathfrak{c}(t)}$ for some $t\geq 0$, i.e. that there exists an isometry  $f_t:X_{\mathfrak{c}(t)}\rightarrow Y_{\mathfrak{c}(t)}$. Then, we define
\[R_t\coloneqq \left\{(x,y)\in X\times Y:\,[\![y]\!]_{\mathfrak{c}\lc t\rc}^Y= f_t([\![x]\!]_{\mathfrak{c}(t)}^X)\right\}.\] 
Since $f_t$ is bijective, $R_t$ is a correspondence between $X$ and $Y$.

Then, we show that $\disna(R_t)\leq t$, which will imply that $\ugh(X,Y)\leq t$. Choose any $(x,y),(x\p,y\p)\in R_t$. If $u_X(x,x')\leq t$, then $u_X(x,x')\leq \max(t,u_Y(y,y'))$. Otherwise, if $u_X(x,x\p)>t$, we have the following two cases:

\begin{enumerate}
    \item $x=x'$. Then, $[\![x]\!]_{\mathfrak{c}(t)}^X =[\![x']\!]_{\mathfrak{c}(t)}^X=\{x\}$. Thus, $[\![y]\!]_{\mathfrak{c}\lc t\rc}^Y =f_t([\![x]\!]_{\mathfrak{c}(t)}^X)= f_t([\![x']\!]_{\mathfrak{c}(t)}^X)=[\![y']\!]_{\mathfrak{c}\lc t\rc}^Y $. Since $f_t$ is isometry, we have that
    \[u_{Y_{\mathfrak{c}(t)} }\lc[\![y]\!]_{\mathfrak{c}\lc t\rc}^Y ,[\![y]\!]_{\mathfrak{c}\lc t\rc}^Y \rc=u_{X_{\mathfrak{c}(t)} }\lc[\![x]\!]_{\mathfrak{c}(t)}^X ,[\![x]\!]_{\mathfrak{c}(t)}^X \rc)=u_X(x,x)>0.\]
    This implies that 
    \[u_Y(y,y)=u_{Y_{\mathfrak{c}(t)} }\lc[\![y]\!]_{\mathfrak{c}\lc t\rc}^Y ,[\![y]\!]_{\mathfrak{c}\lc t\rc}^Y \rc=u_X(x,x)>t,\]
    and thus $[\![y]\!]_{\mathfrak{c}\lc t\rc}^Y=\{y\}$. Similarly, $[\![y']\!]_{\mathfrak{c}\lc t\rc}^Y=\{y'\}$ and thus $\{y\}=[\![y]\!]_{\mathfrak{c}\lc t\rc}^Y =[\![y']\!]_{\mathfrak{c}\lc t\rc}^Y =\{y'\}$. Then, we have that $y=y'$ and thus $u_Y(y,y')=u_X(x,x')$.
    \item $x\neq x'$. Then, $[\![x]\!]_{\mathfrak{c}(t)}^X \neq [\![x']\!]_{\mathfrak{c}(t)}^X$. Since $f_t$ is an isometry, we have that 
    \[[\![y]\!]_{\mathfrak{c}\lc t\rc}^Y =f_t\lc[\![x]\!]_{\mathfrak{c}(t)}^X \rc\neq f_t\lc[\![x']\!]_{\mathfrak{c}(t)}^X \rc=[\![y']\!]_{\mathfrak{c}\lc t\rc}^Y \]
    and thus $u_Y(y,y\p)=u_{Y_{\mathfrak{c}(t)} }\lc[\![y]\!]_{\mathfrak{c}\lc t\rc}^Y ,[\![y']\!]_{\mathfrak{c}\lc t\rc}^Y \rc=u_{X_{\mathfrak{c}(t)} }\lc[\![x]\!]_{\mathfrak{c}(t)}^X ,[\![x']\!]_{\mathfrak{c}(t)}^X \rc=u_X(x,x\p)$.
\end{enumerate}

Therefore, $u_X(x,x\p)\leq\max(t,u_Y(y,y\p))$. Similarly, $u_Y(y,y\p)\leq\max(t,u_X(x,x\p))$. Then, we have that $\disna(R_t)\leq t$ and thus $\ugh(X,Y) \leq \inf\left\{t\geq 0:\, X_{\mathfrak{c}(t)}\cong Y_{\mathfrak{c}(t)}\right\}.$

Conversely, let $R$ be a correspondence between $X$ an $Y$ and let $t\coloneqq\disna(R)$. Define a map $f:X\rightarrow Y$ by taking $x\in X$ to an arbitrary $y\in Y$ such that $(x,y)\in R$. Consider the induced quotient map $f_t:X_{\mathfrak{c}(t)}\rightarrow Y_{\mathfrak{c}(t)}$, defined by $f_t\lc[\![x]\!]_{\mathfrak{c}(t)}^X \rc=[\![f(x)]\!]_{\mathfrak{c}(t)}^Y$. We now show that $f_t$ is well-defined. For any $(x,y),(x\p,y\p)\in R$ such that $[\![x']\!]_{\mathfrak{c}(t)}^X =[\![x]\!]_{\mathfrak{c}(t)}^X $, we have the following two cases:
\begin{enumerate}
    \item $u_X(x,x')\leq t$. Then, since $\disna(R)=t$, we have that $u_Y(y\p,y)\leq\max\lc t,u_X(x,x') \rc\leq t$.
    \item $u_X(x,x)>t$ and $x=x'$. Then, since $\disna(R)=t$, $u_Y(y,y')=u_X(x,x')=u_X(x,x)>t$. Similarly, $u_Y(y,y)=u_X(x,x)=u_X(x',x')=u_Y(y',y')$. Therefore, $y=y'$ by condition (3) of the definition of ultra-dissimilarity spaces (cf. Definition \ref{def:ultra-dissimilarity}).
\end{enumerate}
Therefore, $[\![y]\!]_{\mathfrak{c}\lc t\rc}^Y =[\![y']\!]_{\mathfrak{c}\lc t\rc}^Y$, which implies that $f_t$ is well-defined. Similarly, the quotient map $g_t:Y_{\mathfrak{c}(t)}\rightarrow X_{\mathfrak{c}(t)}$ induced by a map $g:Y\rightarrow X$ such that $g(y)=x$ where $x\in X$ is chosen such that $(x,y)\in R$ is well-defined. It is clear that $g_t$ is the inverse of $f_t$ and thus $f_t$ is bijective.
Now we show that $f_t$ is an isometry. Choose $[\![x]\!]_{\mathfrak{c}(t)}^X ,[\![x']\!]_{\mathfrak{c}(t)}^X \in X_{\mathfrak{c}(t)}$ and let $y=f(x)$ and $y'=f(x')$. Let $s\coloneqq u_{X_{\mathfrak{c}(t)} }\lc[\![x]\!]_{\mathfrak{c}(t)}^X ,[\![x']\!]_{\mathfrak{c}(t)}^X \rc$. If $[\![x]\!]_{\mathfrak{c}(t)}^X \neq[\![x']\!]_{\mathfrak{c}(t)}^X$, then $s>t$ and thus $u_X(x,x\p)=s$. Since $\disna(R)=t<s$, $u_Y(y,y\p)$ is forced to be equal to $s$ and thus 
\[u_{Y_{\mathfrak{c}(t)}}\lc[\![f(x)]\!]_{\mathfrak{c}(t)}^Y ,[\![f(x')]\!]_{\mathfrak{c}(t)}^Y \rc=u_{Y_{\mathfrak{c}(t)}}\lc[\![y]\!]_{\mathfrak{c}\lc t\rc}^Y ,[\![y']\!]_{\mathfrak{c}\lc t\rc}^Y \rc=s=u_{X_{\mathfrak{c}(t)}}\lc[\![x]\!]_{\mathfrak{c}(t)}^X ,[\![x']\!]_{\mathfrak{c}(t)}^X \rc.\]
If $[\![x]\!]_{\mathfrak{c}(t)}^X =[\![x']\!]_{\mathfrak{c}(t)}^X$, then we have the following two cases.
\begin{enumerate}
    \item $u_X(x,x)\leq t$. Then, $[\![x]\!]_{\mathfrak{c}(t)}^X =[\![x']\!]_{\mathfrak{c}(t)}^X $ implies that $u_X(x,x')\leq t$ and $u_{X_{\mathfrak{c}(t)}}\lc[\![x]\!]_{\mathfrak{c}(t)}^X ,[\![x']\!]_{\mathfrak{c}(t)}^X \rc=0$. Since $(x,f(x))\in R$ and $\disna(R)\leq t$, we have that $u_Y(f(x),f(x))\leq\max(t,u_X(x,x))\leq t$. Then, 
    \[u_{Y_{\mathfrak{c}(t)}}\lc[\![f(x)]\!]_{\mathfrak{c}(t)}^Y ,[\![f(x')]\!]_{\mathfrak{c}(t)}^Y \rc=u_{Y_{\mathfrak{c}(t)}}\lc[\![f(x)]\!]_{\mathfrak{c}(t)}^Y ,[\![f(x)]\!]_{\mathfrak{c}(t)}^Y \rc=0=u_X\lc[\![x]\!]_{\mathfrak{c}(t)}^X ,[\![x']\!]_{\mathfrak{c}(t)}^X\rc. \]
    \item $u_X(x,x)>t$. Then, $x=x'$ and $u_{X_{\mathfrak{c}(t)}}\lc[\![x]\!]_{\mathfrak{c}(t)}^X ,[\![x']\!]_{\mathfrak{c}(t)}^X \rc=u_X(x,x)>t$. Since $(x,f(x))\in R$ and $\disna(R)\leq t$, we have $\Lambda_\infty(u_X(x,x),u_Y(f(x),f(x)))\leq t$, which implies that $u_Y(f(x),f(x))=u_X(x,x)>t$. Then, 
    \begin{align*}
        &u_{Y_{\mathfrak{c}(t)}}\lc[\![f(x)]\!]_{\mathfrak{c}(t)}^Y ,[\![f(x')]\!]_{\mathfrak{c}(t)}^Y \rc=u_{Y_{\mathfrak{c}(t)}}\lc[\![f(x)]\!]_{\mathfrak{c}(t)}^Y ,[\![f(x)]\!]_{\mathfrak{c}(t)}^Y \rc=u_X(x,x)\\
        =&u_{X_{\mathfrak{c}(t)}}\lc[\![x]\!]_{\mathfrak{c}(t)}^X ,[\![x]\!]_{\mathfrak{c}(t)}^X \rc=u_{X_{\mathfrak{c}(t)}}\lc[\![x]\!]_{\mathfrak{c}(t)}^X ,[\![x']\!]_{\mathfrak{c}(t)}^X \rc.
    \end{align*}
\end{enumerate}
Therefore, $f_t$ is an isometry and thus $\ugh(X,Y) \geq \inf\left\{t\geq 0:\, X_{\mathfrak{c}(t)}\cong Y_{\mathfrak{c}(t)}\right\}.$

Now, since $X$ is finite, it is obvious that for each $t\geq 0$, there exists $\eps>0$ such that whenever $s\in[t,t+\eps]$, we have that $X_{\mathfrak{c}(t)}\cong X_{\mathfrak{c}(s)}$. Therefore, the infimum of Equation (\ref{eq:ughnwinf}) is attained which concludes the proof.
\end{proof}

Analogously to the case of ultrametric spaces, the structural theorem (Theorem \ref{thm:ugh-structn}) for $\ugh$ on the collection $\ums^\mathrm{dis}$ of all finite ultra-dissimilarity spaces allows us to devise an algorithm similar to Algorithm \ref{algo-uGH} for computing $\ugh$ between ultra-dissimilarity spaces. The argument for the complexity analysis of Algorithm \ref{algo-uGH} can be adapted to show that the time complexity of computing $\ugh$ on $\ums^\mathrm{dis}$ is still $O(n\log(n))$.

\paragraph{Graphical representations of ultra-dissimilarity spaces} As shown in Theorem \ref{thm:dendroultra}, ultrametric spaces are equivalent to dendrograms. Analogously, ultra-dissimilarity spaces can be viewed as certain objects named \emph{treegrams}. To define treegrams, we first introduce a notion called {subpartitions}: given a set $X$, a partition $P'$ of a subset $X'\subseteq X$ is called a \emph{subpartition}.  We denote by $\mathrm{SubPart}(X)$ the collection of all {subpartitions} of $X$. For any subpartitions $P_1$ and $P_2$, we say $P_1$ is \emph{coarser} than $P_2$ if any block in $P_2$ is contained in some block in $P_1$.

\begin{example}[Examples of subpartitions]
\begin{enumerate}
    \item Note that the empty set $P=\emptyset$ is a subpartition of any set $X$.
    \item Given a finite set $X$, let $P=\{B_1,\ldots,B_n\}$ be a partition. Then, for any non-empty subset $X'\subseteq X$, we obtain a subpartition $P|_{X'}$ by restricting $P$ to $X'$ as follows: $P|_{X'}=\{B_1\bigcap X',\ldots,B_n\bigcap X'\}\backslash\{\emptyset\}.$ 
\end{enumerate}
\end{example}

\begin{definition}[Treegrams]\label{def:treegram}
A treegram $\theta_X$ over a finite set $X$ is a function $\theta_X:[0,\infty)\rightarrow \mathrm{SubPart}(X)$ satisfying the following conditions:
\begin{enumerate}
    \item[(1)] For $0\leq s<t$, $\theta_X(t)$ is coarser than $\theta_X(s)$.
    \item[(2)] There exists $t_X>0$ such that $\theta_X(t_X)=\{X\}.$
    \item[(3)] For any $r\geq0$, there exists $\eps>0$ such that $\theta_X(r)=\theta_X(t)$ for $t\in[r,r+\eps].$
    \item[(4)] For each $x\in X$, there exists $t\geq 0$ such that $\{x\}\in \theta_X(t)$ is a block.
\end{enumerate}
\end{definition}

Our definition is a slight modification of treegrams defined in {\cite{smith2016hierarchical,kim2018formigrams}} where the domain of treegrams is the entire real line $\mathbb R$ instead of $\mathbb R_{\geq 0}$.

\begin{figure}[ht]
    \centering
    \includegraphics[width=0.5\textwidth]{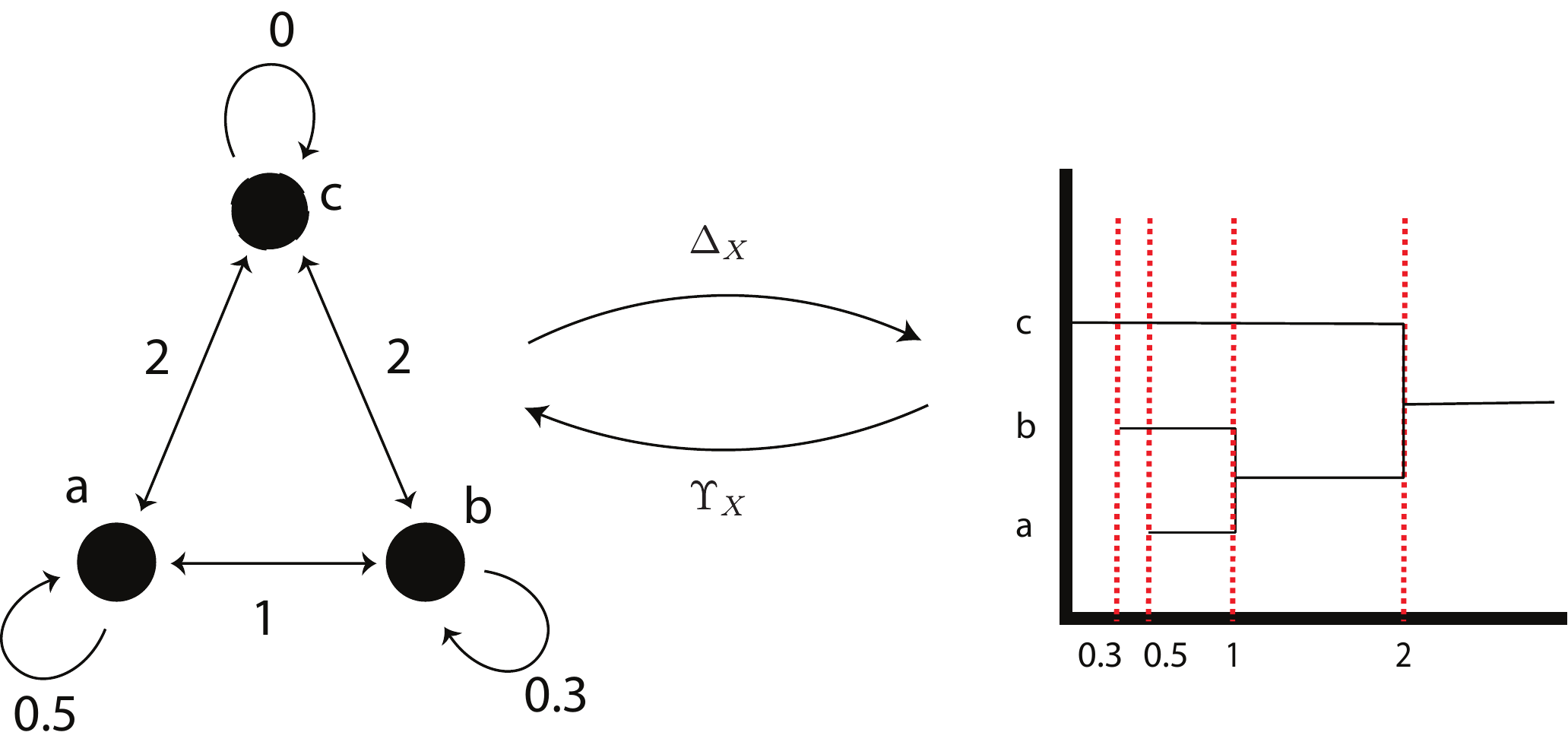}
    \caption{\textbf{Treegrams and ultra-dissimilarity spaces}.}
    \label{fig:ultra-tree}
\end{figure}

Fix a finite set $X$, denote by $\mathcal{U}^\mathrm{dis}(X)$ the collection of all ultra-dissimilarities over $X$ and denote by $\mathcal{T}(X)$ the collection of all treegrams over $X$. We define a map $\Delta_X:\mathcal{U}^\mathrm{dis}(X)\rightarrow\mathcal{T}(X)$ by sending $u_X$ to a treegram $\theta_X$ as follows: for each $t\geq 0$, let $\hat{X}_t\coloneqq\{x\in X:\,u_X(x,x)\leq t\}$ and let $\theta_X(t)\coloneqq\left\{[x]_{\mathfrak{c}(t)}^X:\,x\in \hat{X}_t\right\}$. Note in particular that $\theta_X$ satisfies condition (4) of Definition \ref{def:treegram} due to the definiteness of $u_X$ (cf. condition (3) in Definition \ref{def:ultra-dissimilarity}). Conversely, we define a map $\Upsilon_X:\mathcal{T}(X)\rightarrow\mathcal{U}^\mathrm{dis}(X)$ as follows. Let $\theta_X\in\mathcal{T}(X) $. Then, we define an ultra-dissimilarity $u_X\coloneqq\Upsilon_X(\theta_X)$ on $X$ by: 
\[u_X(x,x')\coloneqq\inf\left\{t\geq 0:\, [x]_{t}^{\theta_X}=[x']_{t}^{\theta_X}\right\},\quad \forall x,x'\in X,\]
where $[x]_{t}^{\theta_X}\in \theta_X(t)$ denotes the block containing $x$. Note that definiteness of $u_X$ follows from condition (4) of Definition \ref{def:treegram}. Then, in analogy to Theorem \ref{thm:dendroultra}, we have the following theorem. See Figure \ref{fig:ultra-tree} for an illustration.

\begin{theorem}\label{thm:treegram-ultra-equiv}
Given any finite set $X$, $\Delta_X:\mathcal{U}^\mathrm{dis}(X)\rightarrow\mathcal{T}(X)$ is bijective with inverse $\Upsilon_X:\mathcal{T}(X)\rightarrow\mathcal{U}^\mathrm{dis}(X)$.
\end{theorem}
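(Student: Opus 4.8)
The plan is to mimic the proof of Theorem \ref{thm:dendroultra}, showing that $\Delta_X$ and $\Upsilon_X$ are mutually inverse bijections between $\mathcal{U}^\mathrm{dis}(X)$ and $\mathcal{T}(X)$. First I would verify that the two maps are well-defined, i.e., that $\Delta_X(u_X)$ really is a treegram and that $\Upsilon_X(\theta_X)$ really is an ultra-dissimilarity. For $\Delta_X$: condition (1) of Definition \ref{def:treegram} follows because $[x]_{\mathfrak{c}(s)}^X\subseteq[x]_{\mathfrak{c}(t)}^X$ for $s<t$, and $\hat X_s\subseteq\hat X_t$; condition (2) holds by taking $t_X=\diam(X)$ (or $\max_{x,x'}u_X(x,x')$); condition (3) follows since $X$ is finite and $u_X$ takes finitely many values, so the subpartition $\theta_X(t)$ is locally constant from the right; condition (4) is exactly the definiteness axiom, since for $t=u_X(x,x)$ we have $x\in\hat X_t$ but for $t'<u_X(x,x)$ the point $x$ appears in no block, and by the strong triangle inequality combined with definiteness no other point $x'$ can satisfy $u_X(x,x')\le u_X(x,x)$ unless $u_X(x,x')=u_X(x,x)$, which forces $x=x'$ at that threshold — so $\{x\}$ is a block at the appropriate level. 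For $\Upsilon_X$: symmetry of $u_X$ is clear; the strong triangle inequality follows from the nesting structure of blocks in a treegram (if $[x]_t^{\theta_X}=[x']_t^{\theta_X}$ and $[x']_s^{\theta_X}=[x'']_s^{\theta_X}$ with $s\le t$, then coarsening gives $[x]_{\max(s,t)}^{\theta_X}=[x'']_{\max(s,t)}^{\theta_X}$); definiteness follows from condition (4), since $u_X(x,x)=\inf\{t:\{x\}\subseteq[x]_t^{\theta_X}\}$ is finite and any $x'\ne x$ can only merge with $x$ at a level at least $\max(u_X(x,x),u_X(x',x'))$, with equality forcing $x=x'$.

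Next I would check the two composition identities. For $\Upsilon_X\circ\Delta_X=\mathrm{id}$: given $u_X$, let $\theta_X=\Delta_X(u_X)$; then for $x,x'\in X$,
\[
\Upsilon_X(\theta_X)(x,x')=\inf\{t\ge 0:\,[x]_{\mathfrak{c}(t)}^X=[x']_{\mathfrak{c}(t)}^X\}=\inf\{t\ge 0:\,u_X(x,x')\le t\}=u_X(x,x'),
\]
where I should be a little careful when $x=x'$: then the condition $[x]_t^{\theta_X}=[x']_t^{\theta_X}$ must be read as ``$x$ belongs to some block of $\theta_X(t)$,'' which happens precisely when $u_X(x,x)\le t$, recovering $u_X(x,x)$. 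For $\Delta_X\circ\Upsilon_X=\mathrm{id}$: given a treegram $\theta_X$, let $u_X=\Upsilon_X(\theta_X)$ and I must show $\Delta_X(u_X)(t)=\theta_X(t)$ for all $t\ge 0$. This amounts to showing that (a) the block of $u_X$ containing $x$ at level $t$ equals $[x]_t^{\theta_X}$, and (b) $\hat X_t=\{x:u_X(x,x)\le t\}$ equals the union of blocks of $\theta_X(t)$. The key lemma, analogous to the one in the discussion preceding Theorem \ref{thm:dendroultra}, is that $[x]_{\mathfrak{c}(t)}^{u_X}=[x]_t^{\theta_X}$; this uses condition (3) (right-continuity) of the treegram to guarantee the infimum defining $u_X$ is attained, so that $u_X(x,x')\le t$ iff $[x]_t^{\theta_X}=[x']_t^{\theta_X}$, and it uses condition (1) (monotone coarsening) to propagate equality up and down in $t$.

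The main obstacle I anticipate is handling the ``birth time'' bookkeeping cleanly — the asymmetry introduced by the diagonal values $u_X(x,x)$, which have no counterpart in the classical dendrogram/ultrametric correspondence. Specifically, one must be careful that at a level $t$ with $u_X(x,x)>t$ the point $x$ is genuinely absent from $\theta_X(t)$ (not a singleton block there), and that the infimum $\inf\{t:\{x\}\in\theta_X(t)\text{ or }x\in[x']_t^{\theta_X}\text{ for some }x'\}$ correctly reproduces $u_X(x,x)$ via condition (4). I would isolate this in a short sublemma: for each $x$, $\min\{t\ge 0:\,x\in\bigcup\theta_X(t)\}$ exists and equals $u_X(x,x)$, where the minimum (not just infimum) is justified by conditions (3) and (4). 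Once this is in place, the verification that $\hat X_t$ matches the support of $\theta_X(t)$ and that blocks correspond is routine. I would close by noting, as the paper does after Theorem \ref{thm:dendroultra}, that this establishes treegrams and ultra-dissimilarity spaces as equivalent objects, and that this equivalence is what justifies the graphical representation in Figure \ref{fig:ultra-tree}.
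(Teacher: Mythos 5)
The paper does not actually prove this theorem: it is stated without proof as the treegram analogue of Theorem \ref{thm:dendroultra}, which is itself quoted from the literature. Your argument is correct and supplies exactly the proof the paper omits: check well-definedness of both maps and then the two composition identities, with the only genuinely new content (relative to the dendrogram/ultrametric case) being the bookkeeping of birth times, which you correctly isolate. The one place where your sketch is thinnest is the definiteness axiom for $\Upsilon_X(\theta_X)$: to see that $u_X(x,x')=\max\lc u_X(x,x),u_X(x',x')\rc$ forces $x=x'$, you need the observation that condition (4) of Definition \ref{def:treegram} together with the coarsening condition (1) implies that the block of $\theta_X(t)$ containing $x$ at the birth level $t=u_X(x,x)$ is exactly the singleton $\{x\}$ (any block containing $x$ at a level $\leq t_0$, where $\{x\}\in\theta_X(t_0)$, must be contained in $\{x\}$); combined with right-continuity, which makes the infimum defining $u_X(x,x')$ attained, this rules out $x'\neq x$ merging with $x$ at level $u_X(x,x)$. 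That step is available and routine, so this is a matter of explicitness rather than a gap; your birth-time sublemma is the right thing to isolate, and the rest of the verification goes through as you describe.
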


{It is obvious that the collection of dendrograms $\mathcal{D}(X)$ over $X$ is a proper subset of $\mathcal{T}(X)$ and that the collection of ultrametrics $\mathcal{U}(X)$ over $X$ is a proper subset of $\mathcal{U}^\mathrm{dis}(X)$. Then, Theorem \ref{thm:treegram-ultra-equiv} is actually an extension/generalization of Theorem \ref{thm:dendroultra}.}

Via Theorem \ref{thm:treegram-ultra-equiv}, one can easily represent an ultra-dissimilarity space via a treegram. In particular, see Figure \ref{fig:simp-treegrams} for an illustration of the closed quotient operator via treegrams.

\begin{figure}
    \centering
    \includegraphics[width=0.8\textwidth]{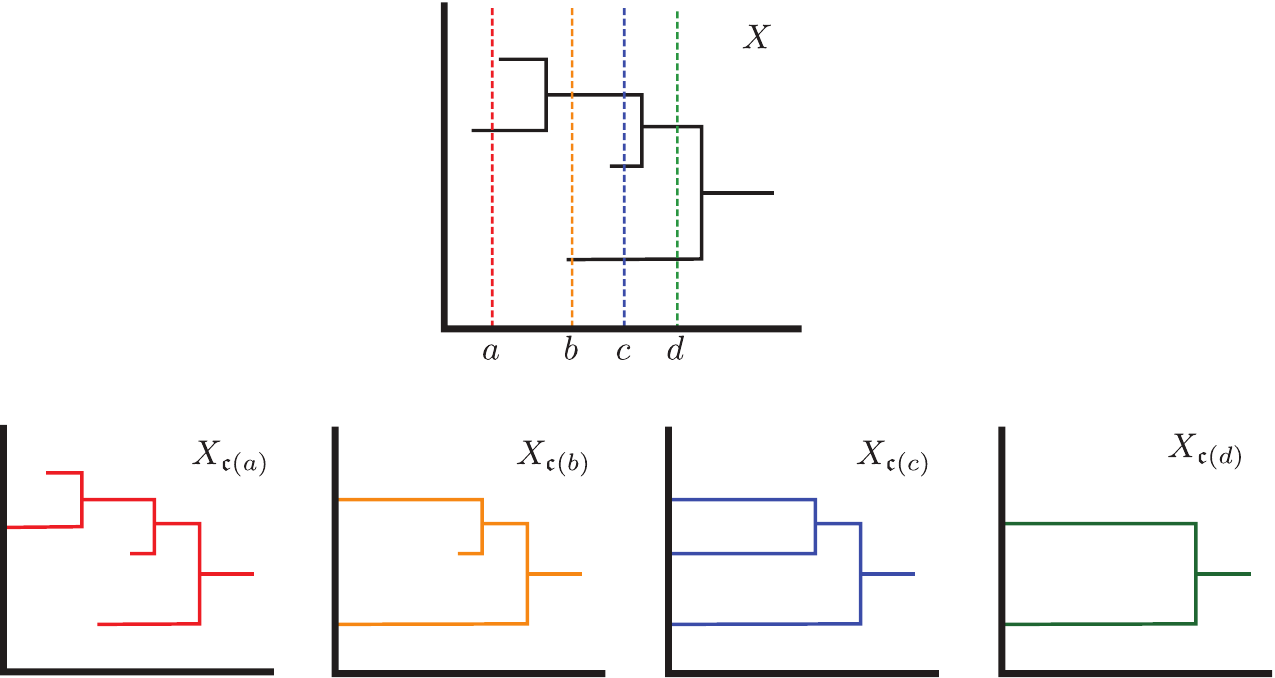}
    \caption{\textbf{Illustration of Definition \ref{def:ultrarightquotient}.} We represent a 4-point ultra-dissimilarity space $X$ as a treegram in the first row of the figure. The second row shows the treegrams corresponding to $X_{\mathfrak{c}(t)} $ at different levels $t$.
    }
    \label{fig:simp-treegrams}
\end{figure}

\section{Relegated proofs}\label{sec:proofs}

\subsection{Proofs from Section \ref{sec:dghp}}

\begin{proof}[Proof of Proposition \ref{prop:dgh-dlp-eq}]
For any correspondence $R$ between $X$ and $Y$, we have
\begin{align*}
    \dis(R,u_X,u_Y)&=\sup_{(x,y),(x',y')\in R}|u_X(x,x')-u_Y(y,y')|\\
    &=\sup_{(x,y),(x',y')\in R}\left|\left((u_X)^\frac{1}{p}(x,x')\right)^p-\left((u_Y)^\frac{1}{p}(y,y')\right)^p\right|\\
    &=\left(\disp\left(R,(u_X)^\frac{1}{p},(u_Y)^\frac{1}{p}\right)\right)^p
\end{align*}
Therefore, by Equation (\ref{eq:dgh-distortion}) and Equation (\ref{eq:dgh-p-distortion}), we have 
\[\dgh(X,Y)=\left(\dlp\left(S_\frac{1}{p}(X),S_\frac{1}{p}(Y)\right)\right)^p.\]

Similarly, $\disp\lc R,u_X,u_Y\rc=\lc\dis\lc R,(u_X)^p,(u_Y)^p\rc\rc^\frac{1}{p}$  and thus $ \dlp(X,Y) = \lc\dgh(S_p(X),S_p(Y))\rc^\frac{1}{p}.$
\end{proof}

\begin{proof}[Proof of Theorem \ref{thm:ugh-dual}]
Let $Z$ be an ultrametric space such that there exist isometric embeddings $\varphi_X:X\hookrightarrow Z$ and $\varphi_Y:Y\hookrightarrow Z$. Let $\eta\coloneqq d_\mathrm{H}^Z(X,Y)$, where we identify $X$ and $Y$ with $\varphi_X(X),\varphi_Y(Y)\subseteq Z$, respectively. Define $R\coloneqq\{(x,y)\in X\times Y:\,u_Z(x,y)\leq\eta\}.$ That $R$ is a correspondence between $X$ and $Y$ follows from the condition that $d_\mathrm{H}^Z(X,Y)=\eta$ and compactness of $X$ and $Y$. Now, consider any $(x,y),(x',y')\in R$. Without loss of generality, we assume that $u_X(x,x')\geq u_Y(y,y')$. If $u_X(x,x')=u_Y(y,y')$, then $\Lambda_\infty(u_X (x,x'),u_Y (y,y'))=0$. So we further assume that $u_X(x,x')> u_Y(y,y')$. Then, 
\begin{align*}
    &\Lambda_\infty(u_X (x,x'),u_Y (y,y')) = u_X(x,x')=u_Z(x,x') \\
    \leq &\max(u_Z (x,y),u_Z(y,y'),u_Z(y',x'))\\
    \leq &\max(u_Z(x,y),u_Z(x',y'))\leq\eta.
\end{align*}
In the second inequality we used the assumption that $u_X(x,x')> u_Y(y,y')$.
Thus, by taking supremum over all pairs $(x,y),(x',y')\in R$, one has $\disna(R)\leq \eta.$ Then, we obtain that $\ugh(X,Y)=\inf_{R}\disna(R)\leq \dghp\infty(X,Y).$

For the reverse inequality, let $R$ be an arbitrary correspondence between $X$ and $Y$. Let $\eta\coloneqq\disna(R)$. Define a function $u:X\sqcup Y\times X\sqcup Y\rightarrow\R_{\geq 0}$ as follows:
\begin{enumerate}
    \item $u|_{X\times X}\coloneqq u_X$ and $u|_{Y\times Y}\coloneqq u_Y$;
    \item for any $(x,y)\in X\times Y$, $u(x,y)\coloneqq\inf_{(x',y')\in R}\max(u_X(x,x'), u_Y (y',y),\eta)$;
    \item for any $(y,x)\in Y\times X$, $u(y,x)\coloneqq u(x,y)$.
\end{enumerate}
Now we show that $u $ is an ultrametric on the disjoint union $X\sqcup Y$. Because of the symmetric roles of $X$ and $Y$, we only need to verify the following two cases:
\begin{description}
\item[Case 1:] $\forall x,x\p\in X$ and $\forall y\in Y$, $ u(x,y)\leq \max(u(x,x\p),u(x\p,y))$;

\item[Case 2:] $\forall x,x\p\in X$ and $\forall y\in Y$, $ u(x,x\p)\leq\max( u(x,y),u(x\p,y))$.
\end{description}

For Case 1,
\begin{align*}
 \max(u(x,x\p),u(x\p,y)) & = \max\left( u(x,x\p),\inf_{(x_1,y_1)\in R}\max( u_X (x',x_1), u_Y (y_1,y),\eta)\right)\\
& = \inf_{(x_1,y_1)\in R}\max\left( u(x,x\p),u_X (x',x_1),u_Y (y_1,y),\eta\right)\\
& \geq \inf_{(x_1,y_1)\in R}\max\left( u_X(x,x_1), u_Y (y_1,y),\eta\right)\\
& =  u (x,y). 
\end{align*}

For Case 2, 
\begin{align*}
 &\max\left( \inf_{(x_1,y_1)\in R}\max\left( u_X(x,x_1), u_Y  (y_1,y),\eta \right),\inf_{(x_2,y_2)\in R}\max\left( u_X  (x_2,x'), u_Y  (y_2,y), \eta \right)\right)\\
 = & \inf_{(x_1,y_1),(x_2,y_2)\in R}\max\left( u_X  (x,x_1), u_Y  (y_1,y), \eta ,  u_X  (x_2,x'),  u_Y  (y_2,y), \eta \right)\\
 \geq & \inf_{(x_1,y_1),(x_2,y_2)\in R}\max\left( u_X  (x,x_1), u_X  (x_2,x'),  u_Y  (y_1,y_2), \eta \right)\\
 \geq & \inf_{(x_1,y_1),(x_2,y_2)\in R}\max\left( u_X  (x,x_1),u_X  (x_2,x'), u_X  (x_1,x_2)\right)\geq u_X (x,x\p)=u(x,x').
\end{align*}
The second inequality above follows from the fact that $\Lambda_\infty(u_X(x_1,x_2),u_Y(y_1,y_2))\leq\disna(R)= \eta$ and Equation (\ref{eq:lambda-infty-other-def}).

Note that $u(x,y)=\eta$ for every $(x,y)\in R$. Therefore, 
$ \dghp\infty(X,Y)\leq d_\mathrm{H}^{(X\sqcup Y,u)}(X,Y)=\eta$. This implies that $ \dghp\infty(X,Y) \leq \inf_{R}\disna(R)=\ugh(X,Y).$
\end{proof}

\subsection{Proofs from Section \ref{sec:computing-dgh}}
\subsubsection{Proof of Theorem \ref{thm:complexityrecdgh} (Time complexity of Algorithm $\mathbf{FindCorrRec}$ (Algorithm \ref{algo-dGH-rec}))}\label{sec:proof-rec}

\begin{lemma}[Inheritance of the FGC]
Let $X$ and $Y$ be finite ultrametric spaces such that $X,Y\in\mathcal{U}_1(\eps,\gamma)$. Assume that $0\leq \diam(Y)-\diam(X)\leq \eps$ and that $\diam(Y)>\eps$. Write $X_{\mathfrak{o}\lc \delta_\eps(Y)\rc}=\{X_i\}_{i\in [N_X]}$ and $Y_{\mathfrak{o}\lc \delta_0(Y)\rc}=\{Y_j\}_{j\in [N_Y]}$. Then, given any surjection $\Psi:[N_X]\twoheadrightarrow [N_Y]$, for each $j\in[N_Y]$, we have that $X_{\Psi^{-1}(j)}\in\mathcal{U}_1(\eps,\gamma)$ and $Y_j\in\mathcal{U}_1(\eps,\gamma)$.
\end{lemma}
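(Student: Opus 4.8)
The plan is to isolate one stability property of $\mathcal{U}_1(\eps,\gamma)$ under passing to subspaces, and then apply it twice. The property I will establish is: \emph{if $W\in\mathcal{U}_1(\eps,\gamma)$, $\rho\ge 0$, and $\emptyset\ne S\subseteq W$ is a subspace (with the restricted ultrametric) such that (i) $[x]_{\mathfrak{o}(\rho)}^{W}\subseteq S$ for every $x\in S$ and (ii) $\diam(S)\le\rho+\eps$, then $S\in\mathcal{U}_1(\eps,\gamma)$.} Granting this, the lemma follows immediately. For $Y_j$, apply it with $W=Y$, $\rho=\delta_0(Y)=\diam(Y)$ and $S=Y_j$: condition (i) holds because $Y_j=[y]_{\mathfrak{o}(\delta_0(Y))}^{Y}$ for every $y\in Y_j$, and (ii) holds since $\diam(Y_j)\le\diam(Y)=\rho\le\rho+\eps$. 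For $X_{\Psi^{-1}(j)}=\bigcup_{i\in\Psi^{-1}(j)}X_i$, apply it with $W=X$, $\rho=\delta_\eps(Y)=\diam(Y)-\eps$ (positive since $\diam(Y)>\eps$) and $S=X_{\Psi^{-1}(j)}$: condition (i) holds because each $X_i$ is a $\rho$-open block of $X$, so $[x]_{\mathfrak{o}(\rho)}^{X}$ equals one of these blocks and hence lies in $S$; and (ii) holds because $\diam(X_{\Psi^{-1}(j)})\le\diam(X)\le\diam(Y)=\rho+\eps$, where $\diam(X)\le\diam(Y)$ is part of the hypothesis.

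To prove the property, fix $x\in S$ and $t\ge\eps$; the goal is $\#[x]_{\mathfrak{c}(t)}^{S}\le\gamma\,\#[x]_{\mathfrak{o}(t-\eps)}^{S}$. I will use two elementary facts. First, since $S$ carries the subspace ultrametric, $[x]_{\mathfrak{c}(t)}^{S}=[x]_{\mathfrak{c}(t)}^{W}\cap S$ and $[x]_{\mathfrak{o}(s)}^{S}=[x]_{\mathfrak{o}(s)}^{W}\cap S$ for all $s$. Second, by monotonicity of balls together with (i), $[x]_{\mathfrak{o}(s)}^{W}\subseteq[x]_{\mathfrak{o}(\rho)}^{W}\subseteq S$ whenever $s\le\rho$, and $[x]_{\mathfrak{c}(t)}^{W}\subseteq[x]_{\mathfrak{o}(\rho)}^{W}\subseteq S$ whenever $t<\rho$. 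Now split into three ranges. If $\eps\le t<\rho$, then $0\le t-\eps<t<\rho$, so $[x]_{\mathfrak{c}(t)}^{S}=[x]_{\mathfrak{c}(t)}^{W}$ and $[x]_{\mathfrak{o}(t-\eps)}^{S}=[x]_{\mathfrak{o}(t-\eps)}^{W}$, and the FGC of $W$ at level $t$ is exactly the desired inequality. If $\rho\le t<\rho+\eps$, then $0\le t-\eps<\rho$, so $[x]_{\mathfrak{o}(t-\eps)}^{S}=[x]_{\mathfrak{o}(t-\eps)}^{W}$, while $[x]_{\mathfrak{c}(t)}^{S}\subseteq[x]_{\mathfrak{c}(t)}^{W}$; combining $\#[x]_{\mathfrak{c}(t)}^{S}\le\#[x]_{\mathfrak{c}(t)}^{W}$ with the FGC of $W$ at level $t$ (legitimate since $t\ge\eps$) settles this case. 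If $t\ge\rho+\eps$, then $\diam(S)\le\rho+\eps\le t$ forces $[x]_{\mathfrak{c}(t)}^{S}=S$, and $t-\eps\ge\rho$ gives $[x]_{\mathfrak{o}(t-\eps)}^{S}\supseteq[x]_{\mathfrak{o}(\rho)}^{S}=[x]_{\mathfrak{o}(\rho)}^{W}$; applying the FGC of $W$ at the \emph{fixed} level $\rho+\eps\ge\eps$ yields $\#[x]_{\mathfrak{c}(\rho+\eps)}^{W}\le\gamma\,\#[x]_{\mathfrak{o}(\rho)}^{W}$, and since every point of $S$ is within $\diam(S)\le\rho+\eps$ of $x$ we have $S\subseteq[x]_{\mathfrak{c}(\rho+\eps)}^{W}$, whence $\#S\le\#[x]_{\mathfrak{c}(\rho+\eps)}^{W}\le\gamma\,\#[x]_{\mathfrak{o}(\rho)}^{W}\le\gamma\,\#[x]_{\mathfrak{o}(t-\eps)}^{S}$. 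Since these ranges exhaust $\{t\ge\eps\}$, $S\in\mathcal{U}_1(\eps,\gamma)$.

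The one genuinely non-obvious step --- and the place I expect the real work to be --- is the range $t\ge\rho+\eps$: applying the FGC of $W$ at level $t$ there would be worthless, because $[x]_{\mathfrak{o}(t-\eps)}^{W}$ can be far larger than its trace $[x]_{\mathfrak{o}(t-\eps)}^{S}$ on the subspace; the remedy is to invoke the FGC of $W$ instead at the threshold level $\rho+\eps$, where the closed ball already swallows all of $S$ while the corresponding open ball $[x]_{\mathfrak{o}(\rho)}^{W}$ is still entirely contained in $S$. Everything else is routine bookkeeping with the subspace identities for open and closed balls, and the degenerate situations ($S$ a singleton, $\rho\le\eps$, or $t-\eps=0$) are handled without modification by the case analysis above.
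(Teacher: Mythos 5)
Your proof is correct and follows essentially the same route as the paper's: in both arguments the key point is that below the partition threshold the open and closed classes of the subspace coincide with (or are contained in) those of the ambient space, so the ambient FGC transfers directly, while above the threshold the closed class is the whole subspace and one reduces to the threshold level. The only organizational difference is that you package the two verifications (for $Y_j$ and for $X_{\Psi^{-1}(j)}$) into a single reusable stability lemma with parameter $\rho$, whereas the paper runs the same two-case computation twice with thresholds $\delta_0(Y)$ and $\diam(X)$ respectively.
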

\begin{proof}
For notational simplicity, let $\delta_0\coloneqq\delta_0(Y)$ and let $\delta_\eps\coloneqq\delta_\eps(Y)$.

We first prove that $Y_j\in\mathcal{U}_1(\eps,\gamma)$. For any $y\in Y_j$, note that $Y_j=[y]_{\mathfrak{o}(\delta_0)}^{Y}$. Given any $t\geq \eps$, if $t\leq \delta_0$, then we easily see that 
$[y]_{\mathfrak{o}(t-\eps)}^{Y_j}=[y]_{\mathfrak{o}(t-\eps)}^{Y}.$
Then, 
\[\#[y]_{\mathfrak{c}(t)}^{Y_j}\leq\underbrace{\#[y]_{\mathfrak{c}(t)}^{Y}\leq\gamma\cdot\#[y]_{\mathfrak{o}(t-\eps)}^{Y}}_{\text{FGC}}=\gamma\cdot\#[y]_{\mathfrak{o}(t-\eps)}^{Y_j}. \]
Otherwise, if $t> \delta_0$, we then have that $[y]_{\mathfrak{c}(t)}^{Y_j}=Y_j=[y]_{\mathfrak{c}(\delta_0)}^{Y_j}$. Then,
\[\#[y]_{\mathfrak{c}(t)}^{Y_j}=\#[y]_{\mathfrak{c}(\delta_0)}^{Y_j}\leq\gamma\cdot\#[y]_{\mathfrak{o}(\delta_0-\eps)}^{Y_j}\leq \gamma\cdot\#[y]_{\mathfrak{o}(t-\eps)}^{Y_j}, \]
where the first inequality follows from the previous case $t\leq\delta_0$. Therefore, $Y_j\in\mathcal{U}_1(\eps,\gamma)$.

We then prove that $X_{\Psi^{-1}(j)}\in\mathcal{U}_1(\eps,\gamma)$. For $X_{\Psi^{-1}(j)}$, we choose any $x\in X_{\Psi^{-1}(j)}$ and any $t\geq \eps$. If $t\leq \diam(X)$, then it is easy to see that $t-\eps<\delta_\eps$. Therefore, $[x]_{\mathfrak{o}(t-\eps)}^{X_{\Psi^{-1}(j)}}=[x]_{\mathfrak{o}(t-\eps)}^X$ and thus
\[\#[x]_{\mathfrak{c}(t)}^{X_{\Psi^{-1}(j)}}\leq\underbrace{\#[x]_{\mathfrak{c}(t)}^{X}\leq\gamma\cdot\#[x]_{\mathfrak{o}(t-\eps)}^{X}}_{\text{FGC}}= \gamma\cdot\#[x]_{\mathfrak{o}(t-\eps)}^{X_{\Psi^{-1}(j)}}. \]
If $t>\diam(X)$, then we have that $[x]_{\mathfrak{c}(t)}^{X_{\Psi^{-1}(j)}}=X_{\Psi^{-1}(j)}=[x]_{\mathfrak{c}(\diam(X))}^{X_{\Psi^{-1}(j)}}$. Then,
\[\#[x]_{\mathfrak{c}(t)}^{X_{\Psi^{-1}(j)}}=\#[x]_{\mathfrak{c}(\diam(X))}^{X_{\Psi^{-1}(j)}}\leq\gamma\cdot\#[x]_{\mathfrak{o}(\diam(X)-\eps)}^{X_{\Psi^{-1}(j)}}\leq \gamma\cdot\#[x]_{\mathfrak{o}(t-\eps)}^{X_{\Psi^{-1}(j)}}, \]
where the first inequality follows from the previous case $t\leq\diam(X)$. Therefore, $X_{\Psi^{-1}(j)}\in\mathcal{U}_1(\eps,\gamma)$.

\end{proof}

In this way, the inputs to each subproblem encountered while running Algorithm \ref{algo-dGH-rec} will satisfy the first $(\eps,\gamma)$-growth condition. This justifies the analysis presented below regarding the number and the size of subproblems.

\begin{proof}[Proof of Theorem \ref{thm:complexityrecdgh}]
We are going to invoke the master theorem \cite{cormen2009introduction} in order to analyze the complexity of our recursive algorithm (Algorithm \ref{algo-dGH-rec}).

\paragraph{Number of subproblems} By the FGC (or Remark \ref{rmk:numberofbranch}), we have that $\max(N_X,N_Y)\leq \gamma$. There will be at most $\gamma^\gamma$ surjections  $\Psi:[N_X]\twoheadrightarrow [N_Y]$. For each such a surjection, Algorithm \ref{algo-dGH-rec} inspects $N_Y$ ($\leq \gamma$) subproblems. Therefore, there are at most $\gamma^{\gamma+1}$ subproblems.

\paragraph{Size of a subproblem} Fix a surjection $\Psi:[N_X]\twoheadrightarrow [N_Y]$. For each $k\in [N_Y]$, we write $Y_k\coloneqq[y_k]_{\mathfrak{o}\lc \delta_0\rc}^Y$ for some $y_k\in Y$. Then, for a fixed $j\in [N_Y]$, we have that
\[\#Y = \#[y_j]_{\mathfrak{o}\lc \delta_0\rc}^Y+\sum_{k\in [N_Y]\backslash\{j\}}\#[y_k]_{\mathfrak{o}\lc \delta_0\rc}^Y\geq \#Y_j+\frac{M-1}{\gamma}\cdot\#Y,\]
where the last inequality follows from the FGC and the fact that $[y_k]_{{\delta_0+\eps}}^Y=Y$ for each $k\in [N_Y]$. Therefore, $\#Y_j\leq \left(1-\frac{M-1}{\gamma}\right)\cdot\#Y$. Now, regarding $X_{\Psi^{-1}(j)}$, since $\Psi$ is a surjection and $M\geq 2$, there exists $i\notin\Psi^{-1}(j) $ such that $X_i\bigcap X_{\Psi^{-1}(j)}=\emptyset$. Assume $X_i=[x_i]_{\mathfrak{o}\lc \delta_\eps\rc}^X$ for some $x_i\in X$. Then, \[\#[x_i]^X_{\mathfrak{o}\lc \delta_\eps\rc}\geq\frac{\#[x_i]^X_{\mathfrak{c}\lc \delta_\eps+\eps\rc}}{\gamma} \geq\frac{\#[x_i]^X_{\mathfrak{o}\lc \delta_\eps+\eps\rc}}{\gamma} \geq\frac{\#[x_i]^X_{\mathfrak{c}\lc \delta_\eps+2\eps\rc}}{\gamma^2}=\frac{\#X}{\gamma^2},\]
where the last equality follows from the fact that $\delta_\eps+2\eps=\diam(Y)+\eps>\diam(X)$. Therefore, 
\[\#X_{\Psi^{-1}(j)}\leq \#X-\#X_i\left(1-\frac{1}{\gamma^2}\right)\#X.\]
When $\gamma\geq 1$, we have that $1-\frac{1}{\gamma^2}\geq 1-\frac{M-1}{\gamma}$. So, $\max(\#X_{\Psi^{-1}(j)},\#Y_j)\leq \left(1-\frac{1}{\gamma^2}\right)n$ and thus the size of a subproblem is bounded above by $\left(1-\frac{1}{\gamma^2}\right)n$.

\paragraph{Base case complexity and other work} If $(X,Y,\eps)$ is one of the base cases, it takes time at most $O(n^2)$ in order to either directly generate a correspondence or 0. In order to utilize the results from the subproblems, we need at most $O(n^2)$ time to construct distance matrices $u_X$ and $u_Y$ from TDSs in order to implement the unions $X_{\Psi^{-1}(j)}$ (cf. Appendix \ref{sec:data-structure}). It then takes time at most $O(n^2)$ in total to construct all the unions $X_{\Psi^{-1}(j)}$, $R_j$ and $R$, and to transpose $R$ when/if needed.

\paragraph{Conclusion} Denote by $W(n)$ the time complexity of the algorithm where $n=\max(\#X,\#Y)$.  Then,
\[W(n)\leq \gamma^{\gamma+1}\cdot W\left(\frac{n}{\gamma^2/(\gamma^2-1)}\right)+O\left(n^2\right).\]

Since by assumption that $\gamma\geq 2$, the critical exponent $\log_{\frac{\gamma^2}{\gamma^2-1}}{\gamma^{\gamma+1}}$ is strictly greater than 2. Therefore, by the master theorem we have that
$ W(n)=O\left(n^{(\gamma+1)\log_{\mathrm{b}(\gamma)}{\gamma}}\right).$ This concludes the proof.
\end{proof}
\subsubsection{Proof of Proposition \ref{prop:smallcorr}}\label{sec: small-algo-prop}
In the following lemmas, \emph{we will always assume that $X$ and $Y$ are finite ultrametric spaces and that there exists $\eps\geq 0$ such that $\diam(X)>\eps$ and $\diam(Y)\leq \eps$. }

\begin{lemma}\label{lm:injlm1}
There exists an $\eps$-correspondence between $X$ and $Y$ if and only if there exists an $\eps$-correspondence between $X\ct{\eps}$ and $Y$. 
\end{lemma}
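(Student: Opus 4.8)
The plan is to prove both implications by relating $\eps$-correspondences between $X$ and $Y$ to $\eps$-correspondences between the $\eps$-closed quotient $X\ct{\eps}$ and $Y$. Recall that $X\ct{\eps}$ is obtained from $X$ by collapsing each closed ball of radius $\eps$ (i.e.\ each class $[x]_{\mathfrak{c}(\eps)}^X$) to a point, and that by Equation~\eqref{eq:closed quotient} the induced ultrametric $u_{X\ct{\eps}}$ satisfies $u_{X\ct{\eps}}([x]_{\mathfrak{c}(\eps)},[x']_{\mathfrak{c}(\eps)}) = u_X(x,x')$ whenever these classes are distinct (which forces $u_X(x,x')>\eps$). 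The key structural remark is that, since $\diam(Y)\le\eps$, \emph{every} pair of points in $Y$ is at distance $\le\eps$; hence the only way a correspondence can have distortion $\le\eps$ is by never ``seeing'' two $X$-points that lie in different $\eps$-balls mapped to the same $Y$-point in an incompatible way — but more precisely, points of $X$ at distance $\le\eps$ impose no constraint, so the quotient $X\ct{\eps}$ carries all the relevant metric information from the $X$ side.

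For the forward direction, suppose $R$ is an $\eps$-correspondence between $X$ and $Y$. Define $\bar R \coloneqq \{([x]_{\mathfrak{c}(\eps)}^X, y) : (x,y)\in R\}\subseteq X\ct{\eps}\times Y$. This is clearly a correspondence since the projection maps remain surjective (the quotient map $X\to X\ct{\eps}$ is surjective). To check $\dis(\bar R)\le\eps$: take $([x]_{\mathfrak{c}(\eps)},y),([x']_{\mathfrak{c}(\eps)},y')\in\bar R$ with witnesses $(x,y),(x',y')\in R$. If $[x]_{\mathfrak{c}(\eps)}=[x']_{\mathfrak{c}(\eps)}$ then $u_{X\ct{\eps}}(\cdot,\cdot)=0$ and $u_Y(y,y')\le\diam(Y)\le\eps$, so $|0-u_Y(y,y')|\le\eps$. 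Otherwise $u_{X\ct{\eps}}([x]_{\mathfrak{c}(\eps)},[x']_{\mathfrak{c}(\eps)}) = u_X(x,x')$, and $|u_X(x,x')-u_Y(y,y')|\le\dis(R)\le\eps$. Hence $\bar R$ is an $\eps$-correspondence between $X\ct{\eps}$ and $Y$.

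For the converse, suppose $\bar R$ is an $\eps$-correspondence between $X\ct{\eps}$ and $Y$. Define $R\coloneqq\{(x,y)\in X\times Y : ([x]_{\mathfrak{c}(\eps)}^X, y)\in\bar R\}$, i.e.\ pull back along the quotient map. This is a correspondence between $X$ and $Y$: surjectivity onto $Y$ is inherited from $\bar R$, and every $x\in X$ appears because $[x]_{\mathfrak{c}(\eps)}$ appears in $\bar R$. For the distortion bound, take $(x,y),(x',y')\in R$. If $[x]_{\mathfrak{c}(\eps)}=[x']_{\mathfrak{c}(\eps)}$, then $u_X(x,x')\le\eps$ and $u_Y(y,y')\le\diam(Y)\le\eps$, so $|u_X(x,x')-u_Y(y,y')|\le\eps$ (both terms lie in $[0,\eps]$). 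If $[x]_{\mathfrak{c}(\eps)}\ne[x']_{\mathfrak{c}(\eps)}$, then $u_X(x,x')=u_{X\ct{\eps}}([x]_{\mathfrak{c}(\eps)},[x']_{\mathfrak{c}(\eps)})$ and, since $([x]_{\mathfrak{c}(\eps)},y),([x']_{\mathfrak{c}(\eps)},y')\in\bar R$, this differs from $u_Y(y,y')$ by at most $\dis(\bar R)\le\eps$. So $\dis(R)\le\eps$.

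The only subtlety — and the place that needs care rather than the place I expect genuine difficulty — is the case $[x]_{\mathfrak{c}(\eps)}=[x']_{\mathfrak{c}(\eps)}$ in the converse: one must confirm both $u_X(x,x')$ and $u_Y(y,y')$ are confined to $[0,\eps]$ so their difference is $\le\eps$; this uses $\diam(Y)\le\eps$ crucially, which is exactly the standing hypothesis. I would state at the outset the standing assumption ``$\diam(X)>\eps$ and $\diam(Y)\le\eps$'' (already in force for this block of lemmas) and note that the hypothesis $\diam(X)>\eps$ is not actually needed for \emph{this} lemma but is carried along for the surrounding development. No master-theorem-style estimate or heavy computation is involved; the whole proof is a routine verification once the pull-back/push-forward correspondences are written down.
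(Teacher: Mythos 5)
Your proposal is correct and follows essentially the same route as the paper: the same push-forward correspondence $\{([x]_{\mathfrak{c}(\eps)}^X,y):(x,y)\in R\}$ for the forward direction, the same pull-back for the converse, and the same two-case distortion check using $\diam(Y)\leq\eps$ when the classes coincide. Your side remark that $\diam(X)>\eps$ is not needed for this particular lemma is also consistent with the paper's argument.
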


\begin{proof}
Suppose that $R$ is an $\eps$-correspondence between $X$ and $Y$. Then, we define the set $R_\eps\subseteq X\ct{\eps}\times Y$ as follows:
\[ R_\eps\coloneqq\left\{\left([x]_{\mathfrak{c}(\eps)}^X,y\right)\in X\ct{\eps}\times Y:\,(x,y)\in R\right\}.\]
It is easy to verify that $R_\eps$ is a correspondence between $X\ct{\eps}$ and $Y$. Now, for any two pairs $\left([x]_{\mathfrak{c}(\eps)}^X,y\right),\lc[x']_{\mathfrak{c}(\eps)}^X,y'\rc\in R_\eps$, we have the following:
\begin{align*}
    \left|u_{X\ct{\eps}}\left([x]_{\mathfrak{c}(\eps)}^X,[x']_{\mathfrak{c}(\eps)}^X\right)-u_Y(y,y')\right|&=
\begin{cases}
\left|u_X(x,x')-u_Y(y,y')\right| & \mbox{if $[x]_{\mathfrak{c}(\eps)}^X\neq[x']_{\mathfrak{c}(\eps)}^X $}\\
u_Y(y,y') & \mbox{if $[x]_{\mathfrak{c}(\eps)}^X=[x']_{\mathfrak{c}(\eps)}^X $}
\end{cases}\\
&\leq \begin{cases}
\dis(R) & \mbox{if $[x]_{\mathfrak{c}(\eps)}^X\neq[x']_{\mathfrak{c}(\eps)}^X $}\\
\diam(Y) & \mbox{if $[x]_{\mathfrak{c}(\eps)}^X=[x']_{\mathfrak{c}(\eps)}^X $}
\end{cases}\leq \eps
\end{align*}
Therefore, $\dis(R_\eps)\leq\eps$ and thus $R_\eps$ is an $\eps$-correspondence between $X\ct{\eps}$ and $Y$.

Now for the converse, suppose that there exists an $\eps$-correspondence $R_\eps$ between $X\ct{\eps}$ and $Y$. Then, we define $R\subseteq X\times Y$ as follows:
\[R\coloneqq\left\{(x,y)\in X\times Y:\left([x]_{\mathfrak{c}(\eps)}^X,y\right)\in R\right\}.\]
It is easy to verify that $R$ is a correspondence between $X$ and $Y$. Then, for any $(x,y),(x',y')\in R$, if $[x]_{\mathfrak{c}(\eps)}^X\neq[x']_{\mathfrak{c}(\eps)}^X $, we have
\[|u_{X}(x,x')-u_Y(y,y')|\leq\left|u_{X\ct{\eps}}\left([x]_{\mathfrak{c}(\eps)}^X,[x']_{\mathfrak{c}(\eps)}^X\right)-u_Y(y,y')\right|\leq \dis(R_\eps)\leq \eps.\]
If $[x]_{\mathfrak{c}(\eps)}^X=[x']_{\mathfrak{c}(\eps)}^X $, then $u_X(x,x')\leq \eps$. Moreover, $u_Y(y,y')\leq \diam(Y)\leq \eps$. Then, $|u_{X}(x,x')-u_Y(y,y')|\leq \eps$. Therefore, $\dis(R)\leq\eps$ and thus $R$ is an $\eps$-correspondence between $X$ and $Y$. This concludes the proof.
\end{proof}

\begin{lemma}\label{lm:injlm2}
For any $\eps$-correspondence $R_\eps$ between $X\ct{\eps}$ and $Y$, there exists a surjection $\psi$ from $Y$ to $X\ct{\eps}$ such that $R_\eps=\{(\psi(y),y):\,y\in Y\}$.
\end{lemma}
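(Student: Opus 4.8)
The statement to prove is Lemma~\ref{lm:injlm2}: every $\eps$-correspondence $R_\eps$ between $X\ct{\eps}$ and $Y$ is the graph of a surjection $\psi:Y\twoheadrightarrow X\ct{\eps}$. The key structural fact to exploit is the hypothesis standing before this lemma, namely $\diam(Y)\leq\eps$ together with $\diam(X)>\eps$, which forces the closed $\eps$-quotient $X\ct{\eps}$ to separate distinct classes by distance strictly larger than $\eps$. Concretely, if $[x]_{\mathfrak{c}(\eps)}^X\neq[x']_{\mathfrak{c}(\eps)}^X$, then $u_X(x,x')>\eps$, hence $u_{X\ct{\eps}}\bigl([x]_{\mathfrak{c}(\eps)}^X,[x']_{\mathfrak{c}(\eps)}^X\bigr)=u_X(x,x')>\eps$. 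The first thing I would do is record this observation as the engine of the argument.

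The main step is to show that $R_\eps$ is \emph{single-valued as a relation from $Y$ to $X\ct{\eps}$}: if $(a,y)\in R_\eps$ and $(a',y)\in R_\eps$ with $a=[x]_{\mathfrak{c}(\eps)}^X$ and $a'=[x']_{\mathfrak{c}(\eps)}^X$ in $X\ct{\eps}$, then $a=a'$. Suppose not, so $a\neq a'$. Apply the definition of $\eps$-correspondence to the pair $(a,y),(a',y)\in R_\eps$: $\bigl|u_{X\ct{\eps}}(a,a')-u_Y(y,y)\bigr|\leq\dis(R_\eps)\leq\eps$. But $u_Y(y,y)=0$, so this gives $u_{X\ct{\eps}}(a,a')\leq\eps$, contradicting the observation above that $u_{X\ct{\eps}}(a,a')>\eps$ when $a\neq a'$. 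Hence for each $y\in Y$ there is a unique element of $X\ct{\eps}$ paired with it; define $\psi(y)$ to be that element. Then by construction $R_\eps\subseteq\{(\psi(y),y):y\in Y\}$, and since $R_\eps$ is a correspondence (so the projection to $Y$ is onto) and each $y$ already appears, we get equality $R_\eps=\{(\psi(y),y):y\in Y\}$.

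Finally, $\psi$ is surjective: since $R_\eps$ is a correspondence its projection onto $X\ct{\eps}$ is all of $X\ct{\eps}$, so every class $[x]_{\mathfrak{c}(\eps)}^X$ occurs as a first coordinate of some pair in $R_\eps$, i.e. equals $\psi(y)$ for some $y\in Y$. This completes the proof. I do not anticipate a genuine obstacle here; the only subtlety is making sure the diameter hypotheses are invoked correctly to guarantee $u_{X\ct{\eps}}(a,a')>\eps$ for $a\neq a'$ (this is exactly why one passes to the $\eps$-quotient $X\ct{\eps}$ rather than working with $X$ directly, and why the hypothesis $\diam(X)>\eps$ — which guarantees $X\ct{\eps}$ is nontrivial — is in force). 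The rest is a short contradiction argument using only the definition of distortion and the fact that $u_Y(y,y)=0$.
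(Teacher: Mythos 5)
Your proof is correct and follows essentially the same route as the paper: show that $R_\eps$ is single-valued from $Y$ to $X\ct{\eps}$ by applying the distortion bound to a pair $(a,y),(a',y)$ with $u_Y(y,y)=0$ and using that distinct classes in $X\ct{\eps}$ are at distance $>\eps$, then read off $\psi$ and its surjectivity from the correspondence property. (One small remark: the separation $u_{X\ct{\eps}}(a,a')>\eps$ for $a\neq a'$ is built into the definition of the closed $\eps$-quotient and does not actually need the diameter hypotheses you cite.)
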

\begin{proof}
We first show that if $\left([x]_{\mathfrak{c}(\eps)}^X,y\right),([x']_{\mathfrak{c}(\eps)}^X,y)\in R_\eps$, then $[x]_{\mathfrak{c}(\eps)}^X=[x']_{\mathfrak{c}(\eps)}^X$. Otherwise, suppose that $[x]_{\mathfrak{c}(\eps)}^X\neq[x']_{\mathfrak{c}(\eps)}^X$, which is equivalent to the condition that $u_X(x,x')>\eps$. Then, 
\[\eps\geq \dis(R_\eps)\geq \big|u_{X\ct{\eps}}\left([x]_{\mathfrak{c}(\eps)}^X,[x']_{\mathfrak{c}(\eps)}^X\right)-u_Y(y,y)\big|=u_X(x,x')>\eps, \]
which is a contradiction!
Then, $R_\eps$ naturally induces a well-defined map $\psi:Y\rightarrow X\ct{\eps}$ taking $y\in Y$ to $[x]_{\mathfrak{c}(\eps)}^X$ such that $\left([x]_{\mathfrak{c}(\eps)}^X,y\right)\in R_\eps$. It is easy to check that $\psi$ is surjective and that $R_\eps=\{(\psi(y),y):\,y\in Y\}$.
\end{proof}

Recall from Section \ref{sec:additive approx} that $\mathrm{sep}(X)\coloneqq\min\{d_X(x,x'):\,x,x'\in X\text{ and }x\neq x'\}$ denotes the \emph{separation} of a finite metric space $(X,d_X)$.

\begin{lemma}\label{lm:injlm3}
Assume that $\mathrm{sep}(X)>\eps$. Then any injective map $\varphi:X\rightarrow Y$ with $\dis(\varphi)\leq\eps$ induces an $\eps$-correspondence between $X$ and $Y$.
\end{lemma}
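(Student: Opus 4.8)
The plan is to exhibit an explicit $\eps$-correspondence built from $\varphi$, namely the (reversed) graph of a carefully chosen extension $\psi\colon Y\to X$ of $\varphi^{-1}$. Write $A\coloneqq\varphi(X)$. Since $\varphi$ is injective, $\varphi\colon X\to A$ is a bijection, and its inverse $\varphi^{-1}\colon A\to X$ satisfies $\dis(\varphi^{-1})=\dis(\varphi)\le\eps$ (the two distortions are the same sup, reindexed by $a=\varphi(x)$). The idea is to extend $\varphi^{-1}$ over $Y\setminus A$ by a \emph{nearest-point projection onto $A$}, with a fixed tie-breaking rule: fix once and for all a total order on $Y$, and for each $y\in Y$ set $m(y)\coloneqq\min_{a\in A}u_Y(y,a)$ and let $y^\ast$ be the least element (in that order) of the nonempty finite set $S_y\coloneqq\{a\in A:u_Y(y,a)=m(y)\}$; note $y^\ast=y$ and $m(y)=0$ when $y\in A$. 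Then define $\psi(y)\coloneqq\varphi^{-1}(y^\ast)$ and $R\coloneqq\{(\psi(y),y):y\in Y\}\subseteq X\times Y$. As $\psi$ restricted to $A$ equals $\varphi^{-1}$, which is onto $X$, the set $R$ is a correspondence between $X$ and $Y$.

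It then remains to show $\dis(R)\le\eps$, i.e.\ $|u_X(\psi(y_1),\psi(y_2))-u_Y(y_1,y_2)|\le\eps$ for all $y_1,y_2\in Y$. Fix such a pair and put $d\coloneqq u_Y(y_1,y_2)$; since $d\le\diam(Y)\le\eps$ and $u_X\ge 0$, the inequality $u_Y(y_1,y_2)-u_X(\psi(y_1),\psi(y_2))\le\eps$ is immediate. For the reverse, $\dis(\varphi^{-1})\le\eps$ gives $u_X(\psi(y_1),\psi(y_2))=u_X(\varphi^{-1}(y_1^\ast),\varphi^{-1}(y_2^\ast))\le u_Y(y_1^\ast,y_2^\ast)+\eps$, so it suffices to prove $u_Y(y_1^\ast,y_2^\ast)\le d$. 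By the strong triangle inequality in $Y$, $u_Y(y_1^\ast,y_2^\ast)\le\max(u_Y(y_1^\ast,y_1),u_Y(y_1,y_2),u_Y(y_2,y_2^\ast))=\max(m(y_1),d,m(y_2))$, so we are done whenever $\max(m(y_1),m(y_2))\le d$.

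The only remaining case, and the crux of the argument, is $\max(m(y_1),m(y_2))>d$, say $m(y_1)>d$. Here the plan is to show that $y_1$ and $y_2$ have identical distances to every point of $A$, which forces $S_{y_1}=S_{y_2}$ and hence $y_1^\ast=y_2^\ast$, so that $u_Y(y_1^\ast,y_2^\ast)=0\le d$. Indeed, for every $a\in A$ the strong triangle inequality gives $u_Y(y_1,a)\le\max(d,u_Y(y_2,a))$; since $u_Y(y_1,a)\ge m(y_1)>d$, this forces $u_Y(y_1,a)\le u_Y(y_2,a)$, and taking the minimum over $a\in A$ yields $m(y_2)\ge m(y_1)>d$. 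Running the same argument with the roles of $y_1,y_2$ exchanged gives $u_Y(y_2,a)\le u_Y(y_1,a)$ for all $a\in A$, hence $u_Y(y_1,a)=u_Y(y_2,a)$ for all $a\in A$ and $m(y_1)=m(y_2)$; therefore $S_{y_1}=S_{y_2}$, and by the fixed tie-breaking rule $y_1^\ast=y_2^\ast$. This finishes the verification that $\dis(R)\le\eps$, so $R$ is an $\eps$-correspondence between $X$ and $Y$.

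I expect the main obstacle to be precisely this last case: a naive extension of $\varphi^{-1}$ — sending all of $Y\setminus A$ to one fixed point of $X$, or breaking ties arbitrarily — can push $\dis(R)$ up to $2\eps$ (one always has $u_X(\psi(y_1),\psi(y_2))\le 2\eps$ but needs $\le d+\eps$ with possibly $d\ll\eps$). The resolution is the observation that $m(y_1)>u_Y(y_1,y_2)$ makes $y_1$ and $y_2$ ``indistinguishable from $A$'', which simultaneously equalizes $m(y_1)=m(y_2)$ and lets a fixed global order collapse $y_1^\ast$ and $y_2^\ast$ to the same point. (Note that, beyond $\diam(Y)\le\eps$ and injectivity of $\varphi$, this argument uses only that $A\neq\emptyset$; the standing hypotheses $\diam(X)>\eps$ and $\mathrm{sep}(X)>\eps$ are not needed for this particular lemma, although they hold in the situation where it is applied.)
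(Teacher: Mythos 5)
Your proof is correct and follows essentially the same route as the paper's: both extend $\varphi^{-1}$ to all of $Y$ by nearest-point projection onto $\varphi(X)$ with a fixed tie-breaking rule, and both rest on the same key ultrametric observation that if $u_Y(y_1,y_2)$ is smaller than the distance from $y_1$ to the image, then $y_1$ and $y_2$ are equidistant from every image point and hence project to the same point. The only cosmetic differences are that you establish this key step directly rather than by contradiction (the paper's Claim \ref{clm:injob}), and you note in passing that the hypothesis $\mathrm{sep}(X)>\eps$ is not actually needed for your version of the estimate.
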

\begin{proof}
Suppose $X=\{x_1,\ldots,x_n\}$ and $\mathrm{im}(\varphi)=\{y_1,\ldots,y_n\}$ where $y_i=\varphi(x_i)$ for every $i=1,\ldots,n$. For any $y\in Y$, define 
\[i_y\coloneqq\min\left\{\mathop{\mathrm{argmin}}_{j=1,\ldots,n}u_Y(y,y_j)\right\}. \]
Obviously, $i_{y_j}=j$. Then, we define $R\subseteq X\times Y$ as follows:
\[ R\coloneqq\{(x_{i_y},y)\in X\times Y:\,\forall y\in Y\}.\]
It is easy to check that $R$ is a correspondence. Now, we verify that $\dis(R)\leq\eps$. Let $(x_i,y),(x_j,y')\in R$, where $i=i_y$ and $j=i_{y'}$. If $i= j$, then $|u_X(x_i,x_i)-u_Y(y,y')|=u_Y(y,y')\leq \diam(Y)\leq\eps.$
Now assume $i\neq j$. It is obvious that $(x_i,y_i),(x_j,y_j)\in R$ and thus $u_X(x_i,x_j)-u_Y(y_i,y_j)\leq\dis(\varphi)\leq\eps$. Since $u_X(x_i,x_j)-u_Y(y,y')\geq\mathrm{sep}(X)-\diam(Y)\geq 0$, the inequality $|u_X(x_i,x_j)-u_Y(y,y')|\leq\eps$ follows from the following observation:

\begin{claim}\label{clm:injob}
For $y,y'\in Y$, if $i_y\neq i_{y'}$, then $u_Y(y,y')\geq u_Y(y_i,y_j)$ where $i\coloneqq i_y$ and $j\coloneqq i_{y'}$.
\end{claim}

\begin{proof}[Proof of Claim \ref{clm:injob}]
Suppose otherwise that $u_Y(y,y')<u_Y(y_i,y_j)$. If $u_Y(y_i,y)\leq u_Y(y,y')$, then 
\[u_Y(y',y_i)\leq\max(u_Y(y,y'),u_Y(y,y_i))\leq u_Y(y,y').\]
By definition of $j=i_{y'}$, we have that $u_Y(y_j,y')\leq u_Y(y_i,y')\leq u_Y(y,y')$. Then, $u_Y(y_i,y_j)\leq \max(u_Y(y_i,y'),u_Y(y',y_j))\leq u_Y(y,y')$, which is a contradiction. Therefore, $u_Y(y_i,y)>u_Y(y,y')$ and similarly $u_Y(y_j,y')>u_Y(y,y')$. Then, by the strong triangle inequality we have that $u_Y(y,y_i)=u_Y(y',y_i)$ and $u_Y(y,y_j)=u_Y(y',y_j)$. By definition of $i=i_y$ and $j=i_{y'}$, we have that 
\[u_Y(y,y_j)=u_Y(y',y_j)\leq u_Y(y',y_i)=u_Y(y,y_i), \]
which implies that $j\in \mathop{\mathrm{argmin}}_{k=1,\ldots,n}u_Y(y,y_k)$ and thus $j>i$. However, we can similarly prove that $i>j$, which is a contradiction. Therefore, $u_Y(y,y')\geq u_Y(y_i,y_j)$.
\end{proof}
\end{proof}

\begin{proof}[Proof of Proposition \ref{prop:smallcorr}]
By Lemma \ref{lm:injlm1}, we only need to prove that there exists an $\eps$-correspondence between $X\ct{\eps}$ and $Y$ if and only if there exists an injective map $\varphi:X\ct{\eps}\rightarrow Y$ with $\dis(\varphi)\leq \eps.$

Assuming the existence of such a correspondence $R_\eps$, then by Lemma \ref{lm:injlm2}, there exists a surjection $\psi:Y\twoheadrightarrow X\ct{\eps}$ such that $R_\eps=\{(\psi(y),y):y\in Y\}$. Then, we construct an injective map $\varphi:X\ct{\eps}\rightarrow Y$ by mapping each $[x]_{\mathfrak{c}(\eps)}^X\in X\ct{\eps}$ to $y$, where $y$ is arbitrarily chosen from $\psi^{-1}\left([x]_{\mathfrak{c}(\eps)}^X \right)$. Then, $\dis(\varphi)\leq \dis(\psi)\leq \eps.$

Now, assume that there exists an injective map $\varphi:X\ct{\eps}\rightarrow Y$ with $\dis(\varphi)\leq \eps.$ Obviously, we have $\mathrm{sep}(X\ct{\eps})>\eps$, and thus, by Lemma \ref{lm:injlm3}, there exists a correspondence $R_\eps$ between $X\ct{\eps}$ and $Y$ with $\dis(R_\eps)\leq \eps$.
\end{proof}
\subsubsection{Proof of Remark \ref{rmk:rho-eps-delta-eps}}\label{sec:proof of remark rho}
We first establish the following characterization of elements in $ V_X^{(\eps)}$.

\begin{lemma}\label{lm:char-LX-eps}
Let $X$ be a finite ultrametric space and let $\eps\geq0$. Then, a subset $U^X\subseteq X$ belongs to $ V_X^{(\eps)}$ if and only if $U^X$ contains all $x\in X$ such that $u_X\lc x,U^X\rc<\diam\lc U^X\rc -2\eps$, where $u_X\lc x,U^X\rc\coloneqq\min\left\{u_X(x,x'):\,x'\in U^X\right\}$.
\end{lemma}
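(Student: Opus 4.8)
Write $d := \diam(U^X)$. Unwinding the definitions from Section~\ref{sec:dp-main-text}, the claim $U^X \in V_X^{(\eps)}$ is equivalent to: there is a closed ball $B \in V_X$ such that $U^X$ is a union of blocks of the $\rho_\eps(B)$-open partition $B_{\mathfrak{o}(\rho_\eps(B))} = \{B_i\}_{i=1}^{N_B}$ and $\diam(U^X) = \diam(B)$ (this second condition being exactly what ``$\eps$-maximal'' means). The plan is to prove the two implications directly, using only the strong triangle inequality and the basic facts in Proposition~\ref{prop:basic property ultrametric}.

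\textbf{From $V_X^{(\eps)}$ to the closure property.} Suppose $U^X \in V_X^{(\eps)}$ and fix a ball $B$ as above, so $d = \diam(B)$. Let $x \in X$ satisfy $u_X(x, U^X) < d - 2\eps$; since $u_X(x,U^X) \ge 0$ this forces $d > 2\eps$, hence $\rho_\eps(B) = d - 2\eps$. Pick $x' \in U^X$ with $u_X(x, x') < \rho_\eps(B)$. Since $x' \in U^X \subseteq B$ and the $\rho_\eps(B)$-open ball around $x'$ taken in $B$ coincides with the one taken in $X$ (as noted in Section~\ref{sec:dp-main-text}, because $\rho_\eps(B) \le \diam(B)$ and $B$ is a closed ball), the point $x$ lies in the block $B_{i_0}$ of $B_{\mathfrak{o}(\rho_\eps(B))}$ that contains $x'$. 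As $x' \in U^X$ and $U^X$ is a union of such blocks, we get $B_{i_0} \subseteq U^X$, so $x \in U^X$. When $d \le 2\eps$ there is no $x$ with $u_X(x,U^X) < d - 2\eps \le 0$, so the closure condition holds vacuously.

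\textbf{From the closure property to $V_X^{(\eps)}$.} Assume $U^X$ contains every $x \in X$ with $u_X(x, U^X) < d - 2\eps$. Choose any $x_0 \in U^X$ and set $B := B_d(x_0)$, which is a closed ball, hence $B \in V_X$. Since $\diam(U^X) = d$ there are $a, b \in U^X \subseteq B$ with $u_X(a,b) = d$, while the strong triangle inequality gives $u_X(y,z) \le \max\bigl(u_X(y,x_0), u_X(x_0,z)\bigr) \le d$ for all $y, z \in B$; therefore $\diam(B) = d$ and $U^X \subseteq B$. It remains to show $U^X$ is a union of blocks of $B_{\mathfrak{o}(\rho_\eps(B))}$, i.e. that for each $x \in U^X$ the whole block $[x]_{\mathfrak{o}(\rho_\eps(B))}^B$ sits inside $U^X$. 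If $d > 2\eps$, then $\rho_\eps(B) = d - 2\eps$, and for $y$ in that block $u_X(y, U^X) \le u_X(y, x) < d - 2\eps = \diam(U^X) - 2\eps$, so $y \in U^X$ by hypothesis. If $d \le 2\eps$ then $\rho_\eps(B) = 0$ and $B_{\mathfrak{o}(0)}$ is the partition of $B$ into singletons, so $U^X$ is trivially a union of its blocks. In either case $\diam(U^X) = d = \diam(B)$, so $U^X \in B_{(\eps)} \subseteq V_X^{(\eps)}$.

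\textbf{Main obstacle.} No deep obstacle is anticipated; the only care needed is the bookkeeping at the ``boundary'' scale $\rho_\eps(B) = 0$ (the case $\diam(U^X) \le 2\eps$), handled via the convention that the $0$-open quotient is the discrete partition, together with the verification that the ball $B_d(x_0)$ constructed in the second implication really has diameter exactly $d$ — both of which are immediate from the strong triangle inequality and the definition of $V_X$.
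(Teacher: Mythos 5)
Your proof is correct and follows essentially the same route as the paper's: the forward direction places $x$ in the open block of a nearby $x'\in U^X$ and uses that $U^X$ is a union of such blocks, while the converse takes the closed ball $B_{\diam(U^X)}(x_0)$ around any $x_0\in U^X$ and checks the union-of-blocks and $\eps$-maximality conditions, with the $\diam(U^X)\le 2\eps$ case handled by the singleton convention for the $0$-open partition. The only difference is cosmetic: you spell out the verification that $\diam(B)=\diam(U^X)$, which the paper dismisses as obvious.
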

\begin{proof}
If $U^X\in V_X^{(\eps)}$, then $U^X$ is an $\eps$-maximal union of closed balls in $B^X\subseteq X$ (cf. Section \ref{sec:dp-main-text}): write $B^X_{\mathfrak{o}\lc \rho_\eps\lc B^X\rc\rc}=\{B_1^X,\ldots,B_N^X\}$, where $\rho_\eps\lc B^X\rc\coloneqq\max\lc\diam\lc B^X\rc-2\eps,0\rc$; then, $U^X=\bigcup_{i\in I}B^X_i$ for some non-empty $I\subseteq\{1,\ldots,N\}$ and $U^X$ satisfies that $\diam\lc U^X\rc=\diam\lc B^X\rc$. Without loss of generality, we assume that $\diam\lc U^X\rc -2\eps>0$.  Given any $x\in X$ such that $u_X\lc x,U^X\rc<\diam\lc U^X\rc -2\eps$, there exists $x'\in U^X$ such that $u_X(x,x')<\diam\lc U^X\rc -2\eps=\diam(B^X)-2\eps$. Since $x'\in U^X\subseteq B^X$, $x'\in B^X_i$ for some $i\in I$. Then,
\[B^X_i=[x']^X_{\mathfrak{o}\lc \rho_\eps\lc B^X\rc\rc}=\{x''\in X:\,u_X(x',x'')<\diam(B^X)-2\eps \}.\]
Therefore, $x\in B^X_i\subseteq U^X$.

Now, let $U^X\subseteq X$ be a subset containing all $x\in X$ such that $u_X\lc x,U^X\rc<\diam\lc U^X\rc -2\eps$. For any $x\in U^X$, consider the ball $B^X\coloneqq [x]^X_{\mathfrak{c}(\delta) }$, where $\delta\coloneqq \diam\lc U^X\rc$. It is obvious that $U^X\subseteq B^X$ and $\diam(U^X)=\diam(B^X)$. If $\diam\lc U^X\rc \leq 2\eps$, then $\rho_\eps(B^X)=0$. Therefore, $B^X_{\mathfrak{o}\lc \rho_\eps\lc B^X\rc\rc}=\{\{x\}:\,x\in B^X\}$ and thus obviously, $U^X\in B^X_{(\eps)}\subseteq  V_X^{(\eps)}$. Now, assume that $\diam\lc U^X\rc >2\eps$. Then, $\rho_\eps(B^X)=\diam(B^X)-2\eps=\diam\lc U^X\rc -2\eps$. By assumption we have that for each $x\in U^X$
\[[x]^X_{\mathfrak{o}\lc \rho_\eps\lc B^X\rc\rc}=\{x'\in X:\,u_X(x,x')<\diam\lc U^X\rc -2\eps \}\subseteq U^X. \]
This implies that 
\[U^X=\bigcup_{x\in U^X}[x]^X_{\mathfrak{o}\lc \rho_\eps\lc B^X\rc\rc}.\]
So $U^X$ is the union of some elements in $B^X_{\mathfrak{o}\lc \rho_\eps\lc B^X\rc\rc}$ and thus $U^X\in B^X_{(\eps)}\subseteq  V_X^{(\eps)}$.
\end{proof}

\begin{proof}[Proof of Remark \ref{rmk:rho-eps-delta-eps}]
Let $U^X\in  V_X^{(\eps)}$ and let $B^Y\in V_Y $ be such that $|\diam\lc U^X\rc -\diam\lc B^Y\rc |\leq \eps$ and $\diam\lc B^Y\rc >\eps$. Let $U^X_{\mathfrak{o}\lc \delta_\eps\lc B^Y\rc\rc}=\left\{U^X_1,\ldots,U^X_{N_{U_X}}\right\}$. For any subset $I\subseteq[N_{U_X}]$ (which can be a singleton), we will prove next that the union $U^X_I\coloneqq\bigcup_{i\in I}U^X_i$ belongs to $ V_X^{(\eps)}$. For any $x\in X$, suppose that there is $x'\in U^X_i\subseteq U^X_I$ such that $u_X(x,x')< \diam(U^X_I)-2\eps\leq \diam\lc U^X\rc -2\eps$. Then, $x\in U^X$ since $U^X\in  V_X^{(\eps)}$ (cf. Lemma \ref{lm:char-LX-eps}). Now, $u_X(x,x')<\diam(U^X_I)-2\eps\leq\diam\lc U^X\rc -2\eps\leq\delta_\eps\lc B^Y\rc $. So $x$ and $x'$ belong to the same block in $U^X_{\mathfrak{o}\lc \delta_\eps\lc B^Y\rc\rc}$ and thus $x\in U^X_i$. Therefore, $x\in U^X_I$ and thus by Lemma \ref{lm:char-LX-eps} we have that $U^X_I\in  V_X^{(\eps)}$. 
\end{proof}

\subsubsection{Proof of Theorem \ref{thm:corr-dp-algo} (correctness of Algorithm $\mathbf{FindCorrDP}$ (Algorithm \ref{algo-dGH-dyn}))}\label{sec:corr-dp}

\begin{proof}
We prove a more general result, namely that for any $\lc U^X,B^Y\rc \in V_X^{(\eps)}\times  V_Y $, $\mathrm{DYN}\lc U^X,B^Y\rc =1$ if and only if there exists an $\eps$-correspondence between $U^X$ and $B^Y$. 

If $\lc U^X,B^Y\rc $ belongs to one of the base cases, the statement is obviously true. For non-base cases, we prove the claim by induction on $\diam\lc B^Y\rc \in\mathrm{spec}(Y)$. For this, we exploit the fact that the spectrum $\spec(Y)=\{0=t_0<\cdots<t_{M}=\diam(Y)\}$ is a finite set. When $\diam\lc B^Y\rc =t_0=0$, for any $U^X$, $\lc U^X,B^Y\rc $ belongs to one of the base cases, so the statement holds true trivially. Let $1< i_0\leq M$ and suppose that the claim holds true for all $t_i$ when $i<i_0$ and all $\mathrm{DYN}\lc U^X,B^Y\rc $ are known whenever $\diam\lc B^Y\rc <t_{i_0}$. Then, the induction step follows directly from Theorem \ref{thm:ums-dgh} and Proposition \ref{prop:smallcorr}. {We elaborate this via the two cases described in page \pageref{two cases} as follows:}
\begin{enumerate}
    \item If $\diam\lc B^Y\rc >\eps$, Algorithm $\mathbf{FindCorrDP}$ partitions $U^X$ and $B^Y$ to obtain $U^X_{\mathfrak{o}\lc \delta_\eps\lc B^Y\rc\rc}=\{U^X_i\}_{i\in [N_{U_X}]}$ and $B^Y_{\mathfrak{o}\lc{\delta_0\lc B^Y\rc} \rc}=\{B^Y_j\}_{j\in [N_{B_Y}]}$, respectively. It is obvious that $B^Y_j\in V_Y $ for each $j\in [N_{B_Y}]$, and by Remark \ref{rmk:rho-eps-delta-eps} we know that $U^X_i\in V_X^{(\eps)}$ for each $i\in [N_{U_X}]$. Since $\diam(B_j^Y)<\diam\lc B^Y\rc =t_{i_0}$ for each $j\in [N_{B_Y}]$, by the induction assumption, the value $\mathrm{DYN}\lc U^X_{\Psi^{-1}(j)},B^Y_j\rc $ has already been computed for any surjection $\Psi:[N_{U_X}]\rightarrow [N_{B_Y}]$ so we already know whether or not there exists any $\eps$-correspondence between $U^X_{\Psi^{-1}(j)}$ and $B^Y_j$. $\mathrm{DYN}\lc U^X,B^Y\rc $ is then determined via Theorem \ref{thm:ums-dgh}: this value is 1 if there exists an $\eps$-correspondence between $U^X$ and $B^Y$, and is 0 otherwise.
    \item If $\diam\lc B^Y\rc \leq \eps$, Algorithm $\mathbf{FindCorrDP}$ assigns the value $\mathbf{FindCorrSmall}(U^X,B^Y,\eps)$ to $\mathrm{DYN}\lc U^X,B^Y\rc$. Then, due to Proposition \ref{prop:smallcorr}, $\mathrm{DYN}\lc U^X,B^Y\rc =1$ if and only if there exists an $\eps$-correspondence between $U^X$ and $B^Y$.
\end{enumerate}

Since we know that $X$ and $Y$ are at the end of the arrays $\mathrm{LX}^{(\eps)}$ and $\mathrm{LY}$, respectively, then $\mathrm{DYN}(\mathrm{END},\mathrm{END})=1$ if and only if there exists an $\eps$-correspondence between $X$ and $Y$.
\end{proof}
\subsubsection{Proof of Remark \ref{rmk:sgc-doubling}}\label{sec:rmk-sgc-proof}
\begin{proof}
Let $X$ be a finite ultrametric space such that $X\in\mathcal{U}_2(\eps,\gamma)$ for some $\eps\geq 0$ and $\gamma\geq 1$.
For any $x\in X$ and any $r>\diam(X)$, we have that $ {B}_r(x)=X= {B}_{\diam(X)}(x)$. So if $ {B}_{\diam(X)}(x)$ can be covered by $K$ many balls with radius $\frac{\diam(X)}{2}$, then it is obvious that $ {B}_r(x)$ can also be covered by $K$ balls with radius $\frac{r}{2}>\frac{\diam(X)}{2}$. Therefore, to determine the doubling constant of $X$, we only need to consider a radius $r$ within the range $(0,\diam(X)]$. Let $k=\lfloor\frac{r}{4\eps}\rfloor+1$, then $k$ is the unique integer such that $r-2\eps\cdot k<\frac{r}{2}\leq r-2\eps\cdot(k-1)$. We assume that $r-2\eps\cdot k\geq 0$ (the case when $r-2\eps\cdot k< 0$ can be proved similarly and we omit the details). Then, by the SGC we have that 
\begin{align*}
    \#\left\{[x']_{\mathfrak{c}\lc\frac{r}{2}\rc}:\,x'\in [x]_{\mathfrak{c}\lc{r}\rc}\right\}&\leq \#\left\{[x']_{\mathfrak{c}\lc{r-2\eps\cdot k}\rc}:\,x'\in [x]_{\mathfrak{c}\lc{r}\rc}\right\}\leq \#\left\{[x']_{\mathfrak{o}\lc{r-2\eps\cdot k}\rc}:\,x'\in [x]_{\mathfrak{c}\lc{r}\rc}\right\}\\
    &\leq  \gamma\cdot\#\left\{[x']_{\mathfrak{c}\lc{r-2\eps\cdot (k-1)}\rc}:\,x'\in [x]_{\mathfrak{c}\lc{r}\rc}\right\}\leq\gamma\cdot\#\left\{[x']_{\mathfrak{o}\lc{r-2\eps\cdot (k-1)}\rc}:\,x'\in [x]_{\mathfrak{c}\lc{r}\rc}\right\}\\
    &\leq \cdots\leq \gamma^{k-1}\cdot\#\left\{[x']_{\mathfrak{c}\lc{r-2\eps}\rc}:\,x'\in [x]_{\mathfrak{c}\lc{r}\rc}\right\}\leq \gamma^k.
\end{align*}
Since $ {B}_r(x)=[x]_{\mathfrak{c}\lc r\rc}$ and $ {B}_\frac{r}{2}(x')=[x']_{\mathfrak{c}\lc\frac{r}{2}\rc}$, we have that $ {B}_r(x)$ can be covered by a union of at most $\gamma^k$ balls with radius $\frac{r}{2}$: $[x]_{\mathfrak{c}\lc r\rc}=\cup_{x'\in[x]_{\mathfrak{c}\lc r\rc}}[x']_{\mathfrak{c}\lc\frac{r}{2}\rc}$. Since $k\leq \lfloor\frac{\diam(X)}{4\eps}\rfloor+1$ for each $r\in(0,\diam(X)]$, we have that $X$ is $\gamma^{\lfloor\frac{\diam(X)}{4\eps}\rfloor+1}$-doubling. 

Conversely, suppose that $X$ is $K$-doubling. Then, for any $x\in X$ and $t>0$, there exist $x_1,\ldots,x_n$ such that $n\leq K$ and $ {B}_t(x)\subseteq\cup_{i=1}^n  {B}_\frac{t}{2}(x_i)$. Without loss of generality, we assume that $ {B}_\frac{t}{2}(x_i)\cap  {B}_t(x)\neq \emptyset$ for each $i=1,\ldots,n$. Then, by Proposition \ref{prop:basic property ultrametric}, we have that $[x_i]_{\mathfrak{c}\lc\frac{t}{2}\rc}= {B}_\frac{t}{2}(x_i)\subseteq  {B}_t(x)=[x]_{\mathfrak{c}(t)}$. Therefore, $[x_i]_{\mathfrak{c}\lc\frac{t}{2}\rc}\subseteq [x_i]_{\mathfrak{o}\lc t\rc}\subseteq [x]_{\mathfrak{c}(t)}$. Since $[x]_{\mathfrak{c}(t)}\subseteq\cup_{i=1}^n[x_i]_{\mathfrak{c}\lc\frac{t}{2}\rc}\subseteq \cup_{i=1}^n[x_i]_{\mathfrak{o}\lc t\rc}$, we have that
\[\#\left\{{[x']_{\mathfrak{o}(t)}}:\,x'\in [x]_{\mathfrak{c}\lc t\rc}\right\}\leq n\leq K. \]
This implies that $X\in\mathcal{U}_2(0,K)$.
\end{proof}

\subsubsection{Proof of Theorem \ref{thm:com-dyn-alg} (Time complexity of Algorithm $\mathbf{FindCorrDP}$ (Algorithm \ref{algo-dGH-dyn}))}\label{sec:com-dp-proof}

\begin{lemma}[Time complexity of Algorithm $\mathbf{FindCorrSmall}$ (Algorithm \ref{algo-dGH-small})]\label{lm:smallcoman}
Algorithm \ref{algo-dGH-small} runs in time $O(n^2n^n)$ where $n\coloneqq\max(\#X,\#Y)$. 
\end{lemma}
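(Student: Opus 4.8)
The statement to prove is the time-complexity bound $O(n^2 n^n)$ for Algorithm \ref{algo-dGH-small} ($\mathbf{FindCorrSmall}$), where $n\coloneqq\max(\#X,\#Y)$. The plan is to trace through the algorithm line by line and bound the cost of each step, then multiply by the number of iterations of the main loop. First I would note that line 1 (the diameter comparison in the assertion) and line 2 (checking $|\diam(X)-\diam(Y)|>\eps$) each cost $O(1)$ given that the \texttt{Diameter} field is stored in the root node of the TDS (cf. Appendix \ref{sec: data structure and algorithm}). Line 5 computes $X\ct{\eps}=\mathbf{ClosedQuotient}(X,\eps)$, which by the complexity analysis of Algorithm \ref{algo-c-quotient} in Appendix \ref{sec: data structure and algorithm} runs in time $O(n)$.

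Next I would bound the main loop in lines 6--11, which iterates over all injective maps $\Phi:X\ct{\eps}\rightarrow Y$. Since $\#X\ct{\eps}\leq\#X\leq n$ and $\#Y\leq n$, the number of injective maps from $X\ct{\eps}$ to $Y$ is at most $n\cdot(n-1)\cdots\leq n^n$; this is the dominating combinatorial factor. For each such $\Phi$, line 7 computes $\dis(\Phi)=\sup_{x,x'\in X\ct{\eps}}|u_{X\ct{\eps}}(x,x')-u_Y(\Phi(x),\Phi(x'))|$, which requires inspecting all pairs of points in $X\ct{\eps}$, hence at most $O((\#X\ct{\eps})^2)=O(n^2)$ distance lookups (here I would invoke Remark \ref{rmk:distance mtx}, which says the distance matrices of $X$ and $Y$ can be recovered from the TDSs, so that a single distance lookup is $O(1)$ after an $O(n^2)$ one-time preprocessing that is not counted). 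Line 8's comparison $\dis(\Phi)\leq\eps$ is then $O(1)$. So each iteration of the loop costs $O(n^2)$, and enumerating the next injective map costs $O(n)$ (writing down one more assignment), which is absorbed into $O(n^2)$.

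Putting the pieces together: the total running time is bounded by the preprocessing/setup cost $O(n)$ for the closed quotient, plus the loop cost, which is (number of injective maps) $\times$ (cost per map) $= O(n^n)\times O(n^2) = O(n^2 n^n)$. Since $O(n)+O(n^2n^n)=O(n^2n^n)$, this gives the claimed bound. I do not anticipate a genuine obstacle here—the argument is a straightforward accounting exercise—but the one point that deserves care is justifying that each distortion evaluation is $O(n^2)$ rather than something larger: this rests on the observation that distances in $X\ct{\eps}$ are exactly the corresponding distances in $X$ (by Equation \eqref{eq:closed quotient}, since distinct classes inherit the original distance) together with the $O(1)$ random-access assumption on the (pre-built) distance matrices from Remark \ref{rmk:distance mtx}. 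The other mild subtlety is that the bound $n^n$ on the number of injective maps is a deliberate overestimate (the true count is $\frac{\#Y!}{(\#Y-\#X\ct{\eps})!}$), but since the theorem only asserts an $O$-bound this slack is harmless.
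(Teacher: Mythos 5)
Your proposal is correct and follows essentially the same argument as the paper's proof: the closed quotient costs $O(n)$, there are at most $n^n$ injective maps $\Phi:X\ct{\eps}\rightarrow Y$, and each distortion evaluation costs $O(n^2)$, giving $O(n^2n^n)$ overall. The extra care you take in justifying the $O(n^2)$ per-map cost via the distance-matrix lookup (Remark \ref{rmk:distance mtx}) is a reasonable elaboration of what the paper leaves implicit.
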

\begin{proof}
$\mathbf{ClosedQuotient}(X,\eps)$ runs in time $O(n)$ (cf. Appendix \ref{sec:data-structure}). There are at most $n^n$ injective maps and for each injective map $\Phi:X\ct{\eps}\rightarrow Y$, we need $O(n^2)$ time to compute $\dis(\Phi)$. Therefore, Algorithm \ref{algo-dGH-small} runs in time bounded by $O(n^2n^n)$.
\end{proof}

\begin{lemma}[Inheritance of the SGC]\label{lm:inheritance of SGC}
If $X$ satisfies the second $(\eps,\gamma)$-growth condition, then so does each $U^X\in V_X^{(\eps)}$ and in particular, so does each ball $B^X\in V_X$.
\end{lemma}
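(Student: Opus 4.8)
The plan is to prove a statement slightly stronger than the lemma, namely that \emph{every} subspace $U\subseteq X$ (equipped with the restriction $u_X|_{U\times U}$, which is automatically an ultrametric) satisfies the second $(\eps,\gamma)$-growth condition whenever $X$ does. The two assertions in the lemma are then immediate special cases, since every element of $V_X^{(\eps)}$ is a union of closed balls of $X$ and hence a subset of $X$, and every closed ball $B^X\in V_X$ is of course a subset of $X$.

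The key observation I would use is that balls in a subspace are simply traces of balls in the ambient space: for $U\subseteq X$, $x\in U$ and $r\ge 0$ one has $[x]_{\mathfrak{c}(r)}^{U}=[x]_{\mathfrak{c}(r)}^{X}\cap U$, and for $s\ge 0$ one has $[x]_{\mathfrak{o}(s)}^{U}=[x]_{\mathfrak{o}(s)}^{X}\cap U$ (using the convention $[x]_{\mathfrak{o}(0)}=\{x\}$ in both spaces, cf.\ Definition \ref{def:ultraquotient-open}). Consequently, for $x',x''\in U$ one has $[x']_{\mathfrak{o}(s)}^{U}=[x'']_{\mathfrak{o}(s)}^{U}$ if and only if $[x']_{\mathfrak{o}(s)}^{X}=[x'']_{\mathfrak{o}(s)}^{X}$, because both equalities are equivalent to $u_X(x',x'')<s$ (respectively to $x'=x''$ when $s=0$).

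Now I would fix $x\in U$ and $t\ge 2\eps$ and set $s\coloneqq t-2\eps\ge 0$. Since $[x]_{\mathfrak{c}(t)}^{U}=[x]_{\mathfrak{c}(t)}^{X}\cap U\subseteq[x]_{\mathfrak{c}(t)}^{X}$, the assignment $[x']_{\mathfrak{o}(s)}^{U}\mapsto[x']_{\mathfrak{o}(s)}^{X}$, defined for $x'\in[x]_{\mathfrak{c}(t)}^{U}$, is a map from $\{[x']_{\mathfrak{o}(s)}^{U}:x'\in[x]_{\mathfrak{c}(t)}^{U}\}$ into $\{[x']_{\mathfrak{o}(s)}^{X}:x'\in[x]_{\mathfrak{c}(t)}^{X}\}$, and by the equivalence recorded above it is both well-defined and injective. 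Therefore
\[
\#\bigl\{[x']_{\mathfrak{o}(t-2\eps)}^{U}:x'\in[x]_{\mathfrak{c}(t)}^{U}\bigr\}\ \le\ \#\bigl\{[x']_{\mathfrak{o}(t-2\eps)}^{X}:x'\in[x]_{\mathfrak{c}(t)}^{X}\bigr\}\ \le\ \gamma,
\]
where the last inequality is the second $(\eps,\gamma)$-growth condition for $X$, applicable because $x\in U\subseteq X$ and $t\ge 2\eps$. Since $x\in U$ and $t\ge 2\eps$ were arbitrary, $U\in\mathcal{U}_2(\eps,\gamma)$, which completes the argument.

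I do not expect a genuine obstacle here; the only point needing a little care is the degenerate case $t=2\eps$, where $t-2\eps=0$ and the open quotient collapses to the discrete partition by the convention of Definition \ref{def:ultraquotient-open}. In that case the identity $[x]_{\mathfrak{o}(0)}^{U}=\{x\}=[x]_{\mathfrak{o}(0)}^{X}\cap U$ still holds, so the displayed inequality reduces to $\#[x]_{\mathfrak{c}(2\eps)}^{U}\le\#[x]_{\mathfrak{c}(2\eps)}^{X}\le\gamma$ and the proof goes through verbatim.
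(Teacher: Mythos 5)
Your argument is correct and follows essentially the same route as the paper's own proof: both reduce to the observation that, for points of the subspace, distinctness of open equivalence classes in the subspace is equivalent to distinctness in the ambient space (equivalently, the trace map on open classes is well-defined and injective), which immediately transfers the cardinality bound. Your explicit handling of the degenerate case $t=2\eps$ and the remark that the argument applies to arbitrary subsets are harmless refinements; the paper's proof implicitly covers both.
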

\begin{proof}
Let $U^X\in V_X^{(\eps)}$. Fix a $t\geq 2\eps$ and $x\in U^X$. Note that for two distinct points $x',x''\in [x]_{\mathfrak{c}(t)}^{U^X}$, $[x']_{\mathfrak{o}(t-2\eps)}^{U^X}\neq [x'']_{\mathfrak{o}(t-2\eps)}^{U^X}$ if and only if $u_{U^X}(x',x'')=u_X(x',x'')\geq t-2\eps$. This is then also equivalent to the condition $[x']_{\mathfrak{o}(t-2\eps)}^{X}\neq [x'']_{\mathfrak{o}(t-2\eps)}^{X}$. 
Then, we have that
\[\#\left\{[x']_{\mathfrak{o}(t-2\eps)}^{U^X}:\,x'\in[x]_{\mathfrak{c}(t)}^{U^X}\right\} \leq \#\left\{[x']_{\mathfrak{o}(t-2\eps)}^{X}:\,x'\in[x]_{\mathfrak{c}(t)}^{X}\right\}\leq \gamma.\]
This concludes the proof that $U^X\in\mathcal{U}_2(\eps,\gamma)$.
\end{proof}

\begin{lemma}\label{lm:counting-lxly}
Let $X$ and $Y$ be two finite ultrametric spaces. Then, $\# V_X =O(\#X)$ and $\# V_Y =O(\#Y)$. If $X\in\mathcal{U}_2(\eps,\gamma)$, then $\# V_X^{(\eps)}=O(\#X\cdot 2^\gamma)$.
\end{lemma}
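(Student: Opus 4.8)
\textbf{Proof plan for Lemma \ref{lm:counting-lxly}.}

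The plan is to prove the three claims by separately counting the relevant collections using the tree structure of ultrametric spaces established earlier.

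First, for $\# V_X = O(\#X)$ (and analogously $\# V_Y = O(\#Y)$), recall from Remark \ref{rmk:tree number} that $V_X$ is precisely the vertex set of the weighted rooted tree $T_X$, whose leaves are exactly the singletons $\{x\}$, $x\in X$. Since for any rooted tree in which every internal node has at least two children one has $\#\mathrm{vertices}\leq 2\,\#\mathrm{leaves}$, and since $T_X$ is exactly such a tree (a ball splits into at least two sub-balls at the next level down in a dendrogram), we get $\# V_X \leq 2\,\#X = O(\#X)$. This is already recorded in Remark \ref{rmk:tree number}, so it can be invoked directly.

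The substantive part is the bound $\# V_X^{(\eps)} = O(\#X\cdot 2^\gamma)$ under the assumption $X\in\mathcal{U}_2(\eps,\gamma)$. Recall $ V_X^{(\eps)} = \bigcup_{B\in V_X} B_{(\eps)}$, where $B_{(\eps)}$ is the set of $\eps$-maximal unions of closed balls of $B$. The plan is to bound $\#B_{(\eps)}$ uniformly over $B\in V_X$ by $O(2^\gamma)$, and then sum over the $O(\#X)$ balls $B\in V_X$. To bound $\#B_{(\eps)}$: write the $\rho_\eps(B)$-open partition of $B$ as $B_{\mathfrak{o}(\rho_\eps(B))} = \{B_1,\ldots,B_{N_B}\}$ where $\rho_\eps(B) = \max(\diam(B)-2\eps, 0)$. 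By definition, every element of $B_{(\eps)}$ is a union $\bigcup_{i\in I}B_i$ for some nonempty $I\subseteq\{1,\ldots,N_B\}$ achieving $\diam(\bigcup_{i\in I}B_i) = \diam(B)$; in particular $\#B_{(\eps)}\leq 2^{N_B}$. Now I claim $N_B\leq\gamma$: picking any $x\in B$, since $B$ is a closed ball we have $B = [x]_{\mathfrak{c}(\diam(B))}$, and the blocks $B_i$ are exactly the classes $[x']_{\mathfrak{o}(\diam(B)-2\eps)}$ for $x'\in [x]_{\mathfrak{c}(\diam(B))}$ — provided $\diam(B)\geq 2\eps$, which is the only case where $\rho_\eps(B) = \diam(B)-2\eps > 0$ and there is anything to count; when $\diam(B)<2\eps$ each $B_i$ is a singleton and the only $\eps$-maximal union of full diameter is $B$ itself, so $\#B_{(\eps)}=1$. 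Applying the SGC (Definition \ref{def:growth2}) with $t = \diam(B)$ gives $N_B = \#\{[x']_{\mathfrak{o}(\diam(B)-2\eps)} : x'\in[x]_{\mathfrak{c}(\diam(B))}\}\leq\gamma$. Hence $\#B_{(\eps)}\leq 2^\gamma$ for every $B\in V_X$, and therefore
\[
\# V_X^{(\eps)}\leq\sum_{B\in V_X}\#B_{(\eps)}\leq \# V_X\cdot 2^\gamma = O(\#X\cdot 2^\gamma).
\]

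The only mild subtlety — and the step I expect to require the most care — is handling the degenerate case $\diam(B)\leq 2\eps$ separately, since there $\rho_\eps(B)=0$ and the open partition of $B$ at scale $0$ consists of singletons, so one should not naively apply the SGC at $t=\diam(B)<2\eps$ (where Definition \ref{def:growth2} imposes no constraint). In that case one observes directly that $B$ is the unique $\eps$-maximal union of its own singletons (any union of a strict subset of the singletons has diameter strictly less than $\diam(B)$ unless $\diam(B)=0$, in which case $B$ is itself a singleton), so $\#B_{(\eps)}=1\leq 2^\gamma$ trivially. With this case dispatched, the uniform bound $\#B_{(\eps)}\leq 2^\gamma$ holds for all $B\in V_X$ and the summation argument above completes the proof.
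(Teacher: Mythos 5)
Your overall strategy is exactly the paper's: invoke Remark \ref{rmk:tree number} for $\#V_X=O(\#X)$, observe that $B_{(\eps)}$ sits inside the power set of the $\rho_\eps(B)$-open partition of $B$, bound the number of blocks of that partition by $\gamma$ via the SGC applied at $t=\diam(B)$, and sum over the $O(\#X)$ balls. That main line of argument is correct, and you are right that the paper's one-line appeal to the SGC implicitly assumes $\diam(B)\geq 2\eps$ (Definition \ref{def:growth2} only constrains $t\geq 2\eps$), so the case $\rho_\eps(B)=0$ deserves a separate word.

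However, the separate word you give for that degenerate case is wrong. When $\diam(B)<2\eps$ the blocks $B_i$ are singletons, and you claim that any union of a \emph{strict} subset of them has diameter strictly less than $\diam(B)$, so that $\#B_{(\eps)}=1$. This fails in ultrametric spaces: take $B=\{a,b,c\}$ with all pairwise distances equal to $\diam(B)>0$; then every subset of size at least two already realizes the diameter, so $B_{(\eps)}$ contains four sets, not one. (Figure \ref{fig:LX} in the paper is exactly a ball whose $\rho_\eps$-open partition is into singletons and whose $B_{(\eps)}$ has three elements.) The conclusion you need, $\#B_{(\eps)}\leq 2^\gamma$, is still true, but for a different reason: apply the SGC at $t=2\eps$ to get $\#[x]_{\mathfrak{c}(2\eps)}=\#\left\{[x']_{\mathfrak{o}(0)}:x'\in[x]_{\mathfrak{c}(2\eps)}\right\}\leq\gamma$, and since $\diam(B)\leq 2\eps$ implies $B\subseteq[x]_{\mathfrak{c}(2\eps)}$ for any $x\in B$, the partition of $B$ into singletons has at most $\#B\leq\gamma$ blocks, whence $\#B_{(\eps)}\leq 2^{\#B}\leq 2^\gamma$. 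With that replacement your proof is complete and matches the paper's count $\#V_X^{(\eps)}\leq \#V_X\cdot 2^\gamma=O(\#X\cdot 2^\gamma)$.
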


\begin{proof}
By Remark \ref{rmk:tree number}, we have that $\# V_X =O(\#X)$ and $\# V_Y =O(\#Y)$.

$ V_X^{(\eps)}$ is defined in Section \ref{sec:dp-main-text} as: $ V_X^{(\eps)}\coloneqq\bigcup_{B^X\in  V_X }B^X_{(\eps)}$. For notational simplicity, we let $\rho_\eps\coloneqq\rho_\eps\lc B^X\rc$. Each $B^X_{(\eps)}$ is a collection of unions of elements in $B^X_{\mathfrak{o}\lc \rho_\eps\rc}$ and thus $B^X_{(\eps)}$ is a subset of the power set $2^{B^X_{\mathfrak{o}\lc \rho_\eps\rc}}$. Since $B^X$ is a closed ball in $X$, $B^X=[x]_{\mathfrak{c}\lc\rho\rc}^X$ for some $x\in X$ and $\rho\coloneqq\diam\lc B^X\rc $. By the second $(\eps,\gamma)$-growth condition, we have that 
\[\#B^X_{\mathfrak{o}\lc \rho_\eps\rc}=\#\left\{{[x']^X_{\mathfrak{o}\lc \rho_\eps\rc}}:\,x'\in [x]_{\mathfrak{c}\lc\rho\rc}^X\right\}\leq\gamma\] 
and thus $\#2^{B^X_{\mathfrak{o}\lc \rho_\eps\rc}}\leq 2^\gamma$. Then, by $\# V_X =O(\#X)$, we have $\# V_X^{(\eps)}=O(\#X\cdot 2^\gamma).$
\end{proof}

\begin{proof}[Proof of Theorem \ref{thm:com-dyn-alg}]

{\textbf{Preprocessing.} In order to implement the union operation (cf. Appendix \ref{sec:union of susbets}) efficiently, we will reconstruct the distance matrices $u_X$ and $u_Y$ from the TDSs $T_X$ and $T_Y$, respectively. This process takes time at most $O(n^2)$ (cf. Remark \ref{rmk:distance mtx}). $\mathrm{LX}^{(\eps)}$ and $\mathrm{LY}$ can be constructed in time $O\lc n^2\log(n)2^\gamma\gamma^2\rc$ (cf. Appendix \ref{sec:implementation detail}). We create an all-zero matrix $\mathrm{DYN}$ of size $\#\mathrm{LX}^{(\eps)}\times\#\mathrm{LY}$ in time $O(n^22^\gamma)$.

\textbf{Main part of the algorithm.} For each $B^Y\in  \mathrm{LY} $, we have the following cases for $U^X\in \mathrm{LX}^{(\eps)}$:

\begin{enumerate}
    \item $|\diam\lc U^X\rc -\diam\lc B^Y\rc |>\eps$ or $\max(\diam\lc U^X\rc ,\diam\lc B^Y\rc)\leq\eps$: It takes constant time to assign either 0 or 1 to $\mathrm{DYN}(U^X,B^Y)$ based on this.
    
    \item $\diam\lc B^Y\rc \leq \eps<\diam\lc U^X\rc$: In this case, both $\diam\lc U^X\rc $ and $\diam\lc B^Y\rc $ are bounded above by $2\eps$ (since the pair $\lc U^X,B^Y\rc$ does not satisfy the condition in the first case). Then, by the SGC and Lemma \ref{lm:inheritance of SGC}, it is easy to check that $\#U^X,\#B^Y\leq \gamma$. Thus, by Lemma \ref{lm:smallcoman}, Algorithm $\mathbf{FindCorrSmall}$ with input $(U^X,B^Y,\eps)$ runs in time $O(\gamma^2\gamma^\gamma)$.
    
    \item $\diam\lc B^Y\rc >\eps$: In this case, by the SGC and Lemma \ref{lm:inheritance of SGC}, it takes at most $O(\gamma)$ time to partition both $U^X$ and $B^Y$ via Algorithm $\mathbf{OpenPartition}$ (Algorithm \ref{algo-o-part}) into at most $\gamma$ blocks, respectively. We then have at most $\gamma^\gamma$ surjections to consider. Given any such surjection $\Psi$, for each $j\in [N_Y]$ let $k_j\coloneqq\#\{U^X_i\}_{i\in \Psi^{-1}(j)}$. Then, it takes time at most $O(k_j^2+\gamma k_j)=O(\gamma k_j)$ to construct the union $U^X_{\Psi^{-1}(j)}$ via the refined union operation discussed in Appendix \ref{sec:union of susbets} (see also Appendix \ref{sec:refined union}). It takes time at most $O(\gamma\log(\gamma))$ to find the index of $U^X_{\Psi^{-1}(j)}$ in $\mathrm{LX}^{(\eps)}$ and constant time to find the index of $B^Y_j$ in $\mathrm{LY}$ (cf. Appendix \ref{sec:preprocessing}). Therefore, accessing the value $\mathrm{DYN}\lc U^X_{\Psi^{-1}(j)},B^Y_j\rc$ has cost at most $O(\gamma\log(\gamma))$. Then, the time complexity for accessing values in $\mathrm{DYN}$ for the surjection $\Psi$ is at most 
    $$\sum_{j}O(\gamma k_j\log(\gamma))=O(\gamma^2\log(\gamma)),$$
    where we use the fact that $\sum_jk_j=O(\gamma)$. Therefore, the total time complexity of this case is bounded by $O(\gamma^{\gamma+2}\log(\gamma))$
\end{enumerate}

Therefore for a single $B^Y\in\mathrm{LY}$, completing all the operations taking place between line 4 and line 22 of Algorithm \ref{algo-dGH-dyn} requires at most time $O(n2^\gamma)\times  O(\gamma^{\gamma+2}\log(\gamma))=O\lc n2^\gamma\gamma^{\gamma+2}\log(\gamma)\rc$. Then, completing the for-loop in line 3 of Algorithm \ref{algo-dGH-dyn} requires requires at most time
\[O(n)\times O\lc n2^\gamma\gamma^{\gamma+2}\log(\gamma)\rc=O\lc n^22^\gamma\gamma^{\gamma+2}\log(\gamma)\rc\]
operations to fill out the matrix $\mathrm{DYN}$.

\textbf{Total time complexity.} By combining the time complexity of the preprocessing part, we have that the total time complexity of Algorithm \ref{algo-dGH-dyn} is bounded by
\[O\lc n^2\log(n) 2^\gamma\gamma^2\rc+O(n^22^\gamma)+O\lc n^22^\gamma\gamma^{\gamma+2}\log(\gamma)\rc=O\lc n^2\log(n)2^\gamma\gamma^{\gamma+2}\rc.\]
This concludes the proof.}
\end{proof}

\subsubsection{Proof of Remark \ref{rmk:improved complexity}}\label{sec:proof of rmk improved complexity}
{Let $\mathcal{E}(X,Y)=\{\eps_0<\eps_1<\ldots<\eps_M\}$. Observe that for each $\eps_i$, since the number of all vertices in $T_X$ or $T_Y$ are bounded above by $2n$, $\gamma_{\eps_i}(X,Y)\coloneqq\max(\gamma_{\eps_i}(X),\gamma_{\eps_i}(Y))$ takes values in $\{1,2,\ldots,2n\}$. It is obvious that for each $k\in\{1,2,\ldots,2n\}$, the set of $\eps_i$s such that $\gamma_{\eps_i}(X,Y)=k$ is an interval, i.e., a consecutive subsequence of $\mathcal{E}(X,Y)$, denoted by $[\eps_{\ell_k},\ldots,\eps_{r_k}]$. Note that $\eps\coloneqq2\dgh(X,Y)\in\mathcal{E}(X,Y)$. Then, there exists $i\in\{0,1,\ldots,M\}$ such that $\eps_i=\eps$. It is obvious that $i$ is the smallest index such that there exists a $\eps_i$-correspondence between $X$ and $Y$. In order to find the index $i$ (and thus to compute $\dgh(X,Y)$), we apply the same procedure as in the proof of \cite[Theorem 5]{touli2018fpt} to search in $\{1,\ldots,2n\}$ for the smallest number $k^*$ such that $\{\eps_{\ell_{k^*}},\ldots,\eps_{r_{k^*}}\}\cap [\eps,\infty)$ is non-empty. This $k^*$ satisfies the condition $k^*=\gamma_\eps(X,Y)$ and $\ell_{k^*}\leq i\leq r_{k^*}$. Then, we apply binary search to find the index $i$. Via an argument similar to the one stated in \cite[Theorem 5]{touli2018fpt}, this whole process for finding $i$ can be completed in time $O\lc n^2\log^3(n)2^{2k^*}(2k^*)^{2k^*+2}\rc$. Since $X,Y\in\mathcal{U}_2(\eps,\gamma)$, we have that $\gamma\geq \gamma_\eps(X,Y)=k^*$. Therefore, we conclude that the exact value $\dgh(X,Y)$ can be computed in time complexity at most $O\lc n^2\log^3(n)2^{2\gamma}(2\gamma)^{2\gamma+2}\rc$.}

\subsubsection{Proof of Lemma \ref{lm:ultrametricty sgc}}\label{sec:proof of approximation}

\begin{proof}
Pick any positive real number $t\geq 2\eps$. By Proposition \ref{prop:ultrametricity double} we have that $\|d_X-u_X^*\|_\infty=\delta$. Then, $B^{u_X^*}_t(x)\subseteq B^{d_X}_{t+\delta}(x)$ for any $x\in X$ and $t\geq 0$. Here $B^d_t(x)\coloneqq\{x'\in X:\,d(x,x')\leq t\}$ represents the closed ball centered at $x$ with radius $t$ with respect to the metric $d$. For later use, we use $B^d_{\mathfrak{o}(t)}(x)\coloneqq\{x'\in X:\,d(x,x')< t\}$ to denote an open ball. Since $u_X^*\leq d_X$, we have that $B^{d_X}_t(x)\subseteq B^{u_X^*}_t(x)$.

We first assume that $\frac{t+\delta}{2}<t-2\eps$. Since $(X,d_X)$ is $K$-doubling, there exist $x_1,\ldots, x_K\in X$ such that $B_{t+\delta}^{d_X}(x)\subseteq\bigcup_{i=1}^KB_\frac{t+\delta}{2}^{d_X}(x_i).$
For each $x_i$, we have that
\[B_\frac{t+\delta}{2}^{d_X}(x_i)\subseteq B^{u_X^*}_\frac{t+\delta}{2}(x_i)\subseteq B^{u_X^*}_{\mathfrak{o}(t-2\eps)}(x_i),\]
where the last inclusion follows from the assumption that $\frac{t+\delta}{2}<t-2\eps$. Then, we have that
\[B^{u_X^*}_t(x)\subseteq B_{t+\delta}^{d_X}(x)\subseteq\bigcup_{i=1}^KB_\frac{t+\delta}{2}^{d_X}(x_i) \subseteq \bigcup_{i=1}^KB^{u_X^*}_{\mathfrak{o}(t-2\eps)}(x_i).\]
Using the notation for open and closed equivalence relations for ultrametric spaces, we conclude that $[x]_{\mathfrak{c}(t)}\subseteq\bigcup_{i=1}^K[x_i]_{\mathfrak{o}(t-2\eps)}$ which implies that
\[\#\left\{{[x']_{\mathfrak{o}\lc t-2\eps\rc}}:\,x'\in [x]_{\mathfrak{c}\lc t\rc}\right\}\leq K.\]

Now, we assume that $\frac{t+\delta}{2}\geq t-2\eps$ {(equivalently $t\leq \delta+4\eps$)}. First note that $s\coloneqq\mathrm{sep}(X,d_X)=\mathrm{sep}(X,u_X)$. Then, if $t+\delta< s$, we have that $B_t^{u_X^*}(x)=B_t^{d_X}(x)=\{x\}$. Hence,
\[\#\left\{{[x']_{\mathfrak{o}\lc t-2\eps\rc}}:\,x'\in [x]_{\mathfrak{c}\lc t\rc}\right\}\leq 1.\]
Otherwise, we assume that  $t+\delta\geq s$. Then, we let $k\in\mathbb{N}$ be such that $\frac{t+\delta}{2^k}<s\leq \frac{t+\delta}{2^{k-1}}$. Equivalently, we have
\[k-1\leq \log_2\lc \frac{t+\delta}{s}\rc<k.\]
By the $K$-doubling property of $(X,d_X)$, we have that $B_{t+\delta}^{d_X}(x)$ can be covered by at most $K^k$ many balls with radius $\frac{t+\delta}{2^k}$. Since $\frac{t+\delta}{2^k}<s$, these balls are singletons and thus $\#B_{t+\delta}^{d_X}(x)\leq K^k$. Therefore,
\[\#B^{u_X^*}_t(x)\leq \#B_{t+\delta}^{d_X}(x)\leq K^k\leq K^{\log_2\lc \frac{t+\delta}{s}\rc+1}\leq  K^{\log_2\lc \frac{2\delta+4\eps}{s}\rc+1},\]
where we use the assumption $t\leq \delta+4\eps$ in the last inequality. Consequently, 
\[\#\left\{{[x']_{\mathfrak{o}\lc t-2\eps\rc}}:\,x'\in [x]_{\mathfrak{c}\lc t\rc}\right\}\leq K^{\log_2\lc \frac{2\delta+4\eps}{s}\rc+1}.\]

In conclusion, for any $x\in X$ and any $r\geq 2\eps$ we have that 
\[\#\left\{{[x']_{\mathfrak{o}\lc t-2\eps\rc}}:\,x'\in [x]_{\mathfrak{c}\lc t\rc}\right\}\leq \max\lc K,K^{\log_2\lc \frac{2\delta+4\eps}{s}\rc+1}\rc,\]
and thus $(X,u_X^*)\in\mathcal{U}_2\lc\eps,\max\lc K,K^{\log_2\lc\frac{2\delta+4\eps}{s}\rc+1}\rc\rc$. 
\end{proof}

\end{document}